\numberwithin{equation}{section}
\newtheorem{thm}{Theorem}
\newtheorem{lem}{Lemma}[section]
\newtheorem{prop}[lem]{Proposition}
\newtheorem{cor}[thm]{Corollary}
\newtheorem*{ModelProblem}{Model Problem}
\newtheorem*{ModelProblem*}{Model Problem*}
\newtheorem*{kthm}{Khinchin's theorem}
\theoremstyle{definition}
\newtheorem{dfn}[lem]{Definition}
\theoremstyle{remark}
\newtheorem*{rem}{Remark}
\newcommand{\N}{\mathbb{N}}
\newcommand{\Z}{\mathbb{Z}}
\newcommand{\Q}{\mathbb{Q}}
\newcommand{\R}{\mathbb{R}}
\newcommand{\CA}{\mathcal{A}}
\newcommand{\CB}{\mathcal{B}}
\newcommand{\CC}{\mathcal{C}}
\newcommand{\CE}{\mathcal{E}}
\newcommand{\CH}{\mathcal{H}}
\newcommand{\CK}{\mathcal{K}}
\newcommand{\CL}{\mathcal{L}}
\newcommand{\CN}{\mathcal{N}}
\newcommand{\CP}{\mathcal{P}}
\newcommand{\CR}{\mathcal{R}}
\newcommand{\CS}{\mathcal{S}}
\newcommand{\CV}{\mathcal{V}}
\newcommand{\CW}{\mathcal{W}}
\newcommand{\CX}{\mathcal{X}}
\newcommand{\ds}{\displaystyle}
\newcommand{\bs}\boldsymbol{}
\newcommand{\eq}[1]{ \begin{equation} \begin{split} #1 \end{split} \end{equation} }
\newcommand{\al}[1]{\begin{align} #1 \end{align} }
\newcommand{\als}[1]{\begin{align*} #1 \end{align*} }
\newcommand{\un}{\mathds{1}}
\newcommand{\dee}{\mathrm{d}}
\DeclareMathOperator{\supp}{supp}
\DeclareMathOperator{\lcm}{lcm}
\renewcommand{\tilde}{\widetilde}
\newcommand{\eps}{\varepsilon}
\renewcommand{\phi}{\varphi}
\definecolor{red}{rgb}{1,0,0}
\definecolor{orange}{rgb}{0.7,0.3,0}
\definecolor{blue}{rgb}{.2,.6,.75}
\definecolor{green}{rgb}{.4,.7,.4}
\renewcommand{\le}{\leqslant}
\renewcommand{\ge}{\geqslant}
\begin{document}

\title{On the Duffin-Schaeffer conjecture}
\author{Dimitris Koukoulopoulos}
\address{D\'epartement de math\'ematiques et de statistique\\
Universit\'e de Montr\'eal\\
CP 6128 succ. Centre-Ville\\
Montr\'eal, QC H3C 3J7\\
Canada}
\email{koukoulo@dms.umontreal.ca}

\author{James Maynard}
\address{Mathematical Institute, Radcliffe Observatory quarter, Woodstock Road, Oxford OX2 6GG, England}
\email{james.alexander.maynard@gmail.com}

\subjclass[2010]{Primary: 11J83. Secondary: 05C40}
\keywords{Diophantine approximation, Metric Number Theory, Duffin-Schaeffer conjecture, graph theory, density increment, compression arguments}

\date{\today}

\begin{abstract}
Let $\psi:\mathbb{N}\to\mathbb{R}_{\ge0}$ be an arbitrary function from the positive integers to the non-negative reals. Consider the set $\mathcal{A}$ of real numbers $\alpha$ for which there are infinitely many reduced fractions $a/q$ such that $|\alpha-a/q|\le \psi(q)/q$. If $\sum_{q=1}^\infty \psi(q)\phi(q)/q=\infty$, we show that $\mathcal{A}$ has full Lebesgue measure. This answers a question of Duffin and Schaeffer. As a corollary, we also establish a conjecture due to Catlin regarding non-reduced solutions to the inequality $|\alpha- a/q|\le \psi(q)/q$, giving a refinement of Khinchin's Theorem.
\end{abstract}

\maketitle

\section{Introduction}
Let $\psi:\N\to\R_{\ge0}$ be an arbitrary function from the positive integers to the non-negative reals. Given $\alpha\in\R$, we wish to understand when we can find infinitely many integers $a$ and $q$ such that
\begin{equation}\label{eq:rational approximations}
\bigg|\alpha-\frac{a}{q}\bigg| \le \frac{\psi(q)}{q}.
\end{equation}
Clearly, it suffices to restrict our attention to numbers $\alpha\in[0,1]$. 

When $\psi(q)=1/q$ for all $q$, Dirichlet's approximation theorem implies that, given {\it any} irrational $\alpha\in[0,1]$, there are  infinitely many coprime integers $a$ and $q$ satisfying  \eqref{eq:rational approximations}. On the other hand, the situation can become significantly more complicated if $\psi$ behaves more irregularly. Even small variations in the size of $\psi$ can cause \eqref{eq:rational approximations} to have no solutions for certain numbers $\alpha$. However, there are several results in the literature that show that, under rather general conditions on $\psi$, \eqref{eq:rational approximations} has infinitely many solutions for {\it almost all} $\alpha\in[0,1]$, in the sense that the residual set has null Lebesgue measure. 

The prototypical such `metric' result was proven by Khinchin in 1924 \cite{Khinchin-paper} (see also \cite[Theorem 32]{Khinchin-book}). To state his result, we let $\lambda$ denote the Lebesgue measure on $\R$.

\begin{kthm}
Consider a function $\psi:\N\to[0,+\infty)$ such that the sequence $(q\psi(q))_{q=1}^\infty$ is decreasing, and let $\CK$ denote the set of real numbers $\alpha\in[0,1]$ for which \eqref{eq:rational approximations} has infinitely many solutions $(a,q)\in\Z^2$ with $0\le a\le q$. Then the following hold:
\begin{enumerate}
	\item If $\sum_{q\ge1} \psi(q)<\infty$, then $\lambda(\CK)=0$.
	\item  If $\sum_{q\ge1} \psi(q)=\infty$, then $\lambda(\CK)=1$.
\end{enumerate}
\end{kthm}
There is an intuitive way to explain why Khinchin's result ought to be true. Consider the sets
\eq{\label{def:K_q}
\CK_q= [0,1]\cap \bigcup_{a=0}^q   \Big[\frac{a-\psi(q)}{q}, \frac{a+\psi(q)}{q}\Big] ,
}
so that\footnote{Recall that if $X_1,X_2,\dots$ is a sequence of sets of real numbers, then $\limsup_{n\to\infty}X_n$ denotes the set of real numbers lying in infinitely many $X_n$'s.}
\[
\CK
	=\limsup_{q\to\infty}\CK_q.
\]
In addition,
\[
\min\{\psi(q),1/2\}\le \lambda(\CK_q)\le 2\min\{\psi(q),1/2\}.
\]
Thus, part (a) of Khinchin's theorem is an immediate corollary of the `easy' direction of the Borel-Cantelli lemma from Probability Theory \cite[Lemma 1.2]{harman} applied to the probability space $[0,1]$ equipped with the measure $\lambda$. If we knew, in addition, that the sets $\CK_q$ were mutually independent, then we could apply the `hard' direction of the Borel-Cantelli lemma \cite[Lemma 1.3]{harman} to deduce part (b) of Khinchin's theorem. Of course, the sets $\CK_q$ are not mutually independent, so the difficulty in Khinchin's proof is showing that there is enough `approximate independence', so that $\CK$ still has full measure.

\medskip

In 1941, Duffin and Schaeffer \cite{DS} undertook a study of the limitations to the validity of Khinchin's theorem, since the condition that $q\psi(q)$ is decreasing is not a necessary condition. They discovered that it is more natural to focus on reduced solutions $a/q$ to \eqref{eq:rational approximations} that avoid overcounting issues arising when working with arbitrary fractions $a/q$. To this end, let 
\begin{equation}\label{eq:A_q-dfn}
\CA_q:= [0,1]\cap \bigcup_{\substack{1\le a\le q \\ \gcd(a,q)=1}}  \Big[\frac{a-\psi(q)}{q}, \frac{a+\psi(q)}{q}\Big] .
\end{equation}
and
\begin{equation}\label{eq:A-dfn}
\CA:= \limsup_{q\to\infty} \CA_q.
\end{equation}
Just like before, using the `easy' direction of the Borel-Cantelli lemma, we immediately find that
\begin{equation}\label{DS-BC}
\sum_{q=1}^\infty \frac{\phi(q)\psi(q)}{q}<\infty
\qquad\implies\qquad \lambda(\CA)=0.
\end{equation}
In analogy to Khinchin's result, Duffin and Schaeffer  conjectured that we also have the implication
\begin{equation}\label{DS-conj}
\sum_{q=1}^\infty \frac{\phi(q)\psi(q)}{q}=\infty
\qquad\implies\qquad \lambda(\CA)=1.
\end{equation}
This is listed as Problem 46 in Montgomery's lectures \cite[Page 204]{montgomery}.

The main result of the present paper is a proof of the Duffin-Schaeffer conjecture:

\begin{thm}\label{thm:MainThm}
Let $\psi:\N\rightarrow \R_{\ge 0}$ be a function such that
\[
\sum_{q=1}^\infty \frac{\psi(q)\phi(q)}{q}=\infty.
\]
Let $\CA$ be the set of $\alpha\in[0,1]$ for which the inequality
\eq{
	\label{eq:rational approximations again}
\bigg|\alpha-\frac{a}{q}\bigg| \le \frac{\psi(q)}{q}
}
has infinitely many coprime solutions $a$ and $q$. 
Then $\CA$ has Lebesgue measure 1. 
\end{thm}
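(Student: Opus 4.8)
The plan is to attack this via the Borel--Cantelli heuristic made rigorous through a quantitative overlap estimate combined with a density-increment (or compression) argument on an auxiliary graph. The core obstruction to the naive second Borel--Cantelli is that the sets $\CA_q$ are far from independent: two indices $q,r$ interact strongly when $\lcm(q,r)$ is small compared to $qr$, i.e.\ when $q$ and $r$ share many prime factors. So the first step is to reduce to a finite subfamily and quantify the failure of independence. Fix a large parameter and select a finite set $S$ of moduli $q$ with $\sum_{q\in S}\phi(q)\psi(q)/q$ large (say $\ge$ some target $T$); by a standard truncation we may assume $\psi(q)\le 1/2$, and indeed by a dyadic decomposition and pigeonholing we may assume all $q\in S$ lie in a dyadic block and $\psi(q)q$ is within a factor $2$ of a fixed value. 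The aim is then a Chung--Erd\H{o}s type lower bound
\eq{
\lambda\Big(\bigcup_{q\in S}\CA_q\Big) \ge \frac{\big(\sum_{q\in S}\lambda(\CA_q)\big)^2}{\sum_{q,r\in S}\lambda(\CA_q\cap\CA_r)},
}
so everything reduces to controlling the pairwise overlaps $\lambda(\CA_q\cap\CA_r)$ from above by roughly $\lambda(\CA_q)\lambda(\CA_r)$ times a correction factor depending only on $\gcd$-type data.

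The second step is the overlap computation. For coprime $a/q$ and $b/r$ with $q\ne r$, the interval around $a/q$ meets the interval around $b/r$ only if $|a/q-b/r|$ is small, i.e.\ $|ar-bq|\le \psi(q)r/q+\psi(r)\le$ small; the number of such near-coincidences is governed by how often $ar\equiv bq$ in a short range, which ties the overlap to the quantity
\eq{
\frac{\phi(q)\phi(r)}{qr}\cdot\psi(q)\psi(r)\cdot\prod_{p\mid \gcd(q,r)}\Big(\text{local factor}\Big),
}
and the local factors can be as large as a constant power of $p$ when $p\mid\gcd(q,r)$ and $p$ divides the "shared part" to a high power. Encoding this, one builds a graph $G_S$ on vertex set $S$ where $q\sim r$ is an edge precisely when the overlap $\lambda(\CA_q\cap\CA_r)$ substantially exceeds the independent prediction $\lambda(\CA_q)\lambda(\CA_r)$, with edge weights recording the size of the anomaly (a product over the relevant primes). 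If the total edge weight of $G_S$ is small compared with $(\sum_{q\in S}\lambda(\CA_q))^2$, the Chung--Erd\H{o}s bound already gives $\lambda(\bigcup\CA_q)$ bounded below by a positive constant, and a standard "quasi-independence on average" argument upgrades this to full measure for $\CA=\limsup\CA_q$.

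The third and hardest step is therefore: what if $G_S$ has large edge weight, i.e.\ the family is genuinely far from quasi-independent? Here one runs a structural/iterative argument. Large edge weight forces many pairs $q,r$ to share a large common factor; one extracts from this a prime power $p^j$ dividing "most" of the moduli in a large sub-block, or more precisely one shows that one can factor $q = d\cdot m$ where $d$ is a fixed anomalous part common to a positive-weight fraction of $S$, and then pass to the rescaled family of $m$'s. Iterating this compression, one peels off anomalous prime-power factors while controlling the loss in the sum $\sum \phi(q)\psi(q)/q$ and in the overlap weights; the key technical inputs are (i) a GCD/Erd\H{o}s-type inequality bounding $\sum_{q,r}\gcd(q,r)\cdot(\text{stuff})$ by $\sum_q(\text{stuff})$ up to acceptable factors, used to show that anomalies cannot persist at every scale, and (ii) a careful bookkeeping lemma showing each compression step either terminates in the quasi-independent regime or strictly reduces a suitable potential. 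Since the potential is bounded (it is controlled by $\log$ of the modulus range, or by the number of prime factors), the iteration must terminate, and at termination we are in the good case and conclude as above. I expect the main obstacle to be exactly this combinatorial iteration: setting up the right notion of "anomalous part" and the right monovariant so that the compression provably terminates while the Diophantine mass $\sum\phi(q)\psi(q)/q$ is not destroyed — this is where the graph-theoretic "density increment / compression" machinery advertised in the keywords must do its work, and it is considerably more delicate than the overlap estimate itself.
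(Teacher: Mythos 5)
Your high-level architecture matches the paper's: reduce to a finite family with $\sum_{q\in S}\phi(q)\psi(q)/q\asymp 1$, apply Cauchy--Schwarz (Chung--Erd\H{o}s) plus Gallagher's $0$--$1$ law, reduce everything to a second-moment bound on $\sum_{q,r}\lambda(\CA_q\cap\CA_r)$ via the Pollington--Vaughan overlap estimate, and when that sum is threatened by pairs with large GCDs, run a graph-theoretic compression. This is recognizably the route taken.

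However, your Step 3 as stated contains a gap that is in fact the entire crux of the problem. You propose to show that large total edge weight ``forces\ldots\ a fixed anomalous part $d$ common to a positive-weight fraction of $S$,'' then rescale. That assertion is the Model Problem of Section \ref{sec:outline}, and, as explained in Section \ref{sec:Concluding}, it is \emph{false} in the unweighted setting: there exist sets $\CS\subseteq[x,2x]$ with $\#\CS\asymp x^c$ in which all pairs have GCD $\ge x^{1-c}$ yet no integer of that size divides a positive proportion of $\CS$. What rescues the argument is precisely the $\phi(q)/q$ weight inside $\mu$, which kills such examples; you treat the weights as bookkeeping rather than as a structural necessity, and your rescaled-family iteration makes no use of them. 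Relatedly, the monovariant you propose (a ``potential'' controlled by $\log$ of the modulus range or number of prime factors, with a loose ``density increment'' flavour) does not carry enough information: a pure edge-density increment loses all control over vertex-set sizes, while a pure size/divisor increment need not increase at each step. The paper's quality functional $q(G)=\delta^{10}\mu(\CV)\mu(\CW)\prod_{p\in\CP}p^{|f(p)-g(p)|}(1-\un_{f(p)=g(p)\ge1}/p)^{-2}(1-1/p^{31/30})^{-10}$ is a specifically tuned hybrid, and the factor $(1-\un_{f(p)=g(p)\ge1}/p)^{-2}$, which is cancelled at the very end by the $\phi(v)\phi(w)/(vw)$ coming from $\mu$, is indispensable to the increment Lemma~\ref{lem:MainLem2}. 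Finally, your phrase ``at termination we are in the good case and conclude as above'' hides a genuinely non-trivial readout step: after the compression, one must still exploit the anatomical condition $L_t(v,w)\ge 10$ (Lemma \ref{lem:Anatomy}, Erd\H{o}s/Vaaler-type counting of integers with many medium prime factors) to close the argument, and keeping this condition alive through the iteration requires the ``clean-up'' step of Lemma \ref{lem:Cosmetic}. So: right skeleton, but the identification of the correct monovariant, the recognition that the $\phi/q$ weights are structurally essential rather than cosmetic, and the final anatomical readout are all missing, and these are the parts that made the conjecture hard.
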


As a direct corollary, we obtain Catlin's conjecture \cite{catlin} that deals with solutions to \eqref{eq:rational approximations again} where the approximations are not necessarily reduced fractions, giving an extension of Khinchin's Theorem.

\begin{thm}\label{thm:catlin}
	Let $\psi:\N\rightarrow \R_{\ge 0}$ and let $\CK$ denote the set of $\alpha\in[0,1]$ for which the inequality \eqref{eq:rational approximations again} has infinitely many solutions $(a,q)\in\Z^2$ with $0\le a\le q$. Define $\psi^*:\N\rightarrow\R_{\ge 0}$ by
	\[
	\psi^*(q):=\phi(q)\sup\{\psi(n)/n \,:\, n\in\N,\ q|n\} .
	\]
	Then the following hold:
	\begin{enumerate}[topsep=2pt, itemsep=2pt]
		\item If $\sum_{q=1}^\infty \psi^*(q)<\infty$, then $\lambda(\CK)=0$.
		\item If $\sum_{q=1}^\infty \psi^*(q)=\infty$, then $\lambda(\CK)=1$.
	\end{enumerate}
\end{thm}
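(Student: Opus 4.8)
The plan is to deduce Theorem~\ref{thm:catlin} from Theorem~\ref{thm:MainThm} by a purely elementary argument, reducing the problem about arbitrary-denominator approximations to one about reduced approximations with a suitably modified weight. Part~(a) should follow from the easy direction of Borel--Cantelli once one checks that the set $\CK$ is contained, up to finitely many approximations, in $\limsup_q \CA^*_q$, where $\CA^*_q$ is the set \eqref{eq:A_q-dfn} built from $\widetilde\psi(q):=\sup\{q\psi(n)/n:q\mid n\}$; the measure of $\CA^*_q$ is comparable to $\min\{\widetilde\psi(q),1/2\}$ and $\phi(q)\widetilde\psi(q)/q=\psi^*(q)$, so $\sum_q\psi^*(q)<\infty$ forces $\lambda(\CK)=0$. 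For part~(b), the idea is to apply Theorem~\ref{thm:MainThm} to the function $\widetilde\psi$ just defined, and then transfer infinitely many reduced solutions for $\widetilde\psi$ back into infinitely many (not necessarily reduced) solutions for $\psi$.

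First I would record the key set-theoretic identity. If $a/q$ is a reduced fraction with $|\alpha-a/q|\le\widetilde\psi(q)/q$, then by definition of $\widetilde\psi$ there is a multiple $n=mq$ of $q$ with $q\psi(n)/n$ arbitrarily close to $\widetilde\psi(q)$; more carefully, one picks $n=mq$ with $\psi(n)/n\ge \widetilde\psi(q)/q - \text{(something negligible)}$, and then $ma/n=a/q$ satisfies $|\alpha-ma/n|\le \psi(n)/n$ provided the inequality for $\widetilde\psi$ was strict, or one works with a slightly shrunk $\psi$ to create room. So each reduced solution $(a,q)$ for $\widetilde\psi$ yields a solution $(ma,n)$ for $\psi$ with $0\le ma\le n$. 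The only thing to be careful about is that infinitely many \emph{distinct} reduced solutions $(a,q)$ could conceivably map to the same fraction $a/q=ma/n$; but distinct reduced fractions stay distinct, and a single real $\alpha$ can have only finitely many reduced approximations $a/q$ with a fixed denominator $q$ satisfying the inequality, so infinitely many distinct reduced solutions for $\widetilde\psi$ genuinely produce infinitely many distinct rationals, hence $\alpha\in\CK$. Thus $\CA(\widetilde\psi)\subseteq\CK$, and since $\sum_q\phi(q)\widetilde\psi(q)/q=\sum_q\psi^*(q)=\infty$ by hypothesis, Theorem~\ref{thm:MainThm} gives $\lambda(\CA(\widetilde\psi))=1$, whence $\lambda(\CK)=1$.

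I would organize the write-up as: (i) reduce to the case $\psi(q)\le q/2$ for all $q$ (replacing larger values by $q/2$ changes neither $\CK$ nor $\psi^*$, since if $\psi(q)\ge q/2$ then $\CK=[0,1]$ trivially and $\psi^*(q)\ge\phi(q)/2$ makes the series diverge, so both conclusions are immediate); (ii) define $\widetilde\psi(q)=q\cdot\sup\{\psi(n)/n:q\mid n\}=\psi^*(q)\cdot q/\phi(q)$ and note $\widetilde\psi(q)\le q/2$; (iii) handle the $\sum\psi^*(q)<\infty$ direction via Borel--Cantelli as above; (iv) handle the $\sum\psi^*(q)=\infty$ direction via the containment $\CA(\widetilde\psi)\subseteq\CK$ and Theorem~\ref{thm:MainThm}. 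A minor technical point worth isolating as a sublemma is the ``strict vs.\ non-strict inequality'' issue in step~(iv): since the supremum defining $\widetilde\psi(q)$ may not be attained, given a reduced solution with $|\alpha-a/q|\le\widetilde\psi(q)/q$ one cannot immediately name the witnessing multiple $n$. The clean fix is to instead prove $\CA(\widetilde\psi_\eps)\subseteq\CK$ for the slightly smaller weight $\widetilde\psi_\eps:=(1-\eps)\widetilde\psi$ — for which the supremum is comfortably exceeded by some multiple — and then let $\eps\to0$ along a sequence, using that $\sum_q\phi(q)\widetilde\psi_\eps(q)/q=(1-\eps)\sum_q\psi^*(q)=\infty$ still holds.

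The main obstacle is not conceptual but bookkeeping: one must ensure the map from reduced $\widetilde\psi$-solutions to $\psi$-solutions preserves infinitude, i.e.\ rule out the degenerate scenario where all but finitely many reduced solutions collapse onto finitely many rationals. This is handled by the observation that for fixed $q$ there are at most $2\widetilde\psi(q)+1\le q+1$ choices of $a\in\{1,\dots,q\}$ with $|\alpha-a/q|\le\widetilde\psi(q)/q$, so having infinitely many reduced solutions $(a,q)$ forces infinitely many distinct denominators $q$, hence infinitely many distinct fractions $a/q$, which map to infinitely many distinct fractions $ma/n$ (as the value $a/q$ is unchanged). Everything else is routine: the measure estimate $\lambda(\CA^*_q)\asymp\min\{\widetilde\psi(q),1/2\}=\widetilde\psi(q)$, the identity $\phi(q)\widetilde\psi(q)/q=\psi^*(q)$, and two invocations of Borel--Cantelli / Theorem~\ref{thm:MainThm}.
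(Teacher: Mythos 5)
Your route is the same as the paper's: your $\widetilde\psi$ is exactly the auxiliary function $\xi$ the paper constructs, and both arguments rest on transferring solutions between $\psi$ and $\widetilde\psi$. Part (b) of your proposal is essentially correct: the $(1-\eps)$ device is a legitimate substitute for the paper's trick of forcing the supremum to be attained (the paper instead normalizes so that $\psi\le 1/2$, whence $\psi(n)/n\to 0$ and the $\sup$ is a $\max$), and your treatment of the collapse issue in that direction is right — distinct reduced fractions map to distinct rationals, so infinitude is automatic there.

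The genuine gap is in part (a). The containment $\CK\setminus\Q\subseteq\limsup_q\CA_q^*$ that you defer with ``once one checks'' is precisely where the work lies, because the collapse problem is real in \emph{this} direction and you only discuss it in the other one: infinitely many distinct solutions $(m_j,n_j)$ of $|\alpha-m_j/n_j|\le\psi(n_j)/n_j$ may all reduce to a single fraction $a/q$, in which case your map produces only one reduced $\widetilde\psi$-solution. Your normalization $\psi(q)\le q/2$ does not exclude this: with $\psi(n)=n/4$ for all $n$, every irrational $\alpha\in(3/4,1)$ lies in $\CK$ via the solutions $(n,n)$, all of which reduce to $1/1$, so your argument yields nothing (the containment happens to survive in this example for unrelated reasons, but not by your reasoning). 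To kill the collapse one needs $\psi(n_j)/n_j\to 0$, so that $|\alpha-a/q|\le\psi(n_j)/n_j\to0$ forces $\alpha=a/q\in\Q$. The paper gets this by normalizing to $\psi\le 1/2$, which in turn requires its separate Case 1 (infinitely many $q$ with $\psi(q)\ge 1/2$) together with the divisor-sum estimate \eqref{eq:catlin1} showing $\sum\psi^*=\infty$ there — note that with threshold $1/2$ rather than $q/2$ your one-line divergence argument no longer applies. Alternatively, and closer to your write-up, you can extract the decay directly from the hypothesis of part (a): $\psi^*(n)\ge\phi(n)\psi(n)/n$, so $\sum\psi^*<\infty$ forces $\psi^*(n)\to0$ and hence $\psi(n)/n\le\psi^*(n)\to0$. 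One of these repairs must be made explicit; as written, part (a) is not proved. Two smaller inaccuracies: a single $q$ with $\psi(q)\ge q/2$ does not give $\CK=[0,1]$ (you need infinitely many such $q$), and $\lambda(\CA_q^*)$ is comparable to $\min\{\phi(q)\widetilde\psi(q)/q,1\}$-type quantities, not to $\min\{\widetilde\psi(q),1/2\}$ (that is the estimate for $\CK_q$); only the upper bound $\lambda(\CA_q^*)\le 2\psi^*(q)$ is needed, so this second slip is harmless.
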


\medskip

There has been much partial progress on the Duffin-Schaeffer conjecture in previous work. The assumption that the sequence $(q\psi(q))_{q=1}^\infty$ is decreasing implies that $(\psi(q)/q)_{q=1}^\infty$ is also decreasing. In particular, if $a/q$ is a fraction satisfying \eqref{eq:rational approximations}, then so is its reduction $a_1/q_1$. Thus, as observed by Walfisz \cite{walfisz} (in work predating Duffin and Schaeffer's conjecture), Khinchin's Theorem implies the Duffin-Schaeffer conjecture when $q\psi(q)$ is decreasing. In the same paper, he strengthened part (b) of Khinchin's theorem as follows: if $\sum_{q\ge1}\psi(q)=\infty$ and $\psi(q)\ll\psi(2q)$ for all $q\in\N$, then the set of $\alpha\in[0,1]$ for which \eqref{eq:rational approximations} has infinitely many coprime solutions $a$ and $q$ has Lebesgue measure 1.

Duffin and Schaeffer \cite{DS} had already established their conjecture \eqref{DS-conj} when $\psi$ is sufficiently `regular', in the sense that the function $\phi(q)/q$ behaves like the constant function 1 when weighted with $\psi$. More precisely, they proved \eqref{DS-conj} under the assumption that
\[
\limsup_{Q\to\infty} \frac{\sum_{q\le Q}\psi(q)\phi(q)/q}{\sum_{q\le Q}\psi(q)}>0 .
\]
Since then, a variety of results towards the Duffin-Schaeffer conjecture have been proven. The first significant step was achieved by Erd\H os \cite{erdos} and then improved by Vaaler \cite{vaaler}, who demonstrated \eqref{DS-conj} when $\psi(q)=O(1/q)$. In addition, Pollington and Vaughan \cite{PV} proved that the $d$-dimensional analogue of the Duffin-Schaeffer conjecture holds for any $d\ge2$. 

The proof of all three aforementioned results can be found in Harman's book \cite{harman} (see Theorems 2.5, 2.6 and  3.6, respectively), along with various other cases of the Duffin-Schaeffer conjecture (see Theorems 2.9, 2.10, 3.7 and 3.8).

More recently, the focus shifted towards establishing variations of \eqref{DS-conj}, where the assumption that the series $\sum_{q\ge1}\psi(q)\phi(q)/q$ diverges is replaced by a slightly stronger assumption. The first result of this kind was proven in 2006 by Haynes,  Pollington and Velani \cite{extra-div1}, and was improved in 2013 by Beresnevich, Harman, Haynes and Velani \cite{extra-div2}. The strongest published such result is the recent theorem of Aistleitner, Lachmann, Munsch, Technau and Zafeiropoulos \cite{aistleitner} who showed that 
\eq{\label{extra divergence}
\sum_{q=1}^\infty \frac{\phi(q)\psi(q)}{q(\log{q})^\eps}=\infty
\qquad\implies\qquad \lambda(\CA)=1,
}
for any fixed $\eps>0$. In 2014, Aistleitner \cite{aistleiner} established a companion result to the above one: he showed that if $\sum_{q=1}^\infty\psi(q)\phi(q)/q$ diverges and $\psi$ is not `too concentrated', in the sense that
\eq{\label{slow divergence}
\sum_{ 2^{2^j}<q\le 2^{2^{j+1}} } \frac{\psi(q)\phi(q)}{q}=O(1/j)\qquad \text{for all}\quad j\ge1 ,
}
then $\lambda(\CA)=1$. 

\begin{rem} In the recent progress report \cite{aistleitner2}, Aistleitner explains how to improve on \eqref{extra divergence} and \eqref{slow divergence}. In particular, his refined arguments allow him to replace $(\log q)^\eps$ by $(\log\log q)^\eps$ in \eqref{extra divergence}.
\end{rem} 

Finally, Beresnevich and Velani \cite{hausdorff DS} have proven that the Duffin-Schaeffer conjecture implies a Hausdorff measure version of itself. An immediate corollary of their results when combined with Theorem \ref{thm:MainThm} is the following.
\begin{cor}
Let $\psi:\N\rightarrow [0,1/2]$. Write $\CA$ for the set of $\alpha\in[0,1]$ such that \eqref{eq:rational approximations again} has infinitely many coprime solutions $a$ and $q$, and set
\[
s=\inf\Bigl\{\beta\in\mathbb{R}_{\ge 0}:\,\sum_{q=1}^\infty \phi(q)(\psi(q)/q)^\beta<\infty\Bigl\}.
\]
Then the Hausdorff dimension $\dim_{\CH}(\CA)$ of $\CA$ satisfies
\[
\dim_{\CH}(\CA)=\min(s,1).
\]
\end{cor}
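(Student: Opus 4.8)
The proof combines the mass transference machinery of Beresnevich and Velani \cite{hausdorff DS} with Theorem \ref{thm:MainThm}; the bound $\dim_{\CH}(\CA)\le\min(s,1)$ uses neither and is elementary. Indeed $\dim_{\CH}(\CA)\le1$ because $\CA\subseteq[0,1]$, and when $s<1$ one fixes $\beta\in(s,1)$ and observes that, since $t\mapsto(\psi(q)/q)^t$ is non-increasing (as $\psi(q)/q\le1/2$), the definition of $s$ forces $\sum_q\phi(q)(\psi(q)/q)^\beta<\infty$. As $\CA=\limsup_{q\to\infty}\CA_q$ and each $\CA_q$ is a union of at most $\phi(q)$ intervals of length $2\psi(q)/q\le1/q$, covering $\CA$ by the component intervals of the $\CA_q$ with $q\ge Q$ gives a cover of mesh $\le 1/Q$ whose lengths, raised to the power $\beta$, sum to at most $2^\beta\sum_{q\ge Q}\phi(q)(\psi(q)/q)^\beta\to0$ as $Q\to\infty$; hence $\CH^\beta(\CA)=0$, so $\dim_{\CH}(\CA)\le\beta$, and letting $\beta\downarrow s$ gives $\dim_{\CH}(\CA)\le s$. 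This is simply the $\beta$-dimensional refinement of \eqref{DS-BC}.

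For the reverse inequality we may assume $\min(s,1)>0$. Fix $\beta$ with $0<\beta<\min(s,1)$ and set $\psi_\beta(q):=q\,(\psi(q)/q)^\beta=q^{1-\beta}\psi(q)^\beta$, so that $\sum_q\phi(q)\psi_\beta(q)/q=\sum_q\phi(q)(\psi(q)/q)^\beta=\infty$ because $\beta<s$. Let $\CB$ be the set obtained from \eqref{eq:A_q-dfn}--\eqref{eq:A-dfn} on replacing $\psi$ by $\psi_\beta$; by Theorem \ref{thm:MainThm} it has full Lebesgue measure in $[0,1]$, and hence, its complement being null, full Lebesgue measure in every subinterval of $[0,1]$. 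Since $\psi_\beta(q)/q=(\psi(q)/q)^\beta$, the set $\CB$ is the $\limsup$ of the intervals $[a/q-(\psi(q)/q)^\beta,\,a/q+(\psi(q)/q)^\beta]$, which arise from the intervals defining $\CA$ by applying $f(r):=r^\beta$ to their radii; moreover $r^{-1}f(r)=r^{\beta-1}$ is monotonic as $\beta<1$. Hence the mass transference principle of \cite{hausdorff DS}, applied to the family of intervals $[a/q-\psi(q)/q,\,a/q+\psi(q)/q]$ (over all $q\ge1$ and all $a$ coprime to $q$) whose $\limsup$ set is $\CA$, upgrades the full-measure statement for $\CB$ to $\CH^\beta(\CA)=\CH^\beta([0,1])=\infty$. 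In particular $\dim_{\CH}(\CA)\ge\beta$, and letting $\beta\uparrow\min(s,1)$ completes the proof.

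No step above is genuinely difficult once Theorem \ref{thm:MainThm} and the mass transference principle of \cite{hausdorff DS} are granted---that is where the real work lies. The only points needing a little care are: to run the transference in the correct direction, namely to \emph{dilate} the approximating intervals, replacing the radius $\psi(q)/q$ by the larger quantity $(\psi(q)/q)^\beta$ (legitimate since $0<\beta<1$), rather than to shrink them; to note that the full-measure conclusion of Theorem \ref{thm:MainThm}, being co-null in $[0,1]$, persists verbatim on every subinterval, which is precisely the hypothesis the transference principle requires; and to discard at the outset the denominators $q$ with $\psi(q)=0$, which contribute nothing to the relevant series nor, up to a countable set, to $\CA$.
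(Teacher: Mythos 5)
Your proof is correct and follows essentially the same route as the paper, which derives the corollary by combining Theorem \ref{thm:MainThm} with the mass transference principle of Beresnevich and Velani \cite{hausdorff DS}. You have simply written out the standard details of that deduction: the covering argument for the upper bound and the application of the transference principle to the dilated intervals of radius $(\psi(q)/q)^\beta$ for the lower bound, both of which are carried out correctly.
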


\medskip

The proof of Theorem \ref{thm:catlin}, assuming Theorem \ref{thm:MainThm}, is explained in Section \ref{sec:catlin}. For an outline of the proof of Theorem \ref{thm:MainThm}, we refer the readers to Section \ref{sec:outline}. Finally, the structure of the rest of the paper is presented in Section \ref{sec:structure}.

\subsection*{Notation} The letter $\mu$ will always denote a generic measure on $\N$. We reserve the letter $\lambda$ for the Lebesgue measure on $\R$.

Sets will be typically denoted by capital calligraphic letters such as $\CA,\CV$ and $\CE$. A triple $G=(\CV,\CW,\CE)$ denotes a bipartite graph with vertex sets $\CV$ and $\CW$ and edge set $\CE\subseteq\CV\times\CW$. 

Given a set or an event $\CE$, we let $\un_\CE$ denote its indicator function.

The letter $p$ will always denote a prime number. We also write $p^k\|n$ to mean that $p^k$ is the exact power of $p$ dividing the integer $n$. 

When we write $(a,b)$, we mean the pair of $a$ and $b$. In contrast, we write $\gcd(a,b)$ for the greatest common divisor of the integers $a$ and $b$ and $\lcm(a,b)$ for the least common multiple of $a$ and $b$. 

Finally, we adopt the usual asymptotic notation of Vinogradov: given two functions $f,g:X\to\R$ and a set $Y\subseteq X$, we write ``$f(x)\ll g(x)$ for all $x\in Y$'' if there is a constant $c=c(f,g,Y)>0$ such that $|f(x)|\le cg(x)$ for all $x\in Y$. The constant is absolute unless otherwise noted by the presence of a subscript. If $h:X\to\R$ is a third function, we use Landau's notation ``$f=g+O(h)$ on $Y$'' to mean that $|f-g|\ll h$ on $Y$. Typically the set $Y$ is clear from the context and so not stated explicitly.

We introduce several new quantities and associated notation in Section \ref{sec:Graph} which are tailored to our application. In the interest of concreteness we have decided to use explicit constants in several parts of the argument, but we encourage the reader not to concern themselves with numerics on a first reading.

\subsection*{Acknowledgements}
First and foremost, we would like to thank Sam Chow, Leo Goldmakher and Andrew Pollington for their valuable insights to this project: we have had extended discussions with them on various aspects of the Duffin-Schaeffer conjecture and are indebted to them for their contributions. In addition, we would like to thank Sam Chow for pointing out the connection of our paper to Catlin's conjecture and the construction of the counterexample given in Section \ref{sec:Concluding}, and Sanju Velani for introducing J.M. to this problem. Finally, we are grateful to Christopher Aistleitner, Ben Green, Alan Haynes and Sam Chow for sending us various comments and corrections on an earlier version of our paper, as well as to the anonymous referees of the paper for their very detailed comments.

Our project began in the Spring of 2017 during our visit to the Mathematical Sciences Research Institute in Berkeley, California (supported by the National Science Foundation under Grant No.~DMS-1440140). 
In addition, a significant part of our work took place during two visits of J.M. to the Centre de recherche math\'ematiques in Montr\'eal in November 2017 and May 2018, and during the visit of D.K. to the University of Oxford in the Spring of 2019 (supported by Ben Green's Simons Investigator Grant 376201). We would like to thank our hosts for their support and hospitality.

D.K. was also supported by the Natural Sciences and Engineering Research Council of Canada (Discovery Grant  2018-05699) and by the Fonds de recherche du Qu\'ebec - Nature et technologies (projet de recherche en \'equipe - 256442). J.M. was also supported by a Clay Research Fellowship during the first half of this project, and this project has received funding from the European Research Council (ERC) under the European Union's Horizon 2020 research and innovation programme (grant agreement No 851318) for the later stages.

\section{Deduction of Theorem \ref{thm:catlin} from Theorem \ref{thm:MainThm}}\label{sec:catlin}

Most of the details of this deduction can be found in Catlin's original paper \cite{catlin}. We give them here as well for the sake of completeness. For easy reference, let
\[
S=\sum_{q=1}^\infty \phi(q)\sup_{\substack{n\in\N\\ q|n}} \frac{\psi(n)}{n} .
\]
Firstly, we deal with a rather trivial case.

\bigskip

\noindent
{\it Case 1:} There is a sequence of integers $q_1<q_2<\cdots$ such that $\psi(q_i)\ge1/2$ for all $i$. 

\medskip

By passing to a subsequence if necessary, we may assume that $q_{i+1}\ge 2q_i^2$ for all $i$. Recall the definition of the set $\CK_q$ from  \eqref{def:K_q}. Since $\psi(q_i)\ge1/2$, we infer that $\CK_{q_i}=[0,1]$ for each $i$. As a consequence, $\CK=[0,1]$. We claim that we also have $S=\infty$. Indeed, for each $d|q_i$, we have 
\[
\sup_{\substack{n\in\N\\ d|n}} \frac{\psi(n)}{n}\ge \frac{\psi(q_i)}{q_i}\ge\frac{1}{2q_i}.
\]
Consequently,
\eq{\label{eq:catlin1}
\sum_{q_{i-1}<q\le q_i} \phi(q)\sup_{\substack{n\in\N\\ q|n}} \frac{\psi(n)}{n}
\ge \sum_{\substack{q_{i-1}<q\le q_i\\ q|q_i}} \frac{\phi(q)}{2q_i}\ge  \frac{1}{2q_i}\sum_{q|q_i}\phi(q)-\frac{1}{2q_i}\sum_{q\le q_{i-1}}\phi(q)\ge \frac{1}{4},
}
since $\sum_{q|q_i}\phi(q)=q_i$ and $\sum_{q\le q_{i-1}}\phi(q)\le q_{i-1}^2\le q_i/2$. Summing \eqref{eq:catlin1} over all $i\ge2$ proves our claim that $S=\infty$.

Hence, if we are in Case 1, we see that $S=\infty$ and $\CK=[0,1]$, so that Theorem \ref{thm:catlin} holds.

\bigskip

\noindent
{\it Case 2:} There are finitely many $q\in\N$ with $\psi(q)\ge1/2$.

\medskip

Note that in this case replacing $\psi$ by $\min\{\psi,1/2\}$ does not affect either the convergence of $S$, nor which numbers lie in the set $\CK=\limsup_{q\to\infty}\CK_q$. Hence, we may assume without loss of generality that $\psi\le1/2$. In particular, we have that $\lim_{n\to\infty}\psi(n)/n=0$, so that we may replace $\sup$ by $\max$ in the definition of $S$. We now follow an argument due to Catlin.

Consider the function $\xi$ defined by 
\[
\frac{\xi(q)}{q}=\max_{\substack{n\in\N\\ q|n}} \frac{\psi(n)}{n}
\]
and the sets
\[
\CC_q= [0,1]\cap \bigcup_{\substack{1\le a\le q \\ \gcd(a,q)=1}}  \Big[\frac{a-\xi(q)}{q}, \frac{a+\xi(q)}{q}\Big] 
\qquad\text{and}\qquad 
\CC:= \limsup_{q\to\infty} \CC_q .
\]
These are the analogues of the sets $\CA_q$ and $\CA$ that appear in Theorem \ref{thm:MainThm}, but with $\xi$ in place of $\psi$. We claim that
\eq{\label{eq:catlin2}
\CC\setminus\Q=\CK\setminus\Q.
}
This will immediately complete the proof of Theorem \ref{thm:catlin}(b) by applying Theorem \ref{thm:MainThm}. In addition, Theorem \ref{thm:catlin}(a) will follow from \eqref{DS-BC}.

Indeed, if $\alpha \in \CC\setminus\Q$, then there are infinitely many reduced fractions $a_j/q_j$ such $|\alpha-a_j/q_j|\le\xi(q_j)/q_j$. By the definition of $\xi$, there is some $n_j$ that is a multiple of $q_j$ such that $\xi(q_j)/q_j=\psi(n_j)/n_j$. If we let $m_j=a_j n_j/q_j$, then $|\alpha-m_j/n_j|\le \psi(n_j)/n_j$ for all $j$. Since $\lim_{j\to\infty}q_j=\infty$ and $n_j\ge q_j$ for each $j$, we also have that $\lim_{j\to\infty}n_j=\infty$,  whence $\alpha \in \CK$. 

Conversely, let $\alpha \in \CK\setminus\Q$. Then there are infinitely many pairs $(m_j,n_j)\in\N^2$ such that $|\alpha-m_j/n_j|\le\psi(n_j)/n_j$. If we let $a_j/q_j$ be the fraction $m_j/n_j$ in reduced form, we also have that $|\alpha-a_j/q_j|\le\psi(n_j)/n_j\le \xi(q_j)/q_j$, where the last inequality follows by noticing that $q_j|n_j$. This shows that $\alpha\in \CC$, as long as we can show that infinitely many of the fractions $a_j/q_j$ are distinct. But if this were not the case, there would exist a fraction $a/q$ such that $a_j/q_j=a/q$ for infinitely many $j$, so that $|\alpha-a/q|\le \psi(n_j)/n_j\le1/(2n_j)$ for all such $j$. Letting $j\to\infty$, we find that $\alpha=a/q\in\Q$, a contradiction. 

This completes the proof of \eqref{eq:catlin2}, and hence of Theorem \ref{thm:catlin} in all cases.

\section{Outline of the proof of Theorem \ref{thm:MainThm}}\label{sec:outline}

The purpose of this section is to explain in rough terms the main ideas that go into the proof of our main result. To simplify various technicalities, let us consider the special case where the function $\psi$ satisfies the following conditions:
\begin{enumerate}
\item $\psi(q)=0$ or $\psi(q)=q^{-c}$ for every $q\in\mathbb{N}$\,;
\item $\psi$ is non-zero only on square-free integers $q$\,;
\item There exists an infinite sequence $2<x_1<x_2<\dots$ such that:
\begin{enumerate}
\item $x_j>x_{j-1}^2$\,;
\item $\psi$ is supported on $\cup_{i=1}^{\infty}[x_i,2x_i]$\,;
\item for each $i$ we have
\[
\sum_{q\in [x_i,2x_i]}\frac{\phi(q)}{q}\psi(q)\in[1,2]  \,.
\]
\end{enumerate}
\end{enumerate}
In this set-up, it follows from a well-known second moment argument (which will be explained in detail in Section \ref{sec:Prelim}) that to establish the Duffin-Schaeffer conjecture it is sufficient to show that for any $x\in \{x_1,\,x_2,\dots\}$ we have
\[
\sum_{\substack{q,r\in\CS\\\gcd(q,r)\le M(q,r)}}\frac{\phi(q)}{q}\cdot \frac{\phi(r)}{r}\cdot P(q,r)\ll x^{2c},
\]
where 
\begin{align*}
\CS&:=\{q\in\Z\cap[x,2x] : \psi(q)\neq0\},\\
M(q,r)&:=\max\{q\psi(r),r\psi(q)\}\asymp x^{1-c},\\
P(q,r)&:=\prod_{\substack{p|qr/\gcd(q,r)^2\\ p>M(q,r)/\gcd(q,r)}}\Bigl(1+\frac{1}{p}\Bigr).
\end{align*}
Note that we have the estimate
\[
\sum_{q\in \CS}\frac{\phi(q)}{q}\asymp x^{c}\sum_{q\in\CS}\frac{\phi(q)}{q}\psi(q)\asymp x^{c} ,
\]
so the key to the proof is to show that $P(q,r)\ll 1$ on average over $q,r\in\CS$. This would then show suitable `approximate independence' of the sets $\CA_q$ defined by  \eqref{eq:A_q-dfn}. The size of $P(q,r)$ is controlled by small primes dividing exactly one of $q,r$. With this in mind, let us consider separately the contribution from $q,r$ with
\begin{equation}
\sum_{\substack{p|qr/\gcd(q,r)^2\\ p\ge t}}\frac{1}{p}\approx 1
\label{eq:tThreshold}
\end{equation}
for different thresholds $t$ (which we think of as small compared with $x$). A calculation then shows that it is sufficient to show that for each $t\ge1$
\begin{equation}
\sum_{\substack{q,r\in\CS \\ \gcd(q,r)\ge x^{1-c}/t}}\frac{\phi(q)}{q}\cdot  \frac{\phi(r)}{r}\ll \frac{x^{2c}}{t}.
\label{eq:jSum}
\end{equation}
In particular, we need to understand the structure of a set $\CS$ where many of the pairs $(q,r)\in\CS^2$ have a large common factor. There are $O(x^c)$ choices of $q\in \CS$ weighted by $\phi(q)/q$. Given $q\in \CS$, there are $x^{o(1)}$ divisors of $q$ that are at least $x^{1-c}/t$. In turn, given such a divisor $d$, there are $O(x^c t)$ integers $r\in[x,2x]$ which are a multiple of $d$ (forgetting the constraint $r\in\CS$). This gives a bound $t x^{2c+o(1)}$ for the sum in \eqref{eq:jSum}, and so the key problem is to win back a little bit more than the $x^{o(1)}$ factor from the divisor bound. We wish to do this by gaining a structural understanding of sets $\CS$ where many pairs have a large GCD. One way that many pairs in $\CS$ can have a large GCD is if a positive proportion of elements of $\CS$ are a multiple of some fixed divisor $d$. It is natural to ask if this is the only such construction. If we ignore the $\phi(q)/q$ weights, this leads us to the following prototypical question that we shall refer to as the {\it Model Problem}.

\begin{ModelProblem}
Let $\CS\subseteq[x,2x]$ satisfy $\#\CS\asymp x^{c}$ and be such that there are $\#\CS^2/100$ pairs $(a_1,a_2)\in\CS^2$ with $\gcd(a_1,a_2)>x^{1-c}$. Must it be the case that there is an integer $d\gg x^{1-c}$ which divides $\gg \#\CS$ elements of $\CS$? 
\end{ModelProblem}
It turns out that the answer to this Model Problem as stated is `no', but a technical variant of it that is sufficient for proving Theorem \ref{thm:MainThm} has a positive answer. For the purposes of this section, we will ignore this subtle issue; we will return to it and discuss it in detail in Section \ref{sec:Concluding}.

\medskip

To attack our Model Problem, we use a `compression' argument, roughly inspired by the papers of Erd\H os-Ko-Rado \cite{erdoskorado} and Dyson \cite{dyson}. We will repeatedly pass to subsets of $\CS$ where we have increasing control over whether given primes occur in the GCDs or not, whilst at the same time showing that the size of the original set is controlled in terms of the size of the new set. At the end of the iteration procedure we will then have arrived at a subset which controls the size of $\CS$,  and where we know that all large GCDs are caused by a fixed divisor. Since the final set then has a very simple GCD structure, we will have enough information to establish \eqref{eq:jSum}.

To enable the iterations, we pass to a bipartite setup. We start out with sets $\CV_0=\CW_0=\CS$. Then, we construct two decreasing sequences of sets $\CV_0\supset \CV_1\supset\CV_2\supset \cdots$ and $\CW_0\supset\CW_1\supset\CW_2\supset \cdots$, as well as a sequence of primes $p_1,p_2,\dots$ such that either $p_j$ divides all elements of $\CV_j$, or $p_j$ is coprime to all elements of $\CV_j$ (and similarly with $\CW_j$). Since $\CS$ contains only square-free integers in the simplified set-up of this section, this means that there will be exponents $k_j,\ell_j\in\{0,1\}$ such that  $p_j^{k_j}\|v$ for all $v\in\CV_j$, and $p_j^{\ell_j}\|w$ for all $w\in\CW_j$. Hence, if we let $a_j=p_1^{k_1}\cdots p_j^{k_j}$ and $b_j=p_1^{\ell_1}\cdots p_j^{\ell_j}$, then $a_j$ will divide all elements of $\CW_j$ and $b_j$ will divide all elements of $\CV_j$.

We will construct the sets $\CV_1,\CV_2,\dots$ and $\CW_1,\CW_2,\dots$ in an iterative fashion. Assume that after $j$ iterations we have arrived at the sets $\CV_j,\CW_j\subseteq\CS$. We then pick a prime $p_{j+1}$ that is different from $p_1,\dots,p_j$, and that occurs as the prime factor of $\gcd(v,w)$ for some $v\in\CV_j$, $w\in\CW_j$ with $\gcd(v,w)>x^{1-c}/t$. Our goal is to pass judiciously to subsets $\CV_{j+1}\subseteq\CV_j$ and $\CW_{j+1}\subseteq\CW_j$ where either $\CV_{j+1}$ is all elements of $\CV_j$ that are divisible by $p_{j+1}$, or $\CV_{j+1}$ is all elements of $\CV_j$ coprime to $p_{j+1}$ (and similarly with $\CW_{j+1}$). Since we're assuming that $\CS$ contains only square-free integers, we then will completely know the $p_{j+1}$-divisibility of all elements of $\CV_{j+1}$ and $\CW_{j+1}$, so in particular all GCDs between an element of $\CV_{j+1}$ and $\CW_{j+1}$ will either be multiple of $p_{j+1}$, or all will be coprime to $p_{j+1}$.

Eventually, we will arrive at a pair of sets $(\CV_J,\CW_J)$ such that every pair $(v,w)\in\CV_J\times\CW_J$ with $\gcd(v,w)>x^{1-c}/t$ has the property that all prime factors of $\gcd(v,w)$ will lie in the set $\{p_1,\dots,p_J\}$ (and moreover we will ensure that there is at least one such pair). This terminates the iterative procedure. By construction, all elements of $\CV=\CV_J$ will be divisible by the fixed integer $a=a_J$, and similarly all elements of $\CW=\CW_J$ will be divisible by the fixed integer $b=b_J$. In addition, if $v\in\CV$ and $w\in\CW$ has $\gcd(v,w)>x^{1-c}/t$, then in fact $\gcd(v,w)$ will be exactly equal to $\gcd(a,b)$ since we know the $p_j$-divisibility for all elements of $\CV$ and $\CW$. Thus, $\gcd(a,b)>x^{1-c}/t$ and actually every pair $v\in \CV$ and $w\in\CW$ has $\gcd(v,w)=\gcd(a,b)$. 

Naturally, the success of the above strategy depends on improving the `structure' of the pair of sets $(\CV_j,\CW_j)$ at each stage of the algorithm. This will enable us to control a quantity like the left hand side of \eqref{eq:jSum} in terms of a related quantity for $(\CV,\CW)=(\CV_J,\CW_J)$. An initially appealing choice to measure the `structure' might be
\[
\delta_j=\frac{\#\{(v,w)\in\CV_j\times\CW_j:\, \gcd(v,w)>x^{1-c}/t\}}{\#\CV_j\cdot \#\CW_j},
\]
namely the density of pairs $(v,w)$ with large GCD at stage $j$. Iteratively increasing this quantity would try to mimic a `density increment' strategy such as that used in the proof of Roth's Theorem on arithmetic progressions \cite{roth1,roth2}. Unfortunately, such an argument loses all control over the size of the sets $\CV_j,\CW_j$, and so we lose control over the sum in \eqref{eq:jSum}.

An alternative suggestion might be to consider a different quantity which focuses on the size of the sets. Recall that all elements of $\CV_j$ are a multiple of $a_j$, all elements of $\CW_j$ are a multiple of $b_j$, and that in our final step we have $\gcd(a,b)>x^{1-c}/t$. Thus
\al{
\label{eq:anatomy2}
\#\CV\cdot \#\CW 
	&\le \#\{(v,w)\in(\Z\cap[x,2x])^2 : a|v,\ b|w\}  \ll \frac{x^2}{ab} \le  t^2 x^{2c}\cdot \frac{\gcd(a,b)^2}{ab} ,
}
where we used that $\CV=\CV_J$ and $\CW=\CW_J$ are subsets of $[x,2x]$ in \eqref{eq:anatomy2}. Thus, one might try to iteratively increase the quantity
\[
\#\CV_j\cdot \#\CW_j\cdot  \frac{a_jb_j}{\gcd(a_j,b_j)^2}.
\]
This would adequately control \eqref{eq:jSum}, but unfortunately it is not possible to guarantee that this quantity increases at each stage, and so this proposal also fails.

However, the variant
\begin{equation}
\delta_j^{10}\cdot \#\CV_j\cdot \#\CW_j \cdot\frac{a_jb_j}{\gcd(a_j,b_j)^2} 
\label{eq:QualityAttempt}
\end{equation}
turns out to (more-or-less) work well. Indeed, if the quantity \eqref{eq:QualityAttempt} increases at each iteration, and at the final iteration all elements of $\CV=\CV_J$ are a multiple of $a=a_J$, all elements of $\CW=\CW_J$ are a multiple of $b=b_J$, and all edges come from pairs $(v,w)$ with $\gcd(v,w)=\gcd(a,b)>x^{1-c}/t$, then we find that
\begin{equation}
\delta_0^{10}\cdot \#\CS^2\le \delta_J^{10}\cdot \#\CV\cdot \#\CW\cdot \frac{ab}{\gcd(a,b)^2}
	\le \#\CV\cdot \#\CW \cdot \frac{ab}{\gcd(a,b)^2}\ll t^2x^{2c}.
\label{eq:IterationBound1}
\end{equation}

We note that in our setup $\#\CS\asymp x^{c}$, and that
\begin{equation}
\sum_{\substack{q,r\in\CS \\ \gcd(q,r)>x^{1-c}/t}}1=\delta_0\cdot \#\CS^2.
\label{eq:EdgeRewrite}
\end{equation}
If it so happens that $\delta_0\le 1/t$, then we trivially obtain \eqref{eq:jSum} (ignoring the $\phi(q)/q$ weighting) from \eqref{eq:EdgeRewrite}. On the other hand, if $\delta_0\gg 1/t $, then \eqref{eq:IterationBound1} falls short of \eqref{eq:jSum} only by a factor $t^{12}$.

Finally, to win the additional factor of $t^{12}$ we make use of the fact that any edge $(q,r)$ in our graph satisfies \eqref{eq:tThreshold}. The crucial estimate is that 
\eq{\label{eq:anatomy-gain}
\#\Bigl\{n<x:\sum_{\substack{p|n\\ p\ge t}}\frac{1}{p}\ge 1\Bigr\}\ll e^{-t} x .
}
This was the key idea in the earlier work of Erd\H os \cite{erdos} and Vaaler \cite{vaaler} on the Duffin-Schaeffer conjecture. In our case, our iteration procedure has essentially reduced the proof to a similar situation to their work. 

Indeed, in \eqref{eq:anatomy2}, we may restrict our attention to pairs $(v,w)$ such that $a|v$, $b|w$ and 
\eq{\label{eq:anatomy-condition-rough}
\sum_{\substack{p|vw/\gcd(v,w)^2 \\ p\ge t}} \frac{1}{p} \approx 1.
}
Unless most of the contribution to the above sum of comes from primes in $a$ and $b$, we can apply \eqref{eq:anatomy-gain} to win a factor of size $e^{-t}=o(t^{-12})$ in \eqref{eq:anatomy2}. Finally, if the small primes in $a$ and $b$ do cause a problem, then a more careful analysis of our iteration procedure shows that we actually are able to increase the quantity \eqref{eq:QualityAttempt} by more than $t^{12}$ by the final stage $J$, which also suffices for establishing \eqref{eq:jSum} in this case.

\medskip

The above description has ignored several important technicalities; it turns out that the $\phi(q)/q$ weights are vital for our argument to work (see the discussion in Section \ref{sec:Concluding}). In addition, we do not quite work with \eqref{eq:QualityAttempt} but with a closely related (but more complicated) expression to enable this quantity to increase at each iteration. The iteration  procedure of our argument is broken up into different stages. In between two of the principal iterative stages, we perform a certain `clean-up' step at which we allow a small loss in the quantity \eqref{eq:QualityAttempt}. This step is essential in order to keep track of the condition \eqref{eq:anatomy-condition-rough} (which could otherwise become meaningless after too many iterations).

\section{Structure of the paper}\label{sec:structure}
In the first half of the paper that consists of Sections \ref{sec:Prelim}-\ref{sec:HighDegree}, we reduce the proof of Theorem \ref{thm:MainThm} to three technical iterative statements about particular graphs, which we call `GCD graphs' (see Definition \ref{gcd graph dfn}). Specifically, in Section \ref{sec:Prelim} we use a second moment argument to reduce the proof to Proposition \ref{prop:GCD}, which claims a suitable bound for sums of the form \eqref{eq:jSum}. Here, we make use of Lemmas \ref{lem:Gallagher}-\ref{lem:Pollington} which are standard results from the literature. In Section \ref{sec:Graph} we introduce the key terminology of the paper and translate Proposition \ref{prop:GCD} into Proposition \ref{prop:Graph}, a statement about edges in a particular `GCD graph'. In Section \ref{sec:Anatomy} we use results about the anatomy of integers (Lemmas \ref{lem:Tenenbaum} and \ref{lem:Anatomy}) to reduce the situation to establishing Proposition \ref{prop:BoundOnEdges}, a technical statement claiming the existence of a `good' GCD subgraph (where `good' here means that there are integers $a$ and $b$ such that all vertices in $\CV$ are divisible by $a$, those in $\CW$ are divisible by $b$, and if $(v,w)$ is an edge, then $\gcd(v,w)=\gcd(a,b)$). Then in Section \ref{sec:Iterative}, we reduce the proof of Proposition \ref{prop:BoundOnEdges} to five iterative claims which form the heart of the paper:  Propositions \ref{prop:IterationStep1}-\ref{prop:SmallPrimes} and Lemmas \ref{lem:Cosmetic}-\ref{lem:HighDegreeSubgraph}. In Sections \ref{sec:CosmeticProof} and \ref{sec:HighDegree} we then directly establish Lemmas \ref{lem:Cosmetic} and \ref{lem:HighDegreeSubgraph}, respectively, leaving the second half of the paper to demonstrate the key statements of Propositions \ref{prop:IterationStep1}-\ref{prop:SmallPrimes}.

The dependency diagram for the first half of the paper is as follows:

\smallskip

  \begin{center}
\makebox[\textwidth]{\parbox{1.5\textwidth}{
\begin{center}
   \tikzstyle{interface}=[draw, text width=6em,
      text centered, minimum height=2.0em]
   \tikzstyle{daemon}=[draw, text width=6em,
      minimum height=2em, text centered, rounded corners]
   \tikzstyle{lemma}=[draw, text width=5em,
      minimum height=1.5em, text centered, rounded corners]
   \tikzstyle{dots} = [above, text width=6em, text centered]
   \tikzstyle{wa} = [daemon, text width=6em,
      minimum height=2em, rounded corners]
   \tikzstyle{ur}=[draw, text centered, minimum height=0.01em]
   \def\blockdist{1.3}
   \def\edgedist{0.}
   \begin{tikzpicture}
      \node (wa)[interface]  {Theorem \ref{thm:MainThm}};
      \path (wa.west)+(-2,0) node (d1)[daemon] {\footnotesize Proposition  \ref{prop:GCD}};
      \path (d1.west)+(-2,0) node (d2)[daemon] {\footnotesize Proposition \ref{prop:Graph}};
      \path (d2.west)+(-2,0) node (d3)[daemon] {\footnotesize Proposition \ref{prop:BoundOnEdges}};

      \path (d3.west)+(-2,2) node (d6)[daemon] {\footnotesize Proposition \ref{prop:IterationStep1}};
      \path (d3.west)+(-2,1) node (d7)[daemon] {\footnotesize Proposition \ref{prop:IterationStep2}};
      \path (d3.west)+(-2,0) node (d8)[daemon] {\footnotesize Proposition \ref{prop:SmallPrimes}};
      \path (wa.south)+(0,-2.5) node (l1)[lemma] {\footnotesize Lemma \ref{lem:Gallagher}};
      \path (l1.west)+(-1.5,0) node (l2)[lemma] {\footnotesize Lemma \ref{lem:Large}};
      \path (l2.west)+(-1.5,0) node (l3)[lemma] {\footnotesize Lemma \ref{lem:Pollington}};
      \path (l3.west)+(-1.5,0) node (l4)[lemma] {\footnotesize Lemma \ref{lem:Anatomy}};
      \path (l4.west)+(-1.5,0) node (l5)[lemma] {\footnotesize Lemma \ref{lem:Tenenbaum}};
            \path (l5.west)+(-1.5,0) node (l6)[lemma] {\footnotesize Lemma \ref{lem:HighDegree}};
            
                  \path (l6)+(0,1.8) node (d4)[lemma] {\footnotesize Lemma \ref{lem:Cosmetic}};
      \path (l6)+(0,0.9) node (d5)[lemma] {\footnotesize Lemma \ref{lem:HighDegreeSubgraph}};
     \path [draw, ->,>=stealth] (d1.east) -- node [above] {} (wa.west) ;
     \path [draw, ->,>=stealth] (d2.east) -- node [above] {} (d1.west) ;
     \path [draw, ->,>=stealth] (d3.east) -- node [above] {} (d2.west) ;
      \path [draw, ->,>=stealth] (d4.east) -- node [above] {} ([yshift=7]d3.south west) ;
      \path [draw, ->,>=stealth] (d5.east) -- node [above] {} (d3.south west) ;
     \path [draw, ->,>=stealth] (d6.east) -- node [above] {} (d3.north west) ;
 \path [draw, ->,>=stealth] (d7.east) -- node [above] {} ([yshift=-7]d3.north west) ;
      \path [draw, ->,>=stealth] (d8.east) -- node [above] {} (d3.west) ;
      \path [draw, ->,>=stealth] (l1.north) -- node [above] {} (wa.south) ;
      \path [draw, ->,>=stealth] (l2.north) -- node [above] {} ([xshift=-4]wa.south) ;
            \path [draw, ->,>=stealth] (l3.north) -- node [above] {} ([xshift=-8]wa.south) ;
                        \path [draw, ->,>=stealth] (l5.east) -- node [above] {} (l4.west) ;
                                    \path [draw, ->,>=stealth] (l4.north) -- node [above] {} (d2.south) ;
                                    \path [draw, ->,>=stealth] (l6.north) -- node [above] {} (d5.south) ;
   \end{tikzpicture}
\end{center}}}
   \end{center}
%
%
%
%
%
%

\medskip

The second half of the paper consists of Sections \ref{sec:Prep}-\ref{sec:IterationStep2}, and it is devoted to proving each of Proposition \ref{prop:IterationStep1}, \ref{prop:IterationStep2} and \ref{prop:SmallPrimes}. Before we embark on the proofs directly, we first establish several preparatory lemmas in Section \ref{sec:Prep}. In particular we prove Lemmas \ref{lem:Pigeonhole}-\ref{lem:NoSmallSetEdges} which are minor results on GCD graphs we will use later on. Section \ref{sec:IterationStep1} is dedicated to the proof of Proposition \ref{prop:IterationStep1}, which is the easier iteration step, and relies on two auxiliary results: Lemmas  \ref{lem:EdgeSets} and \ref{lem:MainLem}. Section \ref{sec:SmallPrime} is dedicated to the proof of Proposition \ref{prop:SmallPrimes}, the iteration procedure for small primes. This proposition follows from Lemma \ref{lem:SmallIteration}, in turn relying on Lemmas \ref{lem:Pigeonhole}, \ref{lem:UnbalancedSetEdges1} and \ref{lem:SmallPrime}. Finally, in Section \ref{sec:IterationStep2} we prove Proposition \ref{prop:IterationStep2}, which is the most delicate part of the iteration procedure. This follows quickly from Lemma \ref{lem:MainLem2}, which in turn relies on Lemmas \ref{lem:UnbalancedSetEdges1}-\ref{lem:NoSmallSetEdges}. The dependency diagram for the second half of the paper is as follows:

%
%
%
%
%
%

\smallskip

  \begin{center}
\makebox[\textwidth]{\parbox{1.5\textwidth}{
\begin{center}
   \tikzstyle{interface}=[draw, text width=6em,
      text centered, minimum height=2.0em]
   \tikzstyle{daemon}=[draw, text width=6em,
      minimum height=2em, text centered, rounded corners]
   \tikzstyle{lemma}=[draw, text width=5em,
      minimum height=1.5em, text centered, rounded corners]
   \tikzstyle{dots} = [above, text width=6em, text centered]
   \tikzstyle{wa} = [daemon, text width=6em,
      minimum height=2em, rounded corners]
   \tikzstyle{ur}=[draw, text centered, minimum height=0.01em]
   \def\blockdist{1.3}
   \def\edgedist{0.}
      \begin{tikzpicture}
   \node (d6)[daemon] {\footnotesize Proposition \ref{prop:IterationStep1}};
   \path (d6)+(0,2) node (d8)[daemon] {\footnotesize Proposition \ref{prop:SmallPrimes}};
   \path (d6)+(0,-2) node (d7)[daemon] {\footnotesize Proposition \ref{prop:IterationStep2}};
   \path (d6.west)+(-2,0) node (l1)[lemma] {\footnotesize Lemma \ref{lem:MainLem}};
   \path (l1.west)+(-2,0) node (l2)[lemma] {\footnotesize Lemma \ref{lem:EdgeSets}};
   \path (d7.west)+(-2,0) node (l3)[lemma] {\footnotesize Lemma \ref{lem:MainLem2}};
   \path (l3.west)+(-2,0) node (l4)[lemma] {\footnotesize Lemma \ref{lem:UnbalancedSetEdges2}};
   \path (l3.west)+(-2,-1) node (l5)[lemma] {\footnotesize Lemma \ref{lem:NoSmallSetEdges}};
   \path (l3.west)+(-2,1) node (l6)[lemma] {\footnotesize Lemma \ref{lem:UnbalancedSetEdges1}};
   \path (d8.west)+(-2,0) node (l7)[lemma] {\footnotesize Lemma \ref{lem:SmallIteration}};
   \path (l7.west)+(-2,-1) node (l8)[lemma] {\footnotesize Lemma \ref{lem:Pigeonhole}};
   \path (l7.west)+(-2,0) node (l9)[lemma] {\footnotesize Lemma \ref{lem:SmallPrime}};
   \path (l7.west)+(-2,1) node (l10)[lemma] {\footnotesize Lemma \ref{lem:HighDegree}};
   \path (l3.west)+(-2,-2) node (l11)[lemma] {\footnotesize Lemma \ref{lem:SmallSetEdges}};
   \path [draw, ->,>=stealth] (l1.east) -- node [above] {} (d6.west) ;
   \path [draw, ->,>=stealth] (l2.east) -- node [above] {} (l1.west) ;
   \path [draw, ->,>=stealth] (l3.east) -- node [above] {} (d7.west) ;
   \path [draw, ->,>=stealth] (l4.east) -- node [above] {} (l3.west) ;
   \path [draw, ->,>=stealth] (l5.east) -- node [above] {} (l3.south west) ;
   \path [draw, ->,>=stealth] ([yshift=-1]l6.east) -- node [above] {} (l3.north west) ;
   \path [draw, ->,>=stealth] ([yshift=3.5]l6.east) -- node [above] {} (l7.south west) ;                  
   \path [draw, ->,>=stealth] (l8.east) -- node [above] {} ([yshift=-3.5]l7.west) ;
   \path [draw, ->,>=stealth] (l9.east) -- node [above] {} ([yshift=3.5]l7.west) ;
   \path [draw, ->,>=stealth] (l7.east) -- node [above] {} (d8.west) ;
   \path [draw, ->,>=stealth] (l10.east) -- node [above] {} (l7.north west) ;
   \path [draw, ->,>=stealth] (l1.north) -- node [above] {} (l7.south) ;
   \path [draw, ->,>=stealth] (l11.north) -- node [above] {} (l5.south) ;
   \path [draw,->,>=stealth] (l1.south) -- node [above] {} (l3.north) ;
   \end{tikzpicture}
\end{center}}}
   \end{center}

%
%

\smallskip

(We have not included the essentially trivial statement of Lemma \ref{lem:InducedGraphs} or Lemma \ref{lem:Trivial} which are used frequently in the later sections.) All lemmas are proven in the section where they appear with the exception of Lemma \ref{lem:Cosmetic} and Lemma \ref{lem:HighDegreeSubgraph}, which are proven in Sections \ref{sec:CosmeticProof} and \ref{sec:HighDegree} respectively. All propositions are proven in sections later than they appear.

\section{Preliminaries}\label{sec:Prelim}

We first reduce the proof of Theorem \ref{thm:MainThm} to a second moment bound given by Proposition \ref{prop:GCD} below. This reduction is standard and appears in several previous works on the Duffin-Schaeffer conjecture. In particular, a vital component is the following ergodic 0-1 law due to Gallagher \cite{gallagher}.

\begin{lem}[Gallagher's 0-1 law]\label{lem:Gallagher}
Consider a function $\psi:\N\to\R_{\ge0}$ and let $\CA$ be as in \eqref{eq:A-dfn}. Then either $\lambda(\CA)=0$ or $\lambda(\CA)=1$. 
\end{lem}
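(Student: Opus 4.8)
The plan is to exhibit $\CA$ (up to a null set) as a tail-invariant set with respect to a suitable family of measure-preserving transformations of $[0,1)$, and then invoke a zero-one law. Concretely, I would work on the circle $\T = \R/\Z$ with Lebesgue measure, identified with $[0,1]$ in the obvious way, and for each positive integer $n$ consider the map $T_n:\T\to\T$ given by $T_n(\alpha) = n\alpha \bmod 1$. Each $T_n$ is measure-preserving, and the key structural observation is that the sets $\CA_q$ from \eqref{eq:A_q-dfn} interact well with these maps: because $\CA_q$ is a union of intervals centered at the rationals $a/q$ with $\gcd(a,q)=1$, and these rationals are permuted (in a measure-preserving, finite-to-one fashion) under multiplication by integers coprime to $q$, one gets that $T_n^{-1}(\CA_q)$ and $\CA_q$ differ in a controlled way. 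The cleanest route is to show that for the specific shift $S:\alpha\mapsto \alpha + t \bmod 1$ combined with the dilations, membership in $\CA = \limsup_q \CA_q$ is insensitive to any finite set of indices $q$ and is invariant under a group of transformations acting ergodically on $\T$.

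The steps, in order, would be: (1) Recall that $\CA = \limsup_{q\to\infty}\CA_q$, so that $\CA = \bigcap_{Q\ge 1}\bigcup_{q\ge Q}\CA_q$; in particular, removing finitely many of the $\CA_q$ does not change $\CA$, so $\CA$ is a "tail event" in $q$. (2) Establish an approximate invariance: for a fixed integer $m$, compare $\CA$ with $m\CA := \{m\alpha \bmod 1 : \alpha\in\CA\}$ (or rather with preimages under $T_m$), using that if $|\alpha - a/q|\le \psi(q)/q$ with $\gcd(a,q)=1$ then $|m\alpha - ma/q|$ relates to an approximation with denominator $q/\gcd(q,m)$ — one has to be slightly careful and it is cleaner to pass to denominators $q$ coprime to $m$, which form a tail-cofinal subsequence and hence define the same $\limsup$ up to null sets. (3) Conclude that $\CA$ (mod null sets) is invariant under $T_p$ for every prime $p$, or alternatively under the full multiplicative action; since the action of $\{T_n : n\ge 1\}$ on $(\T,\lambda)$ is ergodic (indeed any single $T_n$ with $n\ge 2$ is already ergodic, by a standard Fourier-coefficient computation: if $f\circ T_n = f$ in $L^2$ then comparing Fourier coefficients forces $f$ to be constant), any invariant set has measure $0$ or $1$. (4) Assemble these to deduce $\lambda(\CA)\in\{0,1\}$.

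The main obstacle is Step (2): the multiplication maps $T_m$ do not exactly preserve the sets $\CA_q$ — a reduced fraction $a/q$ maps under $\alpha\mapsto m\alpha$ to $ma/q$, whose reduced form has denominator $q/\gcd(q,m)$, so the radius of the approximating interval is rescaled and the coprimality bookkeeping shifts. The fix is to restrict attention to those $q$ coprime to the fixed modulus $m$ (equivalently, to run the argument one prime at a time and note that $\{q : p\nmid q\}$ is cofinal, so throwing away the multiples of $p$ leaves the $\limsup$ unchanged modulo a null set since $\limsup$ only cares about a tail — here one uses that the discarded $\CA_q$'s still form a $\limsup$ which, by Borel–Cantelli considerations or by the same symmetry argument, is either null or handled separately); on this sub-collection $T_p$ genuinely permutes the relevant rationals via the unit group $(\Z/q\Z)^\times$, and interval radii are preserved because $\gcd(q,p)=1$ means the denominator is unchanged. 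Once this is set up correctly, the ergodicity input is entirely standard. I would therefore expect the write-up to spend most of its effort on making the "pass to $q$ coprime to $p$" reduction rigorous and on the clean statement of the invariance, with the ergodic zero-one law itself being a short citation or a two-line Fourier argument.
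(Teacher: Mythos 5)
The paper does not actually prove this lemma: it is quoted verbatim from Gallagher's 1961 paper (``This is Theorem 1 of \cite{gallagher}''), so the relevant comparison is with Gallagher's original argument. Your sketch does have the right skeleton — $\CA$ is a tail event in $q$, and the dilations $T_n(\alpha)=n\alpha \bmod 1$ are the correct family of ergodic transformations — but the crucial Steps (2)–(3) contain a genuine error. You assert that after restricting to $q$ coprime to $p$, ``$T_p$ genuinely permutes the relevant rationals via the unit group $(\Z/q\Z)^\times$, and interval radii are preserved.'' Only the first half is true. The map $T_p$ is locally an expansion by the factor $p$: the forward image of the interval of radius $\psi(q)/q$ about $a/q$ is the interval of radius $p\psi(q)/q$ about $a'/q$ with $a'\equiv pa \pmod q$, while the preimage $T_p^{-1}$ of that interval consists of $p$ intervals of radius $\psi(q)/(pq)$ centred at the fractions $(a+jq)/(pq)$, $0\le j<p$, whose denominators are $pq$, not $q$. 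So neither the radii nor (for preimages) the denominators are preserved, and the exact invariance $T_p^{-1}\CA = \CA$ modulo null sets — which is what the ergodic zero–one law requires — does not follow from what you have written. This scaling mismatch is precisely the technical heart of Gallagher's proof, and it is resolved there by a nontrivial additional input, namely the Cassels–Gallagher dilation lemma (\cite[Lemma 1.5]{harman}): for a $\limsup$ of unions of intervals, dilating every interval about its centre by a bounded factor does not change whether the $\limsup$ set is null. Without that lemma (or an equivalent covering argument) your invariance claim is simply false as stated.

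A secondary gap: your treatment of the indices $q$ divisible by $p$ (``handled separately'' via ``Borel–Cantelli considerations or the same symmetry argument'') does not work as a throwaway remark, since $\psi$ may be supported entirely on multiples of a fixed prime, in which case the ``discarded'' part is all of $\CA$ and nothing cofinal remains. Gallagher's proof must, and does, treat the decomposition $\CA=\limsup_{p\mid q}\CA_q\cup\limsup_{p\nmid q}\CA_q$ for every prime simultaneously rather than discarding one piece. In summary: the strategy is the right one, but the two points you flag as needing ``slight care'' are exactly where the substance of the proof lies, and the specific claims you make to dispose of them (radius preservation under $T_p$; disposability of the $p\mid q$ indices) are incorrect.
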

\begin{proof}
This is Theorem 1 of \cite{gallagher}.
\end{proof}

\begin{lem}[The Duffin-Schaeffer Conjecture when $\psi$ only takes large values]\label{lem:Large}
Let $\psi:\N\rightarrow\R_{\ge 0}$ be a function, and let $\CA$ be as in \eqref{eq:A-dfn}. Assume, further, that:
\begin{enumerate}
\item For every $q\in\mathbb{Z}$, either $\psi(q)=0$ or $\psi(q)\ge 1/2$;
\item $\sum_{q=1}^\infty \psi(q)\phi(q)/q=\infty$.
\end{enumerate}
Then $\lambda(\CA)=1$.
\end{lem}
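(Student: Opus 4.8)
## Proof proposal for Lemma \ref{lem:Large}

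\textbf{Setup.} By Gallagher's 0-1 law (Lemma \ref{lem:Gallagher}), it suffices to show $\lambda(\CA)>0$. The plan is to reduce to a situation where the sets $\CA_q$ are literally all of $[0,1]$ along a subsequence, exploiting hypothesis (a) that forces $\psi(q)\ge 1/2$ whenever $\psi(q)\neq 0$. If $\psi(q)\ge 1/2$ then the intervals $[(a-\psi(q))/q,(a+\psi(q))/q]$ for $a$ running over reduced residues mod $q$ — together with their translates under the floor — cover $[0,1]$, so that $\CA_q=[0,1]$. Indeed, consecutive reduced residues mod $q$ differ by at most... no: one must be careful, since gaps between consecutive totatives can be large (e.g. $q$ a primorial). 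So the genuinely correct reduction is different: one shows that the divergence hypothesis (b) forces $\psi$ to be nonzero on \emph{infinitely many} $q$, and then uses an overlap/second-moment argument on those $q$ directly, or else normalizes differently.

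\textbf{Main approach.} I would argue as follows. First, since $\psi(q)\phi(q)/q \le \psi(q)$ trivially, the divergence of $\sum \psi(q)\phi(q)/q$ forces $\sum_{q:\psi(q)\neq 0}\psi(q)\phi(q)/q=\infty$ with the sum supported on a set $Q_0:=\{q:\psi(q)\ge 1/2\}$, and hence $\sum_{q\in Q_0}\phi(q)/q=\infty$ (since each term $\psi(q)\phi(q)/q \le \phi(q)/q$ when... wait, $\psi(q)$ could be huge). Replacing $\psi$ by $\min\{\psi,1\}$ changes nothing: it does not affect membership in $\CA$ (since $\psi(q)\ge 1$ already gives $\CA_q=[0,1]$, hence one may as well cap), and it only decreases the series, but the series restricted to $q$ with $\psi(q)\ge 1$ then contributes $\sum \phi(q)/q$ over those $q$; if \emph{that} diverges we are trivially done because $\CA_q=[0,1]$ for infinitely many $q$. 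So we may assume $1/2\le \psi(q)\le 1$ on $Q_0$ and $\sum_{q\in Q_0}\phi(q)/q=\infty$. Now $\lambda(\CA_q)\asymp 1$ uniformly for $q\in Q_0$, since $\lambda(\CA_q)$ is comparable to $\sum_{1\le a\le q,\gcd(a,q)=1}\psi(q)/q \asymp \phi(q)/q \cdot \psi(q) \asymp \phi(q)/q$, but also $\lambda(\CA_q)\le 1$ and one checks $\lambda(\CA_q)\gg 1$ when $\psi(q)\ge 1/2$ because the $\phi(q)$ intervals of radius $\ge 1/(2q)$ centered at the totatives, being $\gg \phi(q)/q$-separated on average, still cover a positive proportion... hmm, this lower bound $\lambda(\CA_q)\gg 1$ is exactly the point where I need care. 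The honest route: when $\psi(q)\ge 1/2$, the union over $a$ coprime to $q$ of intervals of radius $\psi(q)/q\ge 1/(2q)$ around $a/q$ covers $[0,1]$ up to a set of measure $\le \sum (\text{gaps})$, and the total length of gaps is $\le \sum_{\text{consecutive totatives } a<a'} (a'-a-1)/q = (q-\phi(q))/q \cdot(\ldots)$, which need \emph{not} be small. So $\lambda(\CA_q)$ can be as small as $\asymp \phi(q)/q$, which can go to $0$.

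\textbf{The real argument (second moment over $Q_0$).} Therefore I would instead run the standard Duffin--Schaeffer second-moment machine directly on the index set $Q_0$. Enumerate $Q_0=\{q_1<q_2<\cdots\}$. We have $\lambda(\CA_{q})\asymp \phi(q)/q$ for $q\in Q_0$ (upper bound clear; lower bound because the $\phi(q)$ intervals of half-length $\ge 1/(2q)$ are genuinely disjoint only after being replaced by disjointified versions, giving $\lambda(\CA_q)\ge c\,\phi(q)/q$ — the intervals around distinct totatives $a/q$ are $\ge 1/q$ apart, and half-length $\psi(q)/q\in[1/(2q),1/q]$, so in fact they \emph{are} pairwise disjoint up to endpoints, giving $\lambda(\CA_q)= 2\psi(q)\phi(q)/q + O(1/q) \asymp \phi(q)/q$). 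Hence $\sum_{q\in Q_0}\lambda(\CA_q)\asymp\sum_{q\in Q_0}\phi(q)/q=\infty$. For the pairwise correlations $\lambda(\CA_q\cap \CA_r)$ one invokes the standard estimate (this is precisely Lemma \ref{lem:Pollington}, the Pollington--Vaughan type bound, together with Lemma \ref{lem:Gallagher}'s companion combinatorial inputs) controlling $\lambda(\CA_q\cap\CA_r)$ by $\lambda(\CA_q)\lambda(\CA_r)$ times a GCD-dependent factor; when restricted to $q,r$ with $\psi\asymp 1$ this reduces exactly to the kind of sum analyzed in Proposition \ref{prop:GCD} (or is even easier, being the bounded-$\psi$ case handled classically by Erd\H os--Vaaler). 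One then applies the Borel--Cantelli / Erd\H os--Koksma divergence criterion (Lemma 1.3 / the quasi-independence lemma of Harman's book, cited in the excerpt) to conclude $\lambda(\limsup \CA_q)>0$, hence $=1$ by Gallagher.

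\textbf{Expected main obstacle.} The crux is the pairwise-correlation bound $\sum_{q,r\le Q,\,q,r\in Q_0}\lambda(\CA_q\cap\CA_r)\ll\big(\sum_{q\le Q,q\in Q_0}\lambda(\CA_q)\big)^2$. Since here $\psi$ takes only values in $[1/2,1]$, this is the \emph{easy, classical} end of Duffin--Schaeffer (it is the regime $\psi=O(1/q)$ of Erd\H os--Vaaler after the scaling $q\psi(q)\asymp q$... actually it is even more elementary: with $\psi\asymp 1$ one has $\lambda(\CA_q\cap\CA_r)\ll \frac{\phi(q)\phi(r)}{qr}\prod_{p\mid qr/\gcd(q,r)^2}(1+1/p)$ and the extra Euler product is $\ll (\log\log)^{O(1)}$, crudely bounded, which is absorbed because the problem is essentially ``$\psi$ identically $1/2$'' where one can even use a direct covering/Cauchy--Schwarz argument on Farey fractions). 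So I expect this lemma to be genuinely soft — the only thing requiring attention is cleanly citing the right correlation estimate (Lemma \ref{lem:Pollington}) and verifying that capping $\psi$ at $1$ and discarding the trivial case $\sum_{q:\psi(q)\ge 1}\phi(q)/q=\infty$ leaves us in its hypotheses. The harder direction of Borel--Cantelli then finishes it.
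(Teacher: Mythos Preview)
The paper does not prove this lemma at all; it simply cites \cite[Theorem~2]{PV}. Your direct second-moment argument is a genuinely different route that would make the result self-contained within the paper's own toolkit (Lemmas \ref{lem:Gallagher} and \ref{lem:Pollington} together with the quasi-independence Borel--Cantelli lemma), and the overall strategy is correct.

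There is, however, one real imprecision you must fix. When you invoke Lemma \ref{lem:Pollington} you write the Euler product as $\prod_{p\mid qr/\gcd(q,r)^2}(1+1/p)$, dropping the restriction $p>M(q,r)/\gcd(q,r)$, and then assert that the resulting $(\log\log)^{O(1)}$ factor ``is absorbed''. It is not: if the support $Q_0$ of $\psi$ is sparse, the partial sums $\sum_{q\in Q_0,\,q\le Y}\phi(q)/q$ may grow arbitrarily slowly compared with $\log\log Y$, and the second-moment inequality fails by exactly this factor. The remedy is to keep the restriction. After replacing $\psi$ by the constant $1/2$ on $Q_0$ (cap at $1/2$, not $1$, so that the hypothesis $\psi\le 1/2$ of Lemma \ref{lem:Pollington} is literally met, and note this only shrinks each $\CA_q$), one has $M(q,r)=\max\{q,r\}/2$. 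Writing $d=\gcd(q,r)$, $q'=q/d$, $r'=r/d$, every prime dividing $qr/d^2=q'r'$ is at most $\max\{q',r'\}$, while the product in Lemma \ref{lem:Pollington} is restricted to $p>\max\{q',r'\}/2$; hence it ranges over primes in an interval of ratio $2$ and is $O(1)$ by Mertens. With this in hand one gets $\lambda(\CA_q\cap\CA_r)\ll\lambda(\CA_q)\lambda(\CA_r)$ uniformly for $q\neq r$ in $Q_0$, and the divergence Borel--Cantelli argument concludes cleanly.
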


\begin{proof}
This follows from \cite[Theorem 2]{PV}.
\end{proof}
\begin{lem}[Bound for $\lambda(\CA_q\cap\CA_r)$]\label{lem:Pollington}
Consider a function $\psi:\N\to[0,1/2]$ and let $\CA_q$ be as in \eqref{eq:A_q-dfn}. In addition, given $q,r\in\N$, set
\[
M(q,r):=\max\{r\psi(q),q\psi(r)\}.
\]
If $q\neq r$, then we have
\[
\frac{\lambda(\CA_q\cap \CA_r)}
{\lambda(\CA_q)\lambda(\CA_r)}
	\ll \un_{M(q,r)\ge \gcd(q,r)} 
	\prod_{\substack{p|qr/\gcd(q,r)^2 \\ p>M(q,r)/\gcd(q,r)}} \left(1+\frac{1}{p}\right) .
\]
\end{lem}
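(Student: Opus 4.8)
The statement is a purely metrical estimate about overlaps of the two unions of intervals $\CA_q$ and $\CA_r$, so the natural approach is a direct computation: write $\CA_q\cap\CA_r$ as a union of small intervals indexed by pairs of reduced fractions $a/q$, $b/r$ that are close together, count how many such pairs there are, and bound the total measure. First I would record the basic facts $\lambda(\CA_q)\asymp\min\{q\psi(q),1\}/1$—more precisely, since $\psi\le 1/2$, the $\phi(q)$ intervals $[(a-\psi(q))/q,(a+\psi(q))/q]$ with $\gcd(a,q)=1$ are pairwise disjoint inside a period, so $\lambda(\CA_q)=2\phi(q)\psi(q)/q$ up to boundary effects at $0$ and $1$; similarly for $r$. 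Then $\lambda(\CA_q)\lambda(\CA_r)\asymp \phi(q)\phi(r)\psi(q)\psi(r)/(qr)$, and this is the denominator we must beat.

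**Main computation.** A point lies in $\CA_q\cap\CA_r$ iff it is within $\psi(q)/q$ of some reduced $a/q$ and within $\psi(r)/r$ of some reduced $b/r$. For such an overlap to be non-empty we need $|a/q-b/r|\le \psi(q)/q+\psi(r)/r$, i.e. $|ar-bq|\le r\psi(q)+q\psi(r)\le 2M(q,r)$, and when it is non-empty its length is $\le 2\min\{\psi(q)/q,\psi(r)/r\}$. Now $ar\equiv aq \cdot (r/q)$... more cleanly: writing $g=\gcd(q,r)$, $q=gq'$, $r=gr'$ with $\gcd(q',r')=1$, the integer $ar-bq=g(ar'-bq')$ is a multiple of $g$, so the condition $|ar-bq|\le 2M(q,r)$ forces $ar-bq\in g\Z\cap[-2M,2M]$, which is non-empty only if $g\le 2M(q,r)$ — this is (essentially) the indicator $\un_{M(q,r)\ge\gcd(q,r)}$ in the statement, modulo the constant $2$. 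For each admissible value $ar-bq=gm$ with $|m|\le 2M/g$, I count the number of pairs $(a,b)$ with $1\le a\le q$, $1\le b\le r$, $\gcd(a,q)=\gcd(b,r)=1$ and $ar'-bq'=m$: fixing $a$ determines $b$ modulo $q'$ (since $\gcd(q',r')=1$), so there are about $r/q'\asymp rg/q = g^2 r'/q$... I would organize this as: number of pairs $\asymp (\#\{a\bmod q: \gcd(a,q)=1, \ a\equiv \text{something} \bmod q'\}) \times$ (number of lifts), and this is where the Euler-product factor $\prod_{p\mid qr/g^2,\,p>M/g}(1+1/p)$ enters — it is exactly the correction measuring how the coprimality conditions on $a$ (mod $q$) and $b$ (mod $r$) interact over the primes dividing $q'r'=qr/g^2$, truncated at the primes larger than $M/g$ because smaller primes can be handled by the trivial divisor-type bound absorbed into the range of $m$.

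**The delicate point.** The main obstacle, and the reason this is a Lemma one cites rather than a one-line remark, is the precise bookkeeping of the coprimality conditions against the congruence $ar'-bq'=m$: one must show that summing over the $O(M/g)$ choices of $m$ and over the residues $a$ produces a count of edges of size $\ll (\phi(q)\phi(r)/(qr))\cdot (M/g)^2 \cdot g \cdot P(q,r)$ or the analogous shape, where the factor $P(q,r)=\prod_{p\mid qr/g^2,\,p>M/g}(1+1/p)$ arises because for primes $p\mid q'$ with $p\nmid r'$ (say) the constraint $\gcd(b,r)=1$ is automatic in terms of the $p$-part while $\gcd(a,q)=1$ removes a $1/p$ proportion, and summing $\prod(1-1/p)^{-1}$-type corrections over the relevant divisors gives the stated product — but only the primes exceeding $M(q,r)/\gcd(q,r)$ survive, since for $p\le M/g$ the "loss" is already accounted for by the slack in the inequality $|m|\le 2M/g$. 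Multiplying the edge count by the per-overlap length $2\min\{\psi(q)/q,\psi(r)/r\}$ and dividing by $\lambda(\CA_q)\lambda(\CA_r)\asymp \phi(q)\phi(r)\psi(q)\psi(r)/(qr)$, the powers of $M$, $g$, $\psi(q)$, $\psi(r)$ should cancel (using $\min\{\psi(q)/q,\psi(r)/r\}\cdot\max\{r\psi(q),q\psi(r)\} = \psi(q)\psi(r)$), leaving exactly $\ll \un_{M\ge g}\, P(q,r)$. Rather than reproving this from scratch, I would cite the Pollington–Vaughan overlap estimate (this is [PV], as the lemma's name suggests), since this inequality is precisely Lemma 2.3 / the key overlap bound of their paper, and the proof there is exactly the interval-counting argument sketched above; the contribution here is just to quote it in the normalization convenient for the rest of the paper.
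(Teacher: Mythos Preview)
Your proposal is correct and matches the paper's approach: the paper simply cites \cite[p.~195--196]{PV} for this bound, exactly as you conclude you would do. Your sketch of the interval-counting argument is an accurate summary of what Pollington--Vaughan prove, but the paper itself provides no details and just quotes the result.
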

\begin{proof}
This bound is given in \cite[p. 195-196]{PV}.
\end{proof}

Given the above lemma, we introduce the notation
\begin{equation}\label{eq:L_t-dfn}
L_t(a,b):= 
\sum_{\substack{p|ab/\gcd(a,b)^2 \\ p\ge t}} \frac{1}{p}
\end{equation}
for $a,b\in\N$ and $t\ge1$. The key result to proving Theorem \ref{thm:MainThm} is:

\begin{prop}[Second moment bound]\label{prop:GCD}
Let $\psi$ and $M(q,r)$ be as in as in Lemma \ref{lem:Pollington}, and consider $Y\ge X\ge1$ such that
\[
1\le \sum_{X\le q\le Y} \frac{\psi(q)\phi(q)}{q} \le2 .
\]
For each $t\ge1$, set
\eq{\label{eq:E_t}
\CE_t 
	= \big\{(v,w)\in (\Z\cap[X,Y])^2 :  \gcd(v,w)\ge  t^{-1}\cdot M(v,w) ,\ L_t(v,w)  \ge 10 \big\}.
}
Then 
\[
\sum_{(v,w)\in \CE_t}  \frac{\phi(v)\psi(v)}{v}\cdot \frac{\phi(w)\psi(w)}{w} \ll \frac{1}{t} .
\]
\end{prop}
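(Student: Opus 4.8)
The plan is to bound the sum over $\CE_t$ by passing from the $\phi(v)\psi(v)/v$ weights to a purely combinatorial count, then applying the anatomy-of-integers bound \eqref{eq:anatomy-gain} to exploit the condition $L_t(v,w)\ge 10$, which is the quantitative heart of the Erd\H{o}s--Vaaler approach. Since $\psi\le 1/2$ on $[X,Y]$, we have $\psi(q)\phi(q)/q \le \psi(q) \le 1/2$ and, crucially, the pair $(v,w)\in\CE_t$ satisfies $\gcd(v,w) \ge M(v,w)/t \ge \max\{v\psi(w),w\psi(v)\}/t$, so $\psi(v)\le t\gcd(v,w)/w$ and $\psi(w)\le t\gcd(v,w)/v$; multiplying gives $\psi(v)\psi(w) \le t^2 \gcd(v,w)^2/(vw)$. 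Hence
\eq{
\sum_{(v,w)\in\CE_t} \frac{\phi(v)\psi(v)}{v}\cdot\frac{\phi(w)\psi(w)}{w} \le t^2 \sum_{(v,w)\in\CE_t} \frac{\gcd(v,w)^2}{v^2 w^2}.
}
Now write $d=\gcd(v,w)$, $v=dv'$, $w=dw'$ with $\gcd(v',w')=1$; the task becomes bounding $t^2\sum_{d}\frac{1}{d^2}\sum_{v',w'}\frac{1}{(v')^2(w')^2}$ where the inner sum is over coprime $v',w'$ with $dv', dw'\in[X,Y]$ and $L_t(dv',dw')\ge 10$. The condition $L_t(dv',dw')\ge 10$ means $\sum_{p\mid v'w',\,p\ge t}1/p\ge 10$ (since $\gcd(v',w')=1$, the radical $v'w'/\gcd^2 = v'w'$), so the integer $n=v'w'$ satisfies $\sum_{p\mid n,\,p\ge t}1/p\ge 10$.

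The main obstacle, and where the argument must be done carefully, is controlling the double sum over $v',w'$ under the constraint that their product has many medium-sized prime factors. I would handle this by summing dyadically: for $v'\in[V,2V]$ and $w'\in[W,2W]$ with $dVW \asymp$ something in $[X,Y]$, one bounds $\sum 1/((v')^2(w')^2) \ll \frac{1}{V^2 W^2}\cdot\#\{(v',w')\}$, and then the count of pairs whose product $n=v'w'$ lies in $[VW, 4VW]$ and satisfies $\sum_{p\mid n,\,p\ge t}1/p\ge 10$ is at most (number of such $n$ up to $4VW$) times the divisor bound $\tau(n)\ll n^{o(1)}$ for the factorizations $n=v'w'$. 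Applying \eqref{eq:anatomy-gain} — or more precisely Lemma \ref{lem:Tenenbaum}/Lemma \ref{lem:Anatomy} as cited, which should give $\#\{n\le z : \sum_{p\mid n,\,p\ge t}1/p\ge 10\}\ll e^{-ct}z$ for some $c>0$ — yields a saving of $e^{-ct}$ (the constant $10$ rather than $1$ in the threshold gives extra room to absorb both the divisor factor $n^{o(1)}$ and to beat $t^2$). Summing the geometric-type series over the dyadic ranges of $d$, $V$, $W$ subject to $dVW\in[X,Y]$ (each such sum contributes $O(1)$ or $O(\log)$ factors, harmless against $e^{-ct}$), one arrives at a total bound $\ll t^2 e^{-ct}\ll 1/t$.

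Alternatively, and probably cleaner, I would not pass to dyadic blocks but instead write $\sum_{(v,w)\in\CE_t}\frac{\gcd(v,w)^2}{v^2w^2}\le \sum_{d\ge 1}\frac{1}{d^2}\sum_{\substack{m,n\ge 1\\ \gcd(m,n)=1\\ L_t(m,n)\ge 10}}\frac{1}{m^2 n^2}$, drop the constraints $v,w\in[X,Y]$ entirely (legitimate since all terms are positive), factor out $d$, and recognize that the remaining sum over coprime $m,n$ with $\sum_{p\mid mn,\,p\ge t}1/p\ge 10$ is bounded by $\sum_{\substack{N\ge 1\\ \sum_{p\mid N,p\ge t}1/p\ge 10}}\frac{\tau(N)}{N^{?}}$ — but this needs the exponent structure to converge, and the honest $1/(m^2n^2)$ does: since $mn\ge$ the product of those primes $\ge t$ whose reciprocals sum to $\ge 10$, which forces $mn$ large, and the weighted count $\sum_N \tau(N)/N^2 \cdot \un_{\sum_{p\mid N, p\ge t}1/p\ge 10}$ can be estimated by a Rankin-type/Euler-product argument restricting to $N$ with the stated property, giving exponential decay in $t$. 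The key point I expect to verify is simply that the power of $2$ in the $\gcd(v,w)^2/(v^2w^2)$ bound is comfortably more than enough for absolute convergence, so the only real work is extracting the $e^{-ct}$ from the $L_t\ge 10$ condition via the cited anatomy lemmas, exactly as in \eqref{eq:anatomy-gain}.
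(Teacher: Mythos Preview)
Your first displayed inequality is false by a factor of $vw$. From $\psi(v)\le t\gcd(v,w)/w$ and $\psi(w)\le t\gcd(v,w)/v$ you correctly get $\psi(v)\psi(w)\le t^2\gcd(v,w)^2/(vw)$, but then
\[
\frac{\phi(v)\psi(v)}{v}\cdot\frac{\phi(w)\psi(w)}{w}
=\frac{\phi(v)\phi(w)}{vw}\,\psi(v)\psi(w)
\le \psi(v)\psi(w)
\le \frac{t^2\gcd(v,w)^2}{vw},
\]
not $t^2\gcd(v,w)^2/(v^2w^2)$. A quick check shows your version cannot hold: in the model situation $\psi(q)\asymp q^{-c}$ on $[X,2X]$ with $c\in(0,1)$, the left side is $\asymp X^{-2c}$ while your right side with $d\asymp X^{1-c}/t$ is $\asymp X^{-2-2c}$, far too small. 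With the correct bound, writing $v=dv'$, $w=dw'$ gives summand $t^2/(v'w')$, which is \emph{independent of $d$}; the sum over $d$ subject to $dv',dw'\in[X,Y]$ contributes a factor $\asymp Y/\max(v',w')$, and you end up with something of size $t^2 Y e^{-ct}$, which is unbounded in $Y$. Dropping the range constraint entirely, as in your ``cleaner'' alternative, makes the $d$-sum literally divergent.

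This is not a repairable slip: controlling that sum over the common divisor $d$ is exactly the obstruction that makes Proposition~\ref{prop:GCD} equivalent in difficulty to the full Duffin--Schaeffer conjecture. The paper's proof occupies Sections~\ref{sec:Graph}--\ref{sec:IterationStep2}: one introduces the GCD-graph formalism and runs an iterative compression/quality-increment argument (Propositions~\ref{prop:IterationStep1}--\ref{prop:SmallPrimes}) to pass to a subgraph on which $\gcd(v,w)$ equals a \emph{fixed} integer $\gcd(a,b)$ for all surviving edges, so that there is no $d$-sum at all. Only then can one apply the Erd\H{o}s--Vaaler anatomy bound (Lemma~\ref{lem:Anatomy}) in the way you envisage; see the proof of Proposition~\ref{prop:Graph} in Section~\ref{sec:Anatomy}, where the analogue of your pointwise bound appears as \eqref{psi ub}--\eqref{eq:G'Quality} with $v_{\max}(w)w_0$ rather than $v^2w^2$ in the denominator.
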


\begin{proof}[Proof of Theorem \ref{thm:MainThm} assuming Proposition \ref{prop:GCD}]
We wish to prove that
\begin{equation}\label{eq:MainGoal}
\lambda(\CA)=1,
\end{equation}
where $\CA=\limsup_{q\to\infty}\CA_q$ with $\CA_q$ defined by \eqref{eq:A-dfn}. We first  write 
\[
\psi(q)=\psi_1(q)+\psi_2(q),
\qquad\text{where}\qquad
\psi_1(q)
	=\begin{cases}
	\psi(q)&\text{if}\ \psi(q)>1/2,\\
	0&\text{otherwise} .
	\end{cases}
\] 
In particular, $\psi_2(q)=\psi(q)$ if $\psi(q)\le 1/2$, and $\psi_2(q)=0$ otherwise. 

If it so happens that $\sum_{q=1}^\infty \psi_1(q)\phi(q)/q=\infty$, then we apply Lemma \ref{lem:Large} to $\psi_1$ to find that $\lambda(\limsup_{q\to\infty}\CB_q)=1$, where $\CB_q$ is defined as $\CA_q$ but with $\psi$ replaced by $\psi_1$. This proves \eqref{eq:MainGoal}, since $\psi_1(q)\le \psi(q)$, and so $\CB_q\subseteq\CA_q$.

Therefore we may assume without loss of generality that $\sum_{q=1}^\infty \psi_1(q)\phi(q)/q<\infty$, and so $\sum_{q=1}^\infty \psi_2(q)\phi(q)/q=\infty$. 
Thus, we have reduced Theorem \ref{thm:MainThm} to the case when 
\[
\psi(q)\le 1/2\quad\text{for all}\ q\ge1. 
\]

By Lemma \ref{lem:Gallagher}, the Duffin-Schaeffer conjecture will follow if we prove that $\lambda(\CA)>0$, 
since this means $\CA$ cannot have measure 0. Note that 
\begin{equation}
\label{eq:limsup-rewrite}
\CA=\limsup_{q\to\infty}\CA_q =  \bigcap_{j=1}^{\infty}
\bigcup_{q\ge j}\CA_q .
\end{equation}
Now, let $X$ be a large parameter and fix $Y=Y(X)$ to be minimal such that
\[
\sum_{X\le q\le Y} \frac{\phi(q)\psi(q)}{q} \in[1,2] .
\]
(Such a $Y$ exists since $\psi(q)\le 1/2$ for all $q$.) Hence, we see that it suffices to prove that
\begin{equation}
\lambda\bigg(\bigcup_{X\le q\le Y} \CA_q\bigg)\gg1 
\label{eq:Finite}
\end{equation}
uniformly for all large enough $X$, since this implies that $\lambda(\CA)>0$ by virtue of \eqref{eq:limsup-rewrite}, and hence Theorem \ref{thm:MainThm} follows.

For each $\alpha\in\R$, consider the counting function
\[  
Q(\alpha)=\#\{q\in\Z\cap [X,Y]: \alpha\in\CA_q\} = \sum_{X\le q\le Y} \un_{\CA_q}(\alpha).
\]
We then have
\begin{align*}
\supp(Q)&=\bigcup_{X\le q\le Y}\CA_q,\\
\int_0^1 Q(\alpha) \dee\alpha &= \sum_{X\le q\le Y}\lambda(\CA_q) \ge \sum_{X\le q\le Y} \frac{\phi(q)\psi(q)}{q} \ge 1 ,\\
\int_0^1 Q(\alpha)^2 \dee\alpha &= \sum_{X\le q,r \le Y}\lambda(\CA_q\cap \CA_r)  .
\end{align*}
Hence, the Cauchy-Schwarz inequality implies that
\[
\lambda\bigg(\bigcup_{X\le q\le Y}\CA_q\bigg)\int_0^1 Q(\alpha)^2\dee\alpha 
\ge\Bigl(\int_0^1Q(\alpha)\dee \alpha\Bigr)^2\ge1.
\]
Thus, to establish \eqref{eq:Finite}, it is enough to prove that
\begin{equation}\label{eq:2ndmoment-goal}
\sum_{X\le q,r\le Y} \lambda(\CA_q\cap \CA_r) \ll  1 .
\end{equation}
The terms with $q=r$ contribute a total
\[
\sum_{X\le q\le Y}\lambda(\CA_q)\le \sum_{X\le q\le Y} \frac{2\phi(q)\psi(q)}{q}\le 4,
\]
and so we only need to consider the contribution of those terms with $q\ne r$. Applying Lemma \ref{lem:Pollington}, we see that 
\[
\lambda(\CA_q\cap \CA_r)\ll 
	\un_{M(q,r)\ge \gcd(q,r)}\cdot \frac{\phi(q)\psi(q)}{q}\cdot \frac{\phi(r)\psi(r)}{r}\cdot 
	\prod_{\substack{p|qr/\gcd(q,r)^2 \\ p>M(q,r)/\gcd(q,r)}} \left(1+\frac{1}{p}\right) ,
\]
where we recall that
\[
M(q,r)=\max\{r\psi(q),q\psi(r)\}.
\]
Thus, \eqref{eq:2ndmoment-goal} is reduced to showing that
\begin{equation}\label{eq:goal}
	\sum_{\substack{X\le q,r\le Y \\ M(q,r)\ge\gcd(q,r) }} 
		\frac{\phi(q)\psi(q)}{q} \cdot\frac{\phi(r)\psi(r)}{r}
			 \prod_{\substack{p|qr/\gcd(q,r)^2 \\ p>M(q,r)/\gcd(q,r)}}\left(1+\frac{1}{p}\right) \ll 1.
\end{equation}
To prove this inequality, we divide the range of $q$ and $r$ into convenient subsets. 

The pairs $(q,r)\in (\Z\cap [X,Y])^2$ with
\[
 \prod_{\substack{p|qr/\gcd(q,r)^2 \\ p>M(q,r)/\gcd(q,r)}}\left(1+\frac{1}{p}\right) < e^{100} 
\]
contribute a total of at most
\[
e^{100}\bigg(\sum_{q\in [X,Y] } 
		\frac{\phi(q)\psi(q)}{q}\bigg)^2
		\le 4e^{100} 
\]
to the right hand side of \eqref{eq:goal}, and so can be ignored. 

For any other pair $(q,r)$, we see that 
\[
e^{100} \le 
	\prod_{\substack{p|qr/\gcd(q,r)^2 \\ p>M(q,r)/\gcd(q,r)}}\left(1+\frac{1}{p}\right)
	\le \exp\Bigl(\sum_{\substack{p|qr/\gcd(q,r)^2 \\ p>M(q,r)/\gcd(q,r)}}\frac{1}{p}\Bigr),
\]
so certainly we have
\[
\sum_{p|qr/\gcd(q,r)^2}\frac{1}{p} \ge 100.
\]
For any such pair, we let $j=j(q,r)$ be the largest integer such that
\[
\sum_{\substack{p| qr/\gcd(q,r)^2 \\ p\ge \exp\exp(j)}} \frac{1}{p}  \ge 10. 
\]
Since $j$ is chosen maximally, we have 
\[
\sum_{\substack{p| qr/\gcd(q,r)^2 \\ p\ge \exp\exp(j+1)}} \frac{1}{p}  <10 .
\]
Mertens' theorem then implies that 
\[
\sum_{\substack{p| qr/\gcd(q,r)^2 \\ p\ge \exp\exp(j)}} \frac{1}{p}=\sum_{\substack{p| qr/\gcd(q,r)^2 \\ \exp\exp(j)\le p <\exp\exp(j+1)}} \frac{1}{p}+\sum_{\substack{p| qr/\gcd(q,r)^2 \\ p\ge \exp\exp(j+1)}} \frac{1}{p}  \ll 1  .
\]
Therefore
\als{
	\prod_{\substack{p| qr/\gcd(q,r)^2 \\ p>M(q,r)/\gcd(q,r)}}\left(1+\frac{1}{p}\right)
	&\ll \prod_{M(q,r)/\gcd(q,r)<p\le \exp\exp(j)} \left(1+\frac{1}{p}\right)  \\
	&\ll\begin{cases}
		1&\text{if}\ M(q,r)/\gcd(q,r)\ge  \exp\exp(j),\\
		e^j &\text{otherwise},
	\end{cases}
}
where we used again Mertens' theorem. As above, those pairs with 
\[
\prod_{\substack{p| qr/\gcd(q,r)^2 \\ p>M(q,r)/\gcd(q,r)}}\left(1+\frac{1}{p}\right)\ll 1
\]
make an acceptable contribution to \eqref{eq:goal}. Therefore we only need to consider pairs $(q,r)$ with $M(q,r)/\gcd(q,r)<  \exp\exp(j)$.

We have thus reduced \eqref{eq:goal} to showing that
\begin{equation}\label{eq:goal2}
	\sum_{j\ge 0} e^j 
	\sum_{(q,r)\in \CE_{\exp\exp(j)}} \frac{\phi(q)\psi(q)}{q}\cdot \frac{\phi(r)\psi(r)}{r} \ll 1,
\end{equation}
where $\CE_t$ is defined by \eqref{eq:E_t}. To prove \eqref{eq:goal2}, we apply Proposition \ref{prop:GCD}, which shows that the inner sum is $O(1/\exp\exp(j))$. Since the sum of $e^j/\exp\exp(j)$ over $j\ge 0$ converges, this completes the proof of Theorem \ref{thm:MainThm}.
\end{proof}

Thus we are left to establish Proposition \ref{prop:GCD}.

\section{Bipartite GCD graphs}\label{sec:Graph}

In this section we introduce the key notation that will underlie the rest of the paper. In particular, we show that Proposition \ref{prop:GCD} follows from a statement given by Proposition \ref{prop:Graph} about a weighted graph with additional information about divisibility of the integers making up its vertices. The rest of the paper is then dedicated to establishing suitable properties of such graphs, which we call `GCD graphs'.

If we let
\[
\CV=\{q\in\Z\cap[X,Y]:\psi(q)\neq0\}
\]
and we weight the elements of $\CV$ with the measure
\[
\mu(q)=\frac{\phi(q)\psi(q)}{q},
\]
then Proposition \ref{prop:GCD} can be interpreted as an estimate for the weighted edge density of the graph with set of vertices $\CV$ and set of edges $\CE_t$ defined by \eqref{eq:E_t}.

Our strategy for proving Proposition \ref{prop:GCD} is to use a `compression' argument. More precisely, if $G_1$ denotes the graph described in the above paragraph, we will construct a finite sequence of graphs $G_1,\dots,G_J$ where we make a small local change to pass from $G_j$ to $G_{j+1}$ that increases the amount of structure in the graph. The final graph $G_J$ will then be highly structured and easy to analyze. To keep control over the procedure, we keep track of how certain statistics of the graph change at each step. This enables us to show that the relevant properties of $G_j$ are suitably controlled by $G_{j+1}$, and so $G_1$ is controlled by $G_J$, where everything is explicit.

To perform the above construction, we introduce some new notation to take into account the extra information about prime power divisibility which we need to carry at each stage.

\begin{dfn}[GCD graph]\label{gcd graph dfn}
Let $G$ be a septuple $(\mu,\CV,\CW,\CE,\CP,f,g)$ such that:	
\begin{enumerate}
		\item $\mu$ is a measure on $\N$ such that $\mu(n)<\infty$ for all $n\in\N$; we extend to $\N^2$ by letting
	\[
	\mu(\CN):=\sum_{(n_1,n_2)\in\CN}\mu(n_1)\mu(n_2) 
	\quad\text{for}\quad\CN\subseteq\N^2;	
	\]	
		\item $\CV$ and $\CW$ are finite sets of positive integers;
		\item $\CE\subseteq\CV\times \CW$, that is to say $(\CV,\CW,\CE)$ is a bipartite graph;
		\item $\CP$ is a set of primes;
		\item $f$ and $g$ are functions from $\CP$ to $\Z_{\ge0}$ such that for all $p\in\CP$ we have:
		\begin{enumerate}
		\item $p^{f(p)}|v$ for all $v\in\CV$, and $p^{g(p)}|w$ for all $w\in\CW$;
		\item if $(v,w)\in\CE$, then $p^{\min\{f(p),g(p)\}}\|\gcd(v,w)$;
		\item if $f(p)\neq g(p)$, then $p^{f(p)}\|v$ for all $v\in\CV$, and $p^{g(p)}\|w$ for all $w\in\CW$.
		\end{enumerate}
\end{enumerate}
We then call $G$ a (bipartite) \emph{GCD graph} with \emph{sets of vertices} $(\CV,\CW)$, \emph{set of edges} $\CE$ and \emph{multiplicative data} $(\CP,f,g)$. We will also refer to $\CP$ as the \emph{set of primes} of $G$. If $\CP=\emptyset$, we say that $G$ has \emph{trivial} set of primes and we view $f=f_\emptyset$ and $g=g_\emptyset$ as two copies of the empty function from $\emptyset$ to $\mathbb{Z}_{\ge 0}$.
\end{dfn}

\begin{dfn}[Non-trivial GCD graph]\label{non-trivial gcd graph dfn}
Let $G=(\mu,\CV,\CW,\CE,\CP,f,g)$. We say that $G$ is {\it non-trivial} if $\mu(\CE)>0$. 
\end{dfn}


We now recast Proposition \ref{prop:GCD} in the language of GCD graphs.

\begin{prop}[Edge set bound]\label{prop:Graph}
	Let $\psi:\N\rightarrow\R_{\ge 0}$, $t\ge1$ and $\mu$ be the measure $\mu(v)=\psi(v)\phi(v)/v$. Let $\CV\subseteq\N$ satisfy $0<\mu(\CV)\ll 1$. Let  $G=(\mu,\CV,\CV,\CE,\emptyset,f_\emptyset,g_\emptyset)$ be a bipartite GCD graph with measure $\mu$, vertex sets $\CV$, trivial set of primes, and edge set $\CE\subseteq\CE_t$, where $\CE_t$ is defined as in Proposition \ref{prop:GCD}. Then
	\[
	\mu(\CE)\ll 1/t.
	\]
\end{prop}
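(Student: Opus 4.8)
The plan is to reduce Proposition \ref{prop:Graph} to a purely graph-theoretic statement that isolates the combinatorial core, then run the compression/iteration machinery on GCD graphs. First I would observe that it suffices to prove the bound for the GCD graph with trivial set of primes whose edge set is exactly $\CE = \CE_t$; any subgraph has smaller $\mu(\CE)$. The next reduction is to split the edge set according to the dyadic size of $\gcd(v,w)$: write $\CE_t = \bigcup_k \CE_t^{(k)}$ where $\CE_t^{(k)}$ collects the pairs with $2^k \le \gcd(v,w) < 2^{k+1}$, and similarly localize the common factor to a fixed residue structure. Because of the constraint $\gcd(v,w) \ge t^{-1} M(v,w)$ together with $\psi \le 1/2$, one has $M(v,w) \asymp \max\{v\psi(w), w\psi(v)\}$ and hence $v, w$ are of comparable size to within a bounded power; this lets us run the argument at a single dyadic scale $X \le v, w < 2X$ and sum a geometric series at the end. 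So the crux becomes: for each scale and each threshold, bound $\mu(\CE_t^{(k)})$ by something like $(1/t) \cdot 2^{-k/100}$ (or whatever explicit saving the later propositions provide), summing to $O(1/t)$.

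The heart of the matter, and where I expect to invoke the later propositions rather than prove anything from scratch, is the iterative construction sketched in Section \ref{sec:outline}. Starting from $G_1 = G$ with trivial multiplicative data, I would repeatedly apply the iteration steps (Propositions \ref{prop:IterationStep1}, \ref{prop:IterationStep2}, \ref{prop:SmallPrimes}) to produce a sequence $G_1, G_2, \dots, G_J$ of GCD graphs, each obtained from the previous by adjoining one prime $p_j$ to the set of primes $\CP$ and passing to the subsets of $\CV, \CW$ on which the $p_j$-adic valuation is pinned down, while at each step the relevant quality statistic — the analogue of $\delta_j^{10} \cdot \#\CV_j \cdot \#\CW_j \cdot a_j b_j / \gcd(a_j, b_j)^2$ in the outline, but with the $\mu$-weights built in — does not decrease (up to the controlled losses in the clean-up steps). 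The terminal graph $G_J$ is "good" in the sense of Proposition \ref{prop:BoundOnEdges}: there are integers $a, b$ with $a \mid v$ for all $v \in \CV_J$, $b \mid w$ for all $w \in \CW_J$, and $\gcd(v,w) = \gcd(a,b)$ for every edge $(v,w) \in \CE_J$. For such a structured graph the edge density is directly estimable: all edges live in $\{(v,w) : a \mid v,\ b \mid w\}$, which has at most $O(X^2/(ab))$ pairs, and the weighted version combined with $\gcd(a,b) \ge t^{-1} M$ gives the claimed bound once one feeds in the anatomy-of-integers input (Lemmas \ref{lem:Tenenbaum}, \ref{lem:Anatomy}) controlling how often $L_t(v,w) \ge 10$.

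Concretely, the deduction I would write here is short: cite the reduction from Proposition \ref{prop:GCD} to Proposition \ref{prop:Graph} as essentially a translation of notation (the measure $\mu(v) = \psi(v)\phi(v)/v$ and the vertex set $\CV = \{q \in \Z \cap [X,Y] : \psi(q) \ne 0\}$ make $\mu(\CE_t)$ literally equal to the sum in Proposition \ref{prop:GCD}), and then note that Proposition \ref{prop:Graph} is what the subsequent sections are devoted to; at this point in the paper the statement is being recorded, with its proof deferred. If instead a self-contained argument is wanted at this stage, I would present only the easy half: the bound $\mu(\CE)\ll 1/t$ holds trivially when the edge density $\delta_0 = \mu(\CE)/\mu(\CV)^2$ is already $\le 1/t$, since $\mu(\CV) \ll 1$; the entire difficulty is the complementary regime $\delta_0 \gg 1/t$, which is exactly what the compression argument handles.

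\textbf{Main obstacle.} The genuine difficulty is not in this proposition's statement but in guaranteeing that \emph{some} monotone quality statistic can be made to increase (net of clean-up losses) at every iteration step while simultaneously keeping the condition $L_t(v,w) \ge 10$ meaningful — the delicate interplay between the density factor $\delta_j$, the size factor $\#\CV_j \#\CW_j$, and the arithmetic factor $a_j b_j/\gcd(a_j,b_j)^2$, together with the extra $e^{-t}$-type saving from the anatomy of integers needed to beat the leftover $t^{12}$. That is precisely what Propositions \ref{prop:IterationStep1}--\ref{prop:SmallPrimes} are engineered to supply, so in the proof of Proposition \ref{prop:Graph} itself the work is organizational: setting up $G_1$, choosing which iteration step to apply based on the current configuration of the graph, verifying the hypotheses of each, and assembling the terminal bound from Proposition \ref{prop:BoundOnEdges}.
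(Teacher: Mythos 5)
Your high-level strategy matches the paper's: invoke Proposition~\ref{prop:BoundOnEdges} to obtain a structured subgraph $G'$ in which every edge $(v,w)$ satisfies $\gcd(v,w)=\gcd(a,b)$ with $a\mid v$, $b\mid w$, and then count. But two of your intermediate reductions are off and, more importantly, the actual calculation --- which is the substance of the paper's proof of Proposition~\ref{prop:Graph} --- is absent. First, the dyadic localization you propose is not used by the paper and your justification for it does not work: from $\gcd(v,w)\ge t^{-1}M(v,w)$ and $\psi\le 1/2$ you get bounds on $\psi(v)$, $\psi(w)$ (of the form $\psi(v)\le t\,\gcd(v,w)/w$), not on the ratio $v/w$, so $v$ and $w$ need not be comparable. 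The paper sidesteps the scale issue by picking $w_0$ to be the \emph{largest integer} in $\CW'$, restricting to $\CE''=\{(v,w)\in\CE':(v,w_0)\in\CE'\}$ so that $w_{\max}(v)=w_0$ for all edges in $\CE''$, and showing $\mu(\CE'')\ge\frac{\delta'}{2}\mu(\CE')$ via the high-degree conditions \ref{prop:Partb}--\ref{prop:Partc}.

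Second, the finishing argument is not a short gloss. The paper bounds $q(G')\ll\mu(\CE'')\cdot\frac{ab}{\gcd(a,b)^2}\cdot\frac{ab}{\phi(a)\phi(b)}$, then uses the fact that $\mu(v)=\psi(v)\phi(v)/v$ and $a\mid v$, $b\mid w$ to cancel the $\frac{ab}{\phi(a)\phi(b)}$ factor (since $\phi(v)/v\le\phi(a)/a$ etc.), arriving at $q(G')\ll t^2ab\sum_{(v,w)\in\CE'}\frac{1}{v_{\max}(w)\,w_0}$ after feeding in the inequalities $\psi(v)\le t\gcd(a,b)/w_0$, $\psi(w)\le t\gcd(a,b)/v_{\max}(w)$. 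The remaining sum is then treated differently in the two sub-cases of \ref{prop:Partd}: in \ref{prop:Case1} the crude bound $\sum\le 1/(ab)$ gives $q(G')\ll t^2$, hence $\mu(\CE)\ll\delta^{-10}t^{-48}\ll 1/t$ using $q(G')\gg\delta t^{50}q(G)$; in \ref{prop:Case2} one must exploit $L_t(v',w')\ge 4$ via Lemma~\ref{lem:Anatomy} to win an $e^{-t}$ factor before $q(G)\ll q(G')$ closes the loop. You mention the anatomy input but not where it enters, and you do not isolate the $\phi$-cancellation step --- yet, as Section~\ref{sec:Concluding} makes explicit, that cancellation against the factor $\prod_{p\in\CP}(1-\un_{f(p)=g(p)\ge1}/p)^{-2}$ in the quality is exactly what makes the argument work and would be fatal to omit.
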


\begin{proof}[Proof of Proposition \ref{prop:GCD} assuming Proposition \ref{prop:Graph}]
	Recall the notation $\psi$, $M(q,r)$, $L_t(a,b)$, $X$, $Y$ and $\CE_t$ of Proposition \ref{prop:GCD}. We wish to show that
	\begin{equation}
	\sum_{(v_1,v_2)\in \CE_t} \frac{\phi(v_1)\psi(v_1)}{v_1}\cdot \frac{\phi(v_2)\psi(v_2)}{v_2}
	\ll  \frac{1}{t}  .
	\label{eq:Target}
	\end{equation}
	Let $\mu$ be the measure on $\N$ defined by $\mu(v):=\psi(v)\phi(v)/v$ and let $\CV=\Z\cap[X,Y]$, 
	so that
	\[
	\mu(\CV)=\sum_{X\le q\le Y}\frac{\phi(q)\psi(q)}{q}\in[1,2].
	\]
	Now define $\CE=\CE_t$ to be as in Proposition \ref{prop:GCD}. We see that $(\CV,\CV,\CE)$ forms a bipartite graph with vertex sets two copies of $\CV$ and edge set $\CE$. We now turn this bipartite graph into a GCD graph $G=(\mu,\CV,\CV,\CE,\emptyset,f_\emptyset,g_\emptyset)$ by attaching trivial multiplicative data to the bipartite graph (here $f_\emptyset$ and $g_\emptyset$ are viewed as two copies of the function of the empty set to $\mathbb{Z}_{\ge 0}$).
	
	Since $0<\mu(\CV)\ll 1$, Proposition \ref{prop:Graph} now applies, showing that
	\[
	\mu(\CE_t)\ll 1/t.
	\]
	This completes the proof.
\end{proof}

Thus we are left to establish Proposition \ref{prop:Graph}. As we briefly explained in Section \ref{sec:outline}, this will be done by passing iteratively to subgraphs of $G$ on which we control the divisibility by more and more primes. To formalize this procedure, we introduce the concept of a GCD subgraph.

\begin{dfn}[GCD subgraph]
Let $G=(\mu,\CV,\CW,\CE,\CP,f,g)$ and $G'=(\mu',\CV',\CW',\CE',\CP',f',g')$ be two GCD graphs. We say that $G'$ is a \emph{GCD subgraph} of $G$ if:
	\[
	\mu'=\mu,\quad \CV'\subseteq\CV,\quad \CW'\subseteq\CW,\quad \CE'\subseteq\CE,
	\quad \CP'\supseteq\CP, \quad f'\big|_{\CP}=f,\quad
	 g'\big|_{\CP}=g.
	\]
	We write $G'\preceq G$ if $G'$ is a GCD subgraph of $G$. Lastly, we say that $G'$ is a {\it non-trivial GCD subgraph} of $G$ if $\mu(\CE')>0$, that is to say $G'$ is non-trivial as a GCD graph.
\end{dfn}

We thus see from the above definition that we only accept $G'$ as a subgraph of $G$ if we have at least as much information about the divisibility of the vertices of $G'$ compared to those of $G$. In particular, we have that $p^{\min(f(p),g(p))} \| \gcd(v',w')$ for all $(v',w')\in\CE'$ and all $p\in\CP$.

We will devise an iterative argument that adds one prime at a time to $\CP$, so that we will eventually control the multiplicative structure of GCDs of connected vertices in the graph very well by the end of this process.

The main way we will produce a GCD subgraph of a GCD graph $G$ is by restricting to vertex sets with certain divisibility properties. Since we will use this several times, we introduce a specific notation for these GCD subgraphs.

\begin{dfn}[Special GCD subgraphs from prime power divisibility] 
\label{def:special graphs}	
Let $p$ be a prime number, and let $k,\ell\in\Z_{\ge0}$.
\begin{enumerate}
\item If $\CV$ is a set of integers and $k\in\Z_{\ge0}$, we set
	\[
	\CV_{p^k}=\{v\in\CV:p^k\|v\},
	\]
that is to say $\CV_{p^k}$ is the set of integers in $\CV$ whose $p$-adic valuation is exactly $k$. Here we have the understanding that $\CV_{p^0}$ denotes the set of $v\in\CV$ that are coprime to $p$. In particular, $\CV_{2^0}$ and $\CV_{3^0}$ denote different sets of integers. 
\item Let $G=(\CV,\CW,\CE)$ be a bipartite graph. If $\CV'\subseteq\CV$ and $\CW'\subseteq\CW$, we define
\[
\CE(\CV',\CW'):=\CE\cap(\CV'\times\CW').
\]
We also write for brevity
\[
\CE_{p^k,p^\ell}:=\CE(\CV_{p^k},\CW_{p^\ell}) .
\]
\item Let $G=(\mu,\CV,\CW,\CE,\CP,f,g)$ be a GCD graph such that $p\notin \CP$. We then define the septuple
\[
G_{p^k,p^\ell}=(\mu,\CV_{p^k},\CW_{p^\ell},\CE_{p^k,p^\ell},\CP\cup\{p\},f_{p^k},g_{p^\ell})
\]
where the functions $f_{p^k}$, $g_{p^\ell}$ are defined on $\CP\cup\{p\}$ by the relations $f_{p^k}\vert_\CP=f$, $g_{p^\ell}\vert_\CP=g$, 
\[
f_{p^k}(p)=k\quad\text{and}\quad g_{p^\ell}(p)=\ell.
\]
It is easy to check that $G_{p^k,p^\ell}$ is a GCD subgraph of $G$.
\end{enumerate}
\end{dfn}

The aim of our iterative procedure is to obtain a simple GCD subgraph $G'$ of our initial graph $G$ where the key quantitative aspects of $G$ are controlled by the corresponding quantities of $G'$. Here `simple' graphs have many primes occurring in $\gcd(v,w)$ for $(v,w)\in\mathcal{E}$ to a fixed exponent, whilst for subgraphs to maintain control over the original graph we need to maintain sufficiently many edges relative to the number of vertices. This leads us to our last four definitions:
\begin{dfn}[Quantities associated to GCD graphs]
Let $G=(\mu,\CV,\CW,\CE,\CP,f,g)$ be a GCD graph.
\begin{enumerate}
	\item If $\mu(\CV),\mu(\CW)>0$, then we define the \emph{edge density} of $G$ by
	\[
	\delta=\delta(G):=\frac{\mu(\CE)}{\mu(\CV)\mu(\CW)}.
	\] 
	If $\mu(\CV)=0$ or $\mu(\CW)=0$, we define the edge density of $G$ to be $0$. 
	\item The \emph{neighbourhood sets} are defined by
	\[
	\Gamma_G(v):=\{w\in\CW:\,(v,w)\in\CE\}\quad\text{for any}\ v\in\CV,
	\]
and similarly 
\[
\Gamma_G(w):=\{v\in\CV:\,(v,w)\in\CE\}
\quad\text{for any}\ w\in \CW.
\]
	\item We let $\CR(G)$ be given by
	\[
	\CR(G):= \{p\notin\CP: \exists (v,w)\in\CE\ \text{such that}\ p|\gcd(v,w)\} .
	\]
	That is to say $\CR(G)$ is the set of primes occurring in a GCD which we haven't yet accounted for. We split this into two further subsets:
	\[
	\CR^\sharp(G):=\left\{p\in\CR(G): 
	\exists k\in\Z_{\ge0}\ \text{such that}\ \frac{\mu(\CV_{p^k})}{\mu(\CV)},\frac{\mu(\CW_{p^k})}{\mu(\CW)}\ge 1-\frac{10^{40}}{p}\right\}
	\]
	and
	\[
	\CR^\flat(G):=\CR(G)\setminus \CR^\sharp(G).
	\]
	\item The \emph{quality} of $G$ is defined by
	\als{
	q(G)&:= \delta^{10}\mu(\CV)\mu(\CW)
	\prod_{p\in\CP} \frac{p^{|f(p)-g(p)|}}{(1-\un_{f(p)=g(p)\ge1}/p)^2(1-1/p^{31/30})^{10}}, 
}
where $\delta$ is the edge density of $G$. 
\end{enumerate}	
\end{dfn}
\begin{rem}
If $\mu(\CV),\mu(\CW)>0$ we see that
\als{
		q(G)&= \frac{\mu(\CE)^{10}}{\mu(\CV)^{9}\mu(\CW)^{9}}
	\prod_{p\in\CP} \frac{p^{|f(p)-g(p)|}}{(1-\un_{f(p)=g(p)\ge1}/p)^2(1-1/p^{31/30})^{10}} .	
	}
\end{rem}

As mentioned in Section \ref{sec:outline}, there are two natural candidates for a quantity to increment; either $\delta$ or 
$\mu(\CV)\mu(\CW)\prod_{p\in\CP}p^{|f(p)-g(p)|}$ (this is the natural generalization to non-squarefree integers). One should essentially think of the quality as a `hybrid' of the two quantities, but with some additional factors which are included for technical reasons.  The factor
\[
\prod_{p\in\CP}\frac{1}{(1-1/p^{31/30})^{10}}
\]
always lies in the interval $[1,\zeta(31/30)^{10}]$, and so is always of size bounded away from 0 and from $\infty$. This factor is included merely for convenience, and allows us to have a quality increment even if there is a tiny loss in our arguments in terms of $p$. The factor 
\[
\prod_{p\in\CP}\frac{1}{(1-\un_{f(p)=g(p)\ge 1}/p)^2}
\]
is crucial for the proof of a quality increment in Lemma \ref{lem:MainLem2} and Proposition \ref{prop:IterationStep2}. This is related to the technical point that it is vital that the weights of our vertices contain the factor $\phi(q)/q$. We will discuss this feature in more detail in Section \ref{sec:Concluding}.

\medskip

We will repeatedly make use of some trivial properties of GCD graphs, given by Lemma \ref{lem:Trivial} below, without further comment.
\begin{lem}[Basic properties of GCD graphs]\label{lem:Trivial} 
	Let $G_1,G_2,G_3$ be GCD graphs.
	\begin{enumerate} 	
				\item The property of being a GCD subgraph is transitive: If $G_1\preceq G_2$ and $G_2\preceq G_3$, then $G_1\preceq G_3$
				\item If $G_1\preceq G_2$, then $\CR(G_1)\subseteq \CR(G_2)$.
				\item If  $G_1=(\mu,\CV,\CW,\CE,\CP,f,g)$ is non-trivial, then $\mu(\CV),\mu(\CW)>0$.
				\item Let $G_1$ have edge density $\delta$. Then the following are equivalent:
				\begin{enumerate}
					\item $G_1$ is non-trivial. 
					\item $\delta>0$.
					\item $q(G_1)>0$.
				\end{enumerate}
	\end{enumerate}
\end{lem}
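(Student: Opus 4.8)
The statement to prove is Lemma \ref{lem:Trivial}, which collects four basic properties of GCD graphs. Each part is essentially an unwinding of the definitions from Section \ref{sec:Graph}, so the proof should be short and verificatory rather than conceptual.

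\medskip

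\textbf{Plan for part (1).} This is immediate from the definition of GCD subgraph. Suppose $G_1 \preceq G_2$ and $G_2 \preceq G_3$, writing $G_i = (\mu^{(i)}, \CV^{(i)}, \CW^{(i)}, \CE^{(i)}, \CP^{(i)}, f^{(i)}, g^{(i)})$. The conditions $\mu^{(1)} = \mu^{(2)} = \mu^{(3)}$, $\CV^{(1)} \subseteq \CV^{(2)} \subseteq \CV^{(3)}$ (and similarly for $\CW$, $\CE$), $\CP^{(1)} \supseteq \CP^{(2)} \supseteq \CP^{(3)}$ all chain together by transitivity of $=$ and $\subseteq$. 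The only mildly non-trivial point is the restriction conditions: from $f^{(1)}\big|_{\CP^{(2)}} = f^{(2)}$ and $f^{(2)}\big|_{\CP^{(3)}} = f^{(3)}$, and $\CP^{(3)} \subseteq \CP^{(2)}$, we get $f^{(1)}\big|_{\CP^{(3)}} = (f^{(1)}\big|_{\CP^{(2)}})\big|_{\CP^{(3)}} = f^{(2)}\big|_{\CP^{(3)}} = f^{(3)}$, and likewise for $g$. Hence $G_1 \preceq G_3$.

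\medskip

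\textbf{Plan for part (2).} Recall $\CR(G) = \{p \notin \CP : \exists (v,w) \in \CE \text{ with } p \mid \gcd(v,w)\}$. If $G_1 \preceq G_2$ and $p \in \CR(G_1)$, then $p \notin \CP^{(1)} \supseteq \CP^{(2)}$, so $p \notin \CP^{(2)}$; and there is $(v,w) \in \CE^{(1)} \subseteq \CE^{(2)}$ with $p \mid \gcd(v,w)$. Hence $p \in \CR(G_2)$.

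\medskip

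\textbf{Plan for part (3).} Suppose $G_1 = (\mu, \CV, \CW, \CE, \CP, f, g)$ is non-trivial, i.e. $\mu(\CE) > 0$ where $\mu(\CE) = \sum_{(v,w)\in\CE} \mu(v)\mu(w)$. Then $\CE \neq \emptyset$, so pick $(v,w) \in \CE$ with $\mu(v)\mu(w) > 0$ (such a term must exist since the sum is positive and all terms are nonnegative — $\mu$ being a measure). Then $\mu(v) > 0$ and $\mu(w) > 0$, with $v \in \CV$ and $w \in \CW$, so $\mu(\CV) \ge \mu(v) > 0$ and $\mu(\CW) \ge \mu(w) > 0$.

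\medskip

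\textbf{Plan for part (4).} I will show (a)$\Rightarrow$(b)$\Rightarrow$(c)$\Rightarrow$(a). For (a)$\Rightarrow$(b): if $G_1$ is non-trivial, then by part (3) $\mu(\CV), \mu(\CW) > 0$, so $\delta = \mu(\CE)/(\mu(\CV)\mu(\CW))$ is defined and positive since $\mu(\CE) > 0$. For (b)$\Rightarrow$(c): recall $q(G_1) = \delta^{10} \mu(\CV)\mu(\CW) \prod_{p\in\CP} \frac{p^{|f(p)-g(p)|}}{(1-\un_{f(p)=g(p)\ge1}/p)^2(1-1/p^{31/30})^{10}}$; if $\delta > 0$ then by the convention in the definition of edge density we cannot have $\mu(\CV) = 0$ or $\mu(\CW) = 0$, so $\mu(\CV)\mu(\CW) > 0$, and each factor in the product over $\CP$ is strictly positive (the numerator $p^{|f(p)-g(p)|} \ge 1$ and the denominator is a positive real, as $1 - 1/p^{31/30} > 0$ and $1 - \un_{f(p)=g(p)\ge 1}/p \ge 1 - 1/p > 0$ for any prime $p \ge 2$), hence $q(G_1) > 0$. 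For (c)$\Rightarrow$(a): we prove the contrapositive. If $G_1$ is trivial, then $\mu(\CE) = 0$; either $\mu(\CV)\mu(\CW) = 0$, in which case $\delta = 0$ by definition and so $q(G_1) = 0$; or $\mu(\CV)\mu(\CW) > 0$, in which case $\delta = 0/(\mu(\CV)\mu(\CW)) = 0$ and again $q(G_1) = \delta^{10} \cdot (\cdots) = 0$. Either way $q(G_1) = 0$, completing the cycle.

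\medskip

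\textbf{Main obstacle.} There is essentially no obstacle here — every part is a direct bookkeeping check against Definitions in Section \ref{sec:Graph}. The only place requiring a moment's care is part (4)(b)$\Rightarrow$(c), where one must confirm each factor in the quality product is strictly positive; this relies on $p \ge 2$ so that $1 - \un_{f(p)=g(p)\ge 1}/p \ge 1/2 > 0$ and $1 - 1/p^{31/30} \ge 1 - 1/2^{31/30} > 0$. One should also note in part (3) that $\mu$ takes values in $[0,\infty)$ (it is a measure on $\N$ with $\mu(n) < \infty$), so a sum of nonnegative terms is positive iff some term is positive.
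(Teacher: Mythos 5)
Your proof is correct and matches the paper's approach: the paper itself dismisses this lemma with the single line "All statements are immediate from the definition of GCD subgraphs and of non-trivial GCD graphs," and your write-up simply spells out exactly those definitional checks (including the one point worth a moment's care, that every factor in the quality product exceeds $1$, so the product cannot vanish).
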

\begin{proof}
All statements are immediate from the definition of GCD subgraphs and of non-trivial GCD graphs.
\end{proof}

\begin{rem} In part (b) of Lemma \ref{lem:Trivial}, it is not necessarily the case that $\CR^\flat(G_1)\subseteq\CR^\flat(G_2)$ nor that $\CR^\sharp(G_1)\subseteq \CR^\sharp(G_2)$.
\end{rem}

Having introduced all necessary terminology, we turn to the task of establishing Proposition \ref{prop:Graph}.

\section{Reduction to a good GCD subgraph}\label{sec:Anatomy}

In this section, we reduce the proof of Proposition \ref{prop:Graph} (and hence of Theorem \ref{thm:MainThm}) to finding a `good' GCD subgraph as described in Proposition \ref{prop:BoundOnEdges} below. This reduction utilizes some results showing that few integers have lots of fairly small prime factors (based on `the anatomy of integers').

\begin{prop}[Existence of a good GCD subgraph]\label{prop:BoundOnEdges}
Let $G=(\mu,\CV,\CW,\CE,\emptyset,f_\emptyset,g_\emptyset)$ be a GCD graph with trivial set of primes and edge density $\delta>0$. Assume further that
\[
\CE\subseteq\{(v,w)\in\N^2 :  L_t(v,w) \ge10\}
\]
for some $t$ satisfying 
\[
t\ge10\delta^{-1/50}\quad\text{and}\quad t> 10^{2000}.
\]
Then there is a GCD subgraph $G'=(\mu,\CV',\CW',\CE',\CP',f',g')$ of $G$ with edge density $\delta'>0$ such that:
\begin{enumerate}[label=(\alph*)]
\item  $\CR(G')=\emptyset$;\label{prop:Parta}
\item For all $v\in\CV'$, we have $\mu(\Gamma_{G'}(v))\ge(9\delta'/10)\mu(\CW')$;\label{prop:Partb}
\item For all $w\in\CW'$, we have $\mu(\Gamma_{G'}(w))\ge(9\delta'/10)\mu(\CV')$;\label{prop:Partc}
\item One of the following holds: \label{prop:Partd}
\begin{enumerate}[label=(\roman*)]
\item \label{prop:Case1}$q(G')\gg \delta t^{50} q(G)$;
\item \label{prop:Case2}$q(G')\gg q(G)$, and if $(v,w)\in\CE'$ and we write them as $v=v'\prod_{p\in\CP'}p^{f'(p)}$ and $w=w'\prod_{p\in\CP'}p^{g'(p)}$, then $L_t(v',w')\ge 4$.
\end{enumerate}
\end{enumerate}
\end{prop}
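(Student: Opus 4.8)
\textbf{Proof proposal for Proposition \ref{prop:BoundOnEdges}.}

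The plan is to run the iterative ``compression'' machinery hinted at in the outline: starting from $G$ (with trivial set of primes), one repeatedly replaces the current GCD graph by a GCD subgraph of it in which one more prime has been incorporated into the multiplicative data $(\CP,f,g)$, while controlling how the quality $q(\cdot)$ changes. Concretely, I would iterate the following kind of step. Suppose we have a non-trivial GCD subgraph $G_j$ of $G$. If $\CR(G_j)=\emptyset$, we stop; otherwise pick a prime $p\in\CR(G_j)$. There will be two regimes, according to whether $p\in\CR^\sharp(G_j)$ or $p\in\CR^\flat(G_j)$. In the $\CR^\sharp$ case, the density-type statistics $\mu(\CV_{p^k})/\mu(\CV)$ and $\mu(\CW_{p^k})/\mu(\CW)$ are both very close to $1$ for a common $k$, so restricting to $\CV_{p^k},\CW_{p^k}$ loses essentially nothing in $\mu(\CV)\mu(\CW)$ and the edge density barely drops; this is exactly where the technical cleanup factor $\prod_p (1-1/p^{31/30})^{-10}$ and the $\prod_p(1-\un_{f=g\ge1}/p)^{-2}$ factor in the quality are designed to absorb the loss and still give $q(G_{j+1})\ge q(G_j)$. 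In the $\CR^\flat$ case, one applies the more substantial iteration steps (Propositions \ref{prop:IterationStep1}, \ref{prop:IterationStep2}, \ref{prop:SmallPrimes}, as organized by the reduction in Section \ref{sec:Iterative}) together with the high-degree and cosmetic cleanup lemmas (Lemmas \ref{lem:Cosmetic}, \ref{lem:HighDegreeSubgraph}) to pass to a subgraph that either makes genuine quality progress or improves the $L_t$ structure of the edges.

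The bookkeeping would track two things along the iteration. First, the quality: I would show $q(G_j)$ is non-decreasing up to the explicitly allowed small losses, and moreover that ``large'' primes in $\CR^\flat$ actually force a quality \emph{increment} by a definite factor (the mechanism behind case \ref{prop:Case1}, where one wins the extra $\delta t^{50}$). Second, the $L_t$-budget: since every edge of $G$ satisfies $L_t(v,w)\ge 10$, and each prime $p\ge t$ that we pull out of the GCD and into $\CP$ decreases the remaining quantity $\sum_{p\mid vw/\gcd(v,w)^2,\ p\ge t} 1/p$, I need to argue that the process terminates with either $\CR(G')=\emptyset$ (all relevant primes accounted for) or with the residual $L_t(v',w')$ still $\ge 4$ as in \ref{prop:Case2}. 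The dichotomy in part \ref{prop:Partd} comes precisely from whether the total quality gain accumulated during the iteration beats the factor $\delta^{-1}t^{-50}$ or not: if it does, we are in case \ref{prop:Case1}; if not, the anatomy-of-integers input (Lemmas \ref{lem:Tenenbaum}, \ref{lem:Anatomy}) guarantees that a positive proportion of the $L_t$-mass must still be ``unused'', giving \ref{prop:Case2}. Properties \ref{prop:Partb} and \ref{prop:Partc} (the minimum-degree condition) are then arranged by one final application of a high-degree subgraph lemma: passing to the subgraph spanned by vertices of near-average degree only costs a bounded factor in $\mu(\CV)\mu(\CW)$ and a factor $\gg 1$ in $\delta$, hence is absorbed into the quality comparison, and it does not reintroduce any primes into $\CR$.

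The main obstacle, and the technical heart of the argument, is maintaining \emph{simultaneous} control of all three quantities — the number of vertices $\mu(\CV)\mu(\CW)$, the edge density $\delta$, and the $L_t$-structure of the edges — through every iteration, since naive strategies (pure density increment, or pure vertex-count increment) each lose control of one of the others, as the outline explains. The quality $q(G)$ is the carefully engineered hybrid that threads this needle, so the crux is verifying the quality-monotonicity (with the precise allowed losses) at each type of step; that verification is what Propositions \ref{prop:IterationStep1}--\ref{prop:SmallPrimes} and Lemmas \ref{lem:Cosmetic}--\ref{lem:HighDegreeSubgraph} are built to supply, and the present proposition should follow by assembling them according to the iteration scheme of Section \ref{sec:Iterative}. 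A secondary subtlety is ensuring the iteration actually terminates: one must bound the number of steps (equivalently, the number of primes that can be pulled into $\CP$ before $\CR$ empties or the $L_t$-threshold in \ref{prop:Case2} is reached), using that each relevant prime contributes a definite amount to either the quality or the $L_t$-budget.
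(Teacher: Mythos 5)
Your overall architecture matches the paper's: start from the trivial-primes GCD graph, iterate Propositions \ref{prop:SmallPrimes}, \ref{prop:IterationStep1}, \ref{prop:IterationStep2} until $\CR$ empties, track the quality through each step, split on whether the accumulated quality gain exceeds roughly $\delta t^{50}$, and close with Lemma \ref{lem:HighDegreeSubgraph} to secure parts \ref{prop:Partb} and \ref{prop:Partc}. That skeleton is right, and the emphasis on quality as the hybrid statistic is exactly the point.

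However, there is a concrete gap in your mechanism for case \ref{prop:Partd}-\ref{prop:Case2}. You claim that when the quality gain is small, ``the anatomy-of-integers input (Lemmas \ref{lem:Tenenbaum}, \ref{lem:Anatomy}) guarantees that a positive proportion of the $L_t$-mass must still be unused.'' That is not how the paper arranges it, and as stated the step does not work: Lemmas \ref{lem:Tenenbaum} and \ref{lem:Anatomy} are \emph{not} used anywhere in the proof of Proposition \ref{prop:BoundOnEdges}; they enter only afterwards, in the deduction of Proposition \ref{prop:Graph} from Proposition \ref{prop:BoundOnEdges}, where the bound $L_t(v',w')\ge 4$ is \emph{fed into} the anatomy lemmas to win a factor $e^{-t}$. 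Inside Proposition \ref{prop:BoundOnEdges} the $L_t(v',w')\ge 4$ conclusion is secured by a quite different argument. Before emptying $\CR$, one invokes Lemma \ref{lem:Cosmetic} (this is the whole purpose of that lemma) to cut to a subgraph in which the contribution to $L_t(v,w)$ from primes in $\CR(G^{(2)})$ at height $\ge t^{50}$ is at most $1$, and a Mertens computation handles the primes in $[t,t^{50}]$, reducing the budget from $10$ to $5$. Separately, one must control the primes already placed into $\CP$ with $f\ne g$: their number $N$ is bounded via the very hypothesis of the case (that the quality gain is $<(t/10)^{50}\delta/10^{10^{3000}}$, forcing $2^N\le t^{50}$), and then $\sum_{p\in\CP_{\mathrm{diff}}}1/p\le N/t\le 1$. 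That reduces the budget from $5$ to $4$, giving $L_t(v',w')\ge 4$ on every surviving edge. No anatomy estimate appears; what does the work is the small-quality-gain hypothesis forcing $N$ to be small.

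Two secondary inaccuracies worth fixing. First, you assign Proposition \ref{prop:IterationStep1} (``more substantial iteration'') to the $\CR^\flat$ case and imply that the $\CR^\sharp$ case is handled by a simple restriction to $\CV_{p^k},\CW_{p^k}$; actually Proposition \ref{prop:IterationStep1} handles $\CR^\flat\ne\emptyset$ and Proposition \ref{prop:IterationStep2} (a genuinely delicate argument, not a free restriction) handles $\CR^\flat=\emptyset$, $\CR^\sharp\ne\emptyset$, while Proposition \ref{prop:SmallPrimes} is a one-time preprocessing for primes $\le 10^{2000}$. Second, Lemma \ref{lem:HighDegreeSubgraph} gives $q(G')\ge q(G)$ and $\delta'\ge\delta$ with \emph{no} loss at all, so there is nothing to ``absorb'' there.
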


Our task is to prove that Proposition \ref{prop:BoundOnEdges} implies Proposition \ref{prop:Graph}. To do so, we need a couple of preparatory lemmas that exploit the condition that $L_t(v',w')
\ge4$ in Case (d)-(ii) of Proposition \ref{prop:BoundOnEdges}.

\begin{lem}[Bounds on multiplicative functions]\label{lem:Tenenbaum}
	Let $k\in\N$ and write $\tau_k$ for the $k$-th divisor function. If $f$ is a multiplicative function such that $0\le f\le \tau_k$, then
	\[
	\sum_{n\le x} f(n) \ll_k x \cdot \exp\Big\{\sum_{p\le x}\frac{f(p)-1}{p}\Big\} .
	\]
\end{lem}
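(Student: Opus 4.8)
The plan is to reduce the estimate to a standard result on mean values of multiplicative functions, for instance the Landau--Selberg--Delange method or the elementary approach of Shiu/Tenenbaum. First I would note that the hypothesis $0 \le f \le \tau_k$ means $f$ is a \emph{sub-multiplicative-type} function dominated by a $k$-fold divisor function, so it satisfies the classical Wirsing-type conditions: in particular $\sum_{p \le x} f(p)/p \ll_k \log\log x$, and $\sum_{p, \nu \ge 2} f(p^\nu)/p^\nu$ converges since $f(p^\nu) \le \tau_k(p^\nu) = \binom{\nu+k-1}{k-1} \ll_k \nu^{k-1}$. These are exactly the hypotheses under which one has a good upper bound for $\sum_{n \le x} f(n)$.

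The cleanest route is to invoke a quoted theorem. I would cite Tenenbaum's book (\emph{Introduction to Analytic and Probabilistic Number Theory}, III.3, Theorem 3.5, or the Shiu/Halberstam--Richert bound): for a non-negative multiplicative $f$ with $f(p) \le k$ for all $p$ (more precisely $\sum_{p \le x} f(p)\log p \le \kappa x$ and a mild bound on prime powers), one has
\[
\sum_{n \le x} f(n) \ll_k \frac{x}{\log x} \exp\Big(\sum_{p \le x} \frac{f(p)}{p}\Big).
\]
Since by Mertens $\exp(\sum_{p\le x} 1/p) \asymp \log x$, the prefactor $1/\log x$ is absorbed and we get exactly $x \exp(\sum_{p\le x}(f(p)-1)/p)$, which is the claimed bound. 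The only thing to check is that the contribution of prime powers does not spoil the argument: writing $f = 1 * h$ in the multiplicative sense is not quite right since $f$ need not be $\ge$ the constant $1$, so instead I would either appeal directly to the Halberstam--Richert inequality (which only needs $0 \le f$, $f(p^\nu) \le A^\nu$, and $\sum_{p \le x} f(p)\log p \le \kappa x$) or deduce it via a Rankin-type / convolution argument controlled by the Euler product $\prod_{p \le x}(\sum_\nu f(p^\nu)/p^\nu) \ll_k \prod_{p\le x}(1 + f(p)/p) \asymp \exp(\sum_{p\le x} f(p)/p)$.

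Concretely, the key steps in order are: (1) verify $\sum_{p\le x} f(p)\log p \ll_k x$ from $f(p) \le k$ and Chebyshev/Mertens; (2) verify $\sum_p \sum_{\nu \ge 2} f(p^\nu)/p^\nu \ll_k 1$ from $f(p^\nu) \le \tau_k(p^\nu) \ll_k \nu^{k-1}$; (3) apply the Halberstam--Richert theorem (or Shiu's theorem, or Tenenbaum III.3.5) to get $\sum_{n\le x} f(n) \ll_k (x/\log x)\exp(\sum_{p\le x} f(p)/p)$; (4) apply Mertens' theorem in the form $\log x \asymp \exp(\sum_{p\le x} 1/p)$ to rewrite $(1/\log x)\exp(\sum_{p\le x} f(p)/p) \asymp \exp(\sum_{p\le x}(f(p)-1)/p)$. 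The main obstacle, such as it is, is purely bookkeeping: making sure the cited mean-value theorem is stated with hypotheses matching $0 \le f \le \tau_k$ (some versions require $f \le 1$, which fails here) and confirming that its implied constant depends only on $k$; once the right theorem is quoted, the deduction is immediate.
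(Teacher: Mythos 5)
Your proposal is correct and is essentially the paper's approach: the paper simply cites Theorem~14.2 of Koukoulopoulos's book \emph{The distribution of prime numbers}, where the statement appears verbatim, and your derivation from the Halberstam--Richert inequality together with Mertens' theorem (to trade the factor $1/\log x$ for $\exp(-\sum_{p\le x}1/p)$) is a correct and standard route to that same result, with all hypotheses verified to depend only on $k$.
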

\begin{proof}
This is \cite[Theorem 14.2, p. 145]{dk-book}.
\end{proof}

\begin{lem}[Few numbers with many prime factors]\label{lem:Anatomy}
	For $x,t\ge1$ and $c\in[1,10]$, we have
	\[
	\#\bigg\{n\le x: \sum_{\substack{p|n\\  p\ge t}}\frac{1}{p} \ge  c \bigg\} \ll x\exp\{ - t^{e^{c-1}}\}  \, ;
	\]
	the implied constant is absolute.
\end{lem}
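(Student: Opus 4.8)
The statement to prove is Lemma \ref{lem:Anatomy}: for $x,t\ge 1$ and $c\in[1,10]$,
\[
\#\Bigl\{n\le x:\ \sum_{\substack{p\mid n\\ p\ge t}}\frac1p\ge c\Bigr\}\ll x\exp\{-t^{e^{c-1}}\}.
\]
The natural approach is a Rankin-type (exponential moment) argument: choose a parameter $y\ge t$, introduce a weight $\lambda^{\omega_{[t,y]}(n)}$ for some $\lambda>1$, where $\omega_{[t,y]}(n)$ counts prime factors of $n$ in $[t,y]$, and couple it to the hypothesis. More precisely, on the set in question we have $\sum_{p\mid n,\ p\ge t}1/p\ge c$; after discarding a small part of the tail (primes $p>y$ contribute at most $\sum_{p>y}1/p \ll 1/\log y$ to that sum, which is $<c/2$ once $y$ is a suitable power of $t$, using $c\ge 1$ and Mertens), we get $\sum_{t\le p\le y,\ p\mid n}1/p\ge c/2$. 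This is still not directly $\omega_{[t,y]}(n)$, so the cleanest route is instead to weight by a multiplicative function tuned to $1/p$: consider $\sum_{n\le x} g(n)$ where $g$ is multiplicative, $g(p^a)=e^{\theta/p}$ for $t\le p\le y$ and $g(p^a)=1$ otherwise, with $\theta>0$ to be optimised. Then the set we want to bound has each element contributing at least $e^{\theta c/2}$ to $\sum_{n\le x} g(n)$, so its cardinality is at most $e^{-\theta c/2}\sum_{n\le x} g(n)$.

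\textbf{Key steps.} First I would fix $y=y(t,c)$ — something like $y=\exp(t^{\,e^{c-1}})$ or a comparable quantity — so that (i) the tail $\sum_{p>y}1/p\le c/2$ and (ii) the gain $e^{-\theta c/2}$ from the Rankin weight matches the target $\exp\{-t^{e^{c-1}}\}$ for the optimal $\theta$. Second, I would estimate $\sum_{n\le x} g(n)$ by Lemma \ref{lem:Tenenbaum}: since $0\le g\le \tau_k$ for a suitable fixed $k$ (here $g(p)=e^{\theta/p}\le e^{\theta/t}$, and choosing $\theta\asymp t$ keeps $g(p)$ bounded, so $k$ is absolute), Lemma \ref{lem:Tenenbaum} gives
\[
\sum_{n\le x} g(n)\ll x\exp\Bigl\{\sum_{p\le x}\frac{g(p)-1}{p}\Bigr\}=x\exp\Bigl\{\sum_{t\le p\le y}\frac{e^{\theta/p}-1}{p}\Bigr\}.
\]
Third, I would bound the exponent: for $t\le p\le y$ with $\theta/p$ bounded, $e^{\theta/p}-1\ll \theta/p$, so $\sum_{t\le p\le y}(e^{\theta/p}-1)/p\ll \theta\sum_{p\ge t}1/p^2\ll \theta/(t\log t)$, which is $o(\theta c)$ once $t$ is large; for small $t$ (say $t$ bounded) the whole statement is trivial since $\exp\{-t^{e^{c-1}}\}\gg 1$ and the count is $\le x$. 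Fourth, I would assemble: the cardinality is $\ll e^{-\theta c/2}\cdot x\exp\{O(\theta/(t\log t))\}\ll x\exp\{-\theta c/3\}$ say, and choosing $\theta$ as large as the boundedness of $g(p)$ allows — essentially $\theta\asymp t$, more carefully $\theta = t^{\,e^{c-1}}/c$ up to constants, with a check that $e^{\theta/t}$ stays bounded which forces the precise shape of the bound — yields $\ll x\exp\{-t^{e^{c-1}}\}$ after adjusting constants. The exponent $e^{c-1}$ is exactly what comes out of optimising $e^{-\theta c}$ against the constraint that $g(p)\le$ const on $[t,y]$ together with the Mertens tail condition; I would reverse-engineer $\theta$ and $y$ from the target.

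\textbf{Main obstacle.} The delicate point is the bookkeeping in the optimisation: one must choose $\theta$ and $y$ so that simultaneously (a) the discarded tail $\sum_{p>y}1/p$ is at most $c/2$ (hence $y$ at least a large power, growing with $c$), (b) $g(p)=e^{\theta/p}$ remains bounded uniformly on $[t,y]$ so Lemma \ref{lem:Tenenbaum} applies with an absolute $k$ and an absolute implied constant, (c) the error term $\sum_{t\le p\le y}(e^{\theta/p}-1)/p$ is genuinely negligible compared to $\theta c$, and (d) the resulting $e^{-\theta c/2}$ is $\le \exp\{-t^{e^{c-1}}\}$. Getting the double exponential $e^{c-1}$ to appear correctly — rather than, say, $c$ or $2^c$ — requires that the threshold $y$ below which we keep primes is chosen so that $\sum_{t\le p\le y}1/p$ is comfortably $\ge c$ (Mertens gives $\log\log y-\log\log t$, so $y\approx\exp((\log t)^{e^c})$-ish), and then $\theta$ is taken as large as $t/(\log\log y)$ roughly allows before $g(p)$ on the upper end $p\approx t$ blows up. I expect the trickiest line to be verifying that these choices are mutually compatible and produce precisely the stated exponent; everything else is a routine application of Lemma \ref{lem:Tenenbaum} and Mertens. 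I would also handle the trivial regimes ($t$ bounded, or $c$ such that the claimed bound exceeds $1$) separately at the outset so that all ``large $t$'' estimates are clean.
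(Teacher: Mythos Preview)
Your overall strategy (an exponential-moment/Rankin argument fed into Lemma \ref{lem:Tenenbaum}) matches the paper's, but your execution has a genuine gap and as written cannot produce the double-exponential saving.

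First, a minor slip: the claim ``primes $p>y$ contribute at most $\sum_{p>y}1/p\ll 1/\log y$'' is false, since $\sum_{p>y}1/p$ diverges. You could bound $\sum_{p\mid n,\,p>y}1/p$ for a fixed $n\le x$, but that brings in $x$ and is not the way to go.

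The real issue is the optimisation. With $g(p^a)=e^{\theta/p}$ supported on $p\ge t$ (or on $[t,y]$, which only makes things worse), boundedness of $g(p)$ for Lemma \ref{lem:Tenenbaum} forces $\theta\ll t$. That gives at best
\[
\#\{\cdots\}\ll e^{-\theta c}\,x\exp\Big\{\sum_{p\ge t}\frac{e^{\theta/p}-1}{p}\Big\}\ll x\,e^{-O(tc)},
\]
which for $c>1$ is far weaker than $x\exp\{-t^{e^{c-1}}\}$. Your suggestion $\theta=t^{e^{c-1}}/c$ would hit the target, but then $e^{\theta/t}$ is unbounded and Lemma \ref{lem:Tenenbaum} no longer applies with an absolute $k$. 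So the two requirements you list under (b) and (d) are not ``mutually compatible'' in your setup; this is not a bookkeeping issue but a structural one.

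The missing idea, which the paper uses, is to shift the \emph{threshold} rather than inflate $\theta$. Set $T=t^{e^{c-1}}$; by Mertens $\sum_{t\le p<T}1/p\le c-\tfrac12$ (for $t$ large), so the hypothesis forces $\sum_{p\mid n,\,p\ge T}1/p\ge\tfrac12$. Now run Rankin with weight $e^{2T/p}$ on primes $p\ge T$: this is bounded by $e^2$, Lemma \ref{lem:Tenenbaum} applies with $k=8$, the exponent $\sum_{p\ge T}(e^{2T/p}-1)/p\ll\sum_{p\ge T}T/p^2\ll1$, and the Rankin factor is $e^{-T}=\exp\{-t^{e^{c-1}}\}$. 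In short: use Mertens to trade the parameter $c$ for a larger prime threshold $T$, then take $\theta\asymp T$ at that new threshold.
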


\begin{proof} We may assume that $t$ is large enough, since the result is trivial when $t$ is bounded. Set $T=t^{e^{c-1}}$, so that $\sum_{t\le p<T}1/p \le c-1/2$ by Mertens' theorem. Hence 
	\begin{equation}
	\#\bigg\{n\le x:\sum_{\substack{p|n\\ p\ge t}}\frac{1}{p} \ge c \bigg\}	
	\le \#\bigg\{n\le x: \sum_{\substack{p|n\\ p\ge T}}\frac{1}{p}  \ge 1/2 \bigg\} 
	\le e^{-T} \sum_{n\le x} \prod_{\substack{p|n\\ p\ge T}} e^{2T/p}  .
	\label{eq:Rankin}
	\end{equation}
	We wish to apply Lemma \ref{lem:Tenenbaum} when $f$ is the multiplicative function with $f(p^\nu)=e^{2T/p}$ for $p\ge T$ and all $\nu\ge 1$, and $f(p^\nu)=1$ for $p<T$. In particular, $f(p^\nu)\le e^2\le 8$ for all prime powers $p^\nu$, so that $f\le \tau_8$. Thus 
	\[
	\sum_{n\le x} \prod_{p|n,\ p\ge T} e^{2T/p}  
	= \sum_{n\le x} f(n) 
	\ll x \cdot \exp\Big\{\sum_{p\le x} \frac{f(p)-1}{p}\Big\} 
	=  x \cdot \exp\Big\{\sum_{T\le p\le x} \frac{e^{2T/p}-1}{p}\Big\} .
	\]
Since $e^{2T/p}=1+O(T/p)$ for $p\ge T$, and $\sum_{p\ge T} T/p^2\ll1$, the sum of $(e^{2T/p}-1)/p$ over $p\in[T,x]$ is $O(1)$. We thus conclude that
	\[
	\#\Big\{n\le x:\sum_{\substack{p|n\\ p\ge t}}\frac{1}{p} \ge c \Big\}\ll e^{-T}x.
	\]
Since $T=t^{e^{c-1}}$, the lemma has been proven.
\end{proof}

\begin{proof}[Proof of Proposition \ref{prop:Graph} assuming Proposition \ref{prop:BoundOnEdges}] Fix $t\ge1$ and 
let $G$ be the GCD graph of Proposition \ref{prop:Graph} with set of edges $\CE\subseteq\CE_t$ (where $\CE_t$ is defined in Proposition \ref{prop:GCD}), weight $\mu(v)=\psi(v)\phi(v)/v$ and edge density $\delta$. 

If $\delta\ll 1/t$, then $\mu(\CE)\ll 1/t$ and so we are done. Therefore we may assume that 
\[
\delta\ge 1/t\quad\text{and}\quad t>10^{2000}.
\] 
Note that this implies that 
\[
t\ge 10\delta^{-1/50}.
\]

We apply Proposition \ref{prop:BoundOnEdges} to $G$ to find a GCD subgraph $G'=(\mu,\CV',\CW',\CE',\CP',f',g')$ of $G$ with edge density $\delta'$ satisfying either case \ref{prop:Partd}-\ref{prop:Case1} or \ref{prop:Partd}-\ref{prop:Case2} of its statement. In addition, we have that:
\begin{enumerate}
\item $\CR(G')=\emptyset$;
\item $\mu(\Gamma_{G'}(v))\ge(9\delta'/10)\mu(\CW')$ for all $v\in\CV'$;
\item $\mu(\Gamma_{G'}(w))\ge(9\delta'/10)\mu(\CV')$ for all $w\in\CW'$.
\end{enumerate}
Set
\[
a:=\prod_{p\in\CP'}p^{f'(p)}
\quad\text{and}\quad
b:=\prod_{p\in\CP'}p^{g'(p)}.
\]
The definition of a GCD graph implies that 
\[
a|v\quad\text{for all}\ v\in\CV',
\qquad b|w\quad \text{for all}\ w\in\CW'.
\] 
Moreover, since $\CR(G')=\emptyset$, and $p^{\min\{f'(p),g'(p)\}}\|\gcd(v,w)$ for all $(v,w)\in\CE'$, we have that 
\[
\gcd(v,w)=\gcd(a,b)
\quad\text{for all}\quad(v,w)\in\CE'.
\] 

Now, note that
\[
\prod_{p\in\CP'}p^{|f'(p)-g'(p)|} = \prod_{p\in\CP'}p^{\max\{f'(p),g'(p)\}-\min\{f'(p),g'(p)\}}= \frac{\lcm(a,b)}{\gcd(a,b)}
	= \frac{ab}{\gcd(a,b)^2} ,
\]
as well as 
\[
\prod_{p\in\CP'}\frac{1}{(1-\un_{f'(p)=g'(p)\ge1}/p)^2(1-1/p^{31/30})^{10}} 
	\ll \prod_{p\in\CP'}\frac{1}{(1-\un_{f'(p)=g'(p)\ge1}/p)^2} \le \frac{ab}{\phi(a)\phi(b)} .
\]
Consequently, from the definition of $q(\cdot)$, we find
\begin{align}
q(G') &=  (\delta')^{10}\mu(\CV')\mu(\CW')
	\prod_{p\in\CP'} \frac{p^{|f'(p)-g'(p)|}}{(1-\un_{f'(p)=g'(p)\ge1}/p)^2(1-1/p^{31/30})^{10}}\nonumber\\
	& \ll (\delta')^{10} \mu(\CV')\mu(\CW') \frac{ab}{\gcd(a,b)^2} \cdot  \frac{ab}{\phi(a)\phi(b)}\nonumber \\
	&= (\delta')^{9}\mu(\CE')  \frac{ab}{\gcd(a,b)^2} \cdot  \frac{ab}{\phi(a)\phi(b)} .\label{eq:QG'Bound}
\end{align}
Proposition \ref{prop:BoundOnEdges} offers a lower bound on $q(G')/q(G)$. Since
\[
q(G) = \delta^{10} \mu(\CV)\mu(\CV)= \delta^{9}\mu(\CE),
\]
we can obtain an upper bound on the size of $\mu(\CE)$ by estimating $q(G')$ from above.

\medskip

Note that 
\[
\CE'\subseteq\CE
\subseteq\{(v,w)\in\CV\times\CV: M(v,w)\le t\cdot \gcd(v,w)\},
\]
where we recall that $M(v,w)=\max\{v\psi(w),w\psi(v)\}$. 
Since $\gcd(v,w)=\gcd(a,b)$ for all $(v,w)\in\CE'$, we infer that
\[
\psi(v)\le \frac{t\cdot \gcd(a,b)}{w}
\quad\text{and}\quad 
\psi(w)\le \frac{t\cdot \gcd(a,b)}{v}
\quad\text{for all}\quad 
(v,w)\in\CE'.
\]
The vertex sets $\CV',\CW'$ are finite sets of positive integers. For each $v\in\CV'$, let $w_{\max}(v)$ be the largest integer in $\CW'$ such that $(v,w_{\max}(v))\in\CE'$. (This quantity is well-defined in virtue of property (b) above. In addition, we emphasise to the reader that `largest' refers to the size of elements as positive integers, and does not depend on the measure $\mu$.) Similarly, for each $w\in\CW'$, let $v_{\max}(w)$ be the largest element of $\CV'$ such that $(v_{\max}(w),w)\in\CE'$. Consequently,
\begin{equation}\label{psi ub}
\psi(v)\le \frac{t\cdot \gcd(a,b)}{w_{\max}(v)}
\quad\text{and}\quad 
\psi(w)\le \frac{t\cdot \gcd(a,b)}{v_{\max}(w)}
\quad\text{for all}\quad v\in \CV',\,w\in\CW'.
\end{equation}

Now, let $w_0$ be the largest integer in $\CW'$ and $\CE''=\{(v,w)\in\CE': (v,w_0)\in\CE'\}$. We then have
\begin{equation}\label{eq:w0}
w_{\max}(v)=w_0 \quad\text{for all}\quad(v,w)\in\CE'' .
\end{equation}
In addition, since $G'$ satisfies conditions (b) and (c) in the statement of Proposition \ref{prop:BoundOnEdges}, we have
\[
\mu(\CE'') 
	=\sum_{v\in \Gamma_{G'}(w_0)}\mu(v)\mu(\Gamma_{G'}(v))
	\ge \mu(\Gamma_{G'}(w_0))\cdot \frac{9\delta'\mu(\CW')}{10}
	\ge \bigg(\frac{9\delta'}{10}\bigg)^2 \mu(\CV')\mu(\CW')\ge \frac{\delta'\mu(\CE')}{2} .
\]

Substituting this bound into \eqref{eq:QG'Bound}, we find
\begin{equation}\label{eq:q(G') bound}
q(G') \ll (\delta')^{8} \mu(\CE'')  \frac{a b}{\phi(a)\phi(b)}\cdot \frac{a b}{\gcd(a,b)^2} 
	\le \mu(\CE'')  \frac{ab}{\gcd(a,b)^2} \cdot \frac{ab}{\phi(a)\phi(b)} .
\end{equation}
Here we used the trivial bound $\delta'\le 1$ in the second inequality.
In addition,
\als{
\mu(\CE'')  = \sum_{(v,w)\in\CE''} \frac{\psi(v)\phi(v)}{v} \cdot \frac{\psi(w)\phi(w)}{w} .
}
Since $a|v$ and $b|w$, we have $\phi(v)/v\le \phi(a)/a$ and $\phi(w)/w\le \phi(b)/b$. Therefore 
\[
\mu(\CE'')\le \frac{\phi(a)\phi(b)}{a b}\sum_{(v,w)\in\CE''}\psi(v)\psi(w).
\]
Together with \eqref{psi ub}, \eqref{eq:w0} and \eqref{eq:q(G') bound}, this implies that
\begin{equation}
q(G') \ll  t^2ab \sum_{(v,w)\in\CE''} \frac{1}{v_{\max}(w) w_0}  
	\le  t^2ab \sum_{(v,w)\in\CE'} \frac{1}{v_{\max}(w) w_0}  .
\label{eq:G'Quality}
\end{equation}
We now split our argument depending on whether \ref{prop:Partd}-\ref{prop:Case1} or \ref{prop:Partd}-\ref{prop:Case2} of Proposition \ref{prop:BoundOnEdges} holds.

\bigskip

\noindent
\textit{Case 1: \ref{prop:Partd}-\ref{prop:Case1} of Proposition \ref{prop:BoundOnEdges} holds.}

\medskip

In this case we have $q(G')\gg \delta t^{50} q(G)$. 
Writing $v=v' a$ and $w=w' b$, we find that
\begin{align*}
\sum_{(v,w)\in\CE'} \frac{1}{v_{\max}(w) w_0}
&\le \sum_{w'\le w_0/b}\frac{1}{ w_0 v_{\max}(b w')}
	\sum_{v'\le v_{\max}(b w')/a}1\\
&\le \sum_{w'\le w_0/b}\frac{1}{ w_0 v_{\max}(b w')}\cdot \frac{v_{\max}(bw')}{a} \\
&\le \frac{1}{ab} .
\end{align*}
Together with \eqref{eq:G'Quality}, this implies that
\[
q(G')\ll t^2.
\]
Since $q(G')\gg \delta t^{50} q(G)$ in this case, and since $\delta \ge 1/t$, this gives
\[
\mu(\CE)  = \delta^{-9}q(G) \ll \delta^{-10}t^{-50}q(G')\ll \frac{1}{\delta^{10}t^{48}} \ll \frac{1}{t} .
\]
This establishes Proposition \ref{prop:Graph} in this case.

\bigskip

\noindent
\textit{Case 2: \ref{prop:Partd}-\ref{prop:Case2} of Proposition \ref{prop:BoundOnEdges} holds.}

\medskip

Write $v=v' a$ and $w=w' b$. In this case 
\begin{equation}
	\label{eq:L_t-lowerbound}
	L_t(v',w')\ge 4\quad\text{whenever}\quad(v,w)\in\CE'.
\end{equation}
We also have $q(G')\gg q(G)$. 

From \eqref{eq:L_t-lowerbound}, we see that either 
\[
\sum_{p|v',\,p\ge t}\frac{1}{p}\ge 2
\quad\text{or}
\quad
\sum_{p|w',\,p\ge t}\frac{1}{p}\ge 2
\]
whenever $(v,w)\in\CE'$.
Consequently,
\als{
\sum_{(v,w)\in\CE'} \frac{1}{v_{\max}(w) w_0}  
	&\le  \mathop{\sum\sum}_{\substack{w'\le w_0/b,\ v'\le v_{\max}(b w')/a\\ \sum_{p|v' w',\,p\ge t}1/p\ge 4}}  \frac{1}{v_{\max}(b w') w_0}  \\
	&\le S_1+S_2,
}
where
\begin{align*}
S_1&=\mathop{\sum\sum}_{\substack{w'\le w_0/b,\ v'\le v_{\max}(b w')/a\\ \sum_{p|v',\,p\ge t}1/p\ge 2}}  \frac{1}{v_{\max}(b w') w_0} ,\\
S_2&=\mathop{\sum\sum}_{\substack{w'\le w_0/b,\ v'\le v_{\max}(b w')/a\\ \sum_{p|w',\,p\ge t}1/p\ge 2}}  \frac{1}{v_{\max}(b w') w_0}  .
\end{align*}
For $S_1$, we note that 
\als{
S_1&\le \sum_{w'\le w_0/b} \frac{1}{w_0v_{\max}(b w')} \sum_{\substack{v'\le v_{\max}(b w')/a \\ \sum_{p|v',\,p\ge t}1/p\ge 2}} 1\\
	&\ll \sum_{w'\le w_0/b} \frac{1}{w_0v_{\max}(b w')} \cdot \frac{v_{\max}(b w')/a}{t^2e^t} \\
	&\le \frac{1}{ab t^2e^t} 
}
by Lemma \ref{lem:Anatomy}, since $\exp(t^e)\gg\exp(t)t^2$. 
Similarly for $S_2$, we find that
\als{
S_2&\le\sum_{\substack{w'\le w_0/b \\ \sum_{p|w',\,p\ge t}1/p\ge 2}} \frac{1}{w_0v_{\max}(b w')} \sum_{v'\le v_{\max}(b w')/a}1\\
	&\le \sum_{\substack{w'\le w_0/b \\ \sum_{p|w',\,p\ge t}1/p\ge 2}} \frac{1}{aw_0} \\
	&\ll \frac{1}{ab t^2e^t} ,
}
by applying Lemma \ref{lem:Anatomy} once again. Substituting these bounds into \eqref{eq:G'Quality}, we conclude that
\[
q(G') \ll e^{-t}.
\]
Since we have $q(G)\ll q(G')$ and $\delta\ge 1/t$, this gives
\[
\mu(\CE)  = \delta^{-9}q(G) \ll t^{9} q(G')\ll t^{9} e^{-t} \ll 1/t. 
\]
This establishes Proposition \ref{prop:Graph} in all cases.
\end{proof}

Thus we are left to prove Proposition \ref{prop:BoundOnEdges}.

\section{Reduction of Proposition \ref{prop:BoundOnEdges} to three iterative propositions}\label{sec:Iterative}

We will prove Proposition \ref{prop:BoundOnEdges} by an iterative argument, where we repeatedly find GCD subgraphs with progressively nicer properties. In this section we reduce the proof to five technical iterative statements, given by three key propositions (Propositions \ref{prop:IterationStep1}-\ref{prop:SmallPrimes}) and two auxiliary lemmas (Lemmas \ref{lem:Cosmetic}-\ref{lem:HighDegreeSubgraph}) given below.

\begin{prop}[Iteration when $\CR^\flat(G)\ne\emptyset$]\label{prop:IterationStep1}
Let $G=(\mu,\CV,\CW,\CE,\CP,f,g)$ be a GCD graph with edge density $\delta>0$ such that 
\[
\CR(G)\subseteq\{p>10^{2000}\}\quad\text{and}\quad \CR^\flat(G)\neq\emptyset.
\] 
Then there is a GCD subgraph $G'$ of $G$ with edge density $\delta'>0$ and multiplicative data $(\CP',f',g')$ such that
	\[
	\CP\subsetneq\CP'\subseteq\CP\cup\CR(G),
	\quad
	\CR(G')\subsetneq \CR(G),
	\quad 
	\min\bigg\{1,\frac{\delta'}{\delta}\bigg\} \cdot \frac{q(G')}{q(G)}\ge 2^N ,
	\]
	where $N=\#\{p\in\CP'\setminus \CP:f'(p)\neq g'(p)\}$.
\end{prop}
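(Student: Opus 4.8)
The plan is to pick a prime $p\in\CR^\flat(G)$ and find the right pair of exponents $(k,\ell)$ so that the special GCD subgraph $G_{p^k,p^\ell}$ (in the sense of Definition \ref{def:special graphs}) has all the required properties; then, if necessary, iterate this one-prime step finitely many times to shrink $\CR^\flat(G)$ further and achieve the full bound $\prod 2$ over the primes that get added with $f'(p)\ne g'(p)$. First I would observe that since $p\in\CR(G)$, there is an edge $(v,w)\in\CE$ with $p\mid\gcd(v,w)$, so among the pairs $(k,\ell)$ with $k,\ell\ge1$ and $\CE_{p^k,p^\ell}\ne\emptyset$ there is at least one; in particular the subgraph $G_{p^k,p^\ell}$ is non-trivial for a suitable choice. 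The key point is how to choose $(k,\ell)$ to guarantee a \emph{quality increment}. Since $p\in\CR^\flat(G)$, by definition for \emph{every} $k$ we have $\min\{\mu(\CV_{p^k})/\mu(\CV),\mu(\CW_{p^k})/\mu(\CW)\}<1-10^{40}/p$; this lack of concentration is exactly what should let us win. The quality of $G_{p^k,p^\ell}$ relative to $G$ picks up a factor $p^{|k-\ell|}/\bigl((1-\un_{k=\ell\ge1}/p)^2(1-1/p^{31/30})^{10}\bigr)$ from the new prime, multiplied by $(\delta'/\delta)^{10}\cdot \mu(\CV_{p^k})\mu(\CW_{p^\ell})/(\mu(\CV)\mu(\CW))$. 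So everything reduces to a pigeonhole/averaging argument over the decomposition $\CV=\bigsqcup_k\CV_{p^k}$, $\CW=\bigsqcup_\ell\CW_{p^\ell}$, and the induced partition of $\CE$ into the $\CE_{p^k,p^\ell}$: since $\mu(\CE)=\sum_{k,\ell}\mu(\CE_{p^k,p^\ell})$, some block has $\mu(\CE_{p^k,p^\ell})\gg \mu(\CE)\cdot(\text{something})$, and I want to show one can choose the block so that the gain $p^{|k-\ell|}$ (when $k\ne\ell$) or the gain from the $\flat$-condition (when $k=\ell$) beats the losses $\mu(\CV_{p^k})/\mu(\CV)\le1$, $\mu(\CW_{p^\ell})/\mu(\CW)\le1$ raised to the ninth power coming from $q(G')=(\delta')^9\mu(\CE')\cdots$.

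The cleanest route is to use the Cauchy–Schwarz / second-moment shape of the quality: writing $q(G)=\mu(\CE)^{10}/(\mu(\CV)^9\mu(\CW)^9)\cdot\Pi$ with $\Pi$ the prime product, one has
\[
q(G_{p^k,p^\ell})=\frac{\mu(\CE_{p^k,p^\ell})^{10}}{\mu(\CV_{p^k})^9\mu(\CW_{p^\ell})^9}\cdot\Pi\cdot\frac{p^{|k-\ell|}}{(1-\un_{k=\ell\ge1}/p)^2(1-1/p^{31/30})^{10}}.
\]
So $q(G_{p^k,p^\ell})/q(G)$ equals $\bigl(\mu(\CE_{p^k,p^\ell})/\mu(\CE)\bigr)^{10}\cdot\bigl(\mu(\CV)/\mu(\CV_{p^k})\bigr)^9\bigl(\mu(\CW)/\mu(\CW_{p^\ell})\bigr)^9$ times the explicit $p$-factor. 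I would split into two cases. \emph{Case $k\ne\ell$:} here the $p$-factor is $\ge p/(1-1/p^{31/30})^{10}\ge p/2$ (using $p>10^{2000}$), and I just need one block with $k\ne\ell$ carrying a reasonable share of the edge mass relative to the vertex masses; a diagonal-vs-off-diagonal dichotomy does it — either enough edge mass is off-diagonal (take the largest such block and bound $\mu(\CE_{p^k,p^\ell})/\mu(\CE)\ge (\text{number of blocks})^{-1}$ suitably, or better use that the relevant ratios telescope), or essentially all edge mass is on the diagonal $k=\ell$, which forces concentration of both $\CV$ and $\CW$ on a common $p^k$, contradicting $p\in\CR^\flat(G)$. \emph{Case all edges diagonal:} then $\sum_k\mu(\CE_{p^k,p^k})=\mu(\CE)$, and the $\flat$-condition says no single $\CV_{p^k}$ (resp.\ $\CW_{p^k}$) exceeds $1-10^{40}/p$ of $\mu(\CV)$ (resp.\ $\mu(\CW)$); feeding this into $\bigl(\mu(\CV)/\mu(\CV_{p^k})\bigr)^9\bigl(\mu(\CW)/\mu(\CW_{p^k})\bigr)^9 / (1-1/p)^{-2}\cdots$ against $\bigl(\mu(\CE_{p^k,p^k})/\mu(\CE)\bigr)^{10}$ and optimizing (the worst case is two blocks each of density $\approx1/2$) yields a factor $\ge 2$ — here the large constant $10^{40}$ provides ample room. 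In either case the new graph adds exactly one prime, with $f'(p)\ne g'(p)$ precisely in Case $k\ne\ell$, so $2^N$ is $2$ or $1$ accordingly, matching the claim; and the $\delta'/\delta$ truncation is harmless because we only ever used $\delta'\le 1$, i.e.\ $\min\{1,\delta'/\delta\}\cdot q(G')/q(G)\ge (\mu(\CE')/\mu(\CE))^{10}\cdot(\cdots)$ still with the gained factor.

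Finally, since after one step $\CR(G')\subsetneq\CR(G)$ but possibly $\CR^\flat(G')$ is still nonempty, I would close the argument by noting the one-step result can be applied repeatedly (each application strictly shrinks $\CR$, so the process terminates), multiplying the quality gains: the composite subgraph $G'$ has $\CP'=\CP\sqcup\{p_1,\dots,p_m\}$, the quality ratio is the product of the individual ratios each of which is $\ge 2$ when $f'(p_i)\ne g'(p_i)$ and $\ge1$ otherwise, hence $\ge 2^N$; and $\min\{1,\delta'/\delta\}$ is absorbed the same way at each step (using $\min\{1,xy\}\ge\min\{1,x\}\min\{1,y\}$). One subtlety to be careful about: I must stop the iteration as soon as the one-step hypothesis $\CR^\flat(\cdot)\ne\emptyset$ could fail — but the proposition only requires $\CR(G')\subsetneq\CR(G)$, $\CP\subsetneq\CP'$, so a single application already suffices for the stated conclusion, and the iteration remark is only needed if one wants to also reach $\CR^\flat(G')=\emptyset$ (which is handled later by combining with the other propositions). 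The main obstacle I anticipate is bookkeeping the inequality in Case $k=\ell$ cleanly enough that the constant $2$ comes out with room to spare; the $\flat$-condition with its enormous constant $10^{40}$ is the crucial input that makes this routine rather than delicate.
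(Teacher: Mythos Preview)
Your overall plan matches the paper's: pick $p\in\CR^\flat(G)$, set $G'=G_{p^k,p^\ell}$ for a well-chosen $(k,\ell)$; then $\CP'=\CP\cup\{p\}$, so $N\in\{0,1\}$ and one must show the displayed ratio is $\ge1$ when $k=\ell$ and $\ge2$ when $k\ne\ell$. But the execution has two genuine gaps. First, you misread the $\flat$-condition: $p\in\CR^\flat(G)$ says that for every $k$ \emph{not both} of $\mu(\CV_{p^k})/\mu(\CV)$ and $\mu(\CW_{p^k})/\mu(\CW)$ exceed $1-10^{40}/p$; it does not say neither does. Relatedly, your claim that ``essentially all edge mass on the diagonal forces concentration on a common $p^k$, contradicting $\flat$'' is false (take $\CV,\CW$ each split half--half between exponents $0$ and $1$ with all edges diagonal), and your attempt to extract a factor $\ge2$ in the diagonal case from $\flat$ is both incorrect and unnecessary: when $k=\ell$ one has $N=0$ and only needs the ratio $\ge1$, which holds without any appeal to $\flat$.

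Second, and more seriously, your pigeonhole in the off-diagonal case is too crude. Picking the largest off-diagonal block with a bound of order $(\text{number of blocks})^{-1}$ fails because the number of $p$-adic classes is unbounded, and ``the ratios telescope'' is not a substitute. The paper's replacement is a tailored combinatorial inequality (Lemma~\ref{lem:EdgeSets}): there always exist $k,\ell$ with $\alpha_k,\beta_\ell>0$ such that either $k=\ell$ and $\mu(\CE_{p^k,p^k})/\mu(\CE)\ge(\alpha_k\beta_k)^{9/10}$, or $k\ne\ell$ and $\mu(\CE_{p^k,p^\ell})/\mu(\CE)\ge S/(2^{|k-\ell|/20}\cdot 1000)$ where $S=\alpha_k(1-\beta_k)+\beta_k(1-\alpha_k)+\alpha_\ell(1-\beta_\ell)+\beta_\ell(1-\alpha_\ell)$. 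The first alternative plugs straight into the quality formula to give $q(G')/q(G)\ge1$ and $\delta'/\delta\ge1$. The second, together with $S\ge\alpha_k\beta_\ell$, yields $\min\{1,\delta'/\delta\}\cdot q(G')/q(G)\ge S^2(p/2^{11/20})^{|k-\ell|}/(1000^{11}\alpha_k\beta_\ell)$; if this is $<2$ one deduces $S\le 10^{34}/p$, and a short computation then forces $\alpha_k,\beta_k\ge1-10^{40}/p$ for some common $k$ --- precisely what $p\in\CR^\flat(G)$ excludes. The specific shape of this pigeonhole (lower bounds in terms of the cross-terms $\alpha_k(1-\beta_k)$, with a geometric penalty in $|k-\ell|$) is the missing idea without which the implication ``failure of quality gain $\Rightarrow$ concentration'' cannot be made.
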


\begin{prop}[Iteration when $\CR^\flat(G)=\emptyset$]\label{prop:IterationStep2}
Let $G=(\mu,\CV,\CW,\CE,\CP,f,g)$ be a GCD graph with edge density $\delta>0$ such that 
\[
\CR(G)\subseteq\{p> 10^{2000}\},
\quad
\CR^\flat(G)=\emptyset ,
\quad
\CR^\sharp(G)\neq\emptyset.
\] 
Then there is a GCD subgraph $G'=(\mu,\CV',\CW',\CE',\CP',f',g')$ of $G$ such that
\[
	\CP\subsetneq\CP'\subseteq\CP\cup\CR(G),
\quad
\CR(G')\subsetneq \CR(G),
\quad 
q(G')\ge q(G).
\]
\end{prop}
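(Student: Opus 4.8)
The plan is to deduce the proposition quickly from a single quality-increment lemma (the role played by Lemma~\ref{lem:MainLem2}), and to put the real effort there. Since $\CR^\flat(G)=\emptyset$ we have $\CR(G)=\CR^\sharp(G)$, so fix a prime $p\in\CR^\sharp(G)$; by hypothesis $p>10^{2000}$, and by the definition of $\CR^\sharp$ there is an exponent $k\in\Z_{\ge0}$ with
\[
\frac{\mu(\CV_{p^k})}{\mu(\CV)}\ge 1-\frac{10^{40}}{p}
\qquad\text{and}\qquad
\frac{\mu(\CW_{p^k})}{\mu(\CW)}\ge 1-\frac{10^{40}}{p}.
\]
The natural candidate is $G':=G_{p^k,p^k}$ in the notation of Definition~\ref{def:special graphs}: it adjoins $p$ to $\CP$ with $f'(p)=g'(p)=k$, so $\CP\subsetneq\CP'\subseteq\CP\cup\CR(G)$, and $\CR(G')\subseteq\CR(G)\setminus\{p\}\subsetneq\CR(G)$, since passing to subsets creates no new primes occurring in GCDs while $p$ is now accounted for. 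Thus the whole game is the quality comparison, which from the formula for $q(\cdot)$ reads
\[
\frac{q(G_{p^k,p^k})}{q(G)}
=\Bigl(\frac{\delta(G_{p^k,p^k})}{\delta(G)}\Bigr)^{\!10}\cdot\frac{\mu(\CV_{p^k})\mu(\CW_{p^k})}{\mu(\CV)\mu(\CW)}\cdot\frac{1}{(1-\un_{k\ge1}/p)^2(1-1/p^{31/30})^{10}}.
\]
The last two factors are $\ge(1-10^{40}/p)^2$ and $\ge1$, respectively, so it would suffice to show the edge density is essentially not lost, e.g. $\delta(G_{p^k,p^k})\ge\delta(G)(1-10^{40}/p)^{-1/5}$.

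The difficulty — and this is where I expect essentially all of the work to live — is that $G_{p^k,p^k}$ \emph{on the nose} need not achieve this. The atypical vertex sets $\CV\setminus\CV_{p^k}$ and $\CW\setminus\CW_{p^k}$ have $\mu$-measure at most $10^{40}\mu(\CV)/p$ and $10^{40}\mu(\CW)/p$, but when $\delta(G)$ is small these tiny sets could a priori carry a constant proportion of $\mu(\CE)$, and losing even a constant factor of $\delta$ is fatal because of the tenth power. So I would run a dichotomy. If the edges meeting the atypical vertices account for only an $O(1/p)$-fraction of $\mu(\CE)$ — which is what one gets once one also rules out anomalously high degrees on those vertices, via an ``edges from a small vertex set'' estimate — then $\delta(G_{p^k,p^k})\ge\delta(G)(1-O(1/p))$ and $G'=G_{p^k,p^k}$ works. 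Otherwise, a non-negligible share of $\mu(\CE)$ runs between a $\mu$-measure $\le10^{40}/p$ vertex set on one side and the rest on the other; then the corresponding induced subgraph (on some $\CV_{p^j}$ or $\CW_{p^\ell}$ with exponent $\ne k$) has its edge density boosted by a factor of order $p/10^{40}$, and passing to it — possibly recursing to fix the $p$-adic valuation on the smaller side, or using an off-diagonal block $G_{p^j,p^\ell}$ whose quality carries the factor $p^{|f'(p)-g'(p)|}\ge p$ — produces a net quality gain, since $p\gg10^{40}$ absorbs every loss. Packaging these cases precisely is what Lemmas~\ref{lem:UnbalancedSetEdges1}, \ref{lem:UnbalancedSetEdges2}, \ref{lem:NoSmallSetEdges} and \ref{lem:SmallSetEdges} are for, and combining them into a single quality-increment statement is Lemma~\ref{lem:MainLem2}; the proposition then drops out immediately.

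In short, the main obstacle is making the dichotomy quantitative in the regime of small $\delta(G)$: ``few edges lost'' is delicate there, and one must balance the $\delta^{10}$-sensitivity of the quality against $1/p$-sized gains, which forces one to use sharp edge-count estimates on unbalanced and small vertex subsets rather than the trivial bounds $\mu(\CE')\le\mu(\CV')\mu(\CW')$.
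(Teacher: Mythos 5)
Your overall architecture matches the paper's: since $\CR^\flat(G)=\emptyset$, every prime of $\CR(G)$ lies in $\CR^\sharp(G)$ and exceeds $10^{2000}$; fix such a $p$ with its exponent $k$, and reduce everything to a single quality-increment lemma (the paper's Lemma \ref{lem:MainLem2}) proved by a dichotomy between a near-diagonal subgraph and a heavy off-diagonal block. You also name the right auxiliary lemmas. But the quantitative heart of the dichotomy is missing, and as sketched its two branches do not meet. In your branch (i), the only \emph{guaranteed} gain for $G_{p^k,p^k}$ is the factor $(1-\un_{k\ge1}/p)^{-2}(1-1/p^{31/30})^{-10}\approx 1+2/p$ (the vertex-measure factors $(\mu(\CV)/\mu(\CV_{p^k}))^{9}$ and $(\mu(\CW)/\mu(\CW_{p^k}))^{9}$ are $\ge1$ but can equal $1$), so against the tenth power on the edge ratio you can only afford to discard a fraction $\le 1/(5p)$ of $\mu(\CE)$ --- not an ``$O(1/p)$-fraction'' with an unspecified constant. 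The edges you must discard include all of $\CE(\CV_{p^k},\CW_{p^{k\pm1}})$ and $\CE(\CV_{p^{k\pm1}},\CW_{p^k})$: the unbalanced-set lemmas only reach exponents with $|\ell-k|\ge2$, and Lemma \ref{lem:NoSmallSetEdges} only controls the small-times-small corner, not edges between the large set $\CV_{p^k}$ and a small block. A block such as $\CE(\CV_{p^k},\CW_{p^{k-1}})$ with $\mu(\CW_{p^{k-1}})=B\mu(\CW)/p$, $B\le10^{40}$, can carry up to a $(B/p)/\delta$-fraction of $\mu(\CE)$. Your branch (ii) only activates when that block carries at least roughly $B^{9/10}/p$ of $\mu(\CE)$ (so that $q(G_{p^k,p^{k-1}})/q(G)\gtrsim \epsilon^{10}p^{10}/B^{9}\ge1$), leaving an unbridged gap between $1/(5p)$ and $B^{9/10}/p$, which is enormous when $B$ is large.

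The paper closes this gap with three devices you would need to supply. First, the diagonal graph is enlarged to $G^+$ with vertex sets $\CV_{p^k}\cup\CV_{p^{k+1}}$ and $\CW_{p^k}\cup\CW_{p^{k+1}}$ (legitimate with $f^+(p)=g^+(p)=k$, since only divisibility, not exact divisibility, is required when $f=g$), so no edges at exponent $k+1$ are lost at all; only the exponent-$(k-1)$ blocks must be discarded. Second, discarding those blocks, of measures $A\mu(\CV)/p$ and $B\mu(\CW)/p$, is offset by the guaranteed gain $(\mu(\CV)/\mu(\CV^+))^{9}(\mu(\CW)/\mu(\CW^+))^{9}\ge(1-A/p)^{-9}(1-B/p)^{-9}$ --- a gain your accounting misses because you treat the vertex-measure ratio only through its upper bound $(1-10^{40}/p)^{-2}$ rather than tying it to the mass actually removed. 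Third, the arithmetic-geometric mean inequality $(9A+1)/10\ge A^{9/10}$ is exactly what makes this gain, together with half of the $(1-1/p)^{-2}$ factor, meet the off-diagonal activation threshold $A^{9/10}/p$; this is the step where the $\phi$-weight factor in the definition of quality is indispensable, and it is the content your sketch defers to Lemma \ref{lem:MainLem2} without proving.
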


Propositions \ref{prop:IterationStep1} and \ref{prop:IterationStep2} deal with large primes. We need a complementary result that handles the small primes.

\begin{prop}[Bounded quality loss for small primes]\label{prop:SmallPrimes}
Let $G=(\mu,\CV,\CW,\CE,\emptyset,f_\emptyset,g_\emptyset)$ be a GCD graph with edge density $\delta>0$ and trivial set of primes. Then there is a GCD subgraph $G'=(\mu,\CV',\CW',\CE',\CP',f',g')$ of $G$ with edge density $\delta'>0$ such that
\[
\CP'\subseteq\{p\le 10^{2000}\},
\quad
\CR(G')\subseteq\{p> 10^{2000}\},
\quad
\min\bigg\{1,\frac{\delta'}{\delta}\bigg\} \cdot \frac{q(G')}{q(G)}\ge \frac{1}{10^{10^{3000}}} .
\]
\end{prop}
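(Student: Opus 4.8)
Here is the plan I would follow.

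\medskip

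\noindent\emph{Reduction to a one‑prime estimate.} The plan is to deal with the primes $p\le 10^{2000}$ one at a time. Enumerate them as $p_1<p_2<\dots<p_r$ with $r=\pi(10^{2000})$, set $G^{(0)}=G$, and for $i=1,\dots,r$ let $G^{(i)}=(G^{(i-1)})_{p_i^{k_i},p_i^{\ell_i}}$ for exponents $(k_i,\ell_i)\in\Z_{\ge0}^2$ to be chosen; this is legitimate because the set of primes of $G^{(i-1)}$ is exactly $\{p_1,\dots,p_{i-1}\}$, so $p_i$ is not among them, and $G_{p^k,p^\ell}$ is always a GCD subgraph. Taking $G'=G^{(r)}$ we get $\CP'=\{p\le 10^{2000}\}$, hence no prime $\le 10^{2000}$ can lie in $\CR(G')$, so $\CR(G')\subseteq\{p>10^{2000}\}$ and $\CP'\subseteq\{p\le10^{2000}\}$ as required. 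Since $\delta(G^{(r)})/\delta(G)=\prod_i\delta(G^{(i)})/\delta(G^{(i-1)})$, the quality telescopes, and $\min\{1,\prod_i x_i\}\prod_i y_i\ge\prod_i(\min\{1,x_i\}y_i)$ for positive $x_i,y_i$, it is enough to show that at each step $G^{(i)}$ is non‑trivial and
\[
\min\Bigl\{1,\tfrac{\delta(G^{(i)})}{\delta(G^{(i-1)})}\Bigr\}\cdot\frac{q(G^{(i)})}{q(G^{(i-1)})}\ \ge\ c_0
\]
for an absolute constant $c_0>0$; then the claimed bound follows from $c_0^{\,r}\ge c_0^{\,10^{2000}}\ge 10^{-10^{3000}}$ provided $c_0\ge 10^{-17}$, say.

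\medskip

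\noindent\emph{The one‑prime step.} Fix a GCD graph $H=(\mu,\CV,\CW,\CE,\CP,f,g)$ with $p\notin\CP$ and $\delta=\delta(H)>0$, and write $V=\mu(\CV)$, $W=\mu(\CW)$, $E=\mu(\CE)$, $V_k=\mu(\CV_{p^k})$, $W_\ell=\mu(\CW_{p^\ell})$, $E_{k\ell}=\mu(\CE_{p^k,p^\ell})$, $\delta_{k\ell}=E_{k\ell}/(V_kW_\ell)$. Using the Remark's formula for the quality, every factor of $q(H_{p^k,p^\ell})$ indexed by $\CP$ cancels against $q(H)$, and the new prime‑$p$ factor $p^{|k-\ell|}(1-\un_{k=\ell\ge1}/p)^{-2}(1-p^{-31/30})^{-10}$ is $\ge p^{|k-\ell|}\ge1$, so
\[
\frac{q(H_{p^k,p^\ell})}{q(H)}\ \ge\ \Bigl(\frac{E_{k\ell}}{E}\Bigr)^{10}\Bigl(\frac{VW}{V_kW_\ell}\Bigr)^{9}p^{|k-\ell|}.
\]
Since $\delta_{k\ell}/\delta=(E_{k\ell}/E)(VW/(V_kW_\ell))$, a short case check on whether $\delta_{k\ell}\gtrless\delta$ gives $\min\{1,\delta_{k\ell}/\delta\}\cdot(E_{k\ell}/E)^{10}(VW/(V_kW_\ell))^{9}\ge (E_{k\ell}/E)^{11}(VW/(V_kW_\ell))^{9}$, and therefore
\[
\min\{1,\delta_{k\ell}/\delta\}\cdot\frac{q(H_{p^k,p^\ell})}{q(H)}\ \ge\ \Phi_{k\ell}:=\Bigl(\frac{E_{k\ell}}{E}\Bigr)^{11}\Bigl(\frac{VW}{V_kW_\ell}\Bigr)^{9}p^{|k-\ell|}.
\]
We must produce $(k,\ell)$ with $\Phi_{k\ell}\gg 1$; any such pair automatically has $E_{k\ell}>0$, so $H_{p^k,p^\ell}$ is non‑trivial.

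\medskip

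\noindent\emph{The apparent obstacle, and why it evaporates.} The worry is that the $p$‑adic valuations of the elements of $\CV$ and $\CW$ may take arbitrarily many distinct values, so that no exponent pair carries a large proportion of the edges; this is what makes a crude pigeonhole useless. It is handled as follows. Let $M=\max_{k,\ell}\Phi_{k\ell}$ (attained, since $\CV,\CW$ are finite). Then $E_{k\ell}\le M^{1/11}E\,(V_kW_\ell/(VW))^{9/11}p^{-|k-\ell|/11}$ for every pair; summing over all pairs (using $\CE=\bigsqcup_{k,\ell}\CE_{p^k,p^\ell}$) and dividing by $E>0$ gives $1\le M^{1/11}B$, where
\[
B=\sum_{k,\ell\ge0}\Bigl(\frac{V_k}{V}\Bigr)^{9/11}\Bigl(\frac{W_\ell}{W}\Bigr)^{9/11}p^{-|k-\ell|/11}.
\]
By Cauchy--Schwarz, $B\le\bigl(\sum_{k,\ell}(V_k/V)^{18/11}p^{-|k-\ell|/11}\bigr)^{1/2}\bigl(\sum_{k,\ell}(W_\ell/W)^{18/11}p^{-|k-\ell|/11}\bigr)^{1/2}$; for each $k$ we have $\sum_{\ell\ge0}p^{-|k-\ell|/11}\le(1+p^{-1/11})/(1-p^{-1/11})=:\kappa_p$, and $\sum_k(V_k/V)^{18/11}\le\sum_k V_k/V=1$ because the valuation classes partition $\CV$ and $18/11>1$ (likewise for $\CW$). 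Hence $B\le\kappa_p\le\kappa_2<33$, so $M\ge\kappa_p^{-11}\ge 33^{-11}>10^{-17}=:c_0$. The crux of the whole argument is exactly this: the unboundedness of the valuations is neutralised by the weight $p^{-|k-\ell|/11}$ together with the elementary bound $\sum_k(V_k/V)^{18/11}\le 1$.

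\medskip

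\noindent\emph{Assembly.} At step $i$ choose $(k_i,\ell_i)$ attaining the maximum $M$ for $H=G^{(i-1)}$, $p=p_i$ (which is positive, hence has $E_{k_i\ell_i}>0$, keeping $G^{(i)}$ non‑trivial), set $G^{(i)}=(G^{(i-1)})_{p_i^{k_i},p_i^{\ell_i}}$, and iterate for $i=1,\dots,r$. By the reduction above, $G'=G^{(r)}$ satisfies $\CP'\subseteq\{p\le10^{2000}\}$, $\CR(G')\subseteq\{p>10^{2000}\}$, $\delta'>0$, and $\min\{1,\delta'/\delta\}\,q(G')/q(G)\ge c_0^{\,r}\ge 10^{-17\cdot 10^{2000}}\ge 10^{-10^{3000}}$, completing the proof.
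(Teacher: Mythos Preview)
Your argument is correct, and it is genuinely different from (and cleaner than) the paper's.

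The paper handles one small prime via Lemma~13.2, which itself is a multi-stage construction: first pass to a subgraph with no low-degree vertices (Lemma~10.1), then apply a dichotomy (Lemma~13.1, or Lemma~12.2 when $p>10^{41}$) which either yields a quality increment directly or produces a single exponent $k$ with $\mu(\CV_{p^k})/\mu(\CV),\mu(\CW_{p^k})/\mu(\CW)\ge 9/10$; in the latter case one then invokes the unbalanced-edge bound (Lemma~11.3) to restrict $|\ell-k|\le r$ with $p^r>10^{2000}$, and finally pigeonholes over the $O(1)$ remaining choices of $\ell$ (Lemma~11.2). This reuses machinery built for the large-prime iterations, at the cost of some case analysis; the per-prime loss is $10^{-50}$, and one only iterates over primes actually lying in $\CR(G)$.

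Your route bypasses all of this. You observe that it suffices to bound the maximum of
\[
\Phi_{k\ell}=\Bigl(\frac{E_{k\ell}}{E}\Bigr)^{11}\Bigl(\frac{VW}{V_kW_\ell}\Bigr)^{9}p^{|k-\ell|}
\]
from below, and you do so by the variational inequality $1=\sum_{k,\ell}E_{k\ell}/E\le M^{1/11}B$ together with the estimate $B\le\kappa_p$ coming from Cauchy--Schwarz, the convergent series $\sum_{\ell}p^{-|k-\ell|/11}$, and the trivial bound $\sum_k(V_k/V)^{18/11}\le\sum_k V_k/V=1$. This handles all exponent pairs $(k,\ell)$ uniformly, with no need for a high-degree preprocessing step, no separation into diagonal and off-diagonal cases, and no appeal to the structural lemmas about dominant prime powers. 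The price is that you iterate over \emph{all} primes $p\le 10^{2000}$ rather than only those in $\CR(G)$, but this is harmless for the stated bound. Your telescoping via $\min\{1,\prod_i x_i\}\ge\prod_i\min\{1,x_i\}$ is also correct.
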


Finally, we need two further technical estimates. The first one strengthens the quality of the inequality $L_t(v,w)\ge10$ when the set $\CR^\flat(G)$ is empty, whereas the second allows one to pass to a subgraph where all vertices have high degree.

\begin{lem}[Removing the effect of $\CR(G)$ from $L_t(v,w)$]\label{lem:Cosmetic}
	Let $t\ge300$ and $G=(\mu,\CV,\CW,\CE,\CP,f,g)$ be a GCD graph with edge density $\delta>0$ such that
	\[
	\CR^\flat(G)=\emptyset,
	\quad 
	\delta\ge (10/t)^{50},
	\quad
	\CE\subseteq\{(v,w)\in\CV\times\CW: L_t(v,w)\ge 10\}.
	\]
	Then there exists a GCD subgraph $G'=(\mu,\CV,\CW,\CE',\CP,f,g)$ of $G$ such that
	\[
	q(G')\ge \frac{q(G)}{2}>0\quad\text{and}\quad
	\CE'\subseteq \bigg\{(v,w)\in\CV\times\CW:
	\sum_{\substack{p|v w/\gcd(v,w)^2\\p\ge t,\ p\notin \CR(G)}}\frac{1}{p}\ge 5\bigg\}.
	\]
\end{lem}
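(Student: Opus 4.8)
The plan is to split, for each edge $(v,w)\in\CE$, the sum defining $L_t(v,w)$ into the part coming from primes in $\CR(G)$ and the part coming from primes not in $\CR(G)$. Write $n=vw/\gcd(v,w)^2$ and set
\[
L_t^{\CR}(v,w):=\sum_{\substack{p\mid n,\ p\ge t\\ p\in\CR(G)}}\frac1p,\qquad
L_t^{\flat}(v,w):=\sum_{\substack{p\mid n,\ p\ge t\\ p\notin\CR(G)}}\frac1p ,
\]
so that $L_t^{\CR}(v,w)+L_t^{\flat}(v,w)=L_t(v,w)\ge 10$ for every edge. Thus every edge satisfies either $L_t^{\flat}(v,w)\ge 5$ (the "good" edges, which we keep) or $L_t^{\CR}(v,w)> 5$. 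Taking $\CE'$ to be the set of good edges and $G'=(\mu,\CV,\CW,\CE',\CP,f,g)$ gives a GCD subgraph (all seven data except $\CE$ unchanged, $\CE'\subseteq\CE$), and the displayed containment for $\CE'$ holds by construction. It remains to prove the quality bound $q(G')\ge q(G)/2$, which by the formula for $q$ (only $\delta$, equivalently $\mu(\CE)$, changes) amounts to showing $\mu(\CE\setminus\CE')\le (1-2^{-1/10})\,\mu(\CE)$, or more crudely $\mu(\CE\setminus\CE')\le \mu(\CE)/100$, say; any fixed proportion below $1-2^{-1/10}$ suffices since $q$ depends on $\delta^{10}$.

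So the heart of the matter is to bound $\mu(\CE\setminus\CE')$, where every $(v,w)\in\CE\setminus\CE'$ has $\sum_{p\mid n,\,p\ge t,\,p\in\CR(G)}1/p>5$. Here I would use the hypothesis $\CR^\flat(G)=\emptyset$, i.e. $\CR(G)=\CR^\sharp(G)$: for every prime $p\in\CR(G)$ there is an exponent $k=k(p)$ with $\mu(\CV_{p^k})/\mu(\CV)$ and $\mu(\CW_{p^k})/\mu(\CW)$ both $\ge 1-10^{40}/p$. Hence the "exceptional" vertices — those $v\in\CV$ with $p^{k(p)}\nmid\!\!\nmid v$ (i.e. $v\notin\CV_{p^{k(p)}}$) — have $\mu$-mass at most $(10^{40}/p)\mu(\CV)$, and similarly on the $\CW$ side. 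The key observation is that if $(v,w)$ is an edge with $p\mid n=vw/\gcd(v,w)^2$, then the $p$-adic valuations of $v$ and $w$ differ, so at least one of $v,w$ fails to lie in $\CV_{p^{k(p)}}$ resp. $\CW_{p^{k(p)}}$; thus each prime $p\in\CR(G)$ dividing $n$ for an edge $(v,w)$ "costs" a membership in an exceptional set of relative $\mu$-mass $O(1/p)$. I would then run a first-moment / union-bound computation:
\[
\mu(\CE\setminus\CE')\le \frac15\sum_{(v,w)\in\CE\setminus\CE'}\sum_{\substack{p\mid n,\ p\ge t\\ p\in\CR(G)}}\frac1p
\le \frac15\sum_{\substack{p\in\CR(G)\\ p\ge t}}\frac1p\,\mu\bigl(\{(v,w)\in\CE: p\mid n\}\bigr),
\]
and bound $\mu(\{(v,w)\in\CE: p\mid n\})\le \mu(\CV_{p^{k(p)}}^c\times\CW)+\mu(\CV\times\CW_{p^{k(p)}}^c)\le (2\cdot 10^{40}/p)\,\mu(\CV)\mu(\CW)$. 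This gives
\[
\mu(\CE\setminus\CE')\le \frac{2\cdot 10^{40}}{5}\,\mu(\CV)\mu(\CW)\sum_{\substack{p\ge t}}\frac1{p^2},
\]
and since $\sum_{p\ge t}p^{-2}\ll 1/(t\log t)$ with $t\ge 300$, and $\mu(\CE)=\delta\,\mu(\CV)\mu(\CW)\ge (10/t)^{50}\mu(\CV)\mu(\CW)$ — wait, this direct comparison is too weak because $\delta$ can be as small as $(10/t)^{50}$.

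Therefore I expect the main obstacle to be precisely this: a naive first-moment bound compares $\mu(\CE\setminus\CE')$ against $\mu(\CV)\mu(\CW)$, but we need to compare it against $\mu(\CE)=\delta\,\mu(\CV)\mu(\CW)$, and $\delta$ may be tiny. The fix is to avoid paying a factor of $1/\delta$ by exploiting that the exceptional vertices are few \emph{uniformly}, not just on average over edges: remove from $\CV$ all $v$ lying in $\bigcup_{p\in\CR(G),\,p\ge t}\CV_{p^{k(p)}}^c$ and similarly from $\CW$ — this discards a subset of relative $\mu$-mass $\le \sum_{p\ge t}10^{40}/p$, which is \emph{not} small, so that is also too lossy. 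The correct approach, I believe, is to instead only discard the \emph{edges} $(v,w)$ for which $L^{\CR}_t(v,w)>5$, and bound their $\mu$-mass by noting that such an edge forces \emph{many} (total reciprocal weight $>5$) primes $p$ with $v\notin\CV_{p^{k(p)}}$ or $w\notin\CW_{p^{k(p)}}$; using that the events "$v\notin \CV_{p^{k(p)}}$" for distinct $p$ are governed by disjoint prime factors one can, via a Rankin-type / second-moment bound on the number of such heavily-exceptional vertices (in the spirit of Lemma \ref{lem:Anatomy} and Lemma \ref{lem:Tenenbaum}), show that the $\mu$-mass of vertices $v$ with $\sum_{p\in\CR(G),\,p\ge t,\,v\notin\CV_{p^{k(p)}}}1/p>5/2$ is at most an exponentially small multiple of $\mu(\CV)$, hence $\le$ (tiny)$\cdot\delta\,\mu(\CV)$ once $t\ge 300$ and $\delta\ge(10/t)^{50}$ (since $e^{-ct^{\text{power}}}$ beats $(t/10)^{50}$). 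Summing the analogous $\CW$-bound and invoking $\delta\ge(10/t)^{50}$ to absorb the $\delta^{-1}$ then yields $\mu(\CE\setminus\CE')\le \mu(\CE)/100$, and the claimed $q(G')\ge q(G)/2>0$ follows (non-triviality of $G'$ being automatic from $\mu(\CE')>0$).
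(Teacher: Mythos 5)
Your opening framework is right and matches the paper: split the contribution to $L_t(v,w)$ into primes in $\CR(G)$ versus primes outside, discard the edges whose $\CR(G)$-contribution exceeds $5$, and show those edges have $\mu$-mass at most a small fraction of $\mu(\CE)$. You also correctly locate the obstacle: the first-moment bound you write down,
\[
\mu(\CE\setminus\CE')\;\le\;\frac{2\cdot10^{40}}{5}\,\mu(\CV)\mu(\CW)\sum_{p\ge t}\frac1{p^2}\;\asymp\;\frac{10^{40}}{t\log t}\,\mu(\CV)\mu(\CW),
\]
must be compared against $\mu(\CE)=\delta\,\mu(\CV)\mu(\CW)$, and $\delta$ can be as small as $(10/t)^{50}$, so the comparison fails by a factor of roughly $t^{49}$. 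Up to this point you are in agreement with the paper.

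The gap is in the proposed fix. The Rankin/exponential-moment argument in the spirit of Lemma \ref{lem:Anatomy} cannot be run here, because $\mu$ is a completely arbitrary measure on $\N$ (see Definition \ref{gcd graph dfn}(a)); Lemmas \ref{lem:Tenenbaum} and \ref{lem:Anatomy} rely on the multiplicative structure of the counting measure via Mertens' theorem, and their conclusions have no analogue for general $\mu$. Concretely, the only information available is that for each $p\in\CR(G)$ the exceptional set $\CV\setminus\CV_{p^{k(p)}}$ has $\mu$-mass $\le 10^{40}\mu(\CV)/p$, with no independence among these events under $\mu$. Nothing prevents these exceptional sets from coinciding on a single subset $\CB$ of $\CV$: taking $\CR(G)\supseteq\{p\in[t,T]\}$ with $T=t^{e^{5/2}}$ and choosing $\mu$ and $\CV$ so that $\CV\setminus\CV_{p^{k(p)}}=\CB$ for all those $p$, every $v\in\CB$ satisfies $\sum_{p}1/p\gtrsim 5/2$ while $\mu(\CB)$ need only be $\approx 10^{40}\mu(\CV)/T = 10^{40}\mu(\CV)/t^{e^{5/2}}$ — polynomially small in $1/t$, not exponentially small, and in particular far larger than $\delta\gtrsim t^{-50}$. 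So the claimed exponential saving does not follow from the hypotheses, and this last step of your argument is not salvageable as written.

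The paper's actual resolution is a different elementary trick. Define $S(v,w)=\sum_{p\,|\,vw/\gcd(v,w)^2,\ p\in\CR(G),\ p\ge t^{50}}1/p$ — note the threshold is $t^{50}$, not $t$ — and keep only the edges with $S(v,w)\le 1$. Now the first-moment bound uses $\sum_{p\ge t^{50}}1/p^2\le t^{-50}$, giving $\sum_{(v,w)\in\CE}\mu(v)\mu(w)S(v,w)\le 2\cdot10^{40}\mu(\CV)\mu(\CW)/t^{50}$, which is $<\mu(\CE)/100$ precisely because $\delta\ge(10/t)^{50}$: the threshold $t^{50}$ was engineered so that $t^{-50}$ beats $\delta^{-1}$. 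The contribution of the intermediate primes $p\in\CR(G)$ with $t\le p<t^{50}$ is then controlled by the absolute Mertens/Rosser–Schoenfeld bound $\sum_{t\le p<t^{50}}1/p\le\log 50+(\log t)^{-2}\le 4$ for $t\ge300$, which needs no comparison with $\delta$. For any surviving edge, the total $\CR(G)$-contribution to $L_t$ is therefore $\le 4+1=5$, giving the stated conclusion, while $q(G')/q(G)=(\mu(\CE')/\mu(\CE))^{10}\ge(99/100)^{10}\ge1/2$. Your write-up misses this split of the prime range at $t^{50}$, which is the one genuine idea needed to close the proof.
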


\begin{lem}[Subgraph with high-degree vertices]\label{lem:HighDegreeSubgraph}
Let $G=(\mu,\CV,\CW,\CE,\CP,f,g)$ be a GCD graph with edge density $\delta>0$. Then there is a GCD subgraph $G'=(\mu,\CV',\CW',\CE',\CP,f,g)$ of $G$ with edge density $\delta'>0$ such that:
\begin{enumerate}
\item $q(G')\ge q(G)$;
\item $\delta'\ge \delta$;
\item For all $v\in\CV'$ and for all $w\in\CW'$, we have
\[
\mu(\Gamma_{G'}(v))\ge\frac{9\delta'}{10}\mu(\CW')
\quad\text{and}\quad \mu(\Gamma_{G'}(w))\ge\frac{9\delta'}{10}\mu(\CV').
\]
\end{enumerate}

\end{lem}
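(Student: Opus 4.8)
The plan is to iteratively delete low-degree vertices while showing that the quality never decreases and the edge density never decreases, and to argue that this process terminates. Start with $G_0=G$. Given a GCD graph $G_j=(\mu,\CV_j,\CW_j,\CE_j,\CP,f,g)$ with edge density $\delta_j>0$, if there exists a vertex $v\in\CV_j$ with $\mu(\Gamma_{G_j}(v))<\frac{9\delta_j}{10}\mu(\CW_j)$ (or, symmetrically, a vertex $w\in\CW_j$ with $\mu(\Gamma_{G_j}(w))<\frac{9\delta_j}{10}\mu(\CV_j)$), pass to $G_{j+1}$ by deleting $v$ from $\CV_j$ (keeping $\CW_j$ unchanged) and removing all edges incident to $v$, keeping the same multiplicative data $(\CP,f,g)$; this is clearly a GCD subgraph. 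Otherwise stop and output $G'=G_j$.

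The two things to verify at each deletion step are that $q(G_{j+1})\ge q(G_j)$ and $\delta_{j+1}\ge\delta_j$, and in particular that $G_{j+1}$ remains non-trivial so the process makes sense. Recall $q(G)=\delta^{10}\mu(\CV)\mu(\CW)\Pi$ where $\Pi=\prod_{p\in\CP}\frac{p^{|f(p)-g(p)|}}{(1-\un_{f(p)=g(p)\ge1}/p)^2(1-1/p^{31/30})^{10}}$ is unchanged under the deletion, and that $\delta^{10}\mu(\CV)\mu(\CW)=\mu(\CE)^{10}/(\mu(\CV)^9\mu(\CW)^9)$. When we delete $v$ from $\CV_j$, writing $m=\mu(v)$, $E=\mu(\CE_j)$, $V=\mu(\CV_j)$, $W=\mu(\CW_j)$ and $e=\mu(v)\cdot\mu(\Gamma_{G_j}(v))=$ the $\mu$-mass of edges removed, we have $\mu(\CE_{j+1})=E-e$, $\mu(\CV_{j+1})=V-m$, $\mu(\CW_{j+1})=W$. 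The low-degree hypothesis gives $e<\frac{9\delta_j}{10}\cdot mW = \frac{9}{10}\cdot\frac{mE}{V}$ (using $\delta_j=E/(VW)$), i.e. $e/E < \frac{9}{10}\cdot m/V < m/V$, so $\frac{E-e}{V-m}>\frac{E}{V}$ because removing a below-average-density chunk raises the ratio; this immediately gives $\delta_{j+1}>\delta_j$ (this is the edge-density monotonicity). For the quality, I need $\frac{(E-e)^{10}}{(V-m)^9 W^9}\ge \frac{E^{10}}{V^9 W^9}$, equivalently $(1-e/E)^{10}\ge(1-m/V)^9$. Since $e/E<\frac{9}{10}(m/V)$, it suffices that $(1-\tfrac{9}{10}x)^{10}\ge(1-x)^9$ for $x=m/V\in[0,1)$; this is an elementary one-variable inequality (at $x=0$ both sides are $1$; taking logarithmic derivatives, $\frac{-9}{1-\tfrac9{10}x}\ge\frac{-9}{1-x}$ since $1-\tfrac9{10}x\ge 1-x$), so it holds. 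Hence $q(G_{j+1})\ge q(G_j)>0$, so by Lemma \ref{lem:Trivial}(d) the graph $G_{j+1}$ is non-trivial and in particular $\mu(\CV_{j+1}),\mu(\CW_{j+1})>0$.

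Finally, for termination: at each step we either remove a vertex from $\CV$ or from $\CW$, and these are finite sets, so the process halts after at most $\#\CV+\#\CW$ steps. When it halts we have, by construction, $\mu(\Gamma_{G'}(v))\ge\frac{9\delta'}{10}\mu(\CW')$ for all $v\in\CV'$ and $\mu(\Gamma_{G'}(w))\ge\frac{9\delta'}{10}\mu(\CV')$ for all $w\in\CW'$, which is property (3); property (1) follows by chaining $q(G')=q(G_J)\ge q(G_{J-1})\ge\cdots\ge q(G_0)=q(G)$, and property (2) similarly from $\delta'=\delta_J\ge\cdots\ge\delta_0=\delta$. The output retains the same multiplicative data $(\CP,f,g)$ as required, and all GCD-graph axioms for $G'$ are inherited from $G$ since we only deleted vertices and edges.

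The main (mild) obstacle is purely bookkeeping: one must be careful that deleting a vertex with zero $\mu$-measure, or with empty neighbourhood, is handled correctly — a vertex $v$ with $\mu(v)=0$ contributes nothing to $\mu(\CV)$ or $\mu(\CE)$, so its presence or absence is irrelevant to $q$ and $\delta$, but it could still violate the degree condition as stated; this is harmless since deleting it changes none of $\mu(\CE),\mu(\CV),\mu(\CW)$ and hence neither $q$ nor $\delta$, and it still makes the vertex count drop, so termination is unaffected. Beyond this, the only real content is the elementary inequality $(1-\tfrac9{10}x)^{10}\ge(1-x)^9$, and everything else is routine.
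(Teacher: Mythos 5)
Your proof is correct and is essentially the paper's argument: the paper first proves an auxiliary Lemma (its Lemma \ref{lem:HighDegree}) that deletes a single low-degree vertex and shows $\delta$ and $q$ do not decrease, then iterates to termination, exactly as you do; the paper's quality-increment check uses the factorization $\mu(\CE')\ge\delta(\mu(\CV)-\mu(v))\mu(\CW)\bigl(1+\tfrac{\mu(v)/10}{\mu(\CV)-\mu(v)}\bigr)$ together with Bernoulli's inequality, which after rearrangement is the same one-variable inequality $(1-\tfrac{9}{10}x)^{10}\ge(1-x)^9$ that you verify directly. The only cosmetic difference is your explicit handling of zero-measure vertices, which the paper leaves implicit (and which is automatically fine since deleting such a vertex leaves $\delta$, $q$ and all $\mu$-masses unchanged).
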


\begin{proof}[Proof of Proposition \ref{prop:BoundOnEdges} assuming Propositions \ref{prop:IterationStep1}-\ref{prop:SmallPrimes} and Lemmas \ref{lem:Cosmetic}-\ref{lem:HighDegreeSubgraph}] We will construct the required subgraph $G'$ in several stages. It suffices to produce a GCD subgraph $G'$ of $G$ satisfying only conclusions $(a)$ and $(d)$ of Proposition \ref{prop:BoundOnEdges}, since an application of Lemma \ref{lem:HighDegreeSubgraph} then produces a GCD subgraph satisfying all the conclusions.

\medskip

\noindent
\textit{Stage 1: Obtaining a GCD subgraph $G^{(1)}$ with $\CR(G^{(1)})\subseteq \{p> 10^{2000}\}$.}

\medskip

Since $G$ has set of primes equal to the empy set, we may apply Proposition \ref{prop:SmallPrimes} to $G$ to produce a GCD subgraph $G^{(1)}=(\mu,\CV^{(1)},\CW^{(1)},\CE^{(1)},\CP^{(1)},f^{(1)},g^{(1)})$ of $G$ with edge density $\delta^{(1)}$ and for which
\begin{equation}
\CR(G^{(1)})\subseteq\{p>10^{2000}\},
\quad q(G^{(1)})\ge\frac{q(G)}{10^{10^{3000}}}
\quad\text{and}\quad 
\delta^{(1)}q(G^{(1)})\ge \frac{\delta\cdot q(G)}{10^{10^{3000}}}.
\label{eq:G1Quality}
\end{equation}
In particular, we have
\begin{equation}\label{eq:R of subgraphs}
\CR(H)\subseteq \CR(G^{(1)})\subseteq \{p>10^{2000}\}
\end{equation}
for any $H\preceq G^{(1)}$ by Lemma \ref{lem:Trivial}(b).

\bigskip

\noindent
{\it Stage 2: Obtaining a GCD subgraph $G^{(2)}$ with $\CR^\flat(G^{(2)})=\emptyset$.}

\medskip

If $\CR^\flat(G^{(1)})\ne \emptyset$, then $G^{(1)}$ satisfies the conditions of Proposition \ref{prop:IterationStep1}. We then repeatedly apply Proposition \ref{prop:IterationStep1} to produce a sequence of GCD subgraphs of $G^{(1)}$ given by 
\[
G^{(1)}=:G^{(1)}_1\succeq G^{(1)}_2\succeq\cdots
\] 
until we obtain a GCD subgraph $G^{(2)}$ of $G^{(1)}$ which does not satisfy the conditions of Proposition \ref{prop:IterationStep1}. Since $\CR(G^{(1)}_{i+1})\subsetneq \CR(G^{(1)}_i)$ and $\CR(G^{(1)})$ is a finite set, this process must indeed terminate after a finite number of steps and produce a GCD graph $G^{(2)}:=(\mu,\CV^{(2)},\CW^{(2)},\CE^{(2)},\CP^{(2)},f^{(2)},g^{(2)})\preceq G^{(1)}$ that does not satisfy the conditions of Proposition \ref{prop:IterationStep1}. Since $\CR(G^{(2)})\subseteq \{p>10^{2000}\}$ by \eqref{eq:R of subgraphs}, it must be the case that
\[
\CR^\flat(G^{(2)})=\emptyset.
\]
In addition, Proposition \ref{prop:IterationStep1} implies that
\[
q(G^{(2)})\ge 2^N q(G^{(1)})
\quad\text{and}\quad  \delta^{(2)}q(G^{(2)})\ge 2^N\delta^{(1)}\, q(G^{(1)}),
\]
where 
\[
N=\#\{p\in\CP^{(2)}\setminus\CP^{(1)}:f^{(2)}(p)\neq g^{(2)}(p)\}.
\] 
Together with \eqref{eq:G1Quality}, this yields that
\begin{equation}
q(G^{(2)})\ge \frac{2^N}{10^{10^{3000}}} \cdot q(G)
\quad\text{and}\quad  
\delta^{(2)}q(G^{(2)})\ge 
	\frac{2^N}{10^{10^{3000}}} \cdot\delta\cdot q(G).
\label{eq:G2Quality} 
\end{equation}

On the other hand, if $\CR^\flat(G^{(1)})=\emptyset$, then we simply take $G^{(2)}=G^{(1)}$ and note that \eqref{eq:G2Quality} is trivially satisfied by \eqref{eq:G1Quality}.

\medskip

This completes Stage 2. The remaining part of the proof deviates according to whether the ratio $q(G^{(2)})/q(G)$ is larger or smaller than $(t/10)^{50}\delta/10^{10^{3000}}$.

\bigskip

\noindent 
{\it Case (a): $q(G^{(2)})/q(G)\ge (t/10)^{50}\delta/10^{10^{3000}}$.} 

\smallskip

In this case we do not need to keep track of the condition that $L_t(v,w)\ge10$ because we have a very large gain in the quality of the new graph. The next stage of the argument is then:

\bigskip

\noindent
{\it Stage 3a: Obtaining a GCD subgraph with} $\CR(G^{(3\text{a})})=\emptyset$.

\medskip

Notice that if $H\preceq G^{(2)}$, then $\CR(H)\subseteq\{p>10^{2000}\}$ by \eqref{eq:R of subgraphs}. Consequently, if $\CR(H)\neq\emptyset$, then either Proposition \ref{prop:IterationStep1} or Proposition \ref{prop:IterationStep2} is applicable to $H$, thus producing a GCD subgraph $H'$ of $H$ such that
\begin{equation}\label{eq:Prop10}
\CR(H')\subsetneq \CR(H)
\quad\text{and}\quad
q(H')\ge q(H).
\end{equation}
Since $\CR(G^{(2)})$ is finite, starting with $H_1=G^{(2)}$ and iterating the above fact, we can construct a finite sequence of GCD subgraphs 
\[
G^{(2)}=H_1\succeq H_2\succeq\cdots\succeq H_J=:G^{(3\text{a})}
\] 
such that
\[
\CR(G^{(3\text{a})})=\emptyset
\quad \text{and}\quad
q(G^{(3\text{a})})\ge q(G^{(2)}).
\]
Applying the assumption that $q(G^{(2)})/q(G)\ge (t/10)^{50}\delta/10^{10^{3000}}$, we infer that
\[
q(G^{(3\text{a})})\ge \Bigl(\frac{t}{10}\Bigr)^{50}\frac{\delta}{10^{10^{3000}}} \cdot q(G).
\]
Hence, the GCD graph $G'=G^{(3\text{a})}$ satisfies condition \ref{prop:Parta} and condition \ref{prop:Partd}-\ref{prop:Case1} of Proposition \ref{prop:BoundOnEdges}, giving the result in this case. (Recall that we may also guarantee conditions  \ref{prop:Partb} and \ref{prop:Partc} of Proposition \ref{prop:BoundOnEdges} by feeding our graph into Lemma \ref{lem:HighDegreeSubgraph}.)

\medskip

In order to complete the proof of Proposition \ref{prop:BoundOnEdges}, it remains to consider the situation when $q(G^{(2)})/q(G)$ is not large.

\bigskip

\noindent 
{\it Case (b): $q(G^{(2)})/q(G)<(t/10)^{50}\delta/10^{10^{3000}}$.} 

\medskip

In this case, the quality increment is small and we must make sure not to lose track of the condition $L_t(v,w)\ge10$. For this reason, we perform some cosmetic surgery to our graph before applying Proposition \ref{prop:IterationStep2}. This consists of {\it Stage 3b} that we present below.

\bigskip

\noindent 
{\it Stage 3b: Removing the effect of primes in $\CR(G^{(2)})$ from the anatomical condition $L_t(v,w)\ge10$.}

\medskip

Note that \eqref{eq:G2Quality} implies that
\[
\delta^{(2)}\ge\frac{\delta}{10^{10^{3000}}} \cdot \frac {q(G)}{q(G^{(2)})}\ge \Bigl(\frac{10}{t}\Bigr)^{50},
\]
and that 
\eq{\label{eq:N-bound}
2^N\le 10^{10^{3000}} \cdot \frac{q(G^{(2)})}{q(G)}\le \Bigl(\frac{t}{10}\Bigr)^{50}\delta\le t^{50},
}
where we recall that 
\[
N=\#\{p\in\CP^{(2)}\setminus\CP^{(1)}:f^{(2)}(p)\ne g^{(2)}(p)\}
\] 
(here we used the trivial bound $\delta\le 1$).

Since $\delta^{(2)}\ge (10/t)^{50}$, $\CR^\flat(G^{(2)})=\emptyset$, and $L_t(v,w)\ge10$ for all $(v,w)\in\CE^{(2)}$, it is the case that $G^{(2)}$ satisfies the conditions of Lemma \ref{lem:Cosmetic}. Consequently, there exists a GCD subgraph $G^{(3\text{b})}=(\mu,\CV^{(3\text{b})},\CW^{(3\text{b})},\CE^{(3\text{b})},\CP^{(3\text{b})},f^{(3\text{b})},g^{(3\text{b})})$  of $G^{(2)}$ with 
\begin{align}
\CP^{(3\text{b})}&=\CP^{(2)},\label{eq:P3b}\\
q(G^{(3\text{b})})&\ge \frac{q(G^{(2)})}{2},
\label{eq:G3Quality}
\end{align}
and such that 
\begin{equation}
\sum_{\substack{p|v w/\gcd(v,w)^2\\p\ge t,\ p\notin \CR(G^{(2)})}}\frac{1}{p}\ge 5
\quad\text{whenever}\quad
(v,w)\in\CE^{(3\text{b})},
\label{eq:Anat1}
\end{equation}

We claim that an inequality of the form \eqref{eq:Anat1} holds even if we remove from consideration the primes lying in the set
\[
\CP^{(2)}_{\text{diff}}:=\{p\in\CP^{(2)}:\,f^{(2)}(p)\ne g^{(2)}(p)\}.
\]
It turns out that we can do this rather crudely, starting from the estimate
\[
\sum_{\substack{p|v w/\gcd(v,w)^2\\p\ge t,\ p\in \CP^{(2)}_{\text{diff}}}}\frac{1}{p}
\le \frac{\#(\CP_{\text{diff}}^{(2)}\cap\{p\ge t\})}{t}.
\]
Recalling that $t>10^{2000}$ and $\CP^{(1)}\subseteq\{p\le10^{2000}\}$, we deduce that
\begin{equation}\label{eq:removing P^2_diff}
\sum_{\substack{p|v w/\gcd(v,w)^2\\p\ge t,\ p\in \CP^{(2)}_{\text{diff}}}}\frac{1}{p}
\le \frac{\#(\CP_{\text{diff}}^{(2)}\setminus\CP^{(1)} )}{t}=\frac{N}{t}.
\end{equation}
Since $t\ge 10^{2000}$, relation \eqref{eq:N-bound} implies that $N\le 2\log(t^{50})=100\log t<t$, that is to say the right hand side of \eqref{eq:removing P^2_diff} is $\le 1$. As a consequence,
\begin{equation}
\sum_{\substack{p|v w/\gcd(v,w)^2\\ p\ge t,\ p\notin \CR(G^{(2)})\cup \CP^{(2)}_{\text{diff}}}}\frac{1}{p}\ge 4
\quad\text{whenever}\quad (v,w)\in\CE^{(3\text{b})}.
\label{eq:Anat4}
\end{equation}
Having removed the effect to the condition $L_t(v,w)\ge10$ of primes from the sets $\CR(G^{(2)})\cup\CP_{\text{diff}}^{(2)}$, we are ready to complete the construction of $G'$ in Case (b).

\bigskip

\noindent 
{\it Stage 4b: Obtaining a GCD subgraph with} $\CR(G^{(4\text{b})})=\emptyset$.

\medskip

We argue as in Stage 3a: for each $H\preceq G^{(3\text{b})}$, we have  $\CR(H)\subseteq\{p>10^{2000}\}$ by \eqref{eq:R of subgraphs}. Hence, if $\CR(H)\neq\emptyset$, then either Proposition \ref{prop:IterationStep1} or Proposition \ref{prop:IterationStep2} is applicable to $H$, thus producing a GCD subgraph $H'$ of $H$ such that
\begin{equation}\label{eq:Prop11}
\CR(H')\subsetneq \CR(H),\quad
\CP_{H'}\subseteq \CR(H)\cup \CP_H ,
\quad\text{and}\quad
q(H')\ge q(H),
\end{equation}
where $\CP_H$ and $\CP_{H'}$ denote the set of primes of $H$ and of $H'$, respectively. 
Since $\CR(G^{(3\text{b})})$ is finite, starting with $H_1=G^{(3\text{b})}$ and iterating the above fact, we can construct a finite sequence of GCD subgraphs 
\[
G^{(3\text{b})}=H_1\succeq H_2\succeq\cdots\succeq H_J=:G^{(4\text{b})}
\] 
such that
\[
\CR(G^{(4\text{b})})=\emptyset\quad\text{and}\quad 
q(G^{(4\text{b})})\ge q(G^{(3\text{b})}).
\]
In addition, note that 
\[
\CP^{(4\text{b})}
	\subseteq \CR(G^{(3\text{b})})\cup\CP^{(3\text{b})}
	\subseteq \CR(G^{(2)})\cup\CP^{(2)},
\]
where the second relation follows by fact \eqref{eq:P3b} that $\CP^{(3\text{b})}=\CP^{(2)}$. 
We now verify that if we let 
\[
G'=G^{(4b)},
\]
then condition \ref{prop:Partd}-\ref{prop:Case2} of Proposition \ref{prop:BoundOnEdges} is satisfied. This suffices for the completion of the proof, since $G'$ clearly satisfies condition \ref{prop:Parta} of Proposition \ref{prop:BoundOnEdges}, and an application of Lemma \ref{lem:HighDegreeSubgraph} can also ensure conditions \ref{prop:Partb} and \ref{prop:Partc}.

First of all, note that by \eqref{eq:G3Quality} and \eqref{eq:G2Quality} and $q(G^{(4\text{b})})\ge q(G^{(3\text{b})})$, we have
\[
q(G') = q(G^{(4\text{b})})\ge q(G^{(3\text{b})})\ge \frac{q(G^{(2)})}{2} \ge \frac{q(G)}{2\cdot 10^{10^{3000}}}.
\]
Let $(v,w)\in\CE^{(4\text{b})}$. It remains to check that $L_t(v',w')\ge4$, where $v'$ and $w'$ are defined by the relations
\[
v=v'\prod_{p\in\CP^{(4\text{b})}}p^{f^{(4\text{b})}(p)}
\quad\text{and}\quad 
w=w'\prod_{p\in\CP^{(4\text{b})}}p^{g^{(4\text{b})}(p)}.
\]
By the definition of the set $\CR(G^{(4\text{b})})$ and since $\CR(G^{(4b)})=\emptyset$, all prime factors of $\gcd(v,w)$ belong to $\CP^{(4\text{b})}$. But for each prime $p\in\CP^{(4\text{b})}$ we have $p^{\min\{f^{(4\text{b})}(p),g^{(4\text{b})}(p)\}}\|\gcd(v,w)$. Thus
\[
\gcd(v,w) = \prod_{p\in\CP^{(4\text{b})}}p^{\min\{f^{(4\text{b})}(p),g^{(4\text{b})}(p)\}}.
\]
In particular, we must have that 
\[
\gcd(v',w')=1.
\]

Now, let $p$ be a prime such that
\[
p|\frac{vw}{\gcd(v,w)^2}\quad\text{and}\quad p\nmid v'w'.
\]
Since $p\nmid v'w'$ but $p|vw$, we must have $p\in\CP^{(4\text{b})}$, and so $p^{\min\{f^{(4\text{b})}(p),g^{(4\text{b})}(p)\}}\|\gcd(v,w)$. In addition, our assumptions that $p\nmid v'w'$ and $p|v w/\gcd(v,w)^2$ imply that $f^{(4\text{b})}(p)\ne g^{(4\text{b})}(p)$. 
If $p\in\CP^{(2)}$, we infer that $p\in \CP^{(2)}_{\text{diff}}$. On the other hand, if $p\notin\CP^{(2)}$, then the inclusion $\CP^{(4\text{b})}\subseteq \CP^{(2)}\cup\CR(G^{(2)})$ implies that $p\in \CR(G^{(2)})$. In either case, we have that $p\in\CP^{(2)}_{\text{diff}}\cup\CR(G^{(2)})$. Thus, since $\CE^{(4\text{b})}\subseteq\CE^{(3\text{b})}$, we may use the bound \eqref{eq:Anat4}, which gives
\[
L_t(v',w')=\sum_{\substack{p|v' w'/\gcd(v',w')^2\\ p\ge t}}\frac{1}{p}\ge\sum_{\substack{p|v w/\gcd(v,w)^2\\p\ge t,\ p\notin \CR(G^{(2)})\cup \CP^{(2)}_{\text{diff}}}}\frac{1}{p}\ge 4.
\]
In particular, $G'=G^{(4\text{b})}$ satisfies the conditions of case \ref{prop:Partd}-\ref{prop:Case2} of Proposition \ref{prop:BoundOnEdges}. This completes the proof of Proposition \ref{prop:BoundOnEdges} in Case (b) too.
\end{proof}

Thus we are left to establish Propositions \ref{prop:IterationStep1}-\ref{prop:SmallPrimes} and Lemmas \ref{lem:Cosmetic}-\ref{lem:HighDegreeSubgraph}. We begin with the last two results because they are easier to establish.

\section{Proof of Lemma \ref{lem:Cosmetic}}\label{sec:CosmeticProof}

In this section we establish Lemma \ref{lem:Cosmetic} directly.

For brevity, let
\[
S(v,w) = \sum_{\substack{p|v w/\gcd(v,w)^2\\p\in \CR(G),\ p\ge t^{50}}}\frac{1}{p}.
\]
We have
	\[
	\sum_{(v,w)\in\CE } \mu(v)\mu(w) S(v,w) 
	=   	\sum_{\substack{p\in \CR(G) \\ p\ge t^{50}}}\frac{1}{p}\cdot 
\mu\Bigl(\Bigl\{(v,w)\in\CE: p|\frac{v w}{\gcd(v,w)^2}\Bigr\}\Bigr) .
	\]
	Fix for the moment a prime $p\in\CR(G)$. Since we have $\CR^\flat(G)=\emptyset$, it must be the case that $p\in\CR^\sharp(G)$, that is to say there exists some $k\in\Z_{\ge0}$ such that 
	\[
	\mu(\CV_{p^k})\ge\Bigl(1-\frac{10^{40}}{p}\Bigr)\mu(\CV)
	\quad\text{and}\quad 
	\mu(\CW_{p^k})\ge\Bigl(1-\frac{10^{40}}{p}\Bigr)\mu(\CW).
	\]
	Now we note that if $p|v w/\gcd(v,w)^2$, then $p^j\|v$ and $p^\ell\|w$ for some $j\ne \ell$. In particular we cannot have $p^k \|v$ and $p^k\|w$.  Thus
	\als{
		\mu\Bigl(\Bigl\{(v,w)\in\CE: p|\frac{v w}{\gcd(v,w)^2}\Bigr\}\Bigr)&\le \mu((\CV\setminus\CV_{p^k})\times \CW)+\mu(\CV\times( \CW\setminus\CW_{p^k}))\\
		&	\le 2\cdot \frac{10^{40}}{p} \cdot \mu(\CV)\mu(\CW) .
	}
Thus we conclude that
	\als{
		\sum_{(v,w)\in\CE } \mu(v)\mu(w) S(v,w) 
		&\le  \sum_{p\ge t^{50}}\frac{2\cdot 10^{40} \mu(\CV)\mu(\CW) }{p^2}\\
		&\le \frac{2\cdot 10^{40}\mu(\CV)\mu(\CW) }{t^{50}}\\
		&<\frac{\mu(\CE) } {100}, 
}
where in the final line we used the fact that $\delta=\mu(\CE)/\mu(\CV)\mu(\CW)\ge (10/t)^{50}$. 

Now, let us define
\[
\CE':=\{(v,w)\in\CE:  S(v,w)\le 1\}.
\]
Evidently, we have that
\[
\mu(\CE\setminus\CE')=\sum_{\substack{(v,w)\in\CE \\ S(v,w)>1}}\mu(v)\mu(w)\le \sum_{(v,w)\in\CE } \mu(v)\mu(w) S(v,w) <\frac{\mu(\CE)}{100}.
\]
Thus $\mu(\CE')\ge 99\mu(\CE)/100$. We then take $G':=(\mu,\CV,\CW,\CE',\CP,f,g)$ and note that 
\[
\frac{q(G')}{q(G)}=\Bigl(\frac{\mu(\CE')}{\mu(\CE)}\Bigr)^{10}\ge \frac{1}{2} .
\]
Finally, we note that 
\[
\sum_{t\le p\le t^{50}}\frac{1}{p}= \lim_{y\to t^-}\sum_{y<p\le t^{50}}\frac{1}{p}\le \log(50)+\frac{1}{(\log t)^2}  \le 4
\]
for $t\ge 300$ by \cite[Theorem 5]{RS}, and so if $(v',w')\in\CE'$ then
\[
\sum_{\substack{p|v' w'/\gcd(v',w')^2\\p\in \CR(G)\\ p\ge t}}\frac{1}{p}\le  \sum_{\substack{p|v' w'/\gcd(v',w')^2\\p\in \CR(G)\\ p\ge t^{50}}}\frac{1}{p}+4\le 5.
\]
Hence, since $\CE'\subseteq\CE\subseteq \{(v,w)\in \CV\times\CW: L_t(v,w)\ge 10\}$, for any $(v',w')\in\CE'$ we have
\[
	\sum_{\substack{p|v' w'/\gcd(v',w')^2\\p\notin \CR(G)\\ p\ge t}}\frac{1}{p}\ge L_t(v',w')-5\ge 5.
\]
This completes the proof of Lemma \ref{lem:Cosmetic}.\qed

\medskip

We are left to establish Propositions \ref{prop:IterationStep1}-\ref{prop:SmallPrimes} and Lemma \ref{lem:HighDegreeSubgraph}.

\section{Proof of Lemma \ref{lem:HighDegreeSubgraph}}\label{sec:HighDegree}

In this section we establish Lemma \ref{lem:HighDegreeSubgraph}. We begin with an auxiliary lemma.

\begin{lem}[Quality increment or all vertices have high degree]\label{lem:HighDegree}
Let	$G=(\mu,\CV,\CW,\CE,\CP,f,g)$ be a GCD graph with edge density $\delta>0$. For each $v\in\CV$ and for each $w\in\CW$, we let
\[
\Gamma_G(v):=\{w\in\CW:\,(v,w)\in\CE\}
\quad\text{and}\quad
\Gamma_G(w):=\{v\in\CV:\,(v,w)\in\CE\}
\]
be the sets of their neighbours. Then one of the following holds:
\begin{enumerate}
\item For all $v\in\CV$ and for all $w\in\CW$, we have
\[
\mu(\Gamma_G(v))\ge\frac{9\delta}{10}\mu(\CW)
\quad\text{and}\quad \mu(\Gamma_G(w))\ge\frac{9\delta}{10}\mu(\CV).
\]
\item There is a GCD subgraph $G'=(\mu,\CV',\CW',\CE',\CP,f,g)$ of $G$ with edge density $\delta'\ge \delta$, quality $q(G')\ge q(G)$, and such that either $\CV'\subsetneq\CV$ or $\CW'\subsetneq \CW$.
\end{enumerate}
\end{lem}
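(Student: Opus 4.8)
The statement to prove is Lemma \ref{lem:HighDegree}: either all vertices of $G$ have degree (in $\mu$-measure) at least $\tfrac{9\delta}{10}$ of the opposite vertex set, or there is a proper GCD subgraph whose quality and edge density do not decrease.

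\textbf{Plan.} The natural approach is a one-step cleaning: if some vertex has low degree, delete it and check that the quality goes up. Suppose, without loss of generality, that there exists $v_0\in\CV$ with $\mu(\Gamma_G(v_0))<\tfrac{9\delta}{10}\mu(\CW)$ (the case of a bad $w_0\in\CW$ is symmetric). Set $\CV':=\CV\setminus\{v_0\}$ and keep $\CW'=\CW$, $\CE':=\CE\cap(\CV'\times\CW)$, and the same multiplicative data $(\CP,f,g)$; this is visibly a GCD subgraph $G'\preceq G$, with $\CV'\subsetneq\CV$. I would then compare $\mu(\CV')=\mu(\CV)-\mu(v_0)$ and $\mu(\CE')=\mu(\CE)-2\mu(v_0)\mu(\Gamma_G(v_0))$ (the factor $2$ because deleting $v_0$ kills edges $(v_0,w)$ and $(w,v_0)$ in the bipartite-square counting of $\mu$ on $\CV\times\CW$ with both copies equal to $\CV$ — more precisely $\mu(\CE)=\sum_{(x,y)\in\CE}\mu(x)\mu(y)$, and the pairs involving $v_0$ in either coordinate contribute $2\mu(v_0)\mu(\Gamma_G(v_0))$ minus the possible double-counted diagonal term, which only helps). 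Writing $\theta=\mu(v_0)/\mu(\CV)$ and using $\mu(\Gamma_G(v_0))< \tfrac{9\delta}{10}\mu(\CW)=\tfrac{9}{10}\cdot\frac{\mu(\CE)}{\mu(\CV)}$, I get $\mu(\CE')\ge \mu(\CE)(1-\tfrac{9\theta}{5})$ (roughly) while $\mu(\CV')=\mu(\CV)(1-\theta)$.

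\textbf{Quality computation.} Since only $\CV$ and $\CE$ change and the multiplicative data is untouched, the product $\prod_{p\in\CP}(\cdots)$ cancels in the ratio $q(G')/q(G)$, and by the Remark after the definition of quality,
\[
\frac{q(G')}{q(G)}=\frac{\mu(\CE')^{10}}{\mu(\CV')^{9}\mu(\CW')^{9}}\cdot\frac{\mu(\CV)^{9}\mu(\CW)^{9}}{\mu(\CE)^{10}}=\Bigl(\frac{\mu(\CE')}{\mu(\CE)}\Bigr)^{10}\Bigl(\frac{\mu(\CV)}{\mu(\CV')}\Bigr)^{9}.
\]
Plugging in the two estimates, this is at least $(1-\tfrac{9\theta}{5})^{10}(1-\theta)^{-9}$; I would check that this is $\ge 1$ for all $\theta\in[0,1)$ — the key inequality is $(1-\tfrac{9\theta}{5})^{10}\ge (1-\theta)^{9}$, which follows because $\tfrac{d}{d\theta}\log$ of the left side is $\tfrac{-18}{1-9\theta/5}\le \tfrac{-9}{1-\theta}$ precisely when $2(1-\theta)\le 1-\tfrac{9\theta}{5}$, i.e. $\theta\le -5$... so that direct route fails and one must be more careful about the exact constant $\tfrac{9}{10}$ versus the exponent $10$. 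The cleaner route: one does not delete the whole vertex but rather notes this is essentially the standard "energy increment" — actually the intended argument is likely to delete $v_0$ and use that $\delta'=\mu(\CE')/(\mu(\CV')\mu(\CW))\ge \delta$ too, which with the displayed identity $q(G')/q(G)=(\delta'/\delta)^{10}(\mu(\CV')\mu(\CW')/(\mu(\CV)\mu(\CW)))$... wait, $q=\delta^{10}\mu(\CV)\mu(\CW)$, so $q(G')/q(G)=(\delta'/\delta)^{10}\cdot \mu(\CV')/\mu(\CV)$. So it suffices to show $\delta'\ge\delta$ and then $(\delta'/\delta)^{10}\ge \mu(\CV)/\mu(\CV')$; since $\mu(\CV)/\mu(\CV')=(1-\theta)^{-1}$ and $\delta'/\delta\ge (1-\tfrac{9\theta}{5})/(1-\theta)$, it reduces to $\bigl((1-\tfrac{9\theta}{5})/(1-\theta)\bigr)^{10}\ge (1-\theta)^{-1}$, i.e. $(1-\tfrac{9\theta}{5})^{10}\ge(1-\theta)^{9}$.

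\textbf{Main obstacle.} The crux is therefore verifying $(1-\tfrac{9\theta}{5})^{10}\ge(1-\theta)^{9}$ for the relevant range of $\theta$ — and this is \emph{false} for $\theta$ close to $5/9$, which signals that one cannot delete an arbitrarily heavy bad vertex in one step. The correct fix, which I would implement, is: among all bad vertices pick one and delete it only if $\theta$ is small (say $\mu(v_0)\le \tfrac12\mu(\CV)$, which must happen after finitely many deletions unless $\CV$ shrinks to a point, handled trivially since then $\CE=\emptyset$ contradicting $\delta>0$), OR — better — observe that deleting the \emph{single lowest-degree} bad vertex when its weight exceeds the threshold would force $\mu(\Gamma_G(v_0))<\tfrac{9\delta}{10}\mu(\CW)$ to be incompatible with the average degree identity $\sum_v\mu(v)\mu(\Gamma_G(v))=\mu(\CE)=\delta\mu(\CV)\mu(\CW)$ once too much mass sits on bad vertices; a short averaging argument shows a bad vertex of small relative weight must exist. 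So the real plan is: (i) reduce to the existence of a bad vertex of small relative weight via the degree-sum identity; (ii) delete it; (iii) do the elementary inequality $(1-\tfrac{9\theta}{5})^{10}\ge(1-\theta)^{9}$ now valid for $\theta$ small; (iv) conclude $q(G')\ge q(G)$, $\delta'\ge\delta$, $\CV'\subsetneq\CV$. I expect step (iii) combined with getting the bookkeeping of $\mu(\CE')$ exactly right (the factor of $2$ and the diagonal term) to be the only genuinely fiddly part; everything else is routine.
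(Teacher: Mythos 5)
Your plan (delete a low-degree vertex, show the quality and density do not drop) is exactly what the paper does, but your execution contains a bookkeeping error that is the sole source of your ``main obstacle.'' The factor of $2$ in $\mu(\CE')=\mu(\CE)-2\mu(v_0)\mu(\Gamma_G(v_0))$ is wrong. In Definition \ref{gcd graph dfn} a GCD graph is a \emph{bipartite} graph with two formally distinct vertex parts $\CV$ and $\CW$ and $\CE\subseteq\CV\times\CW$; even when $\CV$ and $\CW$ consist of the same integers (as in Proposition \ref{prop:Graph}), deleting $v_0$ from the $\CV$ part only removes edges of the form $(v_0,w)$, never edges $(w,v_0)$. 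So the exact identity is $\mu(\CE')=\mu(\CE)-\mu(v_0)\mu(\Gamma_G(v_0))$, and there is also no diagonal double-counting to worry about.

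With the correct formula, setting $\theta=\mu(v_0)/\mu(\CV)$, one gets $\mu(\CE')\ge\mu(\CE)\bigl(1-\tfrac{9\theta}{10}\bigr)$ and $\mu(\CV')=\mu(\CV)(1-\theta)$, so the target inequality becomes
\[
\Bigl(1-\tfrac{9\theta}{10}\Bigr)^{10}\ge(1-\theta)^{9},
\]
which \emph{does} hold for all $\theta\in[0,1)$ (compare logarithmic derivatives: $\tfrac{9}{1-9\theta/10}\le\tfrac{9}{1-\theta}$). The paper phrases this more cleanly: it writes $\mu(\CE')\ge\delta\bigl(\mu(\CV)-\mu(v_0)\bigr)\mu(\CW)\bigl(1+\tfrac{\mu(v_0)/10}{\mu(\CV)-\mu(v_0)}\bigr)$, concludes $\delta'\ge\delta\bigl(1+\tfrac{\mu(v_0)/10}{\mu(\CV)-\mu(v_0)}\bigr)$, and then uses $(1+x)^{10}\ge 1+10x$ to obtain $q(G')\ge q(G)$. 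No averaging argument, no restriction to bad vertices of small weight, and no case distinction is needed; your step (i) and the reduction to small $\theta$ were both attempts to fix a problem that does not exist once the factor of $2$ is removed. Also note $\mu(\CE')>\mu(\CE)/10>0$ (since $\mu(\Gamma_G(v_0))<\tfrac{9\delta}{10}\mu(\CW)$ and $\mu(v_0)\le\mu(\CV)$), so $G'$ is automatically non-trivial even when $v_0$ carries almost all of $\mu(\CV)$.
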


\begin{proof} Assume that (a) fails. Then either its first or its second inequality fails. Assume that the first one fails for some $v\in\CV$; the other case is entirely analogous. Let $\CE'$ be the set of edges between the vertex sets $\CV\setminus\{v\}$ and $\CW$. Note that 
\[
\mu(\CE')= \mu(\CE)-\mu(v)\mu(\Gamma_G(v)) >0
\]
because $\mu(\Gamma_G(v))<9\delta\mu(\CW)/10$, $\mu(v)\le \mu(\CV)$, and $\mu(\CE)>0$ by the assumption $\delta>0$. In particular, we have $\mu(\CW),\mu(\CV\setminus\{v\})>0$. We then consider $G'=(\mu,\CV\setminus\{v\},\CW,\CE',\CP,f,g)$, which is a GCD subgraph of $G$. Let $G'$ have edge density $\delta'$. We claim that $\delta'\ge \delta$ and $q(G')\ge q(G)$.

Indeed, we have
\als{
\mu(\CE')
	= \mu(\CE)-\mu(v)\mu(\Gamma_G(v)) 
	&\ge \delta \mu(\CV)\mu(\CW)-\frac{9\delta}{10}\mu(v)\mu(\CW) \\
	&=\delta \big(\mu(\CV)-\mu(v)\big)\mu(\CW)\cdot\Big(1+\frac{\mu(v)/10}{\mu(\CV)-\mu(v)}\Big).
}
Thus the edge density $\delta'$ of $G'$ satisfies
\[
\delta'=\frac{\mu(\CE')}{\mu(\CV\setminus\{v\})\mu(\CW)}
		=\frac{\mu(\CE')}{\big(\mu(\CV)-\mu(v)\big)\mu(\CW)}
		\ge \delta\cdot \Bigl(1+\frac{\mu(v)/10}{\mu(\CV)-\mu(v)}\Bigr).
\]
Thus we see that $\delta'\ge \delta$, and that 
\begin{align*}
(\delta')^{10}\mu(\CV\setminus\{v\})\mu(\CW)
	&\ge \delta^{10}\big(\mu(\CV)-\mu(v)\big)\mu(\CW)
		\Bigl(1+\frac{\mu(v)}{\mu(\CV)-\mu(v)}\Bigr)
		= \delta^{10} \mu(\CV)\mu(\CW) .
\end{align*}
This proves our claim that $q(G')\ge q(G)$ too, thus completing the proof of the lemma.
\end{proof}
\begin{proof}[Proof of Lemma \ref{lem:HighDegreeSubgraph}]
We note that conclusion $(c)$ of Lemma \ref{lem:HighDegreeSubgraph} is the same as conclusion $(a)$ of Lemma \ref{lem:HighDegree}. Thus, if $G$ does not satisfy conclusion $(c)$ of Lemma \ref{lem:HighDegreeSubgraph}, then we may repeatedly apply Lemma \ref{lem:HighDegree} to produce a sequence of GCD subgraphs 
\[
G=:G_1\succeq G_2\succeq\cdots 
\]
until we arrive at a GCD subgraph of $G$ which satisfies conclusion $(a)$ of Lemma \ref{lem:HighDegree}. This process must terminate after a finite number of steps since at least one of the vertex sets of $G_{i+1}$ has one less element than the corresponding vertex set of $G_i$. Let the process terminate at $G_J$, which satisfies conclusion $(a)$ of Lemma \ref{lem:HighDegree}, and let $\delta_i$ be the edge density of $G_i$. Since $\delta_{i+1}\ge \delta_i$ and $q(G_{i+1})\ge q(G_i)$ by Lemma \ref{lem:HighDegree}, we have that 
\[
\delta_J\ge \delta_{J-1}\ge \dots \ge \delta_1=\delta,\quad \text{and}\quad q(G_J)\ge q(G_{J-1})\ge \dots\ge q(G_1)=q(G).
\]
Since the multiplicative data are also maintained at each iteration, we see that taking $G'=G_J$ gives the result.
\end{proof}

Thus we are left to establish Propositions \ref{prop:IterationStep1}-\ref{prop:SmallPrimes}.

\section{Preparatory Lemmas on GCD graphs}\label{sec:Prep}

Our remaining task is to prove Propositions \ref{prop:IterationStep1}-\ref{prop:SmallPrimes}. Before we attack these directly, we establish various preliminary results about GCD graphs in this section, which we will then use in the remaining sections to prove Propositions \ref{prop:IterationStep1}-\ref{prop:SmallPrimes}.

\begin{lem}[Quality variation for special GCD subgraphs]\label{lem:InducedGraphs}
Let $G=(\mu,\CV,\CW,\CE,\CP,f,g)$ be a GCD graph, $p\in\CR(G)$ and $k,\ell\in\Z_{\ge0}$. If $G_{p^k,p^\ell}$ is as in Definition \ref{def:special graphs}, then $G_{p^k,p^\ell}$ is a GCD subgraph of $G$. In addition, if $G$ is non-trivial and $\mu(\CV_{p^k}),\mu(\CW_{p^\ell})>0$, then we have
\[
\frac{q(G_{p^k,p^\ell})}{q(G)}
	=\bigg(\frac{\mu(\CE_{p^k,p^\ell})}{\mu(\CE)}\bigg)^{10}
	 \bigg(\frac{\mu(\CV)}{\mu(\CV_{p^k})}\bigg)^{9}
	 \bigg(\frac{\mu(\CW)}{\mu(\CW_{p^\ell})}\bigg)^{9}
	 \frac{p^{|k-\ell|}}{(1-\un_{k=\ell\ge1}/p)^2(1-1/p^{31/30})^{10}}.
\]
\end{lem}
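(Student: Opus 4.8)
The plan is to unwind the definition of the quality $q(\cdot)$ for the two graphs $G$ and $G_{p^k,p^\ell}$ and compare term by term. Recall from Definition \ref{def:special graphs} that $G_{p^k,p^\ell}=(\mu,\CV_{p^k},\CW_{p^\ell},\CE_{p^k,p^\ell},\CP\cup\{p\},f_{p^k},g_{p^\ell})$, and that it was already observed there (and can be checked directly against Definition \ref{gcd graph dfn}) that this is a GCD subgraph of $G$; the only point needing the hypothesis $p\in\CR(G)$ rather than merely $p\notin\CP$ is that we want $G_{p^k,p^\ell}$ to potentially be non-trivial, but for the stated identity we simply assume $\mu(\CV_{p^k}),\mu(\CW_{p^\ell})>0$, which together with non-triviality of $G$ we will use to ensure all denominators below are positive. (In fact, since $p\in\CR(G)\subseteq$ complement of $\CP$, the septuple is well-formed.)

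First I would write out, using the Remark following the definition of quality, that for a non-trivial GCD graph $H$ with vertex sets of positive measure,
\[
q(H)=\frac{\mu(\CE_H)^{10}}{\mu(\CV_H)^{9}\mu(\CW_H)^{9}}\prod_{p\in\CP_H}\frac{p^{|f_H(p)-g_H(p)|}}{(1-\un_{f_H(p)=g_H(p)\ge1}/p)^2(1-1/p^{31/30})^{10}}.
\]
Applying this to $H=G_{p^k,p^\ell}$, the edge measure is $\mu(\CE_{p^k,p^\ell})$, the vertex measures are $\mu(\CV_{p^k})$ and $\mu(\CW_{p^\ell})$, and the product runs over $\CP\cup\{p\}$. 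Since $f_{p^k}|_\CP=f$ and $g_{p^\ell}|_\CP=g$, the factors indexed by primes in $\CP$ are identical to those appearing in $q(G)$, so they cancel when we form the ratio $q(G_{p^k,p^\ell})/q(G)$. What remains from the product is exactly the single new factor indexed by $p$, namely
\[
\frac{p^{|f_{p^k}(p)-g_{p^\ell}(p)|}}{(1-\un_{f_{p^k}(p)=g_{p^\ell}(p)\ge1}/p)^2(1-1/p^{31/30})^{10}}=\frac{p^{|k-\ell|}}{(1-\un_{k=\ell\ge1}/p)^2(1-1/p^{31/30})^{10}},
\]
using $f_{p^k}(p)=k$ and $g_{p^\ell}(p)=\ell$.

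Then I would collect the remaining ratio of the $\mu(\CE)$, $\mu(\CV)$, $\mu(\CW)$ powers: from $q(G_{p^k,p^\ell})/q(G)$ the edge term contributes $(\mu(\CE_{p^k,p^\ell})/\mu(\CE))^{10}$ and the vertex terms contribute $(\mu(\CV)/\mu(\CV_{p^k}))^{9}(\mu(\CW)/\mu(\CW_{p^\ell}))^{9}$ (note the inversion, since $\mu(\CV)^9$ and $\mu(\CW)^9$ sit in the \emph{denominator} of $q$). Multiplying these against the surviving $p$-factor yields precisely the claimed formula. I expect no genuine obstacle here: the only things to be careful about are (i) justifying that all quantities in the Remark's formula are legitimate, i.e.\ that $G_{p^k,p^\ell}$ is non-trivial is \emph{not} actually needed — we only need $\mu(\CV_{p^k}),\mu(\CW_{p^\ell})>0$ and $\mu(\CE),\mu(\CV),\mu(\CW)>0$, the latter following from non-triviality of $G$ via Lemma \ref{lem:Trivial}(c) — so the formula makes sense even if $\mu(\CE_{p^k,p^\ell})=0$ (both sides are then $0$); and (ii) bookkeeping the exponent $9$ versus $10$ and the direction of each ratio. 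This is entirely a matter of careful substitution, so the write-up should be short.
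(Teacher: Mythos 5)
Your proof is correct and is exactly the direct unwinding of the definitions that the paper's one-line proof (``This follows directly from the definitions'') has in mind; the careful bookkeeping of the exponent $9$ vs.\ $10$, the cancellation over primes in $\CP$, and the observation that non-triviality of $G_{p^k,p^\ell}$ is not needed (only $\mu(\CV_{p^k}),\mu(\CW_{p^\ell})>0$ and non-triviality of $G$) are all accurate.
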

\begin{proof}
This follows directly from the definitions.
\end{proof}

\begin{lem}[One subgraph must have limited quality loss]\label{lem:Pigeonhole}
Let $G=(\mu,\CV,\CW,\CE,\CP,f,g)$ be a GCD graph with edge density $\delta>0$, and let $\CV=\CV_1\sqcup\dots \sqcup\CV_I$ and $\CW=\CW_1\sqcup\dots \sqcup\CW_J$ be partitions of $\CV$ and $\CW$. Then there is a GCD subgraph $G'=(\mu,\CV',\CW',\CE',\CP,f,g)$ of $G$ with edge density $\delta'>0$ such that
\[
q(G')\ge \frac{q(G)}{(I J)^{10}},\qquad \delta'\ge\frac{\delta}{I J},
\]
and with $\CV'\in\{\CV_1,\dots,\CV_I\}$, $\CW'\in\{\CW_1,\dots,\CW_J\}$, and $\CE'=\CE\cap(\CV'\times\CW')$.
\end{lem}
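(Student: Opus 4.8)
## Proof proposal for Lemma~\ref{lem:Pigeonhole}

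The plan is a straightforward pigeonhole/averaging argument over the cells of the given partitions. First I would set $\CE_{i,j}:=\CE\cap(\CV_i\times\CW_j)$, so that $\CE=\bigsqcup_{i,j}\CE_{i,j}$ and hence $\mu(\CE)=\sum_{i=1}^I\sum_{j=1}^J\mu(\CE_{i,j})$, since $\mu$ on $\N^2$ is defined additively over products of points. Because the sum has $IJ$ terms, there is a pair $(i_0,j_0)$ with $\mu(\CE_{i_0,j_0})\ge \mu(\CE)/(IJ)$; in particular this is positive because $G$ is non-trivial ($\delta>0$ forces $\mu(\CE)>0$). I would then take $\CV'=\CV_{i_0}$, $\CW'=\CW_{j_0}$, $\CE'=\CE_{i_0,j_0}$, and $G'=(\mu,\CV',\CW',\CE',\CP,f,g)$; this is a GCD subgraph of $G$ since we have only restricted the vertex and edge sets and kept the multiplicative data $(\CP,f,g)$ intact (all the divisibility conditions in Definition~\ref{gcd graph dfn} are inherited by subsets).

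Next I would verify the two claimed inequalities. Since $\mu(\CE')>0$ we have $\mu(\CV'),\mu(\CW')>0$ by Lemma~\ref{lem:Trivial}(c), so the edge density $\delta'=\mu(\CE')/(\mu(\CV')\mu(\CW'))$ is well-defined and positive. Using $\mu(\CV')\le\mu(\CV)$ and $\mu(\CW')\le\mu(\CW)$ together with the pigeonhole bound on $\mu(\CE')$, we get
\[
\delta'=\frac{\mu(\CE')}{\mu(\CV')\mu(\CW')}\ge \frac{\mu(\CE)/(IJ)}{\mu(\CV)\mu(\CW)}=\frac{\delta}{IJ}.
\]
For the quality, I would use the formula from the Remark after the definition of quality: since the multiplicative data $(\CP,f,g)$ of $G'$ is identical to that of $G$, the product over $p\in\CP$ is unchanged, so
\[
\frac{q(G')}{q(G)}=\frac{\mu(\CE')^{10}/(\mu(\CV')^9\mu(\CW')^9)}{\mu(\CE)^{10}/(\mu(\CV)^9\mu(\CW)^9)}
=\Bigl(\frac{\mu(\CE')}{\mu(\CE)}\Bigr)^{10}\Bigl(\frac{\mu(\CV)}{\mu(\CV')}\Bigr)^{9}\Bigl(\frac{\mu(\CW)}{\mu(\CW')}\Bigr)^{9}
\ge \Bigl(\frac{\mu(\CE')}{\mu(\CE)}\Bigr)^{10}\ge \frac{1}{(IJ)^{10}},
\]
where in the middle inequality we dropped the factors $\mu(\CV)/\mu(\CV')\ge1$ and $\mu(\CW)/\mu(\CW')\ge1$, and in the last step used $\mu(\CE')\ge\mu(\CE)/(IJ)$.

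There is really no serious obstacle here: the only point requiring a little care is handling degenerate cells — one must make sure the chosen cell $(\CV_{i_0},\CW_{j_0})$ has positive measure, which is guaranteed precisely because we selected it to carry a positive share of $\mu(\CE)$, and then Lemma~\ref{lem:Trivial}(c) does the rest. (If some partition cells $\CV_i$ or $\CW_j$ are empty or have zero $\mu$-measure, they simply contribute nothing to the sum over $\CE_{i,j}$ and are never selected.) Everything else is bookkeeping with the explicit formula for $q(\cdot)$.
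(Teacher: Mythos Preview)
Your proposal is correct and follows essentially the same argument as the paper: choose by pigeonhole the cell $(i_0,j_0)$ with $\mu(\CE_{i_0,j_0})\ge\mu(\CE)/(IJ)$, then bound $\delta'$ and $q(G')$ using $\mu(\CV')\le\mu(\CV)$, $\mu(\CW')\le\mu(\CW)$ and the unchanged multiplicative data. Your added remarks about degenerate cells and the appeal to Lemma~\ref{lem:Trivial}(c) are fine but not strictly necessary.
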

\begin{proof}
For brevity let $\CE_{i,j}=\CE\cap(\CV_i\times\CW_j)$ be the edges between $\CV_i$ and $\CW_j$ for $i\in\{1,\dots,I\}$ and $j\in \{1,\dots,J\}$. Since the partitions of $\CV$ and $\CW$ induce a partition $\CE=\sqcup_{i=1}^I\sqcup_{j=1}^{J}\CE_{i,j}$ of $\CE$, we have
\[
\mu(\CE)=\sum_{i=1}^I\sum_{j=1}^J \mu(\CE_{i,j}).
\]
Thus, by the pigeonhole principle, there is a choice of $i_0$ and $j_0$ such that $\mu(\CE_{i_0,j_0})\ge \mu(\CE)/(IJ)>0$. We then let $G'=(\mu,\CV_{i_0},\CW_{j_0},\CE_{i_0,j_0},\CP,f,g)$, which is clearly a non-trivial GCD subgraph of $G$. We see that
\[
\frac{\delta'}{\delta}=\Bigl(\frac{\mu(\CE_{i_0,j_0})}{\mu(\CE)}\Bigr)\Bigl(\frac {\mu(\CV)}{\mu(\CV_{i_0})}\Bigr)\Bigl(\frac{\mu(\CW)}{\mu(\CW_{j_0})}\Bigr)\ge \frac{\mu(\CE_{i_0,j_0})}{\mu(\CE)}\ge \frac{1}{I J}
\]
and
\[
\frac{q(G')}{q(G)}
	=\bigg(\frac{\mu(\CE_{i_0,j_0})}{\mu(\CE)}\bigg)^{10}
	 \bigg(\frac{\mu(\CV)}{\mu(\CV_{i_0})}\bigg)^{9}
	 \bigg(\frac{\mu(\CW)}{\mu(\CW_{j_0})}\bigg)^{9}
\ge 	\bigg(\frac{\mu(\CE_{i_0,j_0})}{\mu(\CE)}\bigg)^{10} \ge \frac{1}{(I J)^{10}}.
\]
This gives the result.
\end{proof}

\begin{lem}[Few edges between unbalanced sets, I]\label{lem:UnbalancedSetEdges1}
Let	$G=(\mu,\CV,\CW,\CE,\CP,f,g)$ be a GCD graph with edge density $\delta>0$. Let $p\in\CR(G)$, $r\in\Z_{\ge1}$ and $k\in\mathbb{Z}_{\ge 0}$ be such that $p^r>10^{2000}$ and
\[
\frac{\mu(\CW_{p^k})}{\mu(\CW)}\ge 1-\frac{10^{40}}{p}.
\]
(In particular, if $p\le 10^{40}$, the last hypothesis is vacuous.) 

If we set $\CL_{k,r}=\{\ell\in\Z_{\ge0}:|\ell-k|\ge r+1\}$ and write $\delta_{p^k,p^\ell}$ for the edge density of the graph $G_{p^k,p^\ell}$, then one of the following holds:
\begin{enumerate}
\item There is $\ell\in\CL_{k,r}$ such that $q(G_{p^k,p^\ell})>2q(G)$ and $\delta_{p^k,p^\ell}q(G_{p^k,p^\ell})>2\delta q(G)>0$.
\item $\sum_{\ell\in\CL_{k,r}}  \mu(\CE_{p^k,p^\ell})\le\mu(\CE)/(4p^{31/30})$.
\end{enumerate}
\end{lem}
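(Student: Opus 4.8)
The plan is to argue by contraposition: assuming conclusion (a) fails, I will show conclusion (b) holds. First dispose of a degenerate case: if $\mu(\CV_{p^k})=0$ then every $\mu(\CE_{p^k,p^\ell})=0$ and (b) is immediate, so assume $\mu(\CV_{p^k})>0$; also $G$ is non-trivial since $\delta>0$, so $\mu(\CV),\mu(\CW),\mu(\CE)>0$ by Lemma~\ref{lem:Trivial}. Writing $e_\ell:=\mu(\CE_{p^k,p^\ell})/\mu(\CE)$ and $w_\ell:=\mu(\CW_{p^\ell})/\mu(\CW)$, conclusion (b) is exactly the assertion $\sum_{\ell\in\CL_{k,r}}e_\ell\le 1/(4p^{31/30})$, and only the $\ell$ with $e_\ell>0$ (which forces $\mu(\CW_{p^\ell})>0$) contribute to the sum.

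For such an $\ell$ we have $\ell\neq k$ (as $r\ge1$), so $\un_{k=\ell\ge1}=0$ and Lemma~\ref{lem:InducedGraphs} gives an exact formula for $q(G_{p^k,p^\ell})/q(G)$; combined with $\delta_{p^k,p^\ell}/\delta=e_\ell\cdot(\mu(\CV)/\mu(\CV_{p^k}))\cdot(\mu(\CW)/\mu(\CW_{p^\ell}))^{-1}$ it also controls $\delta_{p^k,p^\ell}q(G_{p^k,p^\ell})/(\delta q(G))$. Discarding the factors $(\mu(\CV)/\mu(\CV_{p^k}))^{9}$, $(\mu(\CV)/\mu(\CV_{p^k}))^{10}$ and $(1-p^{-31/30})^{-10}$, all of which are $\ge1$, I obtain
\[
\frac{q(G_{p^k,p^\ell})}{q(G)}\ \ge\ e_\ell^{10}w_\ell^{-9}p^{|k-\ell|},
\qquad
\frac{\delta_{p^k,p^\ell}q(G_{p^k,p^\ell})}{\delta q(G)}\ \ge\ e_\ell^{11}w_\ell^{-10}p^{|k-\ell|}.
\]
Since the positivity required in (a) is automatic once $e_\ell>0$, the failure of (a) means that for every $\ell\in\CL_{k,r}$ at least one of these two lower bounds is $\le2$. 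In either case, using $|k-\ell|\ge r+1$, $p\ge2$ and $w_\ell\le1$, a routine manipulation of exponents collapses this to the single bound $e_\ell\le 2^{1/10}w_\ell^{9/10}p^{-|k-\ell|/11}$ for all $\ell\in\CL_{k,r}$.

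Now split on the size of $p$. If $p\le10^{40}$, simply use $w_\ell\le1$; the key point is that $p^r>10^{2000}$ then forces $r$ to be large ($r\ge51$ suffices), so after summing the geometric series $\sum_{\ell\in\CL_{k,r}}p^{-|k-\ell|/11}\ll p^{-(r+1)/11}$ — where the ratio $1/(1-p^{-1/11})$ is bounded because $p\ge2$ — and substituting $p^r>10^{2000}$, the total is comfortably below $1/(4p^{31/30})$. If $p>10^{40}$, the concentration hypothesis is no longer vacuous: for $\ell\neq k$ one has $\CW_{p^\ell}\subseteq\CW\setminus\CW_{p^k}$, hence $w_\ell\le10^{40}/p$ and $e_\ell\le 2^{1/10}(10^{40}/p)^{9/10}p^{-|k-\ell|/11}$; the gained factor $(10^{40}/p)^{9/10}$ is precisely what compensates for the fact that in this range we only know $r\ge1$. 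Summing the geometric series again and comparing with $1/(4p^{31/30})$ — using $p^r>10^{2000}$ and treating separately the subcases where $\log p$ is small or large — yields conclusion (b) here too.

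The main obstacle is the numerical bookkeeping at the ends of the parameter range, together with the observation that it is genuinely necessary to retain the factors $w_\ell^{-9},w_\ell^{-10}$ rather than replacing them by $1$: the naive bound is too weak precisely when $r=1$, since that forces $p>10^{2000}$ and then the geometric decay $p^{-(r+1)/11}=p^{-2/11}$ is far slower than the target $p^{-31/30}$, so one must use the concentration of $\CW$ on $\CW_{p^k}$ to recover the missing power of $p$. Dually, for small $p$ the concentration hypothesis is vacuous, but there the constraint $p^r>10^{2000}$ already forces $r$, hence the decay $p^{-(r+1)/11}$, to be large enough on its own. Matching these two regimes with honest, explicit constants is the only real work.
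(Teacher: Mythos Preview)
Your argument is correct (apart from a typo: the displayed formula for $\delta_{p^k,p^\ell}/\delta$ should have $(\mu(\CW)/\mu(\CW_{p^\ell}))$, not its inverse, though your subsequent bound $e_\ell^{11}w_\ell^{-10}p^{|k-\ell|}$ is consistent with the correct formula). The reduction to the single inequality $e_\ell\le 2^{1/10}w_\ell^{9/10}p^{-|k-\ell|/11}$ is valid, and the numerics do close in both of your ranges once one uses $p^{r/11}>(10^{2000})^{1/11}\approx 10^{182}$.

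The paper's proof is the natural contrapositive of yours but is organised more efficiently. Two points are worth noting. First, your case split on the size of $p$ is unnecessary: for $\ell\ne k$ the bound $w_\ell\le 10^{40}/p$ holds for \emph{all} primes $p$ (it is trivial when $p\le 10^{40}$ and follows from the concentration hypothesis otherwise), so the factor $w_\ell^{9/10}$ can be exploited uniformly. Second, rather than bounding each $e_\ell$ and summing a geometric series (which forces you to control $1/(1-p^{-1/11})$), the paper assumes the sum in $(b)$ exceeds $\mu(\CE)/(4p^{31/30})$ and uses a weighted pigeonhole with weights $2^{-|k-\ell|/20}$ (whose total is $\le 60$) to locate a single $\ell$ with $e_\ell>1/(300\cdot 2^{|k-\ell|/20}p^{31/30})$; plugging this together with $w_\ell\le 10^{40}/p$ directly into the quality ratio yields $q(G_{p^k,p^\ell})/q(G)\gg p^{r/3}/10^{385}>2$, with a parallel computation for $\delta_{p^k,p^\ell}q(G_{p^k,p^\ell})/(\delta q(G))$. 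The two arguments are dual to one another, but the paper's version avoids both the case split and the tail-sum bookkeeping you flag as the main obstacle.
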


\begin{proof} Assume that conclusion $(b)$ does not hold, so $\sum_{\ell\in\CL_{k,r}}  \mu(\CE_{p^k,p^\ell})>\mu(\CE)/(4p^{31/30})$ and we wish to establish $(a)$. Then there must exist some $\ell\in\CL_{k,r}$ such that 
\[
\mu(\CE_{p^k,p^\ell}) > \frac{\mu(\CE)}{300\cdot2^{|k-\ell|/20}p^{31/30}} >0 ,
\]
where we used that $\sum_{|j|\ge 0}2^{-|j|/20}\le 2/(1-2^{-1/20})\le 60$. In particular, $G_{p^k,p^\ell}$ is a non-trivial GCD graph. Since $\mu(\CW_{p^k})\ge (1-10^{40}/p)\mu(\CW)$, we have that $\mu(\CW_{p^\ell})\le 10^{40}\mu(\CW)/p$. Consequently,
\begin{align*}
\frac{q(G_{p^k,p^\ell})}{q(G)}
	&= \bigg(\frac{\mu(\CE_{p^k,p^\ell})}{\mu(\CE)}\bigg)^{10}
	\bigg(\frac{\mu(\CV)}{\mu(\CV_{p^k})}\bigg)^{9}
	\bigg(\frac{\mu(\CW)}{\mu(\CW_{p^\ell})}\bigg)^{9} \frac{p^{|k-\ell|}}{(1-1/p^{31/30})^{10}} \\
	&\ge \Big(\frac{1}{300\cdot 2^{|k-\ell|/20}p^{31/30}}\Big)^{10}
	\Big(\frac{p}{10^{40}}\Big)^{9} p^{|k-\ell|} \\
	&\ge \frac{p^{-4/3}(p/2^{1/2})^{|k-\ell|}}{10^{25}10^{40\cdot 9}}.
\end{align*}
Since $|k-\ell|\ge r+1\ge 2r/3+4/3$, we have 
\[
p^{-4/3}(p/2^{1/2})^{|k-\ell|}\ge 2^{-4/3}(p/2^{1/2})^{2r/3}.
\]
In addition, note that $(p/2^{1/2})\ge p^{1/2}$ for all primes. Therefore
\begin{align*}
\frac{q(G_{p^k,p^\ell})}{q(G)}	&\ge \frac{p^{r/3}}{2^{4/3}\cdot 10^{385}}
	>2
\end{align*}
by our assumption that $p^r>10^{2000}$. 

Similarly, we have 
\begin{align*}
\frac{\delta_{p^k,p^\ell}}{\delta}\cdot \frac{q(G_{p^k,p^\ell})}{q(G)}
&= \bigg(\frac{\mu(\CE_{p^k,p^\ell})}{\mu(\CE)}\bigg)^{11}
\bigg(\frac{\mu(\CV)}{\mu(\CV_{p^k})}\bigg)^{10}
\bigg(\frac{\mu(\CW)}{\mu(\CW_{p^\ell})}\bigg)^{10} \frac{p^{|k-\ell|}}{(1-1/p^{31/30})^{10}} \\
&\ge \bigg(\frac{1}{300\cdot 2^{|k-\ell|/20}p^{31/30}}\bigg)^{11}
\bigg(\frac{p}{10^{40}}\bigg)^{10} p^{|k-\ell|} \\
&\ge \frac{p^{-41/30}(p/2^{11/20})^{|k-\ell|}}{10^{428}}\\
&\ge\frac{(p/2^{11/20})^{19r/30}}{2^{41/30\cdot 11/20}\cdot 10^{428}}.
\end{align*}
Since $p/2^{11/20}\ge p^{9/20}$ and $p^r>10^{2000}$, we conclude that 
\[
\frac{\delta_{p^k,p^\ell}}{\delta}\cdot \frac{q(G_{p^k,p^\ell})}{q(G)}>2.
\]
This completes the proof of the lemma.
\end{proof}

The symmetric version of Lemma \ref{lem:UnbalancedSetEdges1} to the above one also clearly holds:

\begin{lem}[Few edges between unbalanced sets, II]\label{lem:UnbalancedSetEdges2}
Let $G=(\mu,\CV,\CW,\CE,\CP,f,g)$ be a GCD graph with edge density $\delta>0$. Let $p\in\CR(G)$,  $r\in\mathbb{Z}_{\ge 1}$ and $\ell\in\Z_{\ge0}$ be such that $p^r>10^{2000}$ and
\[
\frac{\mu(\CV_{p^\ell})}{\mu(\CV)}\ge 1-\frac{10^{40}}{p},
\]
and set $\CK_{\ell,r}=\{k\in\Z_{\ge0}:|\ell-k|\ge r+1\}$. If $\delta_{p^k,p^\ell}$ denotes the edge density of the graph $G_{p^k,p^\ell}$, then one of the following holds:
\begin{enumerate}
\item There is $k\in\CK_{\ell,r}$ such that $q(G_{p^k,p^\ell})>2q(G)$ and $\delta_{p^k,p^\ell}q(G_{p^k,p^\ell})>2\delta q(G)>0$.
\item $\sum_{k\in\CK_{\ell,r}}  \mu(\CE_{p^k,p^\ell})\le\mu(\CE)/(4p^{31/30})$.
\end{enumerate}
\end{lem}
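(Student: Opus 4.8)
The statement is the symmetric analogue of Lemma \ref{lem:UnbalancedSetEdges1}, obtained by interchanging the roles of $\CV$ and $\CW$ (and correspondingly of $f$ and $g$, and of the exponents $k$ and $\ell$). The plan is therefore simply to run the proof of Lemma \ref{lem:UnbalancedSetEdges1} verbatim with these roles swapped; since the quality $q(G)$, the edge density $\delta$, and the formula for $q(G_{p^k,p^\ell})/q(G)$ from Lemma \ref{lem:InducedGraphs} are all symmetric in $(\CV,k,f)\leftrightarrow(\CW,\ell,g)$, no new idea is needed.

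In detail: assume conclusion $(b)$ fails, so $\sum_{k\in\CK_{\ell,r}}\mu(\CE_{p^k,p^\ell})>\mu(\CE)/(4p^{31/30})$. Using $\sum_{|j|\ge0}2^{-|j|/20}\le 60$, pigeonhole gives a $k\in\CK_{\ell,r}$ with $\mu(\CE_{p^k,p^\ell})>\mu(\CE)/(300\cdot 2^{|k-\ell|/20}p^{31/30})>0$, so that $G_{p^k,p^\ell}$ is non-trivial. The hypothesis $\mu(\CV_{p^\ell})\ge(1-10^{40}/p)\mu(\CV)$ forces $\mu(\CV_{p^k})\le 10^{40}\mu(\CV)/p$ (since $k\ne\ell$, the sets $\CV_{p^k}$ and $\CV_{p^\ell}$ are disjoint). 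Plugging these two bounds into the identity
\[
\frac{q(G_{p^k,p^\ell})}{q(G)}=\bigg(\frac{\mu(\CE_{p^k,p^\ell})}{\mu(\CE)}\bigg)^{10}\bigg(\frac{\mu(\CV)}{\mu(\CV_{p^k})}\bigg)^{9}\bigg(\frac{\mu(\CW)}{\mu(\CW_{p^\ell})}\bigg)^{9}\frac{p^{|k-\ell|}}{(1-1/p^{31/30})^{10}}
\]
from Lemma \ref{lem:InducedGraphs} (here $|k-\ell|\ge r+1$ so $f$ and $g$ differ at $p$ and the indicator term is absent), and using $\mu(\CW_{p^\ell})\le\mu(\CW)$ trivially, we get exactly the same chain of inequalities as in the proof of Lemma \ref{lem:UnbalancedSetEdges1}: $q(G_{p^k,p^\ell})/q(G)\ge p^{-4/3}(p/2^{1/2})^{|k-\ell|}/(10^{25}10^{360})\ge p^{r/3}/(2^{4/3}10^{385})>2$ by $p^r>10^{2000}$. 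The same computation with one extra factor of $\mu(\CE_{p^k,p^\ell})/\mu(\CE)$ and one extra factor of each of $\mu(\CV)/\mu(\CV_{p^k})$, $\mu(\CW)/\mu(\CW_{p^\ell})$ yields $\delta_{p^k,p^\ell}q(G_{p^k,p^\ell})/(\delta q(G))\ge (p/2^{11/20})^{19r/30}/(2^{41\cdot11/600}10^{428})>2$, again using $p/2^{11/20}\ge p^{9/20}$ and $p^r>10^{2000}$. This establishes conclusion $(a)$.

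There is no real obstacle here: the only thing to be careful about is making sure every place where the original proof used "$\mu(\CW_{p^k})\ge(1-10^{40}/p)\mu(\CW)$" and "$\mu(\CW_{p^\ell})\le 10^{40}\mu(\CW)/p$" is now replaced by the $\CV$-versions with $k$ and $\ell$ swapped, and that the trivial bound $\mu(\CW_{p^\ell})\le\mu(\CW)$ is used in place of $\mu(\CV_{p^k})\le\mu(\CV)$. Since all the numerics are identical after the swap, one can simply write ``the proof is identical to that of Lemma \ref{lem:UnbalancedSetEdges1}, interchanging the roles of $\CV$ and $\CW$'' and be done.

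\begin{proof}
This is the exact analogue of Lemma \ref{lem:UnbalancedSetEdges1} with the roles of $\CV$ and $\CW$ (and of $k$ and $\ell$) interchanged. The quality $q(G)$, the edge density $\delta$, and the formula for $q(G_{p^k,p^\ell})/q(G)$ furnished by Lemma \ref{lem:InducedGraphs} are symmetric under this interchange, so the proof carries over verbatim: one replaces every use of the hypothesis $\mu(\CW_{p^k})\ge(1-10^{40}/p)\mu(\CW)$ (and its consequence $\mu(\CW_{p^\ell})\le 10^{40}\mu(\CW)/p$) by the present hypothesis $\mu(\CV_{p^\ell})\ge(1-10^{40}/p)\mu(\CV)$ (and its consequence $\mu(\CV_{p^k})\le 10^{40}\mu(\CV)/p$, valid since $\CV_{p^k}$ and $\CV_{p^\ell}$ are disjoint when $k\ne\ell$), and one uses the trivial bound $\mu(\CW_{p^\ell})\le\mu(\CW)$ where the original proof used $\mu(\CV_{p^k})\le\mu(\CV)$. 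All numerical estimates are unchanged, and the conclusion follows.
\end{proof}
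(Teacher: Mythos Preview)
Your proposal is correct and matches the paper's approach exactly: the paper itself does not give a separate proof but simply states that the symmetric version of Lemma~\ref{lem:UnbalancedSetEdges1} clearly holds, which is precisely what you have spelled out in detail.
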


Next, we prove a lemma about the connectivity of small vertex sets of a GCD graph.

\begin{lem}[Few edges between small sets]\label{lem:SmallSetEdges}
Let	$G=(\mu,\CV,\CW,\CE,\CP,f,g)$ be a GCD graph with edge density $\delta>0$ and let $\eta\in(0,1)$. Then one of the following holds:
\begin{enumerate}
	\item For all sets $\CA\subseteq\CV$ and $\CB\subseteq\CW$ such that $\mu(\CA)\le \eta\cdot  \mu(\CV)$ and  $\mu(\CB)\le\eta \cdot \mu(\CW)$, we have $\mu(\CE\cap(\CA\times\CB))\le \eta^{9/5}\cdot \mu(\CE)$.
	\item There is a GCD subgraph $G'=(\mu,\CV',\CW',\CE',\CP,f,g)$ of $G$ such that $q(G')>q(G)$,  $\CV'\subsetneq \CV$ and $\CW'\subsetneq \CW$.
\end{enumerate}
\end{lem}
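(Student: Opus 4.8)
## Proof proposal for Lemma \ref{lem:SmallSetEdges}

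The plan is to argue by contradiction on conclusion (a): suppose there exist $\CA\subseteq\CV$ and $\CB\subseteq\CW$ with $\mu(\CA)\le\eta\,\mu(\CV)$, $\mu(\CB)\le\eta\,\mu(\CW)$, yet $\mu(\CE\cap(\CA\times\CB))>\eta^{9/5}\mu(\CE)$; from this I will manufacture a GCD subgraph witnessing conclusion (b). The natural candidate is $G'=(\mu,\CA,\CB,\CE',\CP,f,g)$ where $\CE'=\CE\cap(\CA\times\CB)$; note this is automatically a GCD subgraph since we only shrink the vertex sets and keep the same multiplicative data (the conditions in Definition \ref{gcd graph dfn}(5) are inherited), and it is non-trivial because $\mu(\CE')>\eta^{9/5}\mu(\CE)>0$. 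The only thing to check is the quality inequality $q(G')>q(G)$, and then $\CA\subsetneq\CV$, $\CB\subsetneq\CW$ — the latter strictness is not guaranteed a priori, so a small preliminary reduction is needed (see below).

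For the quality computation, since $G'$ differs from $G$ only in the vertex/edge sets and not in the multiplicative data $(\CP,f,g)$, the product over $\CP$ in the definition of $q(\cdot)$ is identical, so by the remark following the definition of quality,
\[
\frac{q(G')}{q(G)}
=\frac{\mu(\CE')^{10}/(\mu(\CA)^9\mu(\CB)^9)}{\mu(\CE)^{10}/(\mu(\CV)^9\mu(\CW)^9)}
=\Bigl(\frac{\mu(\CE')}{\mu(\CE)}\Bigr)^{10}\Bigl(\frac{\mu(\CV)}{\mu(\CA)}\Bigr)^{9}\Bigl(\frac{\mu(\CW)}{\mu(\CB)}\Bigr)^{9}.
\]
Using $\mu(\CE')/\mu(\CE)>\eta^{9/5}$ and $\mu(\CA)\le\eta\mu(\CV)$, $\mu(\CB)\le\eta\mu(\CW)$, the right-hand side is strictly greater than $(\eta^{9/5})^{10}\cdot\eta^{-9}\cdot\eta^{-9}=\eta^{18}\eta^{-18}=1$, as desired. (Here one must first ensure $\mu(\CA),\mu(\CB)>0$; this is automatic since $\mu(\CE')>0$ forces both vertex sets to have positive measure.)

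It remains to handle the edge case where $\mu(\CA)=\mu(\CV)$ or $\mu(\CB)=\mu(\CW)$, which would prevent the strict inclusions $\CA\subsetneq\CV$, $\CB\subsetneq\CW$ even though $\CA,\CB$ as \emph{sets} might still be proper subsets (vertices of measure zero). I would first discard from $\CV$ and $\CW$ all vertices of $\mu$-measure zero — this changes neither $\mu(\CE)$, nor the quality, nor any measure-weighted quantity, and any such reduced graph is a GCD subgraph of the original with equal quality — so WLOG every vertex has positive measure. Then $\mu(\CA)\le\eta\mu(\CV)<\mu(\CV)$ forces $\CA\subsetneq\CV$, and similarly $\CB\subsetneq\CW$; but I should double-check the degenerate possibility $\eta$ close to $1$ combined with $\mu(\CA)$ attaining the bound — since $\eta<1$ strictly, $\eta\mu(\CV)<\mu(\CV)$ and we are fine. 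The main (and essentially only) obstacle is this bookkeeping about strict inclusions and positivity of measures; the arithmetic of the exponents ($10\cdot\frac{9}{5}=18=9+9$) is the key algebraic coincidence that makes the lemma work, and once the contradiction hypothesis is set up correctly there is nothing deep left.
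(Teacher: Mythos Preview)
Your proof is correct and follows essentially the same approach as the paper's: assume (a) fails, take $G'=(\mu,\CA,\CB,\CE\cap(\CA\times\CB),\CP,f,g)$, and verify $q(G')/q(G)>(\eta^{9/5})^{10}\eta^{-9}\eta^{-9}=1$. The only difference is that your ``preliminary reduction'' discarding zero-measure vertices is unnecessary: since $\mu(\CA)\le\eta\,\mu(\CV)<\mu(\CV)$ (using $\eta<1$ and $\mu(\CV)>0$), we cannot have $\CA=\CV$, so $\CA\subsetneq\CV$ follows immediately without any WLOG step---exactly as the paper argues.
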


\begin{proof}
Assume that (a) fails. Hence, there exist sets $\CA\subseteq\CV$ and $\CB\subseteq\CW$ such that $\mu(\CA)\le\eta \cdot \mu(\CV)$, $\mu(\CB)\le\eta \cdot \mu(\CW)$ and $\mu(\CE\cap(\CA\times\CB))>\eta^{9/5}\cdot \mu(\CE)$. We then set $\CE'=\CE\cap(\CA\times \CB)$ and consider the GCD subgraph $G'=(\mu,\CA,\CB,\CE',\CP,f,g)$ of $G$. Since $\mu(\CE')>0$, this is a non-trivial GCD graph. In addition, since $\mu(\CV)>0$ (because $G$ is non-trivial) and $\eta<1$ (by assumption), we have $\mu(\CA)\le \eta \mu(\CV)<\mu(\CV)$, and thus $\CA\subsetneq \CV$. Similarly, we find that $\CB\subsetneq \CW$. Finally, for the quality of $G'$, we have
\[
\frac{q(G')}{q(G)}
	= \bigg(\frac{\mu(\CE')}{\mu(\CE)}\bigg)^{10}
	\bigg(\frac{\mu(\CV)}{\mu(\CA)}\bigg)^{9}
	\bigg(\frac{\mu(\CW)}{\mu(\CB)}\bigg)^{9} >\frac{(\eta^{9/5})^{10}}{\eta^9\cdot\eta^9}=1.
\]
This completes the proof of the lemma.
\end{proof}

By iterating this lemma, we arrive at the following result.

\begin{lem}[Subgraph with few edges between all small sets]\label{lem:NoSmallSetEdges}
		Let	$G=(\mu,\CV,\CW,\CE,\CP,f,g)$ be a GCD graph with edge density $\delta>0$, and let $\eta\in(0,1)$. Then there is a GCD subgraph $G'=(\mu,\CV',\CW',\CE',\CP,f,g)$ of $G$ with edge density $\delta'>0$ such that both of the following hold:
		\begin{enumerate}
			\item$q(G')\ge q(G)>0$.
			\item For all sets $\CA\subseteq\CV'$ and $\CB\subseteq\CW'$ such that $\mu(\CA)\le \eta\cdot \mu(\CV')$ and $\mu(\CB)\le \eta\cdot \mu(\CW')$, we have $\mu(\CE'\cap(\CA\times\CB))\le \eta^{9/5}\mu(\CE')$.
		\end{enumerate}
\end{lem}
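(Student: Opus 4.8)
The plan is to deduce Lemma~\ref{lem:NoSmallSetEdges} from Lemma~\ref{lem:SmallSetEdges} by a straightforward iteration, exactly in the spirit of the proof of Lemma~\ref{lem:HighDegreeSubgraph} from Lemma~\ref{lem:HighDegree}. Observe that conclusion~(b) of the desired lemma is precisely conclusion~(a) of Lemma~\ref{lem:SmallSetEdges} applied to the current graph. So if $G$ already satisfies conclusion~(b), we are done by taking $G'=G$ (conclusion~(a) is trivial). Otherwise, Lemma~\ref{lem:SmallSetEdges} hands us a GCD subgraph with strictly smaller vertex sets and strictly larger quality, and we repeat.

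More precisely, I would argue as follows. Set $G_1 = G$. Given $G_i = (\mu,\CV_i,\CW_i,\CE_i,\CP,f,g)$ with edge density $\delta_i > 0$, apply Lemma~\ref{lem:SmallSetEdges} with the same $\eta$. If conclusion~(a) of Lemma~\ref{lem:SmallSetEdges} holds for $G_i$, stop and set $G' = G_i$. If instead conclusion~(b) holds, we obtain a GCD subgraph $G_{i+1} = (\mu,\CV_{i+1},\CW_{i+1},\CE_{i+1},\CP,f,g)$ of $G_i$ with $q(G_{i+1}) > q(G_i)$, $\CV_{i+1}\subsetneq\CV_i$ and $\CW_{i+1}\subsetneq\CW_i$; in particular $G_{i+1}$ is non-trivial (since $q(G_{i+1})>q(G_i)>0$, using Lemma~\ref{lem:Trivial}(d)), so it has edge density $\delta_{i+1}>0$ and the iteration can continue. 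Since $\#\CV_{i+1} < \#\CV_i$ (and likewise for $\CW$), and $\CV_1$ is a finite set, this process must terminate after finitely many steps at some $G_J$ satisfying conclusion~(a) of Lemma~\ref{lem:SmallSetEdges}. Because being a GCD subgraph is transitive (Lemma~\ref{lem:Trivial}(a)) and the multiplicative data $(\CP,f,g)$ is preserved at every step, $G' := G_J$ is a GCD subgraph of $G$ of the required form.

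It remains to check the two conclusions for $G' = G_J$. Conclusion~(b) is immediate: $G_J$ satisfies conclusion~(a) of Lemma~\ref{lem:SmallSetEdges}, which says exactly that for all $\CA\subseteq\CV_J$ and $\CB\subseteq\CW_J$ with $\mu(\CA)\le\eta\,\mu(\CV_J)$ and $\mu(\CB)\le\eta\,\mu(\CW_J)$ we have $\mu(\CE_J\cap(\CA\times\CB))\le\eta^{9/5}\mu(\CE_J)$. For conclusion~(a), the quality is non-decreasing along the chain: $q(G_J) > q(G_{J-1}) > \cdots > q(G_1) = q(G)$ (and if $J=1$ we simply have $q(G')=q(G)$), so $q(G')\ge q(G) > 0$. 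Finally $\delta' = \delta(G_J) > 0$ since $G_J$ is non-trivial, completing the proof.

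I do not anticipate a genuine obstacle here; this is a routine termination-by-finiteness argument and the only points requiring a word of care are (i) confirming that each intermediate graph $G_{i+1}$ remains non-trivial so that Lemma~\ref{lem:SmallSetEdges} can be reapplied — which follows from $q(G_{i+1}) > q(G_i) > 0$ together with Lemma~\ref{lem:Trivial}(d) — and (ii) noting that the strict containment of vertex sets guarantees termination, since one cannot strictly shrink a finite set infinitely often.
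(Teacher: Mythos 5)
Your proof is correct and follows essentially the same route as the paper: iterate Lemma \ref{lem:SmallSetEdges}, noting that each application strictly shrinks the (finite) vertex sets and strictly increases the quality, so the process terminates at a graph satisfying conclusion (a) of Lemma \ref{lem:SmallSetEdges}, which is conclusion (b) of Lemma \ref{lem:NoSmallSetEdges}. Your extra check that each intermediate graph remains non-trivial (via $q(G_{i+1})>q(G_i)>0$ and Lemma \ref{lem:Trivial}) is a point the paper leaves implicit but is entirely consistent with its argument.
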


\begin{proof} 
We note that conclusion $(b)$ of Lemma \ref{lem:NoSmallSetEdges} is the same as conclusion $(a)$ of Lemma \ref{lem:SmallSetEdges}. Thus, if $G$ does not satisfy conclusion $(b)$ of Lemma \ref{lem:NoSmallSetEdges}, then we may repeatedly apply Lemma \ref{lem:SmallSetEdges} to produce a sequence of GCD subgraphs 
	\[
	G=:G_1\succeq G_2\succeq\cdots 
	\]
	until we arrive at a GCD subgraph of $G$ which satisfies conclusion $(a)$ of Lemma \ref{lem:SmallSetEdges}. This process must terminate after a finite number of steps since $G_{i+1}$ has strictly smaller vertex sets than those of $G_i$. Let the process terminate at $G_J$, which satisfies conclusion $(a)$ of Lemma \ref{lem:SmallSetEdges}. Since $q(G_{i+1})>q(G_i)$ by Lemma \ref{lem:SmallSetEdges}, we have that 
	\[
	q(G_J)>q(G_{J-1})>\dots>q(G_1)=q(G).
	\]
	Lastly, since the multiplicative data are maintained at each iteration, we see that taking $G'=G_J$ gives the result.
\end{proof}

\section{Proof of Proposition \ref{prop:IterationStep1}} \label{sec:IterationStep1}

In this section we prove Proposition \ref{prop:IterationStep1}, which is the iteration procedure for `generic' primes. This section is essentially self-contained (relying only on the notation of Section \ref{sec:Graph} and the trivial Lemma \ref{lem:InducedGraphs}), and serves as a template for the proofs of the harder Propositions \ref{prop:IterationStep2} and \ref{prop:SmallPrimes}.

\begin{lem}[Bounds on edge sets]\label{lem:EdgeSets}
Consider a GCD graph $G=(\mu,\CV,\CW,\CE,\CP,f,g)$ and a prime $p\in\CR(G)$. 
For each $k,\ell\in\Z_{\ge0}$, let
\[
\alpha_k=\frac{\mu(\CV_{p^k})}{\mu(\CV)}
\quad\text{and}\quad
\beta_\ell=\frac{\mu(\CW_{p^\ell})}{\mu(\CW)}.
\]
Then there exist $k,\ell\in \Z_{\ge0}$ such that $\alpha_k,\beta_\ell>0$ and
\[
\frac{\mu(\CE_{p^k,p^\ell})}{\mu(\CE)} 	
	\ge 	
		\begin{cases}
		(\alpha_k\beta_k)^{9/10}
						&\text{if}\ k=\ell,\\
						\\
			\ds\frac{\alpha_k(1-\beta_k)
				+\beta_k(1-\alpha_k)
				+\alpha_{\ell}(1-\beta_\ell)
				+\beta_\ell(1-\alpha_\ell)}{2^{|k-\ell|/20}\times 1000}
						&\text{otherwise}.
		\end{cases}
\]
\end{lem}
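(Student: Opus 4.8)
The plan is to exploit the fact that the sets $\CV_{p^k}$ (for $k\ge 0$) partition $\CV$, and likewise the $\CW_{p^\ell}$ partition $\CW$, so that $\sum_k \alpha_k = \sum_\ell \beta_\ell = 1$ and $\mu(\CE) = \sum_{k,\ell} \mu(\CE_{p^k,p^\ell})$. Dividing through by $\mu(\CE)$, we want to find a single pair $(k,\ell)$ with $\alpha_k,\beta_\ell>0$ realizing one of the two stated lower bounds. I would argue by contradiction: suppose that for \emph{every} pair $(k,\ell)$ with $\alpha_k,\beta_\ell > 0$, the ratio $\mu(\CE_{p^k,p^\ell})/\mu(\CE)$ is strictly smaller than the displayed right-hand side. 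Summing these strict inequalities over all $(k,\ell)$ should then produce $\sum_{k,\ell}\mu(\CE_{p^k,p^\ell})/\mu(\CE) < 1$, contradicting the partition identity. So the real content is purely the combinatorial inequality
\[
\sum_{k} (\alpha_k\beta_k)^{9/10}
+ \sum_{k\ne \ell} \frac{\alpha_k(1-\beta_k)+\beta_k(1-\alpha_k)+\alpha_\ell(1-\beta_\ell)+\beta_\ell(1-\alpha_\ell)}{2^{|k-\ell|/20}\cdot 1000}
\ge 1,
\]
valid for any two probability vectors $(\alpha_k)$, $(\beta_k)$ on $\Z_{\ge 0}$.

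To prove this inequality, I would first handle the diagonal term. Write $s := \sum_k \min(\alpha_k,\beta_k) \in [0,1]$, which measures the ``overlap'' of the two distributions. Since $\alpha_k\beta_k \ge \min(\alpha_k,\beta_k)^2$ and $\sum_k \min(\alpha_k,\beta_k) = s$, an application of power-mean / the inequality $\sum x_k^{9/10} \ge (\sum x_k)^{9/10}$ when $\sum x_k \le 1$ — more carefully, $\sum_k \min(\alpha_k,\beta_k)^{9/5} \le \sum_k \min(\alpha_k,\beta_k) = s$ is the wrong direction, so instead I would use that $(\alpha_k\beta_k)^{9/10} \ge \min(\alpha_k,\beta_k)^{9/5}$ and then, since each $\min(\alpha_k,\beta_k)\le 1$, we get $\sum_k (\alpha_k\beta_k)^{9/10} \ge \sum_k \min(\alpha_k,\beta_k)^{9/5} \ge \bigl(\sum_k\min(\alpha_k,\beta_k)\bigr)^{9/5}\big/(\#\text{support})^{4/5}$ — this is getting delicate, so the cleaner route is: if the overlap $s$ is close to $1$ the diagonal term alone is close to $1$; if $s$ is bounded away from $1$, then $\sum_k |\alpha_k - \beta_k| = 2(1-s)$ is bounded away from $0$, and I will extract the off-diagonal mass from this. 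For the off-diagonal sum, note $\alpha_k(1-\beta_k)+\beta_k(1-\alpha_k) \ge |\alpha_k - \beta_k|$ (since $\alpha_k(1-\beta_k)+\beta_k(1-\alpha_k) = \alpha_k+\beta_k - 2\alpha_k\beta_k \ge \alpha_k + \beta_k - 2\min(\alpha_k,\beta_k) = |\alpha_k-\beta_k|$), so each summand is $\ge (|\alpha_k-\beta_k| + |\alpha_\ell-\beta_\ell|)/(2^{|k-\ell|/20}\cdot 1000)$. Summing the geometric factor $\sum_{\ell \ne k} 2^{-|k-\ell|/20} \le C_0$ for an explicit $C_0$ (around $60$), and more importantly bounding below: for each fixed $k$ with $|\alpha_k-\beta_k|>0$ there is \emph{some} $\ell\ne k$ with $|k-\ell|$ small contributing, giving a lower bound of order $\sum_k |\alpha_k - \beta_k|/1000 \cdot (\text{something bounded below})$. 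The arithmetic here needs to be done carefully to land the constant $1000$ and exponent $1/20$, but it is elementary.

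The main obstacle I anticipate is the bookkeeping in the off-diagonal sum: one must be careful that the double sum over $k\ne\ell$ is bounded \emph{below} (not above) by a constant multiple of $\sum_k|\alpha_k-\beta_k|$, which requires pairing each index $k$ with a \emph{nearby} index $\ell$ that also carries mass — a priori the mass $|\alpha_\ell - \beta_\ell|$ could be concentrated far from $k$. The trick is that $\sum_\ell |\alpha_\ell - \beta_\ell| = 2(1-s)$ is a fixed total, so by a pigeonhole/averaging argument the ``cross terms'' $|\alpha_k - \beta_k|\cdot|\alpha_\ell-\beta_\ell|$ summed against the decaying kernel $2^{-|k-\ell|/20}$ still capture a definite proportion; alternatively, one keeps only the linear-in-$|\alpha_k-\beta_k|$ part by summing the kernel over $\ell$ (which is bounded both above and — restricting to $|k-\ell|\le$ a constant — below). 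Once the inequality $(\ast)$ is established with room to spare, the contradiction argument closes immediately, and the requirement $\alpha_k,\beta_\ell>0$ is automatic since indices with zero mass contribute nothing to either side. I expect the whole proof to be under a page, with the combinatorial inequality being the only substantive step.
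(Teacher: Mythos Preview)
There is a genuine gap: you have the direction of the key combinatorial inequality reversed. If every ratio $\mu(\CE_{p^k,p^\ell})/\mu(\CE)$ were strictly smaller than the stated right-hand side, then summing over all $(k,\ell)$ gives
\[
1 = \sum_{k,\ell}\frac{\mu(\CE_{p^k,p^\ell})}{\mu(\CE)} < S_1 + S_2,
\]
where $S_1=\sum_k(\alpha_k\beta_k)^{9/10}$ and $S_2$ is the off-diagonal sum. The contradiction therefore requires proving $S_1+S_2\le 1$, not $\ge 1$. Your claimed inequality $S_1+S_2\ge 1$ is in fact false: take $\alpha_0=\beta_0=\alpha_1=\beta_1=1/2$ and all other entries zero; then $S_1=2\cdot(1/4)^{9/10}<0.6$ while $S_2\le 2/1000$, so $S_1+S_2<1$. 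Consequently your entire strategy of \emph{lower}-bounding the diagonal via overlap $s=\sum_k\min(\alpha_k,\beta_k)$ and lower-bounding the off-diagonal via $|\alpha_k-\beta_k|$ is aimed at the wrong target and cannot close the argument.

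The correct task is to \emph{upper}-bound $S_1+S_2$. For $S_2$, sum the geometric kernel $\sum_{j\ne 0}2^{-|j|/20}$ (which is $\le 100$) to get $S_2\le \tfrac{2}{5}\sum_{k\ne\ell}\alpha_k\beta_\ell = \tfrac{2}{5}(1-\sum_k\alpha_k\beta_k)$. For $S_1$, let $\gamma=\max_k\alpha_k\beta_k$ and use Cauchy--Schwarz: $S_1\le\gamma^{2/5}\sum_k(\alpha_k\beta_k)^{1/2}\le\gamma^{2/5}$. Since $\sum_k\alpha_k\beta_k\ge\gamma$, one obtains $S_1+S_2\le\gamma^{2/5}+\tfrac{2}{5}(1-\gamma)$, and the right-hand side is $\le 1$ for $\gamma\in[0,1]$ (the function is increasing and equals $1$ at $\gamma=1$). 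This is the substantive step; your overlap and total-variation heuristics do not enter.
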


\begin{proof} Let $\CX=\{(k,\ell)\in\Z_{\ge0}^2: \alpha_k,\beta_\ell>0\}$. Note that if $(k,\ell)\in\Z_{\ge0}^2\setminus \CX$, then $\mu(\CE_{p^k,p^\ell})\le \mu(\CV_{p^k})\mu(\CW_{p^\ell})=\alpha_k\beta_\ell \mu(\CV)\mu(\CW)=0$. Thus $\sum_{(k,\ell)\in \CX}\mu(\CE_{p^k,p^\ell}) = \mu(\CE)$. Hence, if we assume that the inequality in the statement of the lemma does not hold for any pair $(k,\ell)\in \CX$, we must have
\[
1=\sum_{(k,\ell)\in\CX}\frac{\mu(\CE_{p^k,p^\ell})}{\mu(\CE)}< S_1+S_2,
\]
	where
	\[
	S_1:=\sum_{k=0}^\infty(\alpha_k\beta_k)^{9/10}
	\]
	and
	\[
	S_2:=\sum_{\substack{k,\ell\ge0\\ k\neq\ell}}
		\frac{\alpha_k(1-\beta_k)
			+\beta_k(1-\alpha_k)
			+\alpha_\ell(1-\beta_\ell)
			+\beta_\ell(1-\alpha_\ell)}{2^{|k-\ell|/20}\times 1000}.
	\]
Thus, to arrive at a contradiction, it suffices to show that
	\[
	S_1+S_2\le 1.
	\]
First of all, note that $\sum_{|j|\ge1}2^{-|j|/20}=2/(2^{1/20}-1)\le 100$, whence	
\begin{align*}
S_2&\le 
	\frac{1}{10}
		\bigg(\sum_{k=0}^\infty\alpha_k(1-\beta_k)
		+\sum_{k=0}^\infty\beta_k(1-\alpha_k)
		+\sum_{\ell=0}^\infty\alpha_\ell(1-\beta_\ell)
		+\sum_{\ell=0}^\infty\beta_\ell(1-\alpha_\ell)\bigg) \\
	&=\frac{1}{5}\bigg(\sum_{k=0}^\infty\alpha_k(1-\beta_k)
		+ \sum_{\ell=0}^\infty\beta_\ell(1-\alpha_\ell)\bigg).
\end{align*}
Observing that 
\[
1-\beta_k=\sum_{\ell\ge0,\ \ell\neq k} \beta_\ell
\quad\text{and}\quad
1-\alpha_\ell=\sum_{k\ge0,\ k\neq \ell} \alpha_k,
\]
we conclude that
\[
S_2\le \frac{2}{5}\sum_{\substack{k,\ell\ge0\\ k\neq\ell}} \alpha_k\beta_\ell.
\]

Since $\alpha_k,\beta_\ell$ are non-negative reals which sum to 1, there exists some $k_0\ge0$ such that 
\[
\gamma:=\max_{k\ge0}\alpha_k\beta_k= \alpha_{k_0}\beta_{k_0}.
\]
We thus find that 
\[
S_1 = \sum_{k=0}^\infty(\alpha_k\beta_k)^{9/10}\le \gamma^{2/5}\sum_{k=0}^\infty(\alpha_k\beta_k)^{1/2}\le \gamma^{2/5}\Bigl(\sum_{k=0}^\infty \alpha_k\Bigr)^{1/2}\Bigl(\sum_{\ell=0}^{\infty}\beta_\ell\Bigr)^{1/2}= \gamma^{2/5}
\]
where we used the Cauchy-Schwarz inequality to bound $\sum_k (\alpha_k\beta_k)^{1/2}$ from above. 
We also find that
\[
\frac{5 S_2}{2}\le \sum_{\substack{k,\ell\ge0\\ k\neq\ell}} \alpha_k\beta_\ell
	= 1- \sum_{k=0}^\infty \alpha_k\beta_k
	\le 1-\gamma.
\]
As a consequence, 
\[
S_1+S_2\le \gamma^{2/5}+\frac{2}{5}(1-\gamma).
\]
The function $x\mapsto x^{2/5}+2(1-x)/5$ is increasing for $0\le x\le1$, and so maximized at $x=1$. Thus we infer that $S_1+S_2\le1$ as required, completing the proof of the lemma.
\end{proof}

\begin{lem}[Quality increment unless a prime power divides almost all]\label{lem:MainLem}
Consider a GCD graph $G=(\mu,\CV,\CW,\CE,\CP,f,g)$ with edge density $\delta>0$ and a prime $p\in\CR(G)$ with $p>10^{40}$. Then one of the following holds:
\begin{enumerate}
\item There is a GCD subgraph $G'$ of $G$ with multiplicative data $(\CP',f',g')$ and edge density $\delta'>0$ such that
\[
\CP'=\CP\cup\{p\},\quad
\CR(G')\subseteq \CR(G)\setminus\{p\},\quad 
\min\bigg\{1,\frac{\delta'}{\delta}\bigg\}
	\cdot \frac{q(G')}{q(G)} \ge 2^{\un_{f'(p)\neq g'(p)}} .
\]
\item There is some $k\in\Z_{\ge0}$ such that
\[
\frac{\mu(\CV_{p^k})}{\mu(\CV)}\ge1-\frac{10^{40}}{p}
\quad\text{and}\quad
\frac{\mu(\CW_{p^k})}{\mu(\CW)}\ge 1-\frac{10^{40}}{p}.
\]
\end{enumerate}
\end{lem}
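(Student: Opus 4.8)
The plan is to apply Lemma \ref{lem:EdgeSets} to the prime $p\in\CR(G)$, which produces exponents $k,\ell\in\Z_{\ge0}$ with $\alpha_k,\beta_\ell>0$ and a lower bound on $\mu(\CE_{p^k,p^\ell})/\mu(\CE)$. The candidate for the desired GCD subgraph in case (a) will be $G'=G_{p^k,p^\ell}$, whose quality ratio relative to $G$ is computed exactly by Lemma \ref{lem:InducedGraphs}. Note that passing to $G_{p^k,p^\ell}$ adds $p$ to the set of primes, so $\CR(G')\subseteq\CR(G)\setminus\{p\}$ holds automatically, and $\CP'=\CP\cup\{p\}$; the only thing to verify is the quality inequality $\min\{1,\delta'/\delta\}\cdot q(G')/q(G)\ge 2^{\un_{f'(p)\ne g'(p)}}$, where $f'(p)=k$, $g'(p)=\ell$.

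First I would treat the case $k\ne\ell$. Here Lemma \ref{lem:EdgeSets} gives $\mu(\CE_{p^k,p^\ell})/\mu(\CE)\gg 2^{-|k-\ell|/20}(\alpha_k(1-\beta_k)+\cdots)$, and since $\alpha_k,\beta_\ell$ appear in the denominators of the formula in Lemma \ref{lem:InducedGraphs} while there is a gain of $p^{|k-\ell|}\ge p$, one expects a clean win. More precisely, I would bound $\mu(\CV_{p^k})/\mu(\CV)=\alpha_k$ and $\mu(\CW_{p^\ell})/\mu(\CW)=\beta_\ell$ from below using the relevant terms in the numerator from Lemma \ref{lem:EdgeSets} (each of $\alpha_k(1-\beta_k)$, etc., is at most $\alpha_k$, $\beta_\ell$ as appropriate, but one needs the reverse: the edge bound is a sum of four such terms, so at least one of the four factors $\alpha_k,\beta_k,\alpha_\ell,\beta_\ell$ — whichever pairs with a factor $(1-\cdot)$ that is bounded below — controls things); combining the exponents of $\alpha_k,\beta_\ell$ (ninth powers in the denominator versus tenth/eleventh powers of the edge ratio in the numerator) with the factor $p^{|k-\ell|}/(1-1/p^{31/30})^{10}\ge p$ and using $p>10^{40}$ should beat the absolute constant $1000^{10}$ (or $2^{|k-\ell|/20}$-type) losses with room to spare, yielding both $q(G')/q(G)\ge 2$ and $\delta'/\delta\cdot q(G')/q(G)\ge 2$. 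This is essentially the same computation as in Lemma \ref{lem:UnbalancedSetEdges1}, so I would mirror that argument.

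Next, the case $k=\ell$. Then Lemma \ref{lem:EdgeSets} gives $\mu(\CE_{p^k,p^k})/\mu(\CE)\ge(\alpha_k\beta_k)^{9/10}$, and Lemma \ref{lem:InducedGraphs} (with $k=\ell$, so $p^{|k-\ell|}=1$, and the factor $(1-\un_{k\ge1}/p)^2$ and $(1-1/p^{31/30})^{10}$, all of which are $\ge$ some constant close to $1$) yields $q(G')/q(G)\ge(\alpha_k\beta_k)^{9}\cdot\alpha_k^{-9}\beta_k^{-9}\cdot(\text{bounded below}) = (\text{bounded below})$ — so the quality does not drop (up to the harmless $(1-\un_{k\ge1}/p)^{-2}(1-p^{-31/30})^{-10}\ge 1$ factor actually helping), but we only get $\ge 1$, not $\ge 2$. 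Since for $k=\ell$ we have $\un_{f'(p)\ne g'(p)}=0$, the required bound is exactly $\min\{1,\delta'/\delta\}\cdot q(G')/q(G)\ge 1$, so I also need $\delta'/\delta\ge 1$ or, if not, a separate argument. Here is where conclusion (b) enters: if for the maximizing $k$ we have $\alpha_k\ge 1-10^{40}/p$ \emph{and} $\beta_k\ge 1-10^{40}/p$, we are in case (b) and done. Otherwise at least one of $\alpha_k,\beta_k$ is $<1-10^{40}/p$; I claim that then the $k\ne\ell$ branch of Lemma \ref{lem:EdgeSets}, applied with this same $k$ and a suitable $\ell\ne k$ carrying positive mass, produces a genuine quality increment by a factor $\ge 2$. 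Concretely, $1-\beta_k=\sum_{\ell\ne k}\beta_\ell>10^{40}/p$, so some $\beta_\ell$ with $\ell\ne k$ is reasonably large, and the term $\alpha_k(1-\beta_k)$ (with $\alpha_k$ not too small — if $\alpha_k$ were also tiny we would instead use a $k'$ achieving the max, or argue directly) is $\gg 10^{40}/p$; feeding this into the $k\ne\ell$ computation above with the extra $p^{|k-\ell|}\ge p$ gain overwhelms the $10^{40}$, giving $q(G')/q(G)>2$ and the $\delta$-weighted version too.

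**The main obstacle** I anticipate is the bookkeeping in the $k=\ell$ analysis: one must carefully ensure that when the maximizing $k$ fails condition (b), one can \emph{locate} a pair $(k,\ell)$ with $k\ne\ell$, both $\alpha_k>0$ (or $\beta_\ell>0$) and with enough mass, to run the $k\ne\ell$ increment — and that the constants (the $1000^{10}$, the $2^{|k-\ell|/20}$ sum, the $10^{40}$ from the unbalanced threshold, versus the $p\ge 10^{40}$ gain) actually close. The cleanest route is probably: let $k$ be the index maximizing $\alpha_k\beta_k$; if (b) holds for this $k$ we are done; otherwise WLOG $\beta_k<1-10^{40}/p$, pick $\ell\ne k$ with $\beta_\ell$ maximal among $\ell\ne k$ (so $\beta_\ell\ge(1-\beta_k)/(\text{something})$, but more simply $\beta_\ell>0$ and $\sum_{\ell\ne k}\beta_\ell>10^{40}/p$), and invoke the $k\ne\ell$ bound from Lemma \ref{lem:EdgeSets} together with Lemma \ref{lem:InducedGraphs}, checking the inequality exactly as in the first case. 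I would present the $k\ne\ell$ computation once as a standalone sub-claim and then invoke it in both places.
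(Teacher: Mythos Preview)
Your plan has the dichotomy reversed, and this is a genuine gap rather than a cosmetic issue.

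\textbf{The diagonal case $k=\ell$ is immediate.} With $G'=G_{p^k,p^k}$, Lemma~\ref{lem:InducedGraphs} and the bound $\mu(\CE_{p^k,p^k})/\mu(\CE)\ge(\alpha_k\beta_k)^{9/10}$ give $q(G')/q(G)\ge 1$, as you say. But you also get
\[
\frac{\delta'}{\delta}=\frac{\mu(\CE_{p^k,p^k})}{\mu(\CE)}\cdot\frac{1}{\alpha_k\beta_k}\ge(\alpha_k\beta_k)^{-1/10}\ge 1
\]
directly. So conclusion~(a) holds with no further work, and case~(b) is not needed here. Your proposed fallback (``apply the $k\ne\ell$ branch of Lemma~\ref{lem:EdgeSets} with this same $k$ and a suitable $\ell$'') would anyway be invalid: Lemma~\ref{lem:EdgeSets} is purely existential and does not let you prescribe $k$, and for a hand-picked $\ell$ there is no a~priori lower bound on $\mu(\CE_{p^k,p^\ell})$ at all.

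\textbf{The off-diagonal case $k\ne\ell$ is where (b) actually enters.} Your ``clean win'' claim fails. Writing $S=\alpha_k(1-\beta_k)+\beta_k(1-\alpha_k)+\alpha_\ell(1-\beta_\ell)+\beta_\ell(1-\alpha_\ell)$, the bound from Lemma~\ref{lem:EdgeSets} together with Lemma~\ref{lem:InducedGraphs} and $S\ge\alpha_k(1-\beta_k)\ge\alpha_k\beta_\ell$ yields
\[
\min\Bigl\{\frac{q(G')}{q(G)},\,\frac{\delta'}{\delta}\cdot\frac{q(G')}{q(G)}\Bigr\}\ \ge\ \frac{S^2}{1000^{11}\,\alpha_k\beta_\ell}\Bigl(\frac{p}{2^{11/20}}\Bigr)^{|k-\ell|}.
\]
But $S$ can be of order $1/p$. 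For instance, if $\alpha_{k_0}=\beta_{k_0}=1-1/p$ and $\alpha_{k_0+1}=\beta_{k_0+1}=1/p$ with all edges lying in $\CE_{p^{k_0},p^{k_0+1}}\cup\CE_{p^{k_0+1},p^{k_0}}$, then Lemma~\ref{lem:EdgeSets} returns an off-diagonal pair, $S\asymp 1/p$, $\alpha_k\beta_\ell\asymp 1/p$, and the right-hand side above is $\asymp 1/1000^{11}$, certainly not $\ge 2$. The factor $p^{|k-\ell|}$ does not save you because the tenth power of the edge ratio and the ninth power of $(\alpha_k\beta_\ell)^{-1}$ can nearly cancel.

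The paper's argument is: assume conclusion~(a) fails in the off-diagonal case. Then the displayed minimum is $\le 2$, forcing $S\le 10^{34}/p$ and $S^2/(\alpha_k\beta_\ell)\le 1/5$. From $S\ge(\alpha_k+\beta_\ell)(1-\max\{\alpha_\ell,\beta_k\})$ and AM--GM one deduces $\max\{\alpha_\ell,\beta_k\}\ge 1/2$; then two applications of $1-\alpha_k\le 2\beta_k(1-\alpha_k)\le 2S$ (and symmetrically) give $\alpha_k,\beta_k\ge 1-10^{40}/p$ for the appropriate index, which is exactly (b). This deduction is the missing piece in your plan.
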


\begin{proof} Let $\alpha_k$ and $\beta_\ell$ be defined as in the statement of Lemma \ref{lem:EdgeSets}. Consequently, there are $k,\ell\in\Z_{\ge0}$ such that $\alpha_k,\beta_\ell>0$ and
\begin{equation}\label{eq:FirstPigeonholeArgument}
\frac{\mu(\CE_{p^k,p^\ell})}{\mu(\CE)} 	
\ge 	
\begin{cases}
(\alpha_{k}\beta_{k})^{9/10}
&\text{if}\ k=\ell,\\
\\
\ds\frac{\alpha_k(1-\beta_k)
	+\beta_k(1-\alpha_k)
	+\alpha_\ell(1-\beta_\ell)
	+\beta_\ell(1-\alpha_\ell)}{2^{|k-\ell|/20}\times 1000}
&\text{otherwise}.
\end{cases}
\end{equation}
In particular, $\mu(\CE_{p^k,p^\ell})>0$, so that $G_{p^k,p^\ell}$ is a non-trivial GCD subgraph of $G$. We separate two cases, according to whether $k=\ell$ or not.

\bigskip

\noindent 
\textit{Case 1: $k=\ell$.}

\medskip

Let $G'=G_{p^k,p^k}$. Lemma \ref{lem:InducedGraphs} and our lower bound $\mu(\CE_{p^k,p^k})\ge (\alpha_k\beta_k)^{9/10}\mu(\CE)$ imply that
\als{
\frac{q(G')}{q(G)}	=\bigg(\frac{\mu(\CE_{p^k,p^k})}{\mu(\CE)}\bigg)^{10}
	(\alpha_k\beta_k)^{-9}
	\frac{1}{(1-\un_{k\ge1}/p)^2(1-1/p^{31/30})^{10}} \ge1.
}
In addition,
\[
\frac{\delta'}{\delta} 
	= \frac{\mu(\CE_{p^k,p^k})}{\mu(\CE)} \cdot
	 \frac{\mu(\CV)\mu(\CW)}{\mu(\CV_{p^k})\mu(\CW_{p^k})}  
	\ge (\alpha_{k}\beta_{k})^{9/10}\frac{1}{\alpha_k\beta_k} \ge 1.
\]
This establishes conclusion  (a) in this case, noting that $f'(p)=g'(p)=k$ so $\un_{f'(p)\ne g'(p)}=0$.

\bigskip

\noindent 
\textit{Case 2: $k\ne \ell$}

\medskip

As before, we let $G'=G_{p^k,p^\ell}$, and use Lemma \ref{lem:InducedGraphs} and our lower bound on $\CE_{p^k,p^\ell}$ to find that
\als{
	\frac{q(G')}{q(G)}
	&=\bigg(\frac{\mu(\CE_{p^k,p^\ell})}{\mu(\CE)}\bigg)^{10}
	(\alpha_k\beta_\ell)^{-9}
	\frac{p^{|k-\ell|}}{(1-1/p^{31/30})^{10}} \\
	&\ge\frac{S^{10}}{1000^{10}(\alpha_k\beta_\ell)^{9}}
		\cdot \Big(\frac{p}{2^{1/2}}\Big)^{|k-\ell|},
}
where
\[
S=\alpha_k(1-\beta_k)
+\beta_k(1-\alpha_k)
+\alpha_\ell(1-\beta_\ell)
+\beta_\ell(1-\alpha_\ell).
\]
In addition, we have
\als{
\frac{\delta'}{\delta}\cdot \frac{q(G')}{q(G)}
	&=\bigg(\frac{\mu(\CE_{p^k,p^\ell})}{\mu(\CE)}\bigg)^{11}
	(\alpha_k\beta_\ell)^{-10}
	\frac{p^{|k-\ell|}}{(1-1/p^{31/30})^{10}} \\
	&\ge\frac{S^{11}}{1000^{11}(\alpha_k\beta_\ell)^{10}}
		\cdot \Big(\frac{p}{2^{11/20}}\Big)^{|k-\ell|} .
}
Note that 
\eq{\label{S lb1}
S\ge \alpha_k(1-\beta_k)\ge \alpha_k\beta_\ell.
}
Indeed, this follows by our assumption that $k\neq\ell$, which implies that $\beta_k+\beta_\ell\le \sum_{j\ge 0}\beta_j=1$. Combining the above, we conclude that
\eq{\label{q'/q-lb}
	\min\bigg\{ \frac{q(G')}{q(G)},\frac{\delta'}{\delta}\cdot \frac{q(G')}{q(G)} \bigg\}
	\ge\frac{S^{2}}{1000^{11}\alpha_k\beta_\ell}
		\cdot \Big(\frac{p}{2^{11/20}}\Big)^{|k-\ell|}.
}

Now, assume that conclusion (a) of the lemma does not hold, so that the left hand side of \eqref{q'/q-lb} is $\le2$. Since $|k-\ell|\ge1$ and all primes are at least $2$, we must then have that
\[
S\le \frac{S^{2}}{\alpha_k\beta_\ell}
\le 2\cdot 10^{33}\bigg(\frac{2^{11/20}}{p}\bigg)^{|k-\ell|}\le\frac{10^{34}}{p}
\le\frac{1}{5},
\]
where we used our assumption that $p\ge 10^{40}$ for the last inequality. In particular, this gives
\begin{equation}\label{small quality}
S\le  \frac{10^{34}}{p}
\quad\text{and}\quad
\frac{S^2}{\alpha_k\beta_\ell}\le \frac{1}{5}.
\end{equation}

We note that
\begin{equation}
S\ge\alpha_k(1-\beta_k)+\beta_\ell(1-\alpha_\ell)\ge (\alpha_k+\beta_\ell)(1-\max\{\alpha_\ell,\beta_k\}).
\label{eq:SBound}
\end{equation}
Thus by the arithmetic-geometric mean inequality, and relations \eqref{eq:SBound} and \eqref{small quality}, we have
\[
(1-\max\{\alpha_\ell,\beta_k\})^2\le  \frac{(\alpha_k+\beta_\ell)^2}{4\alpha_k\beta_\ell}(1-\max\{\alpha_\ell,\beta_k\})^2\le \frac{S^2}{4\alpha_k\beta_\ell}\le \frac{1}{20}.
\]
In particular, $\max\{\alpha_\ell,\beta_k\}\ge 1/2$.

We consider the case when $\beta_k\ge1/2$; the case with $\alpha_\ell\ge 1/2$ is entirely analogous with the roles of $\beta$ and $\alpha$ swapped, and the roles of $k$ and $\ell$ swapped. Thus, to complete the proof of the lemma, it suffices to show that
\begin{equation}\label{small quality - large structure}
\alpha_k,\beta_k
	\ge1-\frac{10^{40}}{p}.
\end{equation}
The first inequality of \eqref{small quality} states that
\[
\alpha_k(1-\beta_k)
+\beta_k(1-\alpha_k)
+\alpha_\ell(1-\beta_\ell)
+\beta_\ell(1-\alpha_\ell)
	\le \frac{10^{34}}{p}.
\]
Since $\beta_k\ge1/2$, we infer that
\[
1-\alpha_k\le 2\beta_k(1-\alpha_k)\le 
	\frac{2\cdot 10^{34}}{p}\le \frac{10^{35}}{p} \le \frac{1}{2}. 
\]
In particular, $\alpha_k\ge 1-10^{40}/p$ and $\alpha_k\ge1/2$, whence
\[
1-\beta_k\le 2\alpha_k(1-\beta_k)\le 
\frac{2\cdot 10^{34}}{p}\le \frac{10^{40}}{p}.
\]
This completes the proof of \eqref{small quality - large structure} and hence of the lemma.
\end{proof}

\begin{proof}[Proof of Proposition \ref{prop:IterationStep1}]
This follows almost immediately from Lemma \ref{lem:MainLem}. Since $\CR(G)\subseteq\{p>10^{2000}\}$ by assumption, if $p\in\CR(G)$ then $p>10^{2000}$. We have also assumed that $\CR^{\flat}(G)\ne\emptyset$. Consequently, there is a prime $p\in\CR^\flat(G)$ with $p>10^{2000}>10^{40}$. We now apply Lemma \ref{lem:MainLem} with this choice of $p$. By definition of $\CR^\flat(G)$, conclusion $(b)$ cannot hold, and so conclusion $(a)$ must hold. This then gives the result.
\end{proof}

We are left to establish Proposition \ref{prop:SmallPrimes} and Proposition \ref{prop:IterationStep2}.

\section{Proof of Proposition \ref{prop:SmallPrimes}}\label{sec:SmallPrime}

In this section we prove Proposition \ref{prop:SmallPrimes}, which is the iteration procedure for small primes. This section relies on the notation of Section \ref{sec:Graph}, Lemma \ref{lem:HighDegree}, the Lemmas \ref{lem:InducedGraphs}-\ref{lem:UnbalancedSetEdges1} from Section \ref{sec:Prep} and Lemma \ref{lem:MainLem}. The basic idea of the proof is similar to that of Proposition \ref{prop:IterationStep1}, but we can no longer ensure a quality increment when the primes are small; instead we show that there is only a bounded loss.

\begin{lem}[Small quality loss or prime power divides positive proportion]\label{lem:SmallPrime}
Consider a GCD graph $G=(\mu,\CV,\CW,\CE,\CP,f,g)$ with edge density $\delta>0$, and let $p\in\CR(G)$ be a prime. Then one of the following holds:
\begin{enumerate}
\item There is a GCD subgraph $G'$ of $G$ with multiplicative data $(\CP',f',g')$ and edge density $\delta'>0$ such that
\[
\CP'=\CP\cup\{p\},\quad
\CR(G')\subseteq\CR(G)\setminus\{p\},\quad 
\min\bigg\{1,\frac{\delta'}{\delta}\bigg\}
	\cdot \frac{q(G')}{q(G)} \ge \frac{1}{10^{40}} .
\]
\item There is some $k\in\Z_{\ge0}$ such that
\[
\frac{\mu(\CV_{p^k})}{\mu(\CV)}\ge \frac{9}{10}
\quad\text{and}\quad
\frac{\mu(\CW_{p^k})}{\mu(\CW)}\ge \frac{9}{10}.
\]
\end{enumerate}
\end{lem}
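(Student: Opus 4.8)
## Proof proposal for Lemma \ref{lem:SmallPrime}

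The plan is to mimic the structure of the proof of Lemma \ref{lem:MainLem}, but without the luxury of the factor $p^{|k-\ell|}$ being large when $p$ is small. First I would invoke Lemma \ref{lem:EdgeSets} to obtain exponents $k,\ell\in\Z_{\ge0}$ with $\alpha_k,\beta_\ell>0$ and the dichotomous lower bound on $\mu(\CE_{p^k,p^\ell})/\mu(\CE)$ displayed there, where $\alpha_k=\mu(\CV_{p^k})/\mu(\CV)$ and $\beta_\ell=\mu(\CW_{p^\ell})/\mu(\CW)$. As in Lemma \ref{lem:MainLem}, I would split into the cases $k=\ell$ and $k\ne\ell$.

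In the case $k=\ell$, taking $G'=G_{p^k,p^k}$, the computation from Lemma \ref{lem:MainLem} goes through verbatim: Lemma \ref{lem:InducedGraphs} together with $\mu(\CE_{p^k,p^k})\ge(\alpha_k\beta_k)^{9/10}\mu(\CE)$ gives $q(G')/q(G)\ge 1$ and $\delta'/\delta\ge 1$, so conclusion (a) holds with even the stronger constant $1$ rather than $10^{-40}$. (Here one uses that the extra factor $1/\bigl((1-\un_{k\ge1}/p)^2(1-1/p^{31/30})^{10}\bigr)\ge1$.) In the case $k\ne\ell$, again set $G'=G_{p^k,p^\ell}$. Writing $S=\alpha_k(1-\beta_k)+\beta_k(1-\alpha_k)+\alpha_\ell(1-\beta_\ell)+\beta_\ell(1-\alpha_\ell)$, Lemma \ref{lem:InducedGraphs} and the edge bound give, exactly as before,
\[
\min\Bigl\{\frac{q(G')}{q(G)},\ \frac{\delta'}{\delta}\cdot\frac{q(G')}{q(G)}\Bigr\}
\ge \frac{S^2}{1000^{11}\,\alpha_k\beta_\ell}\cdot\Bigl(\frac{p}{2^{11/20}}\Bigr)^{|k-\ell|}
\ge \frac{S^2}{1000^{11}\,\alpha_k\beta_\ell},
\]
since $p/2^{11/20}\ge 1$ and $|k-\ell|\ge1$. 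Now assume conclusion (a) fails, so the left side is $<10^{-40}$; then $S^2/(\alpha_k\beta_\ell)<1000^{11}\cdot 10^{-40}<10^{-6}$, which is the analogue of \eqref{small quality} (with $10^{34}/p$ replaced by an absolute small constant, and we do not even need the bound $S\le 10^{34}/p$, only $S^2/(\alpha_k\beta_\ell)$ small). From here the argument of Lemma \ref{lem:MainLem} applies almost verbatim: using $S\ge(\alpha_k+\beta_\ell)(1-\max\{\alpha_\ell,\beta_k\})$ and the AM--GM inequality, $(1-\max\{\alpha_\ell,\beta_k\})^2\le S^2/(4\alpha_k\beta_\ell)<1/4$, so $\max\{\alpha_\ell,\beta_k\}\ge 1/2$. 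Taking (by symmetry) the case $\beta_k\ge1/2$, the bound $\alpha_k(1-\beta_k)+\beta_k(1-\alpha_k)\le S$ small forces $1-\alpha_k\le 2\beta_k(1-\alpha_k)\le 2S$, hence $\alpha_k\ge 1/2$ and then $1-\beta_k\le 2\alpha_k(1-\beta_k)\le 2S$; choosing the threshold for "small" so that $2S\le 1/10$ (which is easily arranged since $S^2/(\alpha_k\beta_\ell)$ being $<10^{-6}$ with $\alpha_k,\beta_\ell\le1$ forces $S<10^{-3}$) yields $\alpha_k,\beta_k\ge 9/10$, which is conclusion (b).

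The only genuinely new point compared to Lemma \ref{lem:MainLem} is that we can no longer derive a \emph{strict} quality gain $q(G')/q(G)\ge 2^{\un_{f'\ne g'}}$ when $p$ is small, because we lose the factor $(p/2^{11/20})^{|k-\ell|}$ being large; instead we settle for the fixed lower bound $10^{-40}$, which is what the statement asks for. I expect no serious obstacle here: the main thing to check carefully is bookkeeping of constants — namely that the threshold at which "$S^2/(\alpha_k\beta_\ell)$ is small enough to force (b)'' is compatible with the constant $10^{-40}$ in the failure of (a). Concretely one verifies $1000^{11}\cdot 10^{-40}$ is small enough (it is $<10^{-6}$) to run the AM--GM step and get $\alpha_k,\beta_k\ge 9/10$. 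The verification that $G_{p^k,p^\ell}\preceq G$ and that $\CR(G')\subseteq\CR(G)\setminus\{p\}$, $\CP'=\CP\cup\{p\}$ is immediate from Definition \ref{def:special graphs} and Lemma \ref{lem:InducedGraphs}, since after fixing the $p$-adic valuation on both vertex sets no edge of $G'$ can have $p\mid\gcd(v,w)/p^{\min(k,\ell)}\cdot(\dots)$ in an unaccounted way.
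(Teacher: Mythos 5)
Your proposal is correct and follows essentially the same route as the paper's own proof: both reduce to the $k\ne\ell$ case of Lemma \ref{lem:MainLem} via Lemma \ref{lem:EdgeSets}, observe that the bound \eqref{q'/q-lb} holds with no lower bound on $p$, and then run the same $S$-smallness and AM--GM argument (your derivation $S^2<10^{-6}\alpha_k\beta_\ell\le10^{-6}\Rightarrow S<10^{-3}$ in place of the paper's $S\le S^2/(\alpha_k\beta_\ell)\le10^{-7}$ is an immaterial difference in bookkeeping). No gaps.
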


\begin{proof}
Assume that conclusion $(a)$ does not hold, so we intend to establish $(b)$. For $k,\ell\in\Z_{\ge0}$, let $\mu(\CV_{p^k})=\alpha_k\mu(\CV)$ and $\mu(\CW_{p^\ell})=\beta_\ell\mu(\CW)$. We begin as in the proof of  Lemma \ref{lem:MainLem}, by considering $k,\ell\in\Z_{\ge0}$ satisfying \eqref{eq:FirstPigeonholeArgument} and the inequalities $\alpha_k,\beta_\ell>0$. In particular, $G_{p^k,p^\ell}$ is a non-trivial GCD subgraph of $G$.

We note that the proof of Lemma \ref{lem:MainLem} up to relation \eqref{q'/q-lb} requires no assumption on the size of $p$. Now, if $k=\ell$, then Case 1 of the proof of Lemma \ref{lem:MainLem} shows that conclusion $(a)$ must hold, contradicting our assumption. Therefore we may assume that $k\ne \ell$. Now, arguing as in Case 2 of the proof of Lemma \ref{lem:MainLem}, and setting $G'=G_{p^k,p^\ell}$ and
\[
S=\alpha_k(1-\beta_k)
+\beta_k(1-\alpha_k)
+\alpha_\ell(1-\beta_\ell)
+\beta_\ell(1-\alpha_\ell),
\]
we infer that
\[
	\frac{1}{10^{40}}\ge\min\bigg\{ \frac{q(G')}{q(G)},\frac{\delta'}{\delta}\cdot \frac{q(G')}{q(G)} \bigg\}
	\ge\frac{S^{2}}{1000^{11}\alpha_k\beta_\ell}
		\cdot \Big(\frac{p}{2^{11/20}}\Big)^{|k-\ell|}\ge\frac{S^{2}}{1000^{11}\alpha_k\beta_\ell}.
\]
Therefore we have that
\[
S\le \frac{S^2}{\alpha_k\beta_\ell}\le \frac{1}{10^{7}}.
\]
Since $S\ge (\alpha_k+\beta_\ell)(1-\max\{\alpha_\ell,\beta_k\})$, we have
\[
(1-\max\{\alpha_\ell,\beta_k\})^2\le \frac{ (\alpha_k+\beta_\ell)^2}{4\alpha_k\beta_\ell}(1-\max\{\alpha_\ell,\beta_k\})^2\le \frac{S^2}{4\alpha_k\beta_\ell}\le \frac{1}{100},
\]
so $\max\{\alpha_\ell,\beta_k\}\ge 9/10$. We deal with the case when $\beta_k\ge 9/10$; the case with $\alpha_\ell\ge 9/10$ is entirely analogous with the roles of $k$ and $\ell$ and the roles of $\alpha$ and $\beta$ swapped. 

Since $\beta_k\ge 9/10$, we have
\[
1-\alpha_k\le 2\beta_k(1-\alpha_k)\le 2S\le \frac{2}{10^{7}}
\]
In particular, $\alpha_k \ge 9/10$ and so conclusion $(b)$ holds, as required.
\end{proof}

\begin{lem}[Adding small primes to $\CP$]\label{lem:SmallIteration}
Let $G=(\mu,\CV,\CW,\CE,\CP,f,g)$ be a GCD graph with edge density $\delta>0$. Let $p\in\CR(G)$ be a prime with $p\le 10^{2000}$.

Then there is a GCD subgraph $G'$ of $G$ with set of primes $\CP'$ and edge density $\delta'>0$ such that 
\[
\CP'=\CP\cup\{p\},
\quad \CR(G')\subseteq\CR(G)\setminus\{p\},
\quad 
\min\bigg\{1,\frac{\delta'}{\delta}\bigg\}\cdot \frac{q(G')}{q(G)}\ge \frac{1}{10^{50}}.
\]
\end{lem}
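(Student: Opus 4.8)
The plan is to combine Lemma \ref{lem:SmallPrime} (which handles a single small prime, giving either a bounded-quality-loss subgraph that records the $p$-divisibility, or else the structural statement that some prime power $p^k$ divides a proportion $\ge 9/10$ of both vertex sets), Lemma \ref{lem:HighDegree} (to pass to a high-degree subgraph when needed), and Lemma \ref{lem:UnbalancedSetEdges1} together with its symmetric twin Lemma \ref{lem:UnbalancedSetEdges2} (which say that, when $p^k$ dominates one side, either we already get a big quality increment or there are very few edges to vertices with $p$-valuation far from $k$). The idea is: apply Lemma \ref{lem:SmallPrime} to $G$ and $p$. If conclusion (a) holds we are immediately done, since $1/10^{40}\ge 1/10^{50}$. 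So we may assume conclusion (b): there is some $k$ with $\mu(\CV_{p^k})\ge (9/10)\mu(\CV)$ and $\mu(\CW_{p^k})\ge (9/10)\mu(\CW)$. Note that $p$ being small ($p\le 10^{2000}$) makes the hypothesis ``$\mu(\CW_{p^k})/\mu(\CW)\ge 1-10^{40}/p$'' of Lemma \ref{lem:UnbalancedSetEdges1} too strong to be supplied by conclusion (b) alone — this is the subtlety that forces extra work below and is where I expect the main obstacle.

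The strategy to overcome this is to first pass to a high-degree subgraph via Lemma \ref{lem:HighDegreeSubgraph} (preserving $q$ and $\delta$ and the multiplicative data), so that after this step every vertex has $\mu(\Gamma(v))\ge (9\delta'/10)\mu(\CW')$ and symmetrically. In a high-degree graph, the domination $\mu(\CV_{p^k})\ge(9/10)\mu(\CV)$ forces a nearly matching domination at the edge level: the edges incident to $\CV\setminus\CV_{p^k}$ have $\mu$-mass at most $\mu(\CV\setminus\CV_{p^k})\cdot\mu(\CW)\le (1/10)\mu(\CV)\mu(\CW)\le (1/(9\delta))\cdot$ (total edge mass)... more carefully, since $G$ has high degree, $\mu(\CE)\ge (9\delta/10)\mu(\CV)\mu(\CW)$ is not the issue; rather we use that discarding $\CV\setminus\CV_{p^k}$ and $\CW\setminus\CW_{p^k}$ removes at most $(1/10)\mu(\CV)\mu(\CW)+(1/10)\mu(\CV)\mu(\CW)\le (2/10)\mu(\CV)\mu(\CW)$, and since $\mu(\CE)=\delta\mu(\CV)\mu(\CW)$, we need $\delta\ge$ const; if $\delta$ is tiny this fails, so instead one should feed $G_{p^k,p^k}$ directly into the quality formula of Lemma \ref{lem:InducedGraphs}: with $\alpha:=\mu(\CV_{p^k})/\mu(\CV)\ge 9/10$, $\beta:=\mu(\CW_{p^k})/\mu(\CW)\ge 9/10$, and using a high-degree hypothesis to bound $\mu(\CE_{p^k,p^k})/\mu(\CE)$ from below by something like $1-C(1-\alpha)-C(1-\beta)\ge 1-C/5$ (via the neighbourhood bounds: edges leaving $\CV_{p^k}$ or entering $\CW_{p^\ell}$, $\ell\ne k$, are controlled), one gets $q(G_{p^k,p^k})/q(G)\ge (\text{edge ratio})^{10}\alpha^{-9}\beta^{-9}(1-1/p^{31/30})^{-10}\ge$ an absolute constant like $1/10^{10}$, which comfortably beats $1/10^{50}$. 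The case $k=0$ needs care since then the factor $(1-\un_{k\ge 1}/p)^{-2}=1$, which only helps; the case $k\ge 1$ gives an extra $(1-1/p)^{-2}\ge 1$ factor, also harmless.

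So the steps in order are: (1) apply Lemma \ref{lem:SmallPrime}; if (a), done. (2) Otherwise obtain $k$ from (b); apply Lemma \ref{lem:HighDegreeSubgraph} to replace $G$ by a high-degree subgraph $\tilde G$ with the same multiplicative data, $q(\tilde G)\ge q(G)$ and $\tilde\delta\ge\delta$ — but one must re-examine whether (b) still holds for $\tilde G$; since the vertex sets shrank, the proportions could change, so instead I would apply Lemma \ref{lem:SmallPrime} \emph{after} Lemma \ref{lem:HighDegreeSubgraph}, i.e. reorder: first high-degree, then Lemma \ref{lem:SmallPrime}. (3) With (b) in hand for a high-degree graph, define $G'=\tilde G_{p^k,p^k}$ and use Lemma \ref{lem:InducedGraphs} plus the high-degree neighbourhood bounds to show $\mu(\CE_{p^k,p^k})\ge (1-O(1))\mu(\CE)$ with an absolute implied constant small enough that $\mu(\CE_{p^k,p^k})/\mu(\CE)\ge 1/10^{4}$, say, whence $q(G')/q(G)\gg 10^{-40}\gg 10^{-50}$ and $\delta'/\delta\ge 1$. (4) Check $\CP'=\CP\cup\{p\}$ and $\CR(G')\subseteq\CR(G)\setminus\{p\}$, which are immediate from the construction of $G_{p^k,p^k}$ and the fact that all surviving vertices have $p$-valuation exactly $k$. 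The main obstacle is step (3): making the edge-mass loss bound quantitative enough — because $p$ can be as small as $2$, the crude bound $1-C/p$ is useless, and one genuinely needs the $9/10$-type domination from Lemma \ref{lem:SmallPrime}(b) combined with high degree to control $\mu(\CE\setminus\CE_{p^k,p^k})$ by an absolute fraction of $\mu(\CE)$ rather than by $\mu(\CV)\mu(\CW)$.
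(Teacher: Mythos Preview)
Your plan is correct and in fact leads to a shorter argument than the paper's own. Both proofs begin the same way: pass first to a high-degree subgraph (the paper iterates Lemma \ref{lem:HighDegree}, you cite Lemma \ref{lem:HighDegreeSubgraph}; same thing), then apply Lemma \ref{lem:SmallPrime}, dispensing with case (a) immediately. In case (b), however, the paper takes a longer route: it upgrades the $9/10$ bound to $\max(9/10,\,1-10^{40}/p)$ via Lemma \ref{lem:MainLem} (solely so that the hypothesis of Lemma \ref{lem:UnbalancedSetEdges1} is met), then invokes Lemma \ref{lem:UnbalancedSetEdges1} with some $r\le 6644$ satisfying $p^r>10^{2000}$ to control edges from $\CV_{p^k}$ to $\CW_{p^\ell}$ with $|\ell-k|>r$, and finally pigeonholes over the $\le 2r+1$ admissible values of $\ell$ via Lemma \ref{lem:Pigeonhole}. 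Your choice $G'=\tilde G_{p^k,p^k}$ bypasses all three of these lemmas.

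One clarification of your step (3), where the plan is muddled. Your phrase ``edges leaving $\CV_{p^k}$ or entering $\CW_{p^\ell}$, $\ell\ne k$, are controlled'' is backwards: high degree supplies only \emph{lower} bounds on neighbourhood masses, so it cannot directly bound $\mu(\CE\setminus\CE_{p^k,p^k})$ from above. What does work is inclusion--exclusion using the lower bounds on \emph{both} sides. Writing $\tilde G$ for the high-degree subgraph with density $\tilde\delta$, the degree condition on the $\tilde\CV$-side together with the $9/10$ domination gives
\[
\mu(\tilde\CE(\tilde\CV_{p^k},\tilde\CW))=\sum_{v\in\tilde\CV_{p^k}}\mu(v)\mu(\Gamma_{\tilde G}(v))\ge\frac{9\tilde\delta}{10}\,\mu(\tilde\CV_{p^k})\mu(\tilde\CW)\ge\frac{81}{100}\,\mu(\tilde\CE),
\]
and symmetrically $\mu(\tilde\CE(\tilde\CV,\tilde\CW_{p^k}))\ge(81/100)\mu(\tilde\CE)$. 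Since these two edge sets intersect precisely in $\tilde\CE_{p^k,p^k}$ and their union lies in $\tilde\CE$, one obtains
\[
\mu(\tilde\CE_{p^k,p^k})\ge\mu(\tilde\CE(\tilde\CV_{p^k},\tilde\CW))+\mu(\tilde\CE(\tilde\CV,\tilde\CW_{p^k}))-\mu(\tilde\CE)\ge\frac{62}{100}\,\mu(\tilde\CE).
\]
Feeding this into Lemma \ref{lem:InducedGraphs} with $\alpha_k,\beta_k\ge 9/10$ yields $q(\tilde G_{p^k,p^k})/q(\tilde G)\ge(62/100)^{10}>10^{-3}$ and $\delta'/\tilde\delta\ge 62/100$, far exceeding the required $10^{-50}$. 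So your step (3) goes through, just not for the reason you stated.
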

\begin{proof}
We first repeatedly apply Lemma \ref{lem:HighDegree} until we arrive at a GCD subgraph 
\[
G^{(1)}=(\mu,\CV^{(1)},\CW^{(1)},\CE^{(1)},\CP,f,g)
\]
of $G$ with edge density $\delta^{(1)}$ such that 
\[
\delta^{(1)}\ge \delta
\quad\text{and}\quad
q(G^{(1)})\ge q(G),
\]
as well as 
\[
\mu(\Gamma_{G^{(1)}}(v))\ge \frac{9\delta^{(1)}}{10}\cdot \mu(\CW^{(1)})
\qquad\text{for all}\  v\in \CV^{(1)}.
\] 
(We must eventually arrive at such a subgraph since the vertex sets are strictly decreasing at each stage but can never become empty since the edge density remains bounded away from 0.)

We now apply Lemma \ref{lem:SmallPrime} to $G^{(1)}$. If conclusion $(a)$ of Lemma \ref{lem:SmallPrime} holds, then there is a GCD subgraph $G^{(2)}$ of $G^{(1)}$ satisfying the conclusion of Lemma \ref{lem:SmallIteration}, so we are done by taking $G'=G^{(2)}$. Therefore we may assume that instead conclusion $(b)$ of Lemma \ref{lem:SmallPrime} holds, so there is some $k\in\mathbb{Z}_{\ge 0}$ such that 
\begin{equation}\label{eq:FirstLowerBound}
\frac{\mu(\CV^{(1)}_{p^k})}{\mu(\CV^{(1)})}\ge \frac{9}{10}
\quad\text{and}\quad
\frac{\mu(\CW^{(1)}_{p^k})}{\mu(\CW^{(1)})}\ge \frac{9}{10}.
\end{equation}
In fact we claim that either the conclusion of Lemma \ref{lem:SmallIteration} holds, or we have the stronger condition
\begin{equation}\label{eq:ImprovedLowerBound}
\frac{\mu(\CV^{(1)}_{p^k})}{\mu(\CV^{(1)})}\ge \max\Bigl(\frac{9}{10},1-\frac{10^{40}}{p}\Bigr)
\quad\text{and}\quad
\frac{\mu(\CW^{(1)}_{p^k})}{\mu(\CW^{(1)})}\ge \max\Bigl(\frac{9}{10},1-\frac{10^{40}}{p}\Bigr).
\end{equation}
Relation \eqref{eq:ImprovedLowerBound} follows immediately from \eqref{eq:FirstLowerBound} if $p\le 10^{41}$, so let us assume that $p>10^{41}$. We then apply Lemma \ref{lem:MainLem} to $G^{(1)}$. If conclusion $(a)$ of Lemma \ref{lem:MainLem} holds, then there is a GCD subgraph $G^{(3)}$ of $G^{(1)}$ satisfying the required conditions of Lemma \ref{lem:SmallIteration}, so we are done by taking $G'=G^{(3)}$. Therefore we may assume that conclusion $(b)$ of Lemma \ref{lem:MainLem} holds, so that there is some $k'\ge0$ such that $\mu(\CV^{(1)}_{p^{k'}})/\mu(\CV^{(1)})\ge 1-10^{40}/p\ge 9/10$ and $\mu(\CW^{(1)}_{p^{k'}})/\mu(\CW^{(1)}) \ge 1-10^{40}/p\ge 9/10$. Since there cannot be two disjoint subsets of $\CV^{(1)}$ of density $\ge9/10$, we must then have $k'=k$, thus proving \eqref{eq:ImprovedLowerBound} in this case too. 

In conclusion, regardless of the size of $p$ we have established \eqref{eq:ImprovedLowerBound}. Next, we fix an integer $r\le 6644$ such that $p^r>10^{2000}$ (such an integer exists because $2^{6644}>10^{2000}$) and we apply Lemma \ref{lem:UnbalancedSetEdges1}.

If conclusion $(a)$ of Lemma \ref{lem:UnbalancedSetEdges1} holds, then we take $G'=G^{(1)}_{p^k,p^\ell}$, whose quality satisfies 
\[
q(G')\ge 2q(G^{(1)})\ge 2q(G)
\]
and whose edge density $\delta'$ satisfies 
\[
\delta'q(G')\ge2\delta^{(1)}q(G^{(1)})\ge2\delta q(G)>0.
\]
In particular, $\delta'>0$, so the proof is complete in this case.

Thus we may assume that conclusion $(b)$ of Lemma \ref{lem:UnbalancedSetEdges1} holds, so that
\[
\sum_{\ell\in\CL_{k,r}}  \mu(\CE^{(1)}_{p^k,p^\ell})\le\frac{\mu(\CE^{(1)})}{4p^{31/30}}<\frac{\mu(\CE^{(1)})}{4},
\]
where we recall the notation $\CL_{k,r}:=\{\ell\in\Z_{\ge0}:|\ell-k|\ge r+1\}$. Let 
\[
\tilde{\CW}^{(1)}=\bigcup_{\substack{\ell\ge0 \\ |\ell-k|\le r}}\CW^{(1)}_{p^\ell}
\]
and let
\[
\CE^{(2)}=\CE^{(1)}\cap(\CV^{(1)}_{p^k}\times\tilde{\CW}^{(1)})\subseteq\CE^{(1)}
\] 
be the set of edges between $\CV^{(1)}_{p^k}$ and $\tilde{\CW}^{(1)}$ in $G^{(1)}$. Since $\mu(\CV^{(1)}_{p^k})\ge 9\mu(\CV^{(1)})/10$ and $\mu(\Gamma_{G^{(1)}}(v))\ge 9\delta^{(1)}\mu(\CW^{(1)})/10$ for all $v\in \CV^{(1)}_{p^k}$, we have
\begin{align*}
\mu(\CE^{(2)})\ge \mu(\CE^{(1)}\cap(\CV_{p^k}^{(1)}\times\CW^{(1)}) ) -\sum_{\ell\in\CL_{k,r}}  \mu(\CE^{(1)}_{p^k,p^\ell})
&\ge \sum_{v\in \CV^{(1)}_{p^k}}\mu(v)\mu(\Gamma_{G^{(1)}}(v))-\frac{\mu(\CE^{(1)})}{4}\\
&\ge \frac{9 \delta^{(1)}}{10}\mu(\CV^{(1)}_{p^k})\mu(\CW^{(1)})-\frac{\mu(\CE^{(1)})}{4}\\
&\ge \frac{56}{100}\mu(\CE^{(1)})>0. 
\end{align*}
Let $G^{(2)}=(\mu,\CV^{(1)}_{p^k},\tilde{\CW}^{(1)},\CE^{(2)},\CP,f,g)$ be the GCD subgraph of $G^{(1)}$ formed by restricting to $\CV^{(1)}_{p^k}$ and $\tilde{\CW}^{(1)}$. Since $\mu(\CE^{(2)})>0$, $G^{(2)}$ is a non-trivial GCD subgraph. If $\delta^{(2)}$ denotes its edge density, then
\begin{align*}
\frac{\delta^{(2)}}{\delta^{(1)}}= 
\bigg(\frac{\mu(\CE^{(2)})}{\mu(\CE^{(1)})}\bigg)
	 \bigg(\frac{\mu(\CV^{(1)})}{\mu(\CV_{p^k}^{(1)})}\bigg)
	\bigg( \frac{\mu(\CW^{(1)})}{\mu(\tilde{\CW}^{(1)})}\bigg)
	\ge\frac{1}{2}\cdot 1\cdot 1=\frac{1}{2}.
\end{align*}
In addition, we have that
\begin{align*}
\frac{q(G^{(2)})}{q(G^{(1)})}
=\bigg(\frac{\mu(\CE^{(2)})}{\mu(\CE^{(1)})}\bigg)^{10}
\bigg(\frac{\mu(\CV^{(1)})}{\mu(\CV_{p^k}^{(1)})}\bigg)^9
\bigg( \frac{\mu(\CW^{(1)})}{\mu(\tilde{\CW}^{(1)})}\bigg)^9
\ge\bigg(\frac{1}{2}\bigg)^{10}\cdot 1^9\cdot 1^9=\frac{1}{2^{10}}.
\end{align*}
Finally, we apply Lemma \ref{lem:Pigeonhole} to the partition 
\[
\tilde{\CW}^{(1)}=\bigsqcup_{|\ell-k|\le r}\CW^{(1)}_{p^\ell}
\]
of $\tilde{\CW}^{(1)}$ into $\le 2\cdot 6644+1\le 15000$ subsets. This produces a GCD subgraph
\[
G^{(3)}=(\mu,\CV^{(1)}_{p^k},\CW^{(1)}_{p^\ell},\CE^{(1)}_{p^k,p^\ell},\CP,f,g)
\]
of $G^{(2)}$ for some $\ell\ge 0$ with $|\ell-k|\le r$ such that
\[
q(G^{(3)})\ge \frac{q(G^{(2)})}{15000^{10} }\ge \frac{q(G^{(1)})}{15000^{10}\cdot 2^{10}}\ge \frac{q(G)}{10^{50}}.
\]
In addition, Lemma \ref{lem:Pigeonhole} implies that the density of $G^{(3)}$, call it $\delta^{(3)}$, satisfies
\[
\delta^{(3)}q(G^{(3)})\ge \frac{\delta^{(2)}}{15000}\cdot \frac{q(G^{(2)})}{15000^{10}}
	\ge \frac{\delta^{(1)}}{15000\cdot 2}\cdot \frac{q(G^{(1)})}{15000^{10}\cdot 2^{10}}\ge \frac{\delta\,q(G)}{10^{50}}.
\]
Finally, we note that $G^{(1)}_{p^k,p^\ell}$ is a GCD subgraph of $G^{(3)}$ with set of primes $\CP\cup\{p\}$, edge density $\delta^{(1)}_{p^k,p^\ell}=\delta^{(3)}$, and quality $q(G^{(1)}_{p^k,p^\ell})\ge q(G^{(3)})$. Taking $G'=G^{(1)}_{p^k,p^\ell}$ then gives the result.
\end{proof}

\begin{proof}[Proof of Proposition \ref{prop:SmallPrimes}]
If $\CR(G)\cap\{p\le 10^{2000}\}=\emptyset$, then we can simply take $G'=G$.

If $\CR(G)\cap\{p\le 10^{2000}\}\ne\emptyset$, then we can choose a prime $p\in\CR(G)\cap\{p\le 10^{2000}\}$ and apply Lemma \ref{lem:SmallIteration}. We do this repeatedly to produce a sequence of GCD subgraphs 
\[
G=:G_1\succeq G_2\succeq\cdots
\]
such that
\begin{equation}\label{eq:SmallPrimesInduction}
\frac{q(G_{i+1})}{q(G_i)}\ge\frac{1}{10^{50}}
\quad\text{and}\quad \frac{\delta_{i+1}q(G_{i+1})}{\delta_iq(G_i)}\ge\frac{1}{10^{50}}
\end{equation}
for each $i$, where $\delta_i$ denotes the edge density of $G_i$. In addition, we let $\CP_i$ denote the set of primes associated to $G_i$, so that $\emptyset=\CP_1\subseteq\CP_2\subseteq\cdots \subseteq\{p\le 10^{2000}\}$. 

At each stage, the set $\CR(G_i)\cap\{p\le 10^{2000}\}$ is strictly smaller than before. So, after at most $10^{2000}$ steps we arrive at a GCD subgraph $G^{(1)}=(\mu,\CV^{(1)},\CW^{(1)},\CE^{(1)},\CP^{(1)},f^{(1)},g^{(1)})$ of $G$ with 
\[
\CP^{(1)}\subseteq\{p\le 10^{2000}\}
\quad\text{and}\quad \CR(G^{(1)})\cap\{p\le 10^{2000}\}=\emptyset.
\]
Let $\delta^{(1)}$ denote the edge density of the end graph $G^{(1)}$. Iterating the two inequalities of \eqref{eq:SmallPrimesInduction} at most $10^{2000}$ times, we find that
\[
\frac{q(G^{(1)})}{q(G)}\ge \frac{1}{(10^{50})^{10^{2000}}}\ge \frac{1}{10^{10^{3000}}}
\quad\text{and}\quad 
\frac{\delta^{(1)}q(G^{(1)})}{\delta\,q(G)}
\ge \frac{1}{(10^{50})^{10^{2000}}}\ge \frac{1}{10^{10^{3000}}}.
\] 
Thus, taking $G'=G^{(1)}$ gives the result.
\end{proof}

Thus we are just left to establish Proposition \ref{prop:IterationStep2}.

\section{Proof of Proposition \ref{prop:IterationStep2}}\label{sec:IterationStep2}

Finally, in this section we prove Proposition \ref{prop:IterationStep2}, and hence complete the proof of Theorem \ref{thm:MainThm}. The proof is similar to that of Proposition \ref{prop:IterationStep1}, but more care is required when dealing with the primes coming from $\CR^{\sharp}(G)$.

\begin{lem}[Quality increment even when a prime power divides almost all]\label{lem:MainLem2} 
Consider a GCD graph $G=(\mu,\CV,\CW,\CE,\CP,f,g)$ with edge density $\delta>0$ and let $p\in\CR(G)$ be a prime with $p\ge 10^{2000}$. Then there is a GCD subgraph $G'$ of $G$ with set of primes $\CP'=\CP\cup\{p\}$ such that
\[
\CR(G')\subseteq\CR(G)\setminus\{p\}
\quad\text{and}\quad  q(G')\ge q(G)>0.
\]
\end{lem}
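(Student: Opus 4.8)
\textbf{Proof plan for Lemma \ref{lem:MainLem2}.}

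The plan is to run the same pigeonhole-plus-special-subgraph machinery as in the proof of Lemma \ref{lem:MainLem}, but to be far more careful about the case $k=\ell$, which is now the dominant case since $p\in\CR^\sharp(G)$ is possible. First I would invoke Lemma \ref{lem:EdgeSets} to obtain $k,\ell\in\Z_{\ge0}$ with $\alpha_k,\beta_\ell>0$ (where $\alpha_k=\mu(\CV_{p^k})/\mu(\CV)$, $\beta_\ell=\mu(\CW_{p^\ell})/\mu(\CW)$) and the two-case lower bound for $\mu(\CE_{p^k,p^\ell})/\mu(\CE)$. If $k\ne\ell$, the argument from Case 2 of Lemma \ref{lem:MainLem} applies essentially verbatim (it used no upper bound on $p$, only $p\ge 10^{40}$, which holds since $p\ge 10^{2000}$): either we already get a GCD subgraph with $q(G')\ge 2q(G)\ge q(G)$, finishing, or we deduce $\alpha_k,\beta_k\ge 1-10^{40}/p$, so after replacing $(k,\ell)$ by $(k,k)$ we are back in the case $k=\ell$. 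So the whole content is the case $k=\ell$, and there the naive estimate from Lemma \ref{lem:InducedGraphs},
\[
\frac{q(G_{p^k,p^k})}{q(G)}=\Bigl(\frac{\mu(\CE_{p^k,p^k})}{\mu(\CE)}\Bigr)^{10}(\alpha_k\beta_k)^{-9}\cdot\frac{1}{(1-\un_{k\ge1}/p)^2(1-1/p^{31/30})^{10}}\ge 1,
\]
works whenever $\mu(\CE_{p^k,p^k})\ge(\alpha_k\beta_k)^{9/10}\mu(\CE)$. The subtlety is that Lemma \ref{lem:EdgeSets} only guarantees this lower bound for \emph{some} pair, and when that pair has $k=\ell$ we are fine — but we must make sure the pair Lemma \ref{lem:EdgeSets} hands us genuinely is of the form $(k,k)$ when $p\in\CR^\sharp(G)$, which is exactly what the $k\ne\ell$ analysis above forces (it always reduces to the diagonal), so combining the two cases cleanly closes the argument.

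The one genuinely new ingredient needed for $\CR^\sharp$ primes is controlling the factor $(1-\un_{k\ge1}/p)^{-2}$ in $q(G_{p^k,p^k})$ versus the factor $\mu(\CE_{p^k,p^k})/\mu(\CE)$, i.e.\ making sure that when $p^k\|\gcd(v,w)$ for (almost) all edges with $k\ge 1$, the loss $(\alpha_k\beta_k)^{-9}\le(1-10^{40}/p)^{-18}$ is more than compensated. Here is where the $\phi(q)/q$-type weighting enters: the measure $\mu$ is designed so that passing to $\CV_{p^k}$ with $k\ge 1$ does \emph{not} simply lose a factor — rather, the bound $\mu(\CE_{p^k,p^k})\ge (\alpha_k\beta_k)^{9/10}\mu(\CE)$ from Lemma \ref{lem:EdgeSets} combined with $(\alpha_k\beta_k)^{9/10}/(\alpha_k\beta_k)=(\alpha_k\beta_k)^{-1/10}\ge 1$ already gives the inequality, and the $(1-\un_{k\ge1}/p)^{-2}$ only helps (it is $\ge 1$). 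So in fact the diagonal case of Lemma \ref{lem:MainLem} already proves $q(G')\ge q(G)$ with no size restriction on $p$ — the real work is simply checking that one of the Lemma \ref{lem:EdgeSets} pairs can be taken diagonal (or reduced to diagonal) under the standing hypothesis $p\ge 10^{2000}$, which the $k\ne\ell$ reduction supplies. I would also need to confirm $\CR(G')\subseteq\CR(G)\setminus\{p\}$: since $G'=G_{p^k,p^k}$ adds $p$ to the prime set with $f'(p)=g'(p)=k$ and every edge $(v,w)\in\CE'$ has $p^k\|v$, $p^k\|w$, hence $p^k\|\gcd(v,w)$ and $p\nmid vw/\gcd(v,w)^2$, so $p\notin\CR(G')$; and $\CR(G')\subseteq\CR(G)$ by Lemma \ref{lem:Trivial}(b).

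The main obstacle I anticipate is purely bookkeeping: verifying that the two-case split of Lemma \ref{lem:EdgeSets}, together with the reduction of the off-diagonal case to the diagonal one, never leaves us in a situation where \emph{every} available pair is off-diagonal with no quality gain. The resolution is exactly the dichotomy already built into Case 2 of Lemma \ref{lem:MainLem}: either an off-diagonal pair yields $q(G')>2q(G)$ outright, or we extract $\alpha_k,\beta_k\ge 1-10^{40}/p$ and pass to $(k,k)$, at which point
\[
\frac{\mu(\CE_{p^k,p^k})}{\mu(\CE)}\ge 1-\frac{\mu(\CE\setminus\CE_{p^k,p^k})}{\mu(\CE)}\ge 1-\bigl(1-\alpha_k\bigr)-\bigl(1-\beta_k\bigr)\ge 1-\frac{2\cdot 10^{40}}{p}>0,
\]
and then $q(G_{p^k,p^k})/q(G)\ge (1-2\cdot10^{40}/p)^{10}(1-10^{40}/p)^{-18}(1-1/p^{31/30})^{-10}\ge 1$ for $p\ge 10^{2000}$, using $(1-x)^{10}(1-x/2)^{-18}\ge 1$ for small $x>0$ after a short Taylor estimate. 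Writing this last numeric inequality out carefully — choosing the exponents so that the $(1-1/p^{31/30})^{-10}$ slack absorbs the mismatch — is the only place real care is required, and the constant $10^{2000}$ is comfortably large enough to make it go through.
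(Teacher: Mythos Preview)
There is a genuine gap. Your key inequality
\[
\frac{\mu(\CE\setminus\CE_{p^k,p^k})}{\mu(\CE)}\le (1-\alpha_k)+(1-\beta_k)
\]
is false. What is true is $\mu(\CE\setminus\CE_{p^k,p^k})\le\bigl((1-\alpha_k)+(1-\beta_k)\bigr)\mu(\CV)\mu(\CW)$, so dividing by $\mu(\CE)=\delta\,\mu(\CV)\mu(\CW)$ leaves a factor $1/\delta$ that you have dropped. Since the lemma imposes no lower bound on $\delta$, it may happen that $\alpha_k,\beta_k\ge 1-10^{40}/p$ while \emph{all} edges go between $\CV\setminus\CV_{p^k}$ and $\CW$ (forcing $\delta\le 10^{40}/p$ but nothing more), so that $\mu(\CE_{p^k,p^k})=0$ and $G_{p^k,p^k}$ is useless. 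Thus the reduction ``off-diagonal $\Rightarrow$ pass to the diagonal $(k,k)$'' collapses. The paper's proof works much harder precisely to control where the edges go when the vertex sets are highly concentrated on a single $p$-adic valuation: it first preprocesses with Lemma~\ref{lem:NoSmallSetEdges} so that two small vertex sets cannot carry many edges, then uses Lemmas~\ref{lem:UnbalancedSetEdges1}--\ref{lem:UnbalancedSetEdges2} to dispose of edges with $|i-k|\ge2$ or $|j-k|\ge2$, and finally --- the crucial step you are missing --- does not take $G_{p^k,p^k}$ but rather a three-way split among a graph $G^+$ with enlarged vertex sets $\CV_{p^k}\cup\CV_{p^{k+1}}$, $\CW_{p^k}\cup\CW_{p^{k+1}}$ (still with $f^+(p)=g^+(p)=k$, which is legitimate since $p^k\|\gcd(v,w)$ for such edges) and the two off-diagonal graphs $G_{p^k,p^{k-1}}$, $G_{p^{k-1},p^k}$. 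An AM--GM argument then shows one of the three has $q\ge q(G)$.

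A secondary issue: your Taylor estimate is also wrong. To first order $(1-x)^{10}(1-x/2)^{-18}\approx(1-10x)(1+9x)\approx 1-x<1$, so the claimed inequality $(1-x)^{10}(1-x/2)^{-18}\ge1$ fails for small $x>0$. Even granting your (incorrect) edge bound, the numerics would not close. More fundamentally, you have used $(\alpha_k\beta_k)^{-9}\ge(1-10^{40}/p)^{-18}$, which is the wrong direction: since $\alpha_k,\beta_k\le1$ the valid lower bound is just $(\alpha_k\beta_k)^{-9}\ge1$, and then the factor $(1-2\cdot10^{40}/p)^{10}$ is not compensated by $(1-1/p^{31/30})^{-10}$. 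The factor $(1-\un_{k\ge1}/p)^{-2}$ in the quality is exactly what the paper exploits in its three-way split to make the numerics work when $k\ge1$, but it is far too small to absorb a loss of order $10^{40}/p$ on its own.
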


\begin{proof} First of all, we may assume without loss of generality that for all sets $\CA\subseteq \CV$ and $\CB\subseteq \CW$, we have that 
\begin{equation}\label{eq:NoEdgesBetweenSmallSets}
\mu(\CE\cap(\CA\times \CB)) \le \frac{\mu(\CE)}{2p^{3/2}} 
\quad\text{whenever}\quad \max\bigg\{ \frac{\mu(\CA)}{\mu(\CV)}, \frac{\mu(\CB)}{\mu(\CW)} \bigg\} \le \frac{10^{40}}{p} .
\end{equation}
Indeed, if $G$ does not satisfy \eqref{eq:NoEdgesBetweenSmallSets}, then we apply Lemma \ref{lem:NoSmallSetEdges} with $\eta=10^{40}/p$ to replace $G$ by a non-trivial subgraph $G^{(1)}$ that does have this property (noticing that $(10^{40}/p)^{9/5}\le 1/(2p^{3/2})$ for $p\ge 10^{2000}$). In addition, $G^{(1)}$ has the same multiplicative data as $G$ and its quality is strictly larger. Hence, we may work with $G^{(1)}$ instead. So, from now on, we assume that \eqref{eq:NoEdgesBetweenSmallSets} holds. 

We now apply Lemma \ref{lem:MainLem}. If conclusion $(a)$ of Lemma \ref{lem:MainLem} holds, then we are done. Thus we may assume that conclusion $(b)$ holds, that is to say there is some $k\in\Z_{\ge0}$ such that 
\[
\frac{\mu(\CV_{p^k})}{\mu(\CV)}\ge1-\frac{10^{40}}{p}
\quad\text{and}\quad
\frac{\mu(\CW_{p^k})}{\mu(\CW)}\ge 1-\frac{10^{40}}{p}.
\]
In particular, by \eqref{eq:NoEdgesBetweenSmallSets} we see that
\begin{equation}\label{eq:NoBadEdges1}
\mu\big(\CE(\CV\setminus\CV_{p^k},\CW\setminus \CW_{p^k})\big) 
\le \frac{\mu(\CE)}{2p^{3/2}}.
\end{equation}

Now, set
\[
\tilde{\CV}_{p^k}=\CV_{p^{k-1}}\cup\CV_{p^k}\cup\CV_{p^{k+1}}
\quad\text{and}\quad
\tilde{\CW}_{p^k}=\CW_{p^{k-1}}\cup\CW_{p^k}\cup\CW_{p^{k+1}} ,
\]
with the convention that $\CV_{p^{-1}}=\emptyset=\CW_{p^{-1}}$. 
In view of Lemmas \ref{lem:UnbalancedSetEdges1} and \ref{lem:UnbalancedSetEdges2} applied with $r=1$, we may assume that 
\begin{equation}\label{eq:NoBadEdges2}
\mu\big(\CE(\CV\setminus\tilde{\CV}_{p^k},\CW_{p^k})\big)
	= \sum_{\substack{i\ge0 \\ |i-k|\ge2}} \mu(\CE(\CV_{p^i},\CW_{p^k})) \le \frac{\mu(\CE)}{4p^{31/30}},
\end{equation}
and
\begin{equation}\label{eq:NoBadEdges3}
\mu\big(\CE(\CV_{p^k},\CW\setminus\tilde{\CW}_{p^k})\big)
	= \sum_{\substack{j\ge0 \\ |j-k|\ge2}} \mu(\CE(\CV_{p^k},\CW_{p^j})) \le \frac{\mu(\CE)}{4p^{31/30}} .
\end{equation}
Hence, if we let
\[
\CE^*=\CE(\CV_{p^k},\tilde{\CW}_{p^k})\cup \CE(\tilde{\CV}_{p^k},\CW_{p^k}) ,
\]
then \eqref{eq:NoBadEdges1}-\eqref{eq:NoBadEdges3} imply that
\[
\frac{\mu(\CE^*)}{\mu(\CE)} \ge 1- \frac{1}{2p^{31/30}} - \frac{1}{2p^{3/2}} 
			\ge \bigg(1-\frac{1}{p^{31/30}}\bigg)^{2/3} >0 ,
\]
where we used our assumption that $p>10^{2000}$ and the inequality $(1-x)^{2/3}\le 1-2x/3$ for $x\in[0,1]$ that follows from Taylor's theorem. We then consider the non-trivial GCD subgraph $G^*=(\mu,\CV,\CW,\CE^*,\CP,f,g)$ of $G$ formed by restricting the edge set to $\CE^*$. Note that
\eq{\label{G^*/G}
\frac{q(G^*)}{q(G)}=\Bigl(\frac{\mu(\CE^*)}{\mu(\CE)}\Bigr)^{10} \ge \bigg(1-\frac{1}{p^{31/30}}\bigg)^{20/3}.
}

Now, let $(v,w)\in\CE^*$. We have the following five possibilities: 
\begin{enumerate}
\item $v\in\CV_{p^k}$ and $w\in\CW_{p^k}$, in which case $p^k\|v,w$ and $p^k\|\gcd(v,w)$;
\item $v\in\CV_{p^k}$ and $w\in\CW_{p^{k+1}}$, in which case $p^k|v,w$ and $p^k\|\gcd(v,w)$;
\item $v\in\CV_{p^{k+1}}$ and $w\in\CW_{p^k}$, in which case $p^k|v,w$ and $p^k\|\gcd(v,w)$;
\item $v\in\CV_{p^k}$ and $w\in\CW_{p^{k-1}}$, in which case $p^k\|v$, $p^{k-1}\|w$ and $p^{k-1}\|\gcd(v,w)$;
\item $v\in\CV_{p^{k-1}}$ and $w\in\CW_{p^k}$, in which case $p^{k-1}\|v$, $p^k\|w$ and $p^{k-1}\|\gcd(v,w)$.
\end{enumerate}
We then set $G^+=(\mu,\CV^+,\CW^+,\CE^+,\CP\cup\{p\},f^+,g^+)$, where:
\[
\CV^+=\CV_{p^k}\cup\CV_{p^{k+1}},
\quad
\CW^+=\CW_{p^k}\cup\CW_{p^{k+1}},
\quad
\CE^+=\CE^*\cap(\CV^+\times \CW^+),
\]
as well as
\[
f^+\big|_\CP=f,
\quad
f^+(p)=k,
\quad
g^+\big|_{\CP}=g,
\quad
g^+(p)=k .
\]
By looking at possibilities (a), (b) and (c), it is easy to check that $G^+$ is a GCD subgraph of $G^*$ (and hence of $G$). Note that $\mu(\CV^+)\ge \mu(\CV_{p^k})\ge 1-10^{40}/p>0$. Similarly, we have $\mu(\CW^+)>0$. Consequently, its quality satisfies the relation
\[
\frac{q(G^+)}{q(G^*)} =
	\bigg(\frac{\mu(\CE^+)}{\mu(\CE^*)}\bigg)^{10} 
	\bigg( \frac{\mu(\CV)}{\mu(\CV^+)}\bigg)^{9}
	 \bigg( \frac{\mu(\CW)}{\mu(\CW^+)}\bigg)^{9}
		\frac{1}{(1-\un_{k\ge1}/p)^2(1-1/p^{31/30})^{10}} .	
\]
(This relation is valid even if $\mu(\CE^+)=0$.) We separate two cases.

\bigskip

\noindent 
{\it Case 1:} $k=0$. 

\medskip

In this case $\CV_{p^{k-1}}=\CW_{p^{k-1}}=\emptyset$, so all parameters of $G^+$ are the same as those of $G^*$ except that the set of primes of $G^+$ is $\CP\cup\{p\}$ instead of $\CP$ and $f$,$g$ have been extended to take the value 0 at $p$. As a consequence, 
\[
\frac{q(G^+)}{q(G^*)} =
		\frac{1}{(1-1/p^{31/30})^{10}} .	
\]
In particular, by \eqref{G^*/G} we have
\[
q(G^+)=\frac{q(G^*)}{(1-1/p^{31/30})^{10}} \ge \frac{(1-1/p^{31/30})^{20/3}}{(1-1/p^{31/30})^{10}} q(G)\ge q(G)>0 .
\]
Thus the lemma follows by taking $G'=G^+$.

\bigskip

\noindent 
{\it Case 2:} $k\ge1$.

\medskip

 In this case we have
\al{
\frac{q(G^+)}{q(G^*)} =
	\bigg(\frac{\mu(\CE^+)}{\mu(\CE^*)}\bigg)^{10} 
	\bigg( \frac{\mu(\CV)}{\mu(\CV^+)}\bigg)^{9}
	 \bigg( \frac{\mu(\CW)}{\mu(\CW^+)}\bigg)^{9}
		\frac{1}{(1-1/p)^2(1-1/p^{31/30})^{10}} .
\label{eq:G+}
}
We also consider the GCD subgraphs  $G_{p^k,p^{k-1}}$ and $G_{p^{k-1},p^k}$ of $G$. Notice that $\mu(\CV_{p^k})\ge1-10^{40}/p>0$ for $p\ge 10^{2000}$. Hence, if $\mu(\CW_{p^{k-1}})>0$, then Lemma \ref{lem:InducedGraphs} implies that
\al{
\frac{q(G_{p^k,p^{k-1}})}{q(G^*)} =
	\bigg(\frac{\mu(\CE_{p^k,p^{k-1}})}{\mu(\CE^*)}\bigg)^{10} 
	\bigg( \frac{\mu(\CV)}{\mu(\CV_{p^k})}\bigg)^{9}
	 \bigg( \frac{\mu(\CW)}{\mu(\CW_{p^{k-1}})}\bigg)^{9}
		\frac{p}{(1-1/p^{31/30})^{10}} .
\label{eq:G2}
}
Similarly, if $\mu(\CV_{p^{k-1}})>0$, then we have 
\al{
\frac{q(G_{p^{k-1},p^k})}{q(G^*)} =
	\bigg(\frac{\mu(\CE_{p^{k-1},p^k})}{\mu(\CE^*)}\bigg)^{10} 
	\bigg( \frac{\mu(\CV)}{\mu(\CV_{p^{k-1}})}\bigg)^{9}
	 \bigg( \frac{\mu(\CW)}{\mu(\CW_{p^k})}\bigg)^{9}
		\frac{p}{(1-1/p^{31/30})^{10}} .
\label{eq:G3}
}

Since $\mu(\CV_{p^k})\ge(1-10^{40}/p)\mu(\CV)$, we have that $\mu(\CV_{p^{k-1}})\le 10^{40}\mu(\CV)/p$. Similarly, we have that $\mu(\CW_{p^{k-1}})\le 10^{40}\mu(\CW)/p$. To this end, let $0\le A,B\le 10^{40}$ be such that
\al{
\frac{\mu(\CV_{p^{k-1}})}{\mu(\CV)} = \frac{A}{p} 
\quad\text{and}\quad
\frac{\mu(\CW_{p^{k-1}})}{\mu(\CW)} = \frac{B}{p}.
\label{eq:AB}
}
We note that this implies that
\[
\frac{\mu(\CV^+)}{\mu(\CV)}\le 1-\frac{A}{p}\quad\text{and}\quad\frac{\mu(\CW^+)}{\mu(\CW)} \le  1-\frac{B}{p}.
\]
We also note that $\mu(\CE_{p^k,p^{k-1}})\le \mu(\CV_{p^k})\mu(\CW_{p^{k-1}})\le B\mu(\CV)/p$, so if $\mu(\CE_{p^k,p^{k-1}})>0$ then $B>0$. Similarly if $\mu(\CE_{p^{k-1},p^{k}})>0$ then $A>0$.

Combining \eqref{eq:G+} and \eqref{eq:AB} with \eqref{G^*/G}, we find
\al{
\frac{q(G^+)}{q(G)} &\ge
	\bigg(\frac{\mu(\CE^+)}{\mu(\CE^*)}\bigg)^{10} 
	\frac{1}{(1-A/p)^{9}(1-B/p)^{9}(1-1/p)^2(1-1/p^{31/30})^{10/3}} . \label{eq:ineqA}
	}
Similarly, provided $B>0$, \eqref{eq:G2}, \eqref{eq:AB} and \eqref{G^*/G} give
\al{
\frac{q(G_{p^k,p^{k-1}})}{q(G)}& \ge
	\bigg(\frac{\mu(\CE_{p^k,p^{k-1}})}{\mu(\CE^*)}\bigg)^{10} 
		\frac{p^{10}}{B^{9}(1-1/p^{31/30})^{10/3}}  \, ,\label{eq:ineqB}
		}
		and, provided $A>0$, \eqref{eq:G3}, \eqref{eq:AB} and \eqref{G^*/G} give
		\al{
\frac{q(G_{p^{k-1},p^k})}{q(G)} &\ge
	\bigg(\frac{\mu(\CE_{p^{k-1},p^k})}{\mu(\CE^*)}\bigg)^{10} 
		\frac{p^{10}}{A^{9}(1-1/p^{31/30})^{10/3}} .\label{eq:ineqC}
}
We now claim that at least one of the following inequalities holds:
\al{
\frac{\mu(\CE^+)}{\mu(\CE^*)}	&> 
	(1-A/p)^{9/10}(1-B/p)^{9/10}(1-1/p)^{2/10}(1-1/p^{31/30})^{1/3} \,; \label{eq:ineq1}\\
\frac{\mu(\CE_{p^k,p^{k-1}})}{\mu(\CE^*)}& > 
	\frac{B^{9/10}(1-1/p^{31/30})^{1/3}}{p} \,;\label{eq:ineq2}\\
\frac{\mu(\CE_{p^{k-1},p^k})}{\mu(\CE^*)}& > 
	\frac{A^{9/10}(1-1/p^{31/30})^{1/3}}{p} \,.\label{eq:ineq3}
}
If \eqref{eq:ineq1} holds then $q(G^+)\ge q(G)$ by \eqref{eq:ineqA}. If \eqref{eq:ineq2} holds, then $\mu(\CE_{p^k,p^{k-1}})>0$, so $B>0$, and so $q(G_{p^k,p^{k-1}})\ge q(G)$ by \eqref{eq:ineqB} and \eqref{eq:ineq2}. Finally, if \eqref{eq:ineq3} holds, then $\mu(\CE_{p^{k-1},p^k})>0$, so $A>0$, and so $q(G_{p^{k-1},p^k})\ge q(G)$ by \eqref{eq:ineqC} and \eqref{eq:ineq3}. Therefore this claim would complete the proof by choosing $G'\in\{G^+,G_{p^k,p^{k+1}},G_{p^{k+1},p^k}\}$ according to which of the inequalities \eqref{eq:ineq1}-\eqref{eq:ineq3} hold.

Since $\mu(\CE^+)+\mu(\CE_{p^k,p^{k-1}})+\mu(\CE_{p^{k-1},p^k})=\mu(\CE^*)$, at least one of \eqref{eq:ineq1}-\eqref{eq:ineq3} holds if we can prove that
\[
S:= \bigg( (1-A/p)^{\frac{9}{10}}(1-B/p)^{\frac{9}{10}}(1-1/p)^{\frac{1}{5}} 
	+ \frac{B^{\frac{9}{10}}}{p} 
	+ \frac{A^{\frac{9}{10}}}{p} \bigg) \bigg(1-\frac{1}{p^{31/30}}\bigg)^{1/3} < 1. 
\]
Using the inequality $1-x\le e^{-x}$ three times, we find that
\[
S\le  \bigg( \exp\Big(-\frac{9A+9B+2}{10 p} \Big) 
	+ \frac{B^{\frac{9}{10}}}{p} 
	+ \frac{A^{\frac{9}{10}}}{p} \bigg) \bigg(1-\frac{1}{p^{31/30}}\bigg)^{1/3} .	
\]
Since we also have that $e^{-x}\le 1-x+x^2/2$ for $x\ge0$, as well as $0\le A,B\le 10^{40}$, we conclude that
\[
S\le  \bigg( 1-\frac{9A+9B+2}{10 p}+ \frac{10^{81}}{p^2} 
	+ \frac{B^{\frac{9}{10}}}{p} 
	+ \frac{A^{\frac{9}{10}}}{p} \bigg) \bigg(1-\frac{1}{p^{31/30}}\bigg)^{1/3} .	
\]
By the arithmetic-geometric mean inequality, we have that $(9A+1)/10\ge A^{9/10}$ and $(9B+1)/10\ge B^{9/10}$, whence
\[
S\le  \bigg( 1 + \frac{10^{81}}{p^2} \bigg)\bigg(1-\frac{1}{p^{31/30}}\bigg)^{1/3}.
\]
Since $(1-x)^{1/3}\le 1-x/3$ for $x\in[0,1]$, we must have that $S< 1$ for $p\ge 10^{2000}$, thus completing the proof of the lemma.
\end{proof}

\begin{proof}[Proof of Proposition \ref{prop:IterationStep2}]
This follows almost immediately from Lemma \ref{lem:MainLem2}. Our assumptions that $\CR(G)\subseteq\{p>10^{2000}\}$ and $\CR^\sharp(G)\neq\emptyset$ imply that there is a prime $p>10^{2000}$ lying in $\CR(G)$. 
Thus we can apply Lemma \ref{lem:MainLem2} with this choice of $p$ and complete the proof.
\end{proof}

This completes the proof of Proposition \ref{prop:IterationStep2}, and hence Theorem \ref{thm:MainThm}.

\section{Concluding remarks and counterexamples to the Model Problem}\label{sec:Concluding}

It is a vital feature of our proof that the weight of all vertices $v$ has a factor $\phi(v)/v$, as naturally arises from the setup of the Duffin-Schaeffer conjecture. This allows our proof to (just) work, but without weights of this type our argument would fail. At first sight this point may appear to be a mere technicality, but without these weights there are genuine counterexamples to the entire approach.

\medskip

First, let us see where the proof breaks down without the $\phi(v)/v$ factors. Although most of the argument holds for a general measure $\mu$, in Proposition \ref{prop:Graph} we specialize to the measure $\mu(v)=\psi(v)\phi(v)/v$. In the proof of Proposition \ref{prop:Graph} (in particular, in relation \eqref{eq:G'Quality}), the $\phi(v)\phi(w)/v w$ factor cancels out the factor $a b /\phi(a)\phi(b)$ coming from
\begin{equation}\label{eq:Quality-PhiEffect}
\prod_{p\in\CP}(1-\un_{f(p)=g(p)\ge 1}/p)^{-2}
\end{equation}
in the definition of quality. Otherwise, the proof of Proposition \ref{prop:Graph} would fail. On the other hand, if we were to modify the definition of the quality and remove from it the product in \eqref{eq:Quality-PhiEffect}, then instead the proof of Lemma \ref{lem:MainLem2} would break down and we would not obtain a quality increment when there are many primes dividing a proportion of $1-1/p$ of each vertex set. Thus the argument we present fails without the $\phi(q)/q$ weights.

\medskip

Now, let use explain why the presence of the weight $\phi(v)/v$ is essential for the kind of argument we have given to work. Without using the $\phi(v)/v$ weights, we essentially are attempting to prove that the Model Problem of Section \ref{sec:outline} has an affirmative answer. However, one can construct examples to show that this is not the case. Such examples are based on the observation that all pairwise GCDs of elements of $\{n!/j:\, n/2\le j\le n\}$ are at least $(n-2)!$, but there is no fixed integer of size $\gg (n-2)!$ dividing a positive proportion of elements of this set. (We thank Sam Chow for showing us this construction.) 

Specifically, we select an integer $n\sim \log\log{x}$, a prime $p\in[x^{1-c}n^2/n!,(9/8)x^{1-c}n^2/n!]$, and then take
\[
\CS:=\Bigl\{\frac{n!}{j}p m:\,\frac{3 n}{4}\le j\le n,\,\frac{x^c}{n}\le m\le \frac{4 x^c}{3 n},\,\gcd(m,j)=1\Bigr\}.
\]
It is straightforward to verify that $\CS\subseteq[x,2x]$, and $\#\CS\asymp x^c$. Moreover, we see that if $v_1=n! p m_1/j_1$ and $v_2=n!p m_2/j_2$ are two elements of $\CS$, then
\[
\gcd(v_1,v_2)=\frac{p n!\gcd(m_1,m_2)}{\lcm(j_1,j_2)}\ge \frac{p n!}{n^2}\ge x^{1-c},
\]
so \textit{all} pairs $v_1,v_2$ in $\CS$ have a large gcd. However, we can easily check that there is no integer $d\gg x^{1-c}$ dividing a positive proportion of elements of $\CS$, and so this shows that the Model Problem of Section \ref{sec:outline} has a negative answer.

On the other hand, if we count integers $v$ with weight $\mu(v)=\phi(v)/v$, then the set $\CS$ we defined above has total weight $\mu(\CS)\asymp x^c/\log{n}$, and so it fails to be of a sufficiently large size unless we take $n$ bounded (in which case the prime $p$ is of size $\asymp x^{1-c}$ and it divides a positive proportion of the elements of $\CS$).  Thus the above counterexample no longer works if we count integers with weight $\mu$.


\begin{thebibliography}{99}
	\bibitem{aistleiner}
	C. Aistleitner,\,
	{\it A note on the Duffin-Schaeffer conjecture with slow divergence.}
	Bull. Lond. Math. Soc. 46 (2014), no. 1, 164--168. 
	
	\bibitem{aistleitner2}
	\bysame,\,
	{\it Decoupling theorems for the Duffin-Schaeffer problem.} Progress report (2019), 24 pages,
	 arXiv:1907.04590.
	
	\bibitem{aistleitner}
	C. Aistleitner, T. Lachmann, M. Munsch, N. Technau, and A. Zafeiropoulos,\,
	{\it The Duffin-Schaeffer conjecture with extra divergence.}
	Preprint, \url{https://arxiv.org/abs/1803.05703}.
	
	\bibitem{DS-survey-article} 
	V. Beresnevich, V. Bernik, M. Dodson and S. Velani,\,
	{\it Classical metric Diophantine approximation revisited.} 
	Analytic number theory, 38--61, Cambridge Univ. Press, Cambridge, 2009.
	
	\bibitem{extra-div2} 
	V. Beresnevich, G. Harman, A. K. Haynes and S. Velani,\,
	{\it The Duffin-Schaeffer conjecture with extra divergence II.} 
	Math. Z. 275 (2013), no. 1-2, 127--133.
	
	\bibitem{hausdorff DS}
	V. Beresnevich and S. Velani,\,
	{\it A mass transference principle and the Duffin-Schaeffer conjecture for Hausdorff measures.}
	Ann. of Math. (2) 164 (2006), no. 3, 971--992. 
	
	\bibitem{catlin}
	P. A. Catlin,\,
	{\it Two problems in metric Diophantine approximation. I.}
	J. Number Theory 8 (1976), no. 3, 282--288.
	
	\bibitem{DS} 
	R. J. Duffin and A. C. Schaeffer,\,  {\it Khinchin's problem in metric Diophantine approximation.} 
	Duke Math. J. 8 (1941), 243--255.
	
	\bibitem{dyson}
	F. J. Dyson,\, {\it A theorem on the densities of sets of integers.} 
	J. London Math. Soc. 20 (1945), 8--14.	
	
	\bibitem{erdos} 
	P. Erd\H os,\,
	{\it On the distribution of the convergents of almost all real numbers.} 
	J. Number Theory 2 (1970), 425--441. 
	
	\bibitem{erdoskorado}
	P. Erd\H{o}s, C. Ko, and R. Rado,\,{\it Intersection theorems for systems of finite sets.}
	Quart. J. Math. Oxford Ser. (2) 12 (1961), 313--320.	
	
	\bibitem{gallagher}
	P. Gallagher,\, 
	{\it Approximation by reduced fractions.} 
	J. Math. Soc. Japan 13 (1961), 342--345. 

	\bibitem{harman}
	G. Harman,\,
	{\it Metric number theory.}
	London Mathematical Society Monographs. New Series, 18. The Clarendon Press, Oxford University Press, New York, 1998. 
	
	\bibitem{extra-div1}
	A. K. Haynes, A. D.  Pollington, S. L. Velani,\,
	{\it The Duffin-Schaeffer conjecture with extra divergence.} 
	Math. Ann. 353 (2012), no. 2, 259--273.
	
	\bibitem{Khinchin-paper}
	A. Khintchine,\,
	{\it Einige S\"atze \"uber Kettenbrüche, mit Anwendungen auf die Theorie der Diophantischen Approximationen.} (German) 
	Math. Ann. 92 (1924), no. 1-2, 115--125. 
	
	
	\bibitem{Khinchin-book}
	A. Khinchin,\,
	{\it Continued fractions.} With a preface by B. V. Gnedenko. Translated from the third (1961) Russian edition. Reprint of the 1964 translation. Dover Publications, Inc., Mineola, NY, 1997.
	
	\bibitem{dk-book}
	D. Koukoulopoulos,\,
	{\it The distribution of prime numbers.} Graduate Studies in Mathematics, 203. American Mathematical Society, Providence, RI, 2019.
	
	\bibitem{montgomery}
	H. L. Montgomery,\,
	{\it Ten lectures on the interface between analytic number theory and harmonic analysis.}
	CBMS Regional Conference Series in Mathematics, 84. 
	Published for the Conference Board of the Mathematical Sciences, Washington, DC; 
	by the American Mathematical Society, Providence, RI, 1994. 
	
	\bibitem{PV}
	A. D. Pollington and R. C. Vaughan,\, 
	{\it  The $k$-dimensional Duffin and Schaeffer conjecture.} Mathematika 37 (1990), no. 2, 190--200. 
	
	
	\bibitem{RS} J. B. Rosser and L. Schoenfeld,\,
	{\it Approximate formulas for some functions of prime numbers.}
	Illinois J. Math. 6 (1962), 64--94.
	
	\bibitem{roth1} K. F. Roth,\,
	{\it Sur quelques ensembles d'entiers.} 
	C. R. Acad. Sci. Paris 234 (1952), 388--390.
	
	\bibitem{roth2}
	\bysame,\,
	{\it On certain sets of integers.}
	J. London Math. Soc. 28 (1953), 104--109.
	
	\bibitem{vaaler} 
	J. D. Vaaler,\,
	{\it On the metric theory of Diophantine approximation.}
	Pacific J. Math. 76 (1978), no. 2, 527--539. 
	 
	\bibitem{walfisz} 
	A. Walfisz,\,
	{\it Ein metrischer Satz über Diophantische Approximationen.}
	Fundamenta Mathematicae vol. 16 (1930), 361--385.
\end{thebibliography}
\end{document}